\documentclass[11pt,reqno,a4paper]{amsart}

\usepackage[T1]{fontenc}
\usepackage[utf8]{inputenc}

\usepackage{libertinus}
\usepackage{microtype}

\usepackage{geometry}

\usepackage{xcolor}

\usepackage{amsmath,amssymb,amsthm,amsfonts}
\usepackage{mathtools}
\usepackage{mathrsfs}

\usepackage[
  colorlinks=true,
  linkcolor=blue,
  citecolor=blue,
  urlcolor=blue,
  pagebackref=true
]{hyperref}

\renewcommand*{\backrefalt}[4]{%
  \ifcase #1\relax
  \or
    {\footnotesize Cited on page #2.}%
  \else
    {\footnotesize Cited on pages #2.}%
  \fi
}

\usepackage{graphicx}

\usepackage{titlesec}

\titleformat{\section}
  {\normalfont\large\bfseries\centering}
  {\thesection.}
  {0.75em}
  {}

\titlespacing*{\section}
  {0pt}
  {3.2ex plus 1ex minus .2ex}
  {1.6ex plus .3ex}

\titleformat{\subsection}
  {\normalfont\normalsize\bfseries}
  {\thesubsection.}
  {0.75em}
  {}

\titlespacing*{\subsection}
  {0pt}
  {2.4ex plus .8ex minus .2ex}
  {1.0ex plus .2ex}

\titleformat{\subsubsection}
  {\normalfont\normalsize\bfseries\itshape}
  {\thesubsubsection.}
  {0.75em}
  {}

\titlespacing*{\subsubsection}
  {0pt}
  {2.0ex plus .6ex minus .2ex}
  {0.7ex plus .2ex}

\usepackage{etoolbox}

\makeatletter
\AtBeginEnvironment{thebibliography}{%
  \def\@mklab#1{#1\hfil}%
}
\makeatother
  
\numberwithin{equation}{section}

\usepackage{aliascnt}

\newtheoremstyle{compact}
  {6pt}   
  {1 pt}   
  {\itshape}
  {}
  {\bfseries}
  {.}
  {0.5em}
  {}

\theoremstyle{compact}

\newtheorem{theorem}{Theorem}[section]

\newaliascnt{proposition}{theorem}
\newtheorem{proposition}[proposition]{Proposition}
\aliascntresetthe{proposition}

\newaliascnt{lemma}{theorem}
\newtheorem{lemma}[lemma]{Lemma}
\aliascntresetthe{lemma}

\newaliascnt{corollary}{theorem}
\newtheorem{corollary}[corollary]{Corollary}
\aliascntresetthe{corollary}

\theoremstyle{definition}
\newaliascnt{definition}{theorem}

\aliascntresetthe{definition}

\theoremstyle{remark}

\newtheorem*{unnumberedremark}{Remark}

\AtBeginEnvironment{proof}{\vspace{6pt}}

\usepackage{enumitem}

\setlist{
itemsep=2pt,
topsep=4pt
}

\newcommand{\bS}{\mathbb{S}}

\newcommand\subsetsim{\mathrel{%
\ooalign{\raise0.2ex\hbox{$\subset$}\cr\hidewidth\raise-0.8ex\hbox{\scalebox{0.9}{$\sim$}}\hidewidth\cr}}}

\usepackage{comment}

\makeatletter
\renewcommand{\l@section}[2]{%
  \par\addpenalty\@secpenalty
  \addvspace{1.0em plus 1pt}%
  \@tempdima 1.5em
  \begingroup
    \parindent \z@
    \rightskip \@pnumwidth
    \parfillskip -\@pnumwidth
    \leavevmode
    \bfseries
    \advance\leftskip\@tempdima
    \hskip -\leftskip
    #1\nobreak\hfil \nobreak\hb@xt@\@pnumwidth{\hss #2}\par
  \endgroup}
\makeatother

\usepackage{bbm}
\usepackage{amscd}
\usepackage[all,cmtip]{xy}
\usepackage{changepage}
\usepackage{calc}
\usepackage{scalerel}
\usepackage{extarrows}

\DeclareMathSymbol{\shortminus}{\mathbin}{AMSa}{"39}

\newcommand{\ignore}[1]{}

\begin{document}

\title{Hyperuniform Delone Realizations and Rigidity}

\author{Michael Bj\"orklund}
\address{Department of Mathematics, Chalmers University of Technology and University of Gothenburg, Gothenburg, Sweden}
\email{micbjo@chalmers.se}

\subjclass[2020]{Primary 37A15, 60G55;
Secondary 37A30, 37A35, 60D05.}

\keywords{hyperuniformity, Delone sets, rigidity of point processes,
Bartlett spectrum}

\begin{abstract}
We prove a measurable realization theorem for hyperuniform Delone point
processes. In dimensions \(d\geq2\), for every prescribed \(q\geq1\),
every essentially free ergodic p.m.p.\ action of \(\mathbb R^d\) admits,
at every sufficiently large prescribed intensity, a generating Delone
realization whose return-time point process \(\eta\) is measurably
isomorphic to the original action and whose Bartlett spectrum
\(\sigma_\eta\) satisfies
\[
    \sigma_\eta(B_\varepsilon)=o(\varepsilon^{2q})
    \qquad(\varepsilon\downarrow0).
\]
Thus arbitrarily high finite-order
low-frequency suppression can be imposed without changing the prescribed
measurable dynamics. The same realizations can be chosen with surface-order
ball variance and linear rigidity to any prescribed finite order, while also
being maximally rigid and almost surely bounded-displacement equivalent to a
lattice. For essentially free Euclidean-motion actions whose translation
subaction is ergodic, the construction can be made isotropic and \(V\)-ergodic,
and hence \(V\)-weakly mixing. In dimension one, every essentially free
ergodic flow admits generating Delone realizations with logarithmic interval
discrepancy, maximal rigidity, and near-quadratic decay of the Bartlett
spectrum at the origin.
\end{abstract}

\maketitle


\section{Introduction}
\label{sec:introduction}

Let \(\eta\) be a translation-invariant point process on \(\mathbb R^d\), and
let
\[
    N_R(P):=\#(P\cap B_R)
\]
denote the number of points of a configuration \(P\) in the ball \(B_R\) of
radius \(R\). The process is called \emph{hyperuniform} if
\[
    \operatorname{Var}_\eta(N_R)=o(R^d)
    \qquad(R\to\infty).
\]
Important examples include lattices, certain cut-and-project sets as
established by Bj\"orklund and Hartnick \cite{BH24}, and determinantal point
processes associated with projection kernels \cite{Sos00}; see also
\cite{Tor18,TS03}. Recent work has developed general procedures for producing
hyperuniform processes from other stationary random measures by transport and
related spatial transformations \cite{KLLY25,LK26}.

In this paper the starting point is not a spatial model but a prescribed
measure-preserving action. Given an ergodic p.m.p.\ action of \(\mathbb R^d\),
we ask whether the action can be represented by a hyperuniform Delone point
process without changing its measurable dynamics. More precisely, we require
the point-process representation to be \emph{generating}: the resulting point
configuration must determine the original state almost surely. Thus the point
process is required to be measurably isomorphic to the given action, not merely
a factor of it.

In dimensions \(d\geq2\), the answer is very flexible. Every essentially free
ergodic \(\mathbb R^d\)-action admits, at every sufficiently large prescribed
intensity, generating Delone realizations whose Bartlett spectrum vanishes at
the origin to any prescribed finite polynomial order. These realizations can
simultaneously be chosen maximally rigid, linearly rigid to any prescribed
finite order, and almost surely bounded-displacement equivalent to a lattice.
The construction is compatible with rotational symmetry as well: for suitable
Euclidean-motion actions with ergodic translation subaction, the resulting
process can be chosen isotropic and \(V\)-ergodic, hence \(V\)-weakly mixing,
with surface-order ball variance.

The situation in dimension one is different. Every essentially free ergodic
flow still admits generating Delone realizations with maximal rigidity and
logarithmic interval discrepancy. On the other hand, the companion paper
\cite{Bjo26} shows that, for an ergodic point process of positive intensity
and local second moments, the condition
\[
    \int_{0<|\xi|<1}\frac{1}{\xi^2}\,d\sigma_\eta(\xi)<\infty
\]
forces a nonzero Koopman eigenvalue of the translation action, and in fact
lattice induction. In particular, a point process with this degree of
low-frequency Bartlett suppression cannot be translation weakly mixing.
Uniformly bounded interval discrepancy already imposes a nontrivial spectral
restriction in the present setting. Thus sufficiently strong suppression near
the origin constrains the measurable dynamics in dimension one.

By contrast, in dimensions \(d\geq2\), finite-order Bartlett suppression,
surface-order variance, maximal rigidity, finite-order linear rigidity, and
bounded-displacement lattice geometry do not by themselves constrain the
underlying measurable dynamics: they can be imposed on generating
point-process coordinates for an arbitrary essentially free ergodic action.
The local moment-cancellation principle used to obtain the spectral estimates
has important precedents in tiling, quadrature, and transport constructions
\cite{GJT08,KLLY25,LK26}. The additional requirement here is that these
spatial properties coexist with a Borel equivariant encoding from which the
prescribed dynamical state can be recovered.

This paper is a companion to \cite{Bjo26}, where generating stealthy
Delone realizations are constructed for lattice-induced actions. In dimension
one, the obstruction above shows that sufficiently strong low-frequency
suppression forces nontrivial spectral structure in the translation action.
The present paper shows that no analogous restriction occurs for suppression
of any fixed finite order in dimensions \(d\geq2\). Whether genuinely
stronger low-frequency conditions impose dynamical constraints in higher
dimensions remains open.


\subsection{Hyperuniformity and point-process coordinates}
\label{subsec:introduction-coordinates}

Let \(V\) be a real inner-product space of dimension \(d\), let
\(K\leq O(V)\) be closed, and put \(G_K=V\rtimes K\). We identify \(V\)
with the space \(K\backslash G_K\) of left \(K\)-cosets by
\[
    K(v,k)\longmapsto k^{-1}v.
\]
Under this identification, the action
\[
    g.(Kh):=Khg^{-1}
\]
of \(G_K\) on \(K\backslash G_K\) becomes
\[
    (v,k).u=ku-v.
\]
Accordingly, \(G_K\) acts on configurations by
\[
    (v,k).\delta_P=\delta_{kP-v}.
\]
Let \(\mathcal N_s(V)\) denote the space of locally finite simple
counting measures on \(V\), and let \(\eta\) be a translation-invariant
point process on \(\mathcal N_s(V)\) with positive intensity and local
second moments. For
\(f\in\mathcal S(V)\), write
\[
    \bS f(\omega):=\int_V f\,d\omega,
    \qquad
    \widehat f(\xi)
    :=
    \int_V f(u)e^{-2\pi i\langle u,\xi\rangle}\,d\lambda_V(u).
\]
Its \emph{Bartlett spectrum} is the unique positive translation-bounded Radon
measure \(\sigma_\eta\) satisfying
\[
    \operatorname{Var}_\eta(\bS f)
    =
    \int_V |\widehat f(\xi)|^2\,d\sigma_\eta(\xi).
\]
For a bounded Borel set \(A\subset V\), write
\[
    N_A(\omega):=\omega(A).
\]
Writing \(B_R\) for the ball of radius \(R\) about the origin, the
process is \emph{hyperuniform} if
\[
    \operatorname{Var}_\eta(N_{B_R})=o(R^d)
    \qquad(R\to\infty),
\]
equivalently,
\[
    \sigma_\eta(B_\varepsilon)=o(\varepsilon^d)
    \qquad(\varepsilon\downarrow0);
\]
see \cite[Proposition~3.3]{BH24}. It is \emph{stealthy} if
\(\sigma_\eta\) vanishes on a neighborhood of the origin. For an integer
\(p\geq1\), when we say that \(\eta\) has \emph{Bartlett suppression of
order \(p\)} we mean
\[
    \sigma_\eta(B_\varepsilon)=o(\varepsilon^{2p})
    \qquad(\varepsilon\downarrow0).
\]
We use the more precise phrase \emph{surface-order number variance} for
\[
    \operatorname{Var}_\eta(N_{B_R})=O(R^{d-1})
    \qquad(R\to\infty).
\]
The scale \(O(R^{d-1})\) corresponds to class-I hyperuniformity in the
usual classification \cite{Tor18}; we use the explicit estimate because no
absolute continuity of the Bartlett spectrum is assumed.

Let \(G_K\curvearrowright(X,\mu)\) be a p.m.p.\ Borel action. As usual
in the measure-preserving setting, all cross-section assertions are
understood modulo invariant null sets: a Borel set \(Y\subset X\) is a
\emph{translation cross-section} if, on some \(G_K\)-invariant conull Borel
set \(X_0\subset X\), every \(V\)-orbit meets \(Y\cap X_0\) and
\[
    Y_x:=\{v\in V:(v,e).x\in Y\}
\]
is locally finite for every \(x\in X_0\). Replacing \(Y\) by
\(Y\cap X_0\), we may assume \(Y\subset X_0\). Its return-time map and
return-time process are
\[
    \kappa_Y(x):=\delta_{Y_x},
    \qquad
    \eta_Y:=(\kappa_Y)_*\mu.
\]
The \emph{intensity} of \(Y\) is, by definition, the intensity of
\(\eta_Y\). We call \(Y\) \emph{Delone} if the sets \(Y_x\), on an invariant
conull set as above, are uniformly separated and uniformly relatively dense.
It is \emph{generating} if
\(\kappa_Y\) is injective on a \(G_K\)-invariant conull Borel subset of
\(X\). In that case \(\kappa_Y\) is a measurable isomorphism from such
a conull subset of \(X\) onto its image, which has full \(\eta_Y\)-measure;
if \(Y\) is also \(K\)-invariant, this isomorphism is \(G_K\)-equivariant.
Generation is the information-preserving requirement in the constructions
below.

If \(Y\) is invariant under \(K\), then
\[
    Y_{(v,k).x}=kY_x-v,
    \qquad (v,k)\in G_K,\quad x\in X,
\]
so \(\kappa_Y\) is \(G_K\)-equivariant and \(\eta_Y\) is
\(G_K\)-invariant. For the higher-dimensional results, the restricted \(V\)-action is
assumed ergodic. This assumption is stronger than \(G_K\)-ergodicity and
matters in the isotropic case. For example, rotationally averaging an
anisotropic process may produce an isotropic \(G_K\)-ergodic law while
leaving a global orientation fixed by translations. Translation ergodicity
rules out such a parameter. When \(d\geq2\) and \(K=SO(V)\) or \(O(V)\),
translation ergodicity also implies translation weak mixing by
Corollary~\ref{cor:isotropic-ergodic-weak-mixing}. The same hypothesis later
removes the zero-frequency atoms of the auxiliary covariance spectra.


\subsection{Main results}
\label{subsec:introduction-main-results}

We first state the two realization theorems. We call a point process
\emph{maximally rigid} if, for every bounded Borel set \(A\subset V\), the
restriction of a realization to \(A^c\) measurably determines the entire
realization almost surely.

\begin{theorem}[One-dimensional realization]
\label{thm:main-dimension-one}
Let
\(\mathbb R\curvearrowright(X,\mu)\) be an essentially free ergodic
p.m.p.\ Borel action. There exists \(\rho_0<\infty\) such that,
for every \(\rho\geq\rho_0\), there is a generating Delone
translation cross-section \(Y\subset X\) of intensity \(\rho\)
whose return-time process is maximally rigid.

Moreover, there are a constant \(C_\rho<\infty\) and an invariant conull
Borel set \(X_0\subset X\) such that
\[
    \left|
        \#(Y_x\cap I)-\rho|I|
    \right|
    \leq
    C_\rho\log(2+|I|)
\]
for every \(x\in X_0\) and every bounded interval
\(I\subset\mathbb R\).
\end{theorem}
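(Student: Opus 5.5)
Start from a cross-section with bounded gaps, use a nested hierarchy of blocks of comparable length to get the logarithmic discrepancy bound, and hide the state of the point in small perturbations that can be read back both from the configuration and from any cofinite part of it.

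\textbf{Step 1: base cross-section and dyadic hierarchy.} Since the action is essentially free and ergodic, there is a Delone translation cross-section \(Z\subset X\) with gaps in \([1,2]\), via a Rudolph/Ambrose--Kakutani type construction. Applying a Rokhlin-type refinement, I will build a nested sequence of Borel sub-cross-sections \(Z=Z_0\supset Z_1\supset Z_2\supset\cdots\). Every gap of \(Z_n\) is the union of between \(2\) and \(3\) gaps of \(Z_{n-1}\), so \(Z_n\)-blocks have lengths in \([2^n,2\cdot 3^n]\) and the hierarchy is defined on an invariant conull set.

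\textbf{Step 2: place points with exact count.} In each \(Z_0\)-gap \([a,b)\) I will put \(\lfloor\rho(b-a)+\theta\rfloor\)-type numbers of points, with a carry \(\theta\) passed upward through the hierarchy. The carries are organized so that each \(Z_n\)-block of length \(L\) receives exactly \(\rho L\) points up to an error bounded uniformly in \(n\). Concretely, the rounding errors are balanced at each level of the tree, and each level contributes \(O(1)\).

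The points are spaced quasi-uniformly, for example at positions \(a+(j+\tfrac12)/\rho\) adjusted slightly. Then the gaps lie in \([c/\rho,C/\rho]\) once \(\rho\geq\rho_0\), which gives the Delone property.

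An interval \(I\) is covered by \(O(\log|I|)\) hierarchy blocks, at most a bounded number per level, up to boundary pieces of bounded size. Summing the \(O(1)\) errors gives
\[
    \bigl|\#(Y_x\cap I)-\rho|I|\bigr|\leq C_\rho\log(2+|I|).
\]

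\textbf{Step 3: generating and rigidity.} I will choose a countable generating partition \((P_m)\) of \(X\). The bits \(\mathbf 1_{P_m}\), evaluated at \(Z_n\)-points, are encoded into tiny displacements of designated points inside each \(Z_n\)-block. The displacements are summable so they do not spoil separation, and they use only points reserved at level \(n\), for instance via dyadic digits of offsets.

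The level-\(n\) markers also make the hierarchy readable from the configuration, which gives generation. Decoding every state bit from \(Y_x\) yields injectivity of \(\kappa_Y\) on a conull invariant set.

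For maximal rigidity, I will make the encoding redundant: each bit is repeated in infinitely many disjoint, arbitrarily distant blocks, arranged consistently via the hierarchy. Then \(Y_x\cap A^c\) determines \(x\) for any bounded \(A\), and hence determines \(Y_x\cap A\) as well.

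\textbf{Main obstacle.} The hardest step is the Borel construction of the nested hierarchy with uniformly comparable block lengths, together with keeping the carries bounded at every level simultaneously. It is also delicate to make the marker and encoding scheme decodable from the point configuration alone, without destroying the uniform count control. For this I would first try the Slaman--Steel marker lemma on \(Z\) combined with an explicit integer-splitting lemma.

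Intensity exactly \(\rho\) follows from the block counts being \(\rho L+O(1)\) together with ergodicity.
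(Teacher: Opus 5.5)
Your overall architecture matches the paper's: a bounded-gap cross-section, a nested hierarchy in which each block is two or three blocks of the previous level, integer rounding along that hierarchy, and the decomposition of an orbit interval into maximal hierarchy blocks, at most four per level. The hierarchy itself is not the real obstacle. Modulo null sets, the paper gets it by applying the two-height (heights \(2\) and \(3\)) multiple Rokhlin tower theorem recursively to the induced maps, so that \(\nu(A_k)=(2/5)^k\to0\) and every cut has a finite level. Your marker-lemma-plus-splitting route would also work.

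\textbf{Gap in Step~2.} The bound \(C_\rho\log(2+|I|)\) rests entirely on every hierarchy block having discrepancy \(O(1)\) uniformly in its level. You assert this without a rule. Read literally, ``each level contributes \(O(1)\)'' gives error \(O(n)\) on a level-\(n\) block, and only \(O(\log^2|I|)\) in the end. What must be specified is where the integer corrections created at higher levels are deposited.
\begin{itemize}
\item In the paper, a level-\((k+1)\) block \(B\) with children \(B_1,\dots,B_m\) has deficit \(c_\rho(B)=\lfloor L_\rho(B)\rfloor-\sum_i\lfloor L_\rho(B_i)\rfloor=\lfloor\sum_i\{L_\rho(B_i)\}\rfloor\in\{0,\dots,m-1\}\).
\item It pays one unit to the level-zero block immediately right of each of the first \(c_\rho(B)\) of its \(m-1\) internal child boundaries.
\item The cut before \(z\) is an internal child boundary at exactly one level, namely \(\ell(z)+1\). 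Hence each level-zero block receives at most one unit.
\item Induction then gives \(\sum_{w\in B}q_\rho(w)=\lfloor L_\rho(B)\rfloor+\iota_\rho(a)\), where \(\iota_\rho(a)\in\{0,1\}\) is the unit at the left cut of \(B\).
\end{itemize}
A naive rule fails here. For instance, depositing the units at the first level-zero block of the parent lets one level-zero block collect units from every ancestor that starts there, and the per-block error is unbounded.

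\textbf{Step~3.} This is a different and much heavier route, and as written it is not yet decodable. Tiny displacements of quasi-uniformly spaced points can only be read once the reference positions are known (roof boundaries, \(\tau(z)\), \(q_\rho(z)\)). You never say what a marker looks like, or why it is recognizable intrinsically, in particular from \(P_x\cap A^c\) alone. Nor is it clear that finitely many bits of a generating partition, recorded at the sparse sites \(Z_n\), actually generate.

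Since you already allow real-valued offsets (``dyadic digits''), all of this can be bypassed as the paper does. Fix a Borel injection \(b:Z\to(0,1)\) and place in each roof interval the cluster \(h\{1,1+\varepsilon_0,1+(3+b(z))\varepsilon_0\}\). Its internal distances are below \(h/10\), while all other distances exceed \(3h/4\). The clusters are therefore exactly the nontrivial components of the graph whose edges are shorter than \(h/10\). One complete cluster yields the roof boundary \(a=u_1-h\) and \(w=b^{-1}(\beta)\), hence \(x=(-a).w\).

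This gives injectivity outright, with no generating partition and no null set. It also gives maximal rigidity essentially for free:
\begin{itemize}
\item infinitely many complete clusters lie outside any bounded \(A\);
\item restricting to \(A^c\) cannot create short edges;
\item Lusin--Novikov selects one such cluster Borel-measurably.
\end{itemize}
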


\begin{theorem}[Higher-dimensional realization]
\label{thm:main-higher-dimensional}
Let \(d\geq2\), let \(K\leq O(V)\) be closed, and let
\(G_K=V\rtimes K\curvearrowright(X,\mu)\) be an essentially free
p.m.p.\ Borel action whose restricted \(V\)-action is ergodic.
For every integer \(p\geq1\) there exists \(\rho_0<\infty\) such
that, for every \(\rho\geq\rho_0\), there is a \(K\)-invariant
generating Delone translation cross-section \(Y\subset X\) of intensity
\(\rho\) whose return-time process is maximally rigid and satisfies
\[
    \sigma_{\eta_Y}(B_\varepsilon)
    =
    o(\varepsilon^{2p})
    \qquad(\varepsilon\downarrow0).
\]
If \(2p>d+1\), then moreover
\[
    \operatorname{Var}_{\eta_Y}(N_{B_R})
    =O(R^{d-1})
    \qquad(R\to\infty).
\]
\end{theorem}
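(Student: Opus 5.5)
The strategy is to build \(Y\) in three stages. First, use a \(K\)-invariant clopen-style tiling of the translation orbits. Second, insert points tile by tile so that each tile carries a locally moment-cancelling configuration. Third, encode the state \(x\) in small, controlled perturbations that leave the low-order moments unchanged.

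\textbf{Step 1: the tiling.} By essential freeness and the standard Ornstein--Weiss/Rokhlin machinery for \(\mathbb R^d\)-actions, choose a Borel cross-section and from it a Borel \(K\)-invariant equivariant tiling \(x\mapsto\mathcal T_x\) of \(V\) into bounded tiles with uniformly bounded diameter and inradius bounded below. Making the construction \(K\)-invariant requires the cross-section to be \(K\)-invariant. One way to achieve this is to use \(\{(v,k).x\}\)-orbit points and Voronoi tiles, which are defined by Euclidean data and hence \(K\)-equivariant. The tiles are then chopped into pieces of comparable volume \(\asymp L^d\).

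\textbf{Step 2: the point patterns.} Inside each tile \(T\), place \(n_T\approx\rho|T|\) points so that, for every polynomial \(P\) of degree \(<p\), the local discrepancy measure
\[
\mu_T:=\sum_{y\in Y_x\cap T}\delta_y-\rho\,\lambda_V|_T
\]
satisfies \(\int P\,d\mu_T=0\). The count \(\#(Y_x\cap T)=\rho|T|\) must be an integer. Absorb this by adjusting tiles so that the volume is a multiple of \(1/\rho\), or by letting one neighbouring region carry the rounding, and handle it in the merging. For \(\rho\) large, Carathéodory/quadrature existence (positive-weight Tchakaloff-type designs with equal weights, as in Step~2 of \cite{GJT08}-type constructions) gives equal-weight cubature nodes that are uniformly separated at scale \(\rho^{-1/d}\). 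This is why \(\rho\geq\rho_0\) is needed. The choices must be Borel in the tile and its shape, and \(K\)-equivariant; the latter is achieved by choosing nodes by a rule depending only on the tile relative to its own Euclidean geometry.

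\textbf{Step 3: spectral estimate and variance.} With vanishing moments up to order \(p-1\), the contribution of each tile satisfies
\[
\widehat{\mu_T}(\xi)=O(|\xi|^{p}L^{p}\,\#T).
\]
Since the tiles have bounded size, a stationarity argument (Palm/cluster decomposition) gives
\[
\int|\widehat f_\varepsilon|^2\,d\sigma\lesssim \varepsilon^{2p}\cdot(\text{volume}),
\]
with the little-\(o\) obtained by dominated convergence. Translation ergodicity is used here to kill the atom at \(0\). For \(2p>d+1\), the surface-order bound follows from the standard estimate
\[
\operatorname{Var}N_{B_R}=\int|\widehat{1_{B_R}}|^2\,d\sigma,
\qquad |\widehat{1_{B_R}}(\xi)|^2\lesssim R^{d-1}|\xi|^{-d-1},
\]
together with the translation-boundedness of \(\sigma\) at high frequency and the \(\varepsilon^{2p}\) integrability near \(0\). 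Alternatively it follows directly, since only tiles meeting \(\partial B_R\) contribute. Bounded displacement to a lattice follows from the tile-wise equal counts via Hall's marriage / Laczkovich.

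\textbf{Step 4: generation and rigidity — the main obstacle.} The nodes in each tile have a finite-dimensional family of moment-preserving deformations, and I would use this freedom to encode \(x\). Fix a countable generating partition of a cross-section, and let tiles at increasing scales record successive digits via tiny moment-preserving perturbations. These encodings must be Borel, decodable from \(Y_x\) alone, and must also let the tiling itself be recovered from the point set. The latter is arranged by a distinguished local marker pattern in each tile. Maximal rigidity then follows because the marker structure is recovered from \(Y_x\cap A^c\); inside the missing region, the exact counts and the vanishing moments on tiles (together with the encoding redundancy) pin down the points. I expect this rigidity-plus-generation encoding, compatible with \(K\)-equivariance and the Delone constraints, to be the hardest part. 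First I would try hierarchical (multiscale) tilings, where each level's encoding lives on a generic perturbation subspace that the moment conditions leave open.
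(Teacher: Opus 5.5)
The genuine gap is in Step~2, at the integer counts. Everything in your Step~3 rests on each bounded tile carrying exactly \(\rho\lambda_V(T)\) points, and you dispose of this by ``adjusting tiles so that the volume is a multiple of \(1/\rho\), or letting a neighbouring region carry the rounding.'' Nothing in that step uses \(d\geq2\), and no dimension-free version can work. For \(d=1\), bounded tiles with exact counts give uniformly bounded interval discrepancy. By Proposition~\ref{prop:bounded-discrepancy-eigenvalue}, this forces \(\rho\) to be a Koopman eigenvalue, which is impossible for a weakly mixing flow.

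Handing remainders to a chosen neighbour gives well-defined bounded transfers only if the resulting parent map is acyclic with finite descendant sets, i.e.\ generates a one-ended forest. Otherwise the accumulated remainders are undefined or unbounded. Merging tiles into ever larger units instead destroys the uniform local bounds your spectral step needs.

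This is precisely the missing idea, and the paper supplies it as follows. The proximity graph on a cross-section \(C\) for the full \(G_K\)-action has one-ended components, because complements of balls are connected when \(d\geq2\) (Proposition~\ref{prop:proximity-graph}). Theorem~\ref{thm:measured-one-ended-forest} then gives a Borel a.e.\ one-ended spanning subforest whose parent map has bounded range, boundedly many children, and finite descendant sets. With \(M_\rho(c)\) the \(\rho\)-mass of the descendant subtree, one sets \(q_\rho(c)=\lfloor M_\rho(c)\rfloor-\sum_{w\in\operatorname{Ch}(c)}\lfloor M_\rho(w)\rfloor\). One then physically moves a thin shell of \(\rho\)-mass \(\{M_\rho(c)\}\) from each Voronoi cell into its parent's packet. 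This produces bounded packets of exact integer mass (Lemma~\ref{lem:forest-integerization}, Proposition~\ref{prop:integer-mass-packets}).

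Step~4 is also unresolved, and your rigidity mechanism is not valid: exact counts and vanishing moments do not determine where the missing points are. The paper's route is much simpler and also settles \(K\)-equivariance. Your node rule ``depending only on the tile's Euclidean geometry'' does not: it must give symmetric patterns on symmetric tiles, and it has no frame in which to write state-dependent data.

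In the paper, returns to \(C\) give every scaffold site a position \(u\), a frame \(r\in K\) and a label \(c\in C\). Local patterns are built in the label's coordinates and placed by \(\Phi_{x,g}\), and freeness gives \(x=(-u,r).c\) as in \eqref{eq:framed-recovery}. A single \((d+1)\)-point set \(hS_{b(c)}\) with distinct pairwise distances, where \(b:C\to(0,1)\) is a Borel injection, encodes \(u\), \(r\) and \(c\) at once (Lemma~\ref{lem:full-frame-marker}). So no generating partition or multiscale digits are needed: any complete marker far from \(A\) recovers \(x\), hence the whole configuration through the Borel map \(x\mapsto\delta_{P_x}\).

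Two smaller points:
\begin{itemize}
\item Tchakaloff-type theorems give positive, not equal, weights, and no separation or Borel control. The paper instead uses near-uniform equal-mass nodes and fixes the moments of degree \(<p\) by a contraction argument in a region disjoint from the marker.
\item Per-tile Fourier bounds do not by themselves control the variance, since the tiles are correlated. The paper handles this through \(\delta_{P_x}-\rho\lambda_V=\sum_{|\mathbf j|=p}\partial^{\mathbf j}\Theta_{\mathbf j,x}\), with translation-covariant \(\Theta_{\mathbf j,x}\) whose covariance spectra have no atom at \(0\) by \(V\)-ergodicity.
\end{itemize}
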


We next state the corresponding conclusions entirely in point-process
language. A point process carries its canonical translation action on
configuration space. Applying a realization theorem to this action replaces
the original process by an equivariantly isomorphic one; the resulting map on
configurations is made explicit below.

\begin{corollary}[One-dimensional point-process realization]
\label{cor:main-dimension-one-point-process}
Let \(\eta_0\) be a translation-ergodic point process on \(\mathbb R\) whose
canonical translation action on \((\mathcal N_s(\mathbb R),\eta_0)\) is
essentially free. For every sufficiently large \(\rho\), there are a
translation-ergodic point process \(\eta\), supported on Delone configurations
and of intensity \(\rho\), and a translation-equivariant measurable
isomorphism
\[
    T:(\mathcal N_s(\mathbb R),\eta_0)
      \longrightarrow
      (\mathcal N_s(\mathbb R),\eta).
\]
The process \(\eta\) is maximally rigid and satisfies
\[
    \operatorname{Var}_\eta(N_{[0,R)})=O(\log^2 R)
    \qquad(R\to\infty)
\]
and
\[
    \sigma_\eta([-\varepsilon,\varepsilon])
    =
    O\!\left(
        \varepsilon^2
        \log^2\!\frac{e}{\varepsilon}
    \right)
    \qquad(\varepsilon\downarrow0).
\]
In particular, \(\eta\) is hyperuniform.
\end{corollary}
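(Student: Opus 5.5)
We apply Theorem~\ref{thm:main-dimension-one} to the canonical action \(\mathbb R\curvearrowright(\mathcal N_s(\mathbb R),\eta_0)\). This action is ergodic by assumption and essentially free by hypothesis, so for every \(\rho\geq\rho_0\) it provides a generating Delone cross-section \(Y\subset\mathcal N_s(\mathbb R)\) of intensity \(\rho\). The return-time process \(\eta:=\eta_Y\) is maximally rigid, and the discrepancy bound holds on an invariant conull set \(X_0\). Set \(T:=\kappa_Y\). By construction \(T_*\eta_0=\eta\), and \(T\) is translation-equivariant since \(Y_{v.x}=Y_x-v\). It is injective on an invariant conull set, and a Borel injection has Borel image with Borel inverse (Lusin--Souslin), so \(T\) is a measurable isomorphism. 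Translation-ergodicity of \(\eta\) follows because \(\eta\) is a factor of the ergodic system \(\eta_0\). Since \(\eta\) is carried by the Delone sets \(Y_x\), \(x\in X_0\), it is supported on Delone configurations. Maximal rigidity is part of the theorem.

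For the variance, we take \(I=[0,R)\). The discrepancy bound gives \(|N_{[0,R)}-\rho R|\le C_\rho\log(2+R)\) \(\eta\)-almost surely. The mean of \(N_{[0,R)}\) is \(\rho R\), so
\[
\operatorname{Var}_\eta(N_{[0,R)})\le \mathbb E_\eta\bigl[(N_{[0,R)}-\rho R)^2\bigr]\le C_\rho^2\log^2(2+R)=O(\log^2R).
\]
Local second moments are automatic because the configurations are uniformly separated.

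The spectral estimate is the main step. We would pick a nonnegative Schwartz function \(\varphi\) with \(\widehat\varphi\geq c>0\) on \([-1,1]\), for instance \(\varphi=|\widehat\psi|^2\) for suitable \(\psi\), or a Fejér-type kernel. We then set \(f_R(u)=R^{-1}\varphi(u/R)\), so that \(\widehat f_R(\xi)=\widehat\varphi(R\xi)\). This gives
\[
c^2\,\sigma_\eta([-1/R,1/R])\le \int|\widehat f_R|^2\,d\sigma_\eta=\operatorname{Var}_\eta(\bS f_R).
\]
To bound the right-hand side, we write \(\bS f_R-\rho\int f_R\) as a Stieltjes integral of \(f_R\) against the discrepancy function \(D(t)=\#(Y_x\cap[0,t))-\rho t\), with the appropriate sign convention for \(t<0\). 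Integrating by parts yields
\[
\Bigl|\bS f_R-\rho\!\int f_R\Bigr|\le\int|f_R'(t)|\,|D(t)|\,dt\le C_\rho\int R^{-2}|\varphi'(t/R)|\log(2+|t|)\,dt.
\]
After the substitution \(t=Rs\) this becomes
\[
C_\rho R^{-1}\int|\varphi'(s)|\log(2+R|s|)\,ds=O(R^{-1}\log R),
\]
using \(\log(2+R|s|)\le\log(2+R)+\log(2+|s|)\) and the rapid decay of \(\varphi'\). Squaring gives \(\operatorname{Var}_\eta(\bS f_R)=O(R^{-2}\log^2R)\). Setting \(\varepsilon=1/R\) then yields \(\sigma_\eta([-\varepsilon,\varepsilon])=O(\varepsilon^2\log^2(e/\varepsilon))\) for small \(\varepsilon\), and boundedness of \(\sigma_\eta\) on compacts handles the remaining \(\varepsilon\) up to any fixed threshold.

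The delicate point is the integration by parts. It must be done pathwise on \(X_0\) with the uniform constant \(C_\rho\), and it requires the boundary terms to vanish at \(\pm\infty\), which holds because \(D\) grows only logarithmically. Finally, hyperuniformity follows because \(\varepsilon^2\log^2(e/\varepsilon)=o(\varepsilon^1)\) with \(d=1\), using the equivalence in \cite[Proposition~3.3]{BH24}. It also follows directly from \(O(\log^2R)=o(R)\), noting that intervals are the balls in dimension one.
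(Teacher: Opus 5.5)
Your proof is correct. The realization step, the maximal rigidity, and the variance bound follow the paper exactly: the paper also applies Theorem~\ref{thm:main-dimension-one} to the canonical action and takes \(T=\kappa_{Y}\), and it obtains \(O(\log^2R)\) by squaring and integrating the discrepancy estimate.

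The Bartlett step differs. The paper does not smooth. It takes the sharp window \(\mathbf 1_{[0,R)}\) with \(R=(2\varepsilon)^{-1}\) and uses \(|\widehat{\mathbf 1}_{[0,R)}(\xi)|=|\sin(\pi R\xi)/(\pi\xi)|\geq 2R/\pi\) for \(|\xi|\leq\varepsilon\). It then invokes Lemma~\ref{lem:Bartlett-indicators}, which extends the Bartlett identity to indicators of sets with null boundary, to get \(\tfrac{4R^2}{\pi^2}\sigma_\eta([-\varepsilon,\varepsilon])\leq\operatorname{Var}_\eta(N_{[0,R)})=O(\log^2R)\).

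In the paper, the spectral bound is therefore a one-line consequence of the variance bound alone. It would hold for any stationary process with \(O(\log^2R)\) interval variance. Your route stays inside \(\mathcal S(\mathbb R)\), where the Bartlett identity holds by definition, so you do not need the indicator extension. The price is the pathwise Abel summation against \(D(t)\), which uses the uniform discrepancy bound at every scale rather than the variance at a single scale. Both routes give the same rate.

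One small point: a Fej\'er kernel is not a Schwartz function. To stay within the identity as stated, take for instance the Gaussian \(\varphi(u)=e^{-\pi u^2}\), for which \(\widehat\varphi\geq e^{-\pi}\) on \([-1,1]\).
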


The logarithmic discrepancy in Theorem~\ref{thm:main-dimension-one}
cannot in general be replaced by a uniform bound.
Proposition~\ref{prop:bounded-discrepancy-eigenvalue} shows that uniformly
bounded interval discrepancy at intensity \(\rho\) forces \(\rho\) to be a
nonzero eigenvalue of the Koopman representation of the flow. Thus uniformly
bounded discrepancy imposes a genuine spectral restriction on the underlying
dynamics; in particular, it cannot occur for a weakly mixing flow. The
argument does not establish optimality of the logarithmic bound, and
intermediate sublogarithmic rates remain open.

The higher-dimensional theorem gives the following recoding result. The
relevant notions of rigidity are defined in
Subsection~\ref{subsec:rigidity-definitions}.

\begin{corollary}[Point-process realization]
\label{cor:main-point-process-realization}
Let \(d\geq2\), let \(K\leq O(V)\) be closed, and let \(\eta_0\) be a
\(G_K\)-invariant point process on \(V\). Suppose that its canonical
\(G_K\)-action on \((\mathcal N_s(V),\eta_0)\) is essentially free and that
the restricted \(V\)-action is ergodic. Given integers \(q\geq1\) and
\(\ell\geq0\), every sufficiently large prescribed intensity \(\rho\) admits a
\(G_K\)-invariant, \(V\)-ergodic point process \(\eta\), supported on Delone
configurations and of intensity \(\rho\), together with a \(G_K\)-equivariant
measurable isomorphism
\[
    T:(\mathcal N_s(V),\eta_0)
      \longrightarrow
      (\mathcal N_s(V),\eta).
\]
Moreover,
\[
    \sigma_\eta(B_\varepsilon)
    =
    o(\varepsilon^{2q})
    \qquad(\varepsilon\downarrow0),
\]
\[
    \operatorname{Var}_\eta(N_{B_R})=O(R^{d-1})
    \qquad(R\to\infty),
\]
and \(\eta\) is maximally rigid and linearly \(\ell\)-rigid.
\end{corollary}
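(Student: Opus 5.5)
The plan is to derive the corollary from Theorem~\ref{thm:main-higher-dimensional}, applied to the canonical action \(G_K\curvearrowright(\mathcal N_s(V),\eta_0)\). By hypothesis this action is essentially free with ergodic restricted \(V\)-action.

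Given \(q\geq1\) and \(\ell\geq0\), we fix an integer \(p\) with
\[
    p\geq q,\qquad 2p>d+1,
\]
and \(p\) large enough that the linear \(\ell\)-rigidity criterion of Subsection~\ref{subsec:rigidity-definitions} applies. I expect that criterion to read: Bartlett suppression of order \(p\) with \(p\) large relative to \(\ell\) and \(d\) implies linear \(\ell\)-rigidity, in the spirit of Ghosh--Peres-type rigidity from spectral decay at the origin. For \(\rho\geq\rho_0(p)\) the theorem then yields a \(K\)-invariant generating Delone cross-section \(Y\subset\mathcal N_s(V)\) of intensity \(\rho\). We set
\[
    T:=\kappa_Y,\qquad \eta:=\eta_Y=T_*\eta_0.
\]

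The verification runs as follows.
\begin{enumerate}
    \item \emph{\(T\) is an isomorphism.} Since \(Y\) is \(K\)-invariant, the identity \(Y_{(v,k).x}=kY_x-v\) shows that \(T\) is \(G_K\)-equivariant and that \(\eta\) is \(G_K\)-invariant. Generation gives injectivity of \(T\) on an invariant conull Borel set. By Lusin--Souslin, \(T\) is a Borel isomorphism onto its Borel image, which has full \(\eta\)-measure. Hence \(T\) is a measurable isomorphism.
    \item \emph{Ergodicity.} \(V\)-ergodicity of \(\eta\) is inherited from \(V\)-ergodicity of \(\eta_0\) through the equivariant isomorphism, since a factor of an ergodic action is ergodic.
    \item \emph{Delone support and intensity.} \(\eta\) is supported on Delone configurations because \(Y\) is Delone on an invariant conull set, and its intensity is \(\rho\) by definition.
    \item \emph{Spectral and variance bounds.} Suppression of order \(p\geq q\) gives \(\sigma_\eta(B_\varepsilon)=o(\varepsilon^{2p})\subseteq o(\varepsilon^{2q})\). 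Since \(2p>d+1\), the theorem also gives \(\operatorname{Var}_\eta(N_{B_R})=O(R^{d-1})\).
    \item \emph{Maximal rigidity.} This is stated in the theorem for \(\eta_Y\).
\end{enumerate}

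The only step that is not bookkeeping is linear \(\ell\)-rigidity, whose definition is not given above. If it means that for bounded \(A\) the linear statistics \(\int_A u^\alpha\,d\omega\), \(|\alpha|\leq\ell\), are measurable functions of \(\omega|_{A^c}\), I would prove it as follows. Approximate the indicator-weighted moments by smooth \(f_n\) that equal \(u^\alpha\) near \(A\) and spread out at scale \(n\), and show
\[
    \operatorname{Var}_\eta\bigl(\bS f_n\bigr)=\int|\widehat{f_n}|^2\,d\sigma_\eta\longrightarrow 0.
\]
This uses the \(o(\varepsilon^{2p})\) decay near the origin, combined with translation-boundedness of \(\sigma_\eta\) at high frequencies, with \(p\) large enough relative to \(\ell+d\) to absorb the growth of the polynomial weight. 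The outer part of \(\bS f_n\) is then an \(A^c\)-measurable approximation of the moment, up to an additive constant equal to the mean.

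If instead the paper's construction yields linear \(\ell\)-rigidity directly, for example from maximal rigidity, then this step is immediate. Otherwise the main obstacle is quantifying the requirement on \(p\) in terms of \(\ell\) so that this variance tends to zero, and I would take \(p>\ell+d\) as the first attempt.
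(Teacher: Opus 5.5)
Your proposal matches the paper's proof: apply the higher-dimensional theorem to the canonical action with $p\geq q$ and $p>d+\ell$, which already forces $2p>d+1$ since $d\geq2$. Then take $T=\kappa_Y$ and obtain linear $\ell$-rigidity from exactly the polynomial-cutoff variance argument you sketch, with the same threshold (Lemma~\ref{lem:polynomial-cutoff} and Proposition~\ref{prop:polynomial-rigidity}). One caveat: the paper's linear $\ell$-rigidity is the closed-linear-span condition $\mathfrak m_{\mathbf j}(\cdot\,;A)^\circ\in\mathcal H_{A^c}$, which the paper notes maximal rigidity does not imply, so your fallback route would fail; your cutoff argument gives this condition anyway, since $-(\bS(f_n-u^{\alpha}\mathbf 1_A))^\circ\to\mathfrak m_{\alpha}(\cdot\,;A)^\circ$ in $L^2(\eta)$ with every term in $\mathcal H_{A^c}$.
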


\begin{proof}
Apply Theorem~\ref{thm:main-higher-dimensional} to the canonical
\(G_K\)-action on \((\mathcal N_s(V),\eta_0)\), with an integer \(p\) satisfying
\[
    p\geq q,
    \qquad
    p>d+\ell.
\]
Let \(Y\) be the resulting generating translation cross-section and put
\(T:=\kappa_Y\) and \(\eta:=T_*\eta_0\). Since \(Y\) is generating,
\(T\) is a measurable \(G_K\)-isomorphism onto its image. The theorem gives
maximal rigidity and
\[
    \sigma_\eta(B_\varepsilon)=o(\varepsilon^{2p})
    =o(\varepsilon^{2q}).
\]
Since \(p>d+\ell\) and \(d\geq2\), one has \(2p>d+1\), so the theorem also
gives surface-order ball variance. Finally,
Proposition~\ref{prop:polynomial-rigidity} gives linear \(\ell\)-rigidity.
\end{proof}

The map \(T\) is a recoding of the whole configuration, not a pointwise
matching between the points of the two configurations. In the terminology of
stationary random measures it is a factor map; this is distinct from an
allocation rule, which acts on physical space and transports one random
measure to another; see \cite{KM25}. A pointwise ``perturbation'' description
would require an additional equivariant matching, which is not used here.

The surface-order variance estimate is optimal for the processes constructed
here. The averaged Beck-type lower bound of the author and Byl\'ehn
\cite[Theorem~5.1]{BB26}, applied to the nonzero Bartlett spectrum of the
constructed process and the absence of an atom at the origin, gives
\[
    0<\limsup_{R\to\infty}
    \frac{\operatorname{Var}_\eta(N_{B_R})}{R^{d-1}}<\infty.
\]
Proposition~\ref{prop:surface-order-optimality} verifies the hypotheses and
carries out the passage from the averaged lower bound to this limsup.
Consequently, the ball-variance bound cannot be improved to
\(o(R^{d-1})\), despite arbitrarily high finite-order decay of the Bartlett
spectrum. Here and below,
\(\mathcal H^{d-1}\) denotes the \((d-1)\)-dimensional Hausdorff measure.

Recall that two Delone sets are \emph{bounded-displacement equivalent} if
there is a bijection between them that moves every point by a uniformly
bounded distance.

\begin{corollary}[Bounded-displacement realizations]
\label{cor:main-bounded-displacement}
Under the hypotheses of Corollary~\ref{cor:main-point-process-realization},
the process \(\eta\) may be chosen so that, after fixing an orthonormal
identification \(V\cong\mathbb R^d\), there is \(C_\rho<\infty\) with
\[
    \left|\#(P\cap U)-\rho\lambda_V(U)\right|
    \leq
    C_\rho\,\mathcal H^{d-1}(\partial U)
\]
for \(\eta\)-almost every \(P\) and every finite union \(U\) of unit cubes in
the corresponding integer grid.
\end{corollary}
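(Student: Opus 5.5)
The plan is to read this corollary off the construction behind Theorem~\ref{thm:main-higher-dimensional}, not off its statement, since the statement only records spectral information. The construction, which I expect to follow the local moment-cancellation principle, takes a Borel equivariant tiling of each orbit \(V.x\) into cells, and in each cell places a cluster of exactly the prescribed number of points \(\rho\lambda_V(\text{cell})\), arranged so that the low-order moments cancel. I will use it in the special case \(K=\{e\}\) and \(V\cong\mathbb R^d\), with the cells being the unit grid cubes \(\mathbb Z^d+t(x)\). If the general construction uses isotropic or randomly rotated tiles, I will check separately that the bounded-displacement property below survives rotation, since it is stated only after fixing an orthonormal identification.

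The key steps, in order, are these.

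\begin{enumerate}
\item Fix the identification and choose the grid. Through a translation cross-section with a Borel selector, choose a Borel equivariant phase \(t(x)\in V/\mathbb Z^d\). Standard Rokhlin/Ornstein--Weiss-type tilings give large cubes of side \(L\in\mathbb N\); these can be subdivided into unit grid cells, with cell boundaries aligned to a grid that is a translate of \(\mathbb Z^d\) locally. A globally consistent grid is not available for a general (e.g.\ weakly mixing) action. So I will replace the global grid by the tiling by large cubes, each subdivided into unit cubes.
\item Make the points exact per cell. Arrange that every unit cell \(Q\) of the tiling contains exactly \(\rho\) points, with \(\rho\in\mathbb N\) large. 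The generating code, the moment cancellation and the Delone property are all encoded by positions inside cells, not by counts.
\item Bound the discrepancy of a union \(U\) of unit cubes. Write \(U\) as a union of the cells it contains plus partially covered cells. For fully contained cells the discrepancy is zero. Each partially covered cell meets \(\partial U\); because the cells and \(U\) have bounded geometry, the number of such cells is at most \(C\,\mathcal H^{d-1}(\partial U)\). Each contributes discrepancy at most \(\rho+\text{const}\), and summing gives the bound.
\end{enumerate}

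The main obstacle is that \(U\) is a union of cubes of the fixed integer grid, while the tiling cells are translated relative to it. So I will work with the misaligned cells directly: any cell meeting both \(U\) and \(U^c\) lies within distance \(\sqrt d\) of \(\partial U\). For a cubical \(U\), the volume of the \(\sqrt d\)-neighbourhood of \(\partial U\) is at most \(C_d\,\mathcal H^{d-1}(\partial U)\), because \(\partial U\) is a union of unit faces. The partially covered cells are disjoint and have unit volume, so their number is at most \(C_d\,\mathcal H^{d-1}(\partial U)\).

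It remains to check that the final process, including cells carrying auxiliary coding, still has exactly \(\rho\) points per cell. If the construction instead uses cells of varying shapes but bounded diameter and exact counts, the same neighbourhood argument applies verbatim. In that case I take \(C_\rho\) proportional to \(\rho\) times the maximal cell volume.
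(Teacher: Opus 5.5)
Your main line (Steps 1--2) has a genuine gap, and you do not need it anyway.

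\textbf{Step 1 cannot work.} Ornstein--Weiss gives only quasi-tilings. An exact Borel equivariant tiling of every orbit by integer-sided axis-parallel cubes cannot exist for the actions in question, because after subdivision it is a tiling of each orbit by unit cubes. For $d=2$, every tiling of the plane by translates of a unit square consists of straight rows or of straight columns. The row (or column) offset modulo $1$ is then a Borel map $s$ with $s((v,e).x)=s(x)-\langle v,e_2\rangle \bmod 1$ (or the same with $e_1$). So $e^{2\pi i s}$ is a translation eigenfunction with nonzero frequency. This is exactly the global-phase obstruction you noticed, reintroduced by the subdivision. Already in $d=2$ it excludes every weakly mixing action, for instance every isotropic one by Corollary~\ref{cor:isotropic-ergodic-weak-mixing}.

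\textbf{Further problems.} Step 2 only reaches $\rho\in\mathbb N$, whereas the corollary concerns every sufficiently large real $\rho$; non-integer cell masses are precisely what the forest integerization handles. Restricting to $K=\{e\}$ discards the $G_K$-equivariance that $\eta$ must keep. A fresh construction would also have to re-prove generation, the Bartlett decay and the rigidity statements of Corollary~\ref{cor:main-point-process-realization}, which your plan does not address.

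\textbf{Your closing fallback is correct.} It is all you need, applied to the construction the paper already uses:
\begin{itemize}
\item The physical packets $\Phi_{x,g}(\Pi_\rho(c_g))$, $g\in C_x$, partition $V$ exactly (Proposition~\ref{prop:integer-mass-packets}).
\item Each packet contains exactly $q_\rho(c_g)=\rho\lambda_V(\Pi_\rho(c_g))$ points of $P_x$, namely the placed copy of $Q_\rho(c_g)$ (Theorem~\ref{thm:local-filling}).
\item Each packet lies in $p_g+\overline B_{R_{\rm pkt}}$.
\item The sites $p_g$ are $r_{\rm sep}$-separated, which replaces your unit-volume count.
\end{itemize}
A packet contained neither in $U$ nor in $U^c$ has its ball meeting $\partial U$. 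So $p_g$ lies within $R_{\rm pkt}$ of one of the $\mathcal H^{d-1}(\partial U)$ separating unit faces. Packing gives at most $C\,\mathcal H^{d-1}(\partial U)$ such packets, each contributing at most $2A_1\rho$. The frames $r_g$ play no role, so rotation is harmless.

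\textbf{Comparison with the paper.} The paper uses exactly this counting for balls in Proposition~\ref{prop:higher-dimensional-exact-intensity}. For unions of cubes it instead writes $\delta_{P_x}-\rho\lambda_V=\sum_i\partial_i\Xi_{i,x}$ via first-order Taylor primitives of locally bounded variation, and tests against a mollified indicator of $U$ (Proposition~\ref{prop:uniform-boundary-discrepancy}). Both rest only on $\Delta_c(V)=0$ and bounded support. Your counting is more elementary; the paper's version reuses the derivative representation it needs anyway for the Bartlett estimates.
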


\begin{unnumberedremark}
By Laczkovich's criterion \cite{Lac92}, in the form recorded in
\cite[Theorem~1.4]{Sol14}, the estimate in
Corollary~\ref{cor:main-bounded-displacement} implies that \(P\) is
bounded-displacement equivalent, for \(\eta\)-almost every \(P\), to a lattice
of covolume \(\rho^{-1}\), and hence bi-Lipschitz equivalent to that lattice.
The bounded-displacement bijection is not asserted to be measurable or
translation equivariant; this pointwise equivalence therefore does not
identify the measurable translation dynamics with those of a lattice.
\end{unnumberedremark}

Taking \(K=SO(V)\) gives a point-process version of the isotropic realization
result.

\begin{corollary}[Isotropic realizations]
\label{cor:main-isotropic-realizations}
Let \(d\geq2\), and let \(\eta_0\) be an isotropic, \(V\)-ergodic point process
on \(V\) whose canonical \(G_{SO(V)}\)-action is essentially free. Given
\(q\geq1\) and \(\ell\geq0\), every sufficiently large prescribed intensity
is realized by an isotropic, \(V\)-weakly mixing point process \(\eta\),
supported on Delone configurations, together with a
\(G_{SO(V)}\)-equivariant measurable isomorphism
\[
    T:(\mathcal N_s(V),\eta_0)
      \longrightarrow
      (\mathcal N_s(V),\eta).
\]
Moreover,
\[
    \sigma_\eta(B_\varepsilon)
    =
    o(\varepsilon^{2q})
    \qquad(\varepsilon\downarrow0),
\]
\[
    \operatorname{Var}_\eta(N_{B_R})=O(R^{d-1})
    \qquad(R\to\infty),
\]
and \(\eta\) is maximally rigid and linearly \(\ell\)-rigid.
\end{corollary}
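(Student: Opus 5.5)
The plan is to deduce Corollary~\ref{cor:main-isotropic-realizations} from Corollary~\ref{cor:main-point-process-realization} with \(K=SO(V)\), and then upgrade \(V\)-ergodicity to \(V\)-weak mixing using Corollary~\ref{cor:isotropic-ergodic-weak-mixing}.

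First I check the hypotheses. The process \(\eta_0\) is isotropic, meaning \(SO(V)\)-invariant. It is also translation-invariant, since it is \(V\)-ergodic and in particular \(V\)-invariant. Hence \(\eta_0\) is \(G_{SO(V)}\)-invariant. Its canonical \(G_{SO(V)}\)-action is essentially free by assumption, and the restricted \(V\)-action is ergodic. These are exactly the hypotheses of Corollary~\ref{cor:main-point-process-realization} with \(K=SO(V)\).

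Applying that corollary with the given \(q\) and \(\ell\), every sufficiently large \(\rho\) yields a \(G_{SO(V)}\)-invariant, \(V\)-ergodic point process \(\eta\). It is supported on Delone configurations, has intensity \(\rho\), and comes with a \(G_{SO(V)}\)-equivariant measurable isomorphism \(T:(\mathcal N_s(V),\eta_0)\to(\mathcal N_s(V),\eta)\). The same corollary gives \(\sigma_\eta(B_\varepsilon)=o(\varepsilon^{2q})\), surface-order ball variance, maximal rigidity and linear \(\ell\)-rigidity. Invariance of \(\eta\) under \(SO(V)\) is precisely isotropy.

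It remains to show weak mixing. The canonical \(G_{SO(V)}\)-action on \((\mathcal N_s(V),\eta)\) is a p.m.p.\ action of \(V\rtimes SO(V)\) with \(d\geq2\) whose restricted \(V\)-action is ergodic. By Corollary~\ref{cor:isotropic-ergodic-weak-mixing}, as announced in the introduction, such an action is \(V\)-weakly mixing. As a sanity check, the mechanism should be the following. Suppose \(f\) is a Koopman eigenfunction for \(V\) with character \(e^{2\pi i\langle\cdot,\xi\rangle}\). Then rotating \(f\) by \(k\in SO(V)\) produces an eigenfunction with character \(k\xi\). Eigenfunctions with distinct characters are orthogonal, but if \(\xi\neq0\) the orbit \(SO(V)\xi\) is an uncountable sphere, contradicting separability of \(L^2\). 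Therefore \(\xi=0\), and ergodicity then forces \(f\) to be constant.

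Alternatively, weak mixing of \(\eta\) can be transported from \(\eta_0\) through the isomorphism \(T\), since \(T\) is \(V\)-equivariant.

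I do not expect a serious obstacle. The only point needing care is applying Corollary~\ref{cor:isotropic-ergodic-weak-mixing} to the right action with its exact hypotheses, namely freeness and ergodicity of the translation subaction. If that corollary is formulated for \(\eta_0\) rather than \(\eta\), I would apply it to \(\eta_0\) and transfer weak mixing via \(T\).
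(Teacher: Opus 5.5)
Your proposal is correct and follows the paper's argument: apply Corollary~\ref{cor:main-point-process-realization} with \(K=SO(V)\) to the canonical action of \(\eta_0\), identify \(G_{SO(V)}\)-invariance with isotropy, and upgrade the resulting \(V\)-ergodicity to \(V\)-weak mixing via Corollary~\ref{cor:isotropic-ergodic-weak-mixing}. One small correction: that corollary requires only \(d\geq2\) and translation ergodicity, not freeness, so your closing caveat is unnecessary; transferring weak mixing from \(\eta_0\) through the \(V\)-equivariant isomorphism \(T\) would also work.
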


The same conclusion holds with \(SO(V)\) replaced by \(O(V)\), provided
\(\eta_0\) is orthogonally invariant and its canonical \(G_{O(V)}\)-action is
essentially free.

Examples of Euclidean-motion actions satisfying the translation-ergodicity
hypothesis are collected in Subsection~\ref{subsec:introduction-examples}.


\subsection{Ideas of the proof}
\label{subsec:introduction-proof-ideas}

We describe the higher-dimensional construction first. The one-dimensional
construction uses different technical tools because the natural
nearest-neighbor return graph on a Delone subset of \(\mathbb R\) is
two-ended, so the one-ended-forest step below is unavailable.
Section~\ref{sec:dimension-one} replaces it by nested return partitions. The
two constructions nevertheless solve the same two structural problems:
integerizing local point counts and retaining enough finite information to
recover the original state.

\emph{1. From an auxiliary cross-section to integer local masses.}
We first choose an auxiliary cocompact cross-section \(C\) for the full
\(G_K\)-action. This is not the translation cross-section appearing in the
main theorem. Rather, the full return sets to \(C\), projected through
\(K\backslash G_K\cong V\), give uniformly Delone scaffolds in \(V\). Each
scaffold site carries a frame in \(K\) and a label in \(C\), and these data
allow local objects constructed at the label to be placed equivariantly in
physical coordinates.

The associated Voronoi cells have uniformly controlled geometry. After
resolving boundary ties by the labels, they form an exact Borel partition. At
intensity \(\rho\), a cell \(D\) carries the real mass
\[
    \rho\,\lambda_V(D),
\]
whereas the next stage of the construction must replace each local piece by a
finite set of points. We therefore need local pieces whose \(\rho\)-masses are
integers. The bounded-range p.m.p.\ Borel graph constructed below is
nowhere two-ended, so Theorem~\ref{thm:measured-one-ended-forest} supplies a
one-ended spanning subforest. This measured forest theorem is the principal
nonstandard external input in the integerization. The resulting parent map is
used to modify neighboring cells by sets whose \(\rho\)-masses are the
relevant fractional parts. After
placement at the scaffold sites, the resulting sets \(\Pi_\rho(c)\) form an
exact partition of \(V\), have uniformly controlled geometry, and satisfy
\[
    \rho\lambda_V(\Pi_\rho(c))=q_\rho(c)\in\mathbb N.
\]

\emph{2. Replacing each packet by points without losing information.}
The packet \(\Pi_\rho(c)\) is replaced by exactly \(q_\rho(c)\) points. The
local points are chosen uniformly separated and so that the discrete and
continuous measures have the same moments through degree \(p-1\):
\[
    \sum_{u\in Q_\rho(c)}u^{\mathbf j}
    =
    \rho\int_{\Pi_\rho(c)}u^{\mathbf j}\,d\lambda_V(u),
    \qquad |\mathbf j|<p.
\]

Moment matching alone would not make the realization generating. Each local
set \(Q_\rho(c)\) is therefore required to contain the \((d+1)\)-point set
\(hS_{b(c)}\), which is left fixed by the moment correction. After placement in
physical coordinates, a set \(p+r hS_{b(c)}\) determines the position \(p\),
the frame \(r\), and the value \(b(c)\); since \(b\) is injective, it also
determines the label \(c\). Equation~\eqref{eq:framed-recovery} then recovers
the original state, proving injectivity of the point map. Such complete finite
sets occur arbitrarily far from every bounded region, which yields maximal
rigidity.

\emph{3. Zeroth-order cancellation and bounded displacement.}
After the local configurations are placed in physical coordinates, compare the
resulting counting measure with Lebesgue measure at density \(\rho\):
\[
    \Delta_x:=\delta_{P_x}-\rho\lambda_V.
\]
Because every packet has the correct total mass, \(\Delta_x\) has a first-order
divergence representation
\[
    \Delta_x
    =
    \sum_{i=1}^d\partial_i\Xi_{i,x},
\]
where the signed measures \(\Xi_{i,x}\) have uniformly bounded total variation
on unit balls. Testing this identity against smoothed indicators of unions of
unit cubes gives
\[
    \bigl|\#(P_x\cap U)-\rho\lambda_V(U)\bigr|
    \leq
    C_\rho\,\mathcal H^{d-1}(\partial U).
\]
Laczkovich's bounded-displacement criterion then implies that \(P_x\) is
bounded-displacement equivalent to the lattice
\(\rho^{-1/d}\mathbb Z^d\).

\emph{4. Higher moments and the Bartlett spectrum.}
The higher moment identities give
\[
    \Delta_x
    =
    \sum_{|\mathbf j|=p}
        \partial^{\mathbf j}\Theta_{\mathbf j,x},
\]
where the signed measures \(\Theta_{\mathbf j,x}\) are translation-covariant
and have uniformly bounded variation on unit balls. Their spectral measures
then give
\[
    \sigma_\eta(B_\varepsilon)
    =
    o(\varepsilon^{2p})
    \qquad(\varepsilon\downarrow0).
\]
Here translation ergodicity is used to exclude atoms at the origin. The same
estimate gives surface-order ball variance when \(2p>d+1\), and polynomial
cutoffs give linear \(\ell\)-rigidity when \(p>d+\ell\).

The roles of the three main ingredients are thus distinct: the forest produces
integer local masses, higher-order moment cancellation gives the Bartlett decay and
linear rigidity, and the finite sets \(S_{b(c)}\) make the resulting point map
injective. Together these ingredients
allow the final Delone configuration to retain the prescribed
dynamics while satisfying the geometric, spectral, and rigidity conclusions.
The final \(K\)-invariant translation cross-section is then recovered from the
resulting equivariant configuration by
Proposition~\ref{prop:configuration-realization}.

The one-dimensional construction has the same division of labor. The nested
return-block hierarchy of Section~\ref{sec:dimension-one} replaces the
one-ended parent forest and performs the integer rounding across successively
larger blocks. The three-point markers then play the role of the finite sets
\(S_{b(c)}\): they make the resulting point configuration generating and,
from arbitrarily distant complete markers, maximally rigid. Thus the two
dimensions use different geometric implementations of the same realization
strategy.


\subsection{Examples with ergodic translations}
\label{subsec:introduction-examples}

The translation-ergodicity hypothesis in the isotropic realization result
occurs in several quite different settings.
\begin{itemize}
\item
\emph{Restrictions of simple-group actions.}
After an orthogonal identification \(V\simeq\mathbb R^d\), there is an
embedding
\[
    \mathbb R^d\rtimes SO(d)
    \longrightarrow
    SL_{d+1}(\mathbb R),
    \qquad
    (v,A)\longmapsto
    \begin{pmatrix}
        A&v\\
        0&1
    \end{pmatrix}.
\]
By the Howe--Moore theorem \cite{HM79}, every ergodic p.m.p.\ action of
\(SL_{d+1}(\mathbb R)\) restricts to a Euclidean-motion action whose
translation subaction is mixing. The class includes both homogeneous actions
such as \(\Gamma\backslash SL_{d+1}(\mathbb R)\) for lattices \(\Gamma\) and
arbitrary ergodic p.m.p.\ actions of the ambient simple group.

\item
\emph{Gaussian actions.}
If \(\varphi:V\to\mathbb R\) is a continuous normalized positive-definite
\(SO(V)\)-invariant function, the centered stationary Gaussian field with
covariance \(\mathbb E[Z_uZ_w]=\varphi(u-w)\) carries a p.m.p.\
\(V\rtimes SO(V)\)-action. For such Gaussian actions in dimension at
least two, translation ergodicity implies translation mixing. By the Gaussian ergodicity criterion
\cite{Mar49}, ergodicity excludes atoms in the spectral measure; rotational
invariance and the Bessel decay of spherical measures then imply that this
atomless radial measure is Rajchman. Hence \(\varphi(v)\to0\) as
\(\|v\|\to\infty\), which for a stationary Gaussian field is equivalent
to mixing of the translation action.

\item
\emph{Determinantal processes and random tessellations.}
A translation-invariant determinantal point process with radial kernel is
orthogonally invariant, and its translation action is mixing \cite{Sos00}.
Likewise, an isotropic STIT tessellation is orthogonally invariant, and its
translation action is mixing \cite{LR11}. These give probabilistic and
stochastic-geometric examples not naturally obtained by restriction from
an ambient semisimple-group action.

\item
\emph{Pinwheel tilings.}
The planar pinwheel tiling hull is uniquely ergodic under translations, and
its unique invariant measure is \(SO(2)\)-invariant \cite{Rad93,Rad94}.
This gives a deterministic aperiodic example whose translation action is
weakly mixing by Corollary~\ref{cor:isotropic-ergodic-weak-mixing}.
Lotz--Klatt \cite[Example~5.6]{LK26} study related point processes obtained
by placing points in the pinwheel tiles, including the tile-centroid process,
by transport methods.
\end{itemize}
The examples illustrate the translation-ergodicity hypothesis; essential
freeness is an additional assumption in the realization theorems.


\subsection{Relation to other constructions}
\label{subsec:introduction-related-constructions}

\emph{Spatial constructions and transport.}
Many important hyperuniform processes arise from a prescribed spatial
mechanism: lattices and cut-and-project sets, determinantal and Coulomb
systems, perturbed lattices, and related models; see \cite{BH24,Tor18} and
the references therein. Perturbed lattices illustrate how sensitively
hyperuniformity may depend on the displacement field. In dimensions at least
three, bounded dependent perturbations can destroy hyperuniformity
\cite{DFHL24}, while sufficiently strong mixing assumptions can yield
surface-order fluctuations; see, for example, Flimmel \cite{Fli26}.
Gabrielli, Joyce and Torquato \cite{GJT08} constructed hyperuniform point
processes from equal-volume tilings and showed how preservation of successive
cell moments yields increasingly high orders of vanishing at low frequency.
Recent transport constructions provide a broader framework for related ideas:
Klatt, Last, Lotz and Yogeshwaran \cite{KLLY25} hyperuniformize broad classes
of stationary random measures, and Lotz and Klatt \cite{LK26} obtain
isotropic noncrystalline processes with arbitrarily high finite orders of
low-frequency suppression. These works, together with \cite{GJT08}, establish
much of the spatial mechanism behind the high-order suppression used here. In
particular, the present construction uses the same basic principle of
cellwise moment cancellation. The different constraint is the realization
problem: the measurable action is
prescribed first, and the construction must produce Delone point-process
coordinates that are equivariant and generating, so that the entire
dynamical state is recoverable. The forest integerization and the finite
recovery sets are used to make the established local cancellation mechanism
compatible with that requirement. The Beck-type surface-order lower bound
used above originates in Beck \cite{Bec87}; the author and Byl\'ehn develop
its random-measure form in \cite{BB26}.

\emph{Separated nets and bounded displacement.}
The deterministic geometry of the higher-dimensional realizations belongs to
the classical theory of uniformly spread sets. Laczkovich characterized
bounded-displacement equivalence to a lattice by boundary-discrepancy estimates
\cite{Lac92}. Bounded-displacement equivalence is stronger than bi-Lipschitz
equivalence, while arbitrary separated nets need not even be bi-Lipschitz
equivalent to a lattice, as shown by Burago and Kleiner
\cite{BK98}. Of
particular relevance here, Haynes, Kelly and Weiss studied bi-Lipschitz
(BL) and bounded-displacement (BD) classes of separated nets arising as
return times to sections of structured
\(\mathbb R^d\)-actions \cite{HKW14}. Our result runs in the opposite
direction: starting from an arbitrary essentially free ergodic measurable
action, we construct a generating section whose return-time configurations
are BD, hence BL, equivalent to a lattice on an invariant conull set. The BD
matching is geometric and is not asserted to vary measurably or equivariantly
with the state.

\emph{Diffraction and measurable dynamics.}
The distinction between the measurable action and the spatial statistics of
a particular realization has a standard analogue in diffraction theory.
Dworkin's correspondence identifies diffraction with spectral measures
generated by distinguished linear observables of a translation-bounded
measure dynamical system; in general these observables generate only a
proper subspace of the full Koopman representation
\cite{BL04,BLvE15,DM08,Gou03}. For an ergodic point process, let \(\rho_\eta\) denote the intensity and
\(\gamma_\eta\) the autocorrelation in the usual normalization. Then
\[
    \widehat\gamma_\eta=\rho_\eta^2\delta_0+\sigma_\eta.
\]
Accordingly, the Bartlett spectrum is the centered point-process part of
diffraction and records only the corresponding Dworkin subspace. It depends on
the chosen point-process coordinates and need not be preserved by measurable
dynamical isomorphism. Osada proved, for example, that a large class of
translation-invariant determinantal processes are measurably isomorphic to
homogeneous Poisson processes \cite{Osa21}, despite very different canonical
second-order statistics. As a concrete instance, consider the homogeneous
Poisson translation action in \(d\geq2\). The same measurable action then admits generating
Delone coordinates that combine arbitrarily high finite Bartlett order with
surface-order variance, maximal rigidity, any prescribed finite linear
rigidity, and BD equivalence to lattices.

\emph{Rigidity.}
Number rigidity was introduced by Ghosh and Peres \cite{GP17}: the exterior
of a bounded region determines the number of points inside. Their work also
showed that for zeros of the planar Gaussian analytic function the exterior
determines the first moment, and Ghosh and Krishnapur developed a hierarchy
of higher moment rigidity \cite{GK21}. The relation between suppressed
fluctuations and rigidity has since been studied in several directions; see
\cite{GL17,LR24,LR25}. In particular, Lachi\`eze-Rey \cite{LR24}
develops spectral criteria for higher-order linear rigidity of stationary
random measures. At the extreme end, Ghosh and Lebowitz proved maximal
rigidity for stealthy hyperuniform processes \cite{GL18}. In the present
construction maximal nonlinear rigidity has a different source: the isolated copies of the finite sets \(S_{b(c)}\) make the entire configuration recoverable from any
bounded exterior. Higher-order spectral cancellation independently yields
arbitrarily high finite orders of \emph{linear} rigidity in \(d\geq2\).

\emph{Spectral gaps.}
The companion paper \cite{Bjo26} treats the stealthy regime and the
one-dimensional inverse-square obstruction described above.


The paper is organized as follows. Section~\ref{sec:preliminaries} collects
the measure-theoretic, geometric, spectral and rigidity preliminaries.
Section~\ref{sec:dimension-one} gives the one-dimensional construction.
Section~\ref{sec:orbit-geometry-integerization} develops the framed orbit
geometry, one-ended forest and integerization used in higher dimensions.
Section~\ref{sec:local-filling-assembly} discretizes the integer-mass packets,
imposes the exact local moment identities, and proves global generation and
maximal rigidity.
Section~\ref{sec:spectral-decay-rigidity} establishes deterministic boundary
discrepancy, bounded displacement, Bartlett decay, number variance and linear
rigidity. Section~\ref{sec:proofs-main-theorems} verifies the main theorems and corollaries from these constructions.


\subsection{Acknowledgments}

The author was supported by the Swedish Research Council under
grant VR~11253322. The author thanks Mattias Byl\'ehn, Tobias Hartnick,
G\"unter Last and Rapha\"el Lachi\`eze-Rey for valuable discussions related
to this work. He is especially grateful to Luca Lotz and
Michael Klatt for sharing their related work, for a detailed comparison of
the two constructions, and for extensive comments on an earlier version. In
particular, they pointed out the connections with invariant and fair
partitions, averaging sets and designs, moment-preserving hyperuniform
constructions, and higher-order rigidity; their comments led to a substantially
expanded account of the relevant literature and of the relation between the
different approaches.


\section{Preliminaries}
\label{sec:preliminaries}

Throughout, a p.m.p.\ Borel action means a probability-measure-preserving
Borel action on a standard probability space. Such an action is
\emph{essentially free} if the stabilizer of almost every point is trivial.

\subsection{Euclidean and Fourier conventions}
\label{subsec:Euclidean-Fourier-conventions}

Let \(V\) be a real inner-product space of dimension \(d\geq1\).
We write \(\langle\cdot,\cdot\rangle\) and \(\|\cdot\|\) for its
inner product and norm, \(\lambda_V\) for the induced Lebesgue
measure, and \(B_R(v)\) for the open ball of radius \(R\) about
\(v\); we abbreviate \(B_R:=B_R(0)\). Fix once and for all an
orthonormal basis \(e_1,\ldots,e_d\) of \(V\). Multi-index notation,
including \(u^{\mathbf j}\), refers to the resulting coordinates.
Statements involving all polynomials of degree at most \(\ell\) are
independent of this choice of basis.

Let \(K\leq O(V)\) be closed, let \(e\) denote its identity, and set
\[
    G_K:=V\rtimes K,
    \qquad
    (v,k)(w,\ell):=(v+kw,k\ell).
\]
Thus \((v,k)^{-1}=(-k^{-1}v,k^{-1})\). Let \(K\backslash G_K\) denote the space of left \(K\)-cosets. We
identify it with \(V\) by
\[
    K(v,k)\longmapsto k^{-1}v.
\]
Under this identification, the left \(G_K\)-action
\[
    g.(Kh):=Khg^{-1},
    \qquad g,h\in G_K,
\]
on \(K\backslash G_K\) becomes
\[
    (v,k).u=ku-v,
    \qquad (v,k)\in G_K,\quad u\in V.
\]

We denote by \(\smash{C_c(V)}\), \(\smash{C_c^\infty(V)}\), and
\(\mathcal S(V)\) the spaces of compactly supported continuous
functions, compactly supported smooth functions, and Schwartz
functions on \(V\), respectively. We write
\[
    S^{d-1}:=\{u\in V:\|u\|=1\}
\]
for the unit sphere. For a Lipschitz function \(f\),
\(\operatorname{Lip}(f)\) denotes its Lipschitz constant, and for a
finite signed measure \(\nu\), \(\|\nu\|_{\rm TV}\) denotes its total
variation norm. For \(f\in L^1(V)\), we use the
Fourier transform
\[
    \widehat f(\xi)
    :=
    \int_V f(u)e^{-2\pi i\langle u,\xi\rangle}
    \,d\lambda_V(u),
    \qquad
    \xi\in V.
\]
The character associated with \(\xi\in V\) is
\(\chi_\xi(v):=e^{2\pi i\langle v,\xi\rangle}\), \(v\in V\).

For \(P\subset V\), define
\[
    \operatorname{sep}(P)
    :=
    \inf_{\substack{u_1,u_2\in P\\u_1\neq u_2}}
    \|u_1-u_2\|,
    \qquad
    \operatorname{covrad}(P)
    :=
    \sup_{v\in V}\operatorname{dist}(v,P).
\]
The set \(P\) is \emph{\((r,R)\)-Delone} if
\(\operatorname{sep}(P)\geq r\) and
\(\operatorname{covrad}(P)\leq R\), and it is \emph{Delone} if this
holds for some \(r,R>0\). A family of subsets of \(V\) is
\emph{uniformly Delone} if the same pair \((r,R)\) works for every
member of the family.


\subsection{Point processes and Euclidean symmetries}
\label{subsec:point-processes-symmetries}

Let \(\mathcal M_+(V)\) be the space of positive Radon measures on
\(V\), equipped with the vague topology, and let
\(\mathcal N_s(V)\subset\mathcal M_+(V)\) be the subspace of locally
finite simple counting measures. We identify a locally finite set
\(P\subset V\) with
\[
    \delta_P:=\sum_{u\in P}\delta_u.
\]

The action of \(G_K\) on \(V\) induces an action on
\(\mathcal M_+(V)\) by
\[
    \bigl((v,k).\omega\bigr)(f)
    :=
    \omega\bigl(f(k\,\cdot-v)\bigr),
    \qquad (v,k)\in G_K,\quad \omega\in\mathcal M_+(V),\quad f\in C_c(V),
\]
so that
\[
    (v,k).\delta_P=\delta_{kP-v}.
\]

A \emph{point process} on \(V\) is a Borel probability measure
\(\eta\) on \(\mathcal N_s(V)\). Let \(H\leq G_K\) be closed. We say
that \(\eta\) is \emph{\(H\)-invariant} if \(h_*\eta=\eta\) for every
\(h\in H\), \emph{\(H\)-ergodic} if the resulting \(H\)-action on
\((\mathcal N_s(V),\eta)\) is ergodic, and \emph{\(H\)-weakly
mixing} if its diagonal action on
\((\mathcal N_s(V)^2,\eta\otimes\eta)\) is ergodic. For the subgroups
\(V\times\{e\}\) and \(\{0\}\times K\), we abbreviate these terms as
\(V\)-invariance and \(K\)-invariance, and similarly for ergodicity
and weak mixing.

When \(d\geq2\), a \(V\)-invariant point process which is also
\(SO(V)\)-invariant is called \emph{isotropic}; if it is also
\(O(V)\)-invariant, it is called \emph{orthogonally invariant}.

\subsection{Linear statistics and the Bartlett spectrum}
\label{subsec:linear-statistics-Bartlett}

A \(V\)-invariant point process \(\eta\) has \emph{local second
moments} if
\[
    \int_{\mathcal N_s(V)}\omega(A)^2\,d\eta(\omega)<\infty
\]
for every bounded Borel set \(A\subset V\). Its first moment measure
is a multiple of Lebesgue measure; the corresponding constant
\(\rho_\eta\geq0\), determined by
\[
    \int_{\mathcal N_s(V)}\omega(f)\,d\eta(\omega)
    =
    \rho_\eta\int_V f\,d\lambda_V,
    \qquad f\in C_c(V),
\]
is the \emph{intensity} of \(\eta\).

For an integrable random variable \(Z\) on a probability space
\((\Omega,\mathbb P)\), write
\[
    Z^\circ:=Z-\int_\Omega Z\,d\mathbb P.
\]
In particular, for random variables on \((\mathcal N_s(V),\eta)\), the
centering is with respect to \(\eta\).
For a bounded Borel set \(A\subset V\), write
\[
    N_A(\omega):=\omega(A).
\]

Suppose that \(\eta\) has local second moments. For
\(f\in\mathcal S(V)\), the \emph{linear statistic}
\(\bS f(\omega):=\int_V f\,d\omega\) belongs to \(L^2(\eta)\), and
\[
    (\bS f)^\circ
    =
    \bS f-\rho_\eta\int_V f\,d\lambda_V.
\]
There is a unique positive Radon measure \(\sigma_\eta\) on \(V\)
such that
\[
    \int_{\mathcal N_s(V)}
    (\bS f)^\circ\overline{(\bS g)^\circ}\,d\eta
    =
    \int_V
    \widehat f(\xi)\overline{\widehat g(\xi)}
    \,d\sigma_\eta(\xi),
    \qquad f,g\in\mathcal S(V);
\]
see \cite[Proposition~8.2.I and Definition~8.2.II]{DVJ03}, with the
spectral variable rescaled to our Fourier convention. We call
\(\sigma_\eta\) the \emph{Bartlett spectrum} of \(\eta\). In
particular,
\[
    \operatorname{Var}_\eta(\bS f)
    =
    \int_V|\widehat f(\xi)|^2\,d\sigma_\eta(\xi).
\]
Moreover, \(\sigma_\eta\) is translation bounded
\cite[Proposition~8.2.II(iv)]{DVJ03}, that is,
\(\sup_{\xi\in V}\sigma_\eta(\xi+L)<\infty\) for every compact
\(L\subset V\).

We also use the covariance identity for indicators.

\begin{lemma}
\label{lem:Bartlett-indicators}
Let \(\eta\) be a \(V\)-invariant point process with local second moments,
and let \(A\subset V\) be bounded with \(\lambda_V(\partial A)=0\). Then
\[
    \operatorname{Var}_\eta(N_A)
    =
    \int_V|\widehat{\mathbf 1}_A(\xi)|^2\,d\sigma_\eta(\xi).
\]
More generally, the corresponding covariance identity holds for two
such bounded sets.
\end{lemma}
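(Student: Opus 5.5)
The plan is to extend the defining Schwartz identity
\[
    \int(\bS f)^\circ\overline{(\bS g)^\circ}\,d\eta=\int_V\widehat f\,\overline{\widehat g}\,d\sigma_\eta
\]
from \(f,g\in\mathcal S(V)\) to indicators of bounded sets with null boundary. We approximate \(\mathbf 1_A\) by mollifications and pass to the limit on both sides.

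\emph{Approximation.} Fix a nonnegative \(\phi\in C_c^\infty(V)\) with \(\int\phi=1\) and support in \(B_1\), put \(\phi_\delta:=\delta^{-d}\phi(\cdot/\delta)\) and \(f_\delta:=\mathbf 1_A*\phi_\delta\in C_c^\infty(V)\subset\mathcal S(V)\). Then \(0\leq f_\delta\leq 1\), all supports lie in a fixed bounded set \(A^1\) (the \(1\)-neighborhood of \(A\)), and \(f_\delta\to\mathbf 1_A\) pointwise off \(\partial A\).

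\emph{Left-hand side.} For a fixed configuration \(\omega\), the limit \(\bS f_\delta(\omega)\to N_A(\omega)\) holds as soon as \(\omega(\partial A)=0\). This holds \(\eta\)-almost surely, since by the intensity formula
\[
    \mathbb E_\eta\,\omega(\partial A)=\rho_\eta\lambda_V(\partial A)=0 .
\]
The domination \(|\bS f_\delta|\leq N_{A^1}\in L^2(\eta)\), which uses local second moments, gives \(L^2\) convergence. Hence
\[
    \operatorname{Var}(\bS f_\delta)\to\operatorname{Var}(N_A),
\]
and the same argument gives convergence of the covariances.

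\emph{Right-hand side.} This is the main obstacle, because \(\sigma_\eta\) is infinite and \(|\widehat{\mathbf 1}_A|^2\) decays slowly, so dominated convergence cannot be applied directly. We have \(\widehat f_\delta=\widehat{\mathbf 1}_A\,\widehat\phi(\delta\cdot)\to\widehat{\mathbf 1}_A\) pointwise, since \(\widehat\phi(0)=1\). Choosing \(\phi\) so that \(\widehat\phi(\delta\xi)\) is real and increases to \(1\) as \(\delta\downarrow0\) would allow monotone convergence. One way is to take \(\phi=\psi*\tilde\psi\), which gives \(\widehat\phi=|\widehat\psi|^2\geq0\), though monotonicity in \(\delta\) is not automatic. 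The cleaner route is Fatou's lemma, which gives
\[
    \int|\widehat{\mathbf 1}_A|^2\,d\sigma_\eta\leq\liminf_{\delta\downarrow0}\operatorname{Var}(\bS f_\delta)=\operatorname{Var}(N_A)<\infty,
\]
so \(\widehat{\mathbf 1}_A\in L^2(\sigma_\eta)\). Next, \((\bS f_\delta)^\circ\) is Cauchy in \(L^2(\eta)\), and the isometry shows that \(\widehat f_\delta\) is Cauchy in \(L^2(\sigma_\eta)\). Its \(L^2\) limit must agree \(\sigma_\eta\)-a.e.\ with the pointwise limit \(\widehat{\mathbf 1}_A\), so
\[
    \int|\widehat f_\delta|^2\,d\sigma_\eta\to\int|\widehat{\mathbf 1}_A|^2\,d\sigma_\eta .
\]
Equating the two limits gives the variance identity.

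The covariance identity for two sets \(A,B\) then follows by polarization, or directly from the inner-product convergence in \(L^2(\sigma_\eta)\) and \(L^2(\eta)\). If needed, translation boundedness of \(\sigma_\eta\) justifies treating \(\sigma_\eta\) as a Radon measure in these limits, but the argument above only uses the Cauchy property.
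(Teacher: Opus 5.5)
Your proof is correct and follows essentially the same route as the paper: smooth approximants with a common compact support converge to \(N_A\) in \(L^2(\eta)\) by stationarity (\(\lambda_V(\partial A)=0\)) and domination, the Bartlett isometry makes \(\widehat f_\delta\) Cauchy in \(L^2(\sigma_\eta)\), and the limit is identified with \(\widehat{\mathbf 1}_A\) (you use the pointwise formula \(\widehat f_\delta=\widehat{\mathbf 1}_A\,\widehat\phi(\delta\,\cdot)\), the paper uses uniform convergence coming from \(L^1\)-convergence), with polarization giving the covariance version. Your Fatou step is harmless but not needed, since the Cauchy argument already shows \(\widehat{\mathbf 1}_A\in L^2(\sigma_\eta)\).
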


\begin{proof}
Choose \(f_n\in C_c^\infty(V)\) with \(0\leq f_n\leq1\), all
supported in one fixed compact set, such that
\(f_n\to\mathbf 1_A\) pointwise off \(\partial A\) and in
\(L^1(\lambda_V)\). Stationarity gives
\[
    \int\omega(\partial A)\,d\eta(\omega)
    =\rho_\eta\lambda_V(\partial A)=0,
\]
so \(\bS f_n\to N_A\) almost surely. The variables are dominated by
\(\omega(K)\) for a fixed compact \(K\), which belongs to
\(L^2(\eta)\) by the local second-moment assumption. Hence the
convergence also holds in \(L^2(\eta)\).

The Bartlett identity for Schwartz functions therefore shows that
\(\widehat f_n\) is Cauchy in \(L^2(\sigma_\eta)\), and hence converges
to some \(F\in L^2(\sigma_\eta)\). After passing to a subsequence,
\(\widehat f_n\to F\) almost everywhere. On the other hand,
\(L^1\)-convergence gives uniform convergence
\(\widehat f_n\to\widehat{\mathbf 1}_A\), so
\(F=\widehat{\mathbf 1}_A\) almost everywhere. Taking the limit gives the
stated identity. The covariance version follows by polarization.
\end{proof}

If \(\eta\) is \(K\)-invariant, then \(\sigma_\eta\) is
\(K\)-invariant as well. In particular, the
Bartlett spectrum of an isotropic point process is radial.

We call \(\eta\) \emph{hyperuniform} if
\[
    \operatorname{Var}_\eta(N_{B_R})=o(R^d)
    \qquad(R\to\infty).
\]
By
\cite[Proposition~3.3]{BH24}, this is equivalent to
\[
    \sigma_\eta(B_\varepsilon)
    =
    o(\varepsilon^d)
    \qquad
    (\varepsilon\downarrow0).
\]


\subsection{Rigidity}
\label{subsec:rigidity-definitions}

Let \(\eta\) be a \(V\)-invariant point process with local second
moments. Fix a bounded Borel set \(A\subset V\). We first make precise
what is meant by the information contained in the configuration outside
\(A\).

Let
\[
    \mathcal N_s(A^c)
    :=
    \{\nu\in\mathcal N_s(V):\nu(A)=0\},
\]
equipped with the Borel structure inherited from
\(\mathcal N_s(V)\), and let
\[
    r_{A^c}:\mathcal N_s(V)\longrightarrow\mathcal N_s(A^c),
    \qquad
    r_{A^c}(\omega):=\omega|_{A^c},
\]
be the restriction map. We denote by
\[
    \mathcal F_{A^c}
    :=
    r_{A^c}^{-1}\bigl(\mathcal B(\mathcal N_s(A^c))\bigr)
\]
the sigma-algebra generated by the outside configuration. Equivalently,
\(\mathcal F_{A^c}\) is the smallest sigma-algebra on
\(\mathcal N_s(V)\) for which all counting maps
\[
    \omega\longmapsto\omega(B),
    \qquad
    B\subset A^c\ \text{bounded Borel},
\]
are measurable. Thus an \(\mathcal F_{A^c}\)-measurable random
variable depends only on \(\omega|_{A^c}\), up to \(\eta\)-null sets.

We also need the linear information carried by the outside
configuration. Let \(\mathcal H_{A^c}\) be the closed subspace of
\(L^2(\eta)\) generated by the centered linear statistics
\[
    (\bS f)^\circ,
\]
where \(f\) ranges over bounded, compactly supported Borel functions
that vanish on \(A\). Every such statistic depends only on
\(\omega|_{A^c}\), and hence
\[
    \mathcal H_{A^c}
    \subset
    L^2(\mathcal F_{A^c},\eta).
\]
Membership in \(\mathcal H_{A^c}\) is stronger than mere
\(\mathcal F_{A^c}\)-measurability: it means that the random variable
can be approximated in \(L^2(\eta)\) by finite linear combinations of
centered linear statistics supported outside \(A\).

For \(\mathbf j\in\mathbb N_0^d\), define the polynomial moment inside
\(A\) by
\begin{equation}
\label{eq:interior-polynomial-moment}
    \mathfrak m_{\mathbf j}(\omega;A)
    :=
    \int_A u^{\mathbf j}\,d\omega(u).
\end{equation}

The term \emph{linear rigidity} for a closed-linear-span condition is used
by Bufetov--Dabrowski--Qiu \cite{BDQ18}; their point-process formulation is
motivated by the rigidity arguments of Ghosh and Peres \cite{GP17}.
Higher-order linear rigidity for polynomial moments of stationary random
measures is developed by Lachi\`eze-Rey \cite{LR24}. The definition below is
the corresponding local formulation for point processes. For an integer
\(\ell\geq0\),
we say that \(\eta\) is \emph{linearly \(\ell\)-rigid} if
\begin{equation}
\label{eq:linear-ell-rigidity-definition}
    \mathfrak m_{\mathbf j}(\,\cdot\,;A)^\circ
    \in\mathcal H_{A^c}
    \qquad
    (|\mathbf j|\leq\ell)
\end{equation}
for every bounded Borel set \(A\subset V\). Thus each centered
polynomial moment of degree at most \(\ell\) inside \(A\) lies in the
closed linear span of statistics observable outside \(A\).

We say that \(\eta\) is \emph{\(\ell\)-rigid} if
\[
    \mathfrak m_{\mathbf j}(\,\cdot\,;A)
\]
is \(\mathcal F_{A^c}\)-measurable for every bounded Borel set
\(A\subset V\) and every \(|\mathbf j|\leq\ell\). In other words, the
outside configuration determines all polynomial moments inside \(A\)
of degree at most \(\ell\), almost surely. The case \(\ell=0\) is
\emph{number rigidity}. Since
\(\mathcal H_{A^c}\subset L^2(\mathcal F_{A^c},\eta)\), linear
\(\ell\)-rigidity implies \(\ell\)-rigidity.

Finally, \(\eta\) is \emph{maximally rigid} if, for every bounded Borel
set \(A\subset V\), the outside configuration determines the entire
configuration almost surely. Equivalently, for every such \(A\) there
is a Borel map
\[
    \mathcal R_A:
    \mathcal N_s(A^c)\longrightarrow\mathcal N_s(V)
\]
such that
\[
    \mathcal R_A(\omega|_{A^c})=\omega
\]
for \(\eta\)-almost every \(\omega\). Maximal rigidity therefore
implies \(\ell\)-rigidity for every finite \(\ell\), but it does not by
itself imply linear rigidity.


\subsection{Translation cross-sections and configuration realizations}
\label{subsec:cross-sections-realizations}

Let \(G_K\curvearrowright(X,\mu)\) be a p.m.p.\ Borel action. For a
Borel set \(Y\subset X\) and \(x\in X\), its \emph{translation
return set} is
\[
    Y_x:=\{v\in V:(v,e).x\in Y\}.
\]
Throughout the paper, cross-sections for p.m.p.\ actions are understood
modulo invariant null sets. Thus \(Y\) is a \emph{translation cross-section}
if there is a \(G_K\)-invariant conull Borel set \(X_0\subset X\) such that
every \(V\times\{e\}\)-orbit in \(X_0\) meets \(Y\cap X_0\) and every
\(Y_x\), \(x\in X_0\), is locally finite. We henceforth replace \(Y\) by
\(Y\cap X_0\). It is \emph{\(K\)-invariant} if \((0,k).Y=Y\) for every
\(k\in K\).

A translation cross-section is \emph{separated},
\emph{cocompact}, or \emph{Delone} if, on such an invariant conull set,
the family \((Y_x)\) is, respectively, uniformly separated, uniformly
relatively dense, or uniformly Delone. For general background on separated
cross-sections and transverse measures, see \cite[Sections~3 and~4]{BHK25}.
For later use in dimension one, recall also that free Borel flows admit
separated cocompact cross-sections. Kechris proved the existence of complete
lacunary cross-sections for Borel actions of locally compact second countable
groups \cite[Corollary~1.2]{Kec92}; for flows, the regular cross-section theorem
of Slutsky gives the stronger bounded-gap conclusion
\cite[Theorem~9.1]{Slu19}. We use only the consequence that successive return
times are bounded both below and above; see \cite[Section~1]{Slu19} for the
historical background.

If \(Y\) is \(K\)-invariant, then
\[
    Y_{(v,k).x}=kY_x-v,
    \qquad
    (v,k)\in G_K,\quad x\in X.
\]
Hence the \emph{return-time map}
\[
    \kappa_Y:X\longrightarrow\mathcal N_s(V),
    \qquad
    \kappa_Y(x):=\delta_{Y_x},
\]
is \(G_K\)-equivariant. It is Borel by the Lusin--Novikov theorem
\cite[Theorem~18.10]{Kec95}, applied to the incidence set
\[
    \{(x,v)\in X\times V:(v,e).x\in Y\}.
\]
The point process
\[
    \eta_{Y,\mu}:=(\kappa_Y)_*\mu
\]
is called the \emph{return-time process}; when \(\mu\) is fixed,
we write \(\eta_Y\). It is \(G_K\)-invariant, and it is
\(V\)-ergodic whenever the restricted \(V\)-action on
\((X,\mu)\) is ergodic. If \(Y\) is separated, then \(\eta_Y\)
has local second moments. We define the \emph{intensity} of \(Y\) to be
\(\rho_{\eta_Y}\).

A \(K\)-invariant translation cross-section \(Y\) is
\emph{generating} if \(\kappa_Y\) is injective on a
\(G_K\)-invariant conull Borel subset of \(X\). By the
Lusin--Souslin theorem \cite[Theorem~15.1]{Kec95}, in this case
\(\kappa_Y\) is a measurable \(G_K\)-isomorphism onto its image.

When \(X=\mathcal N_s(V)\) carries the canonical \(G_K\)-action and \(\eta\)
is a \(G_K\)-invariant point process, the set
\[
    Y_{\rm can}:=\{\omega\in\mathcal N_s(V):\omega(\{0\})=1\}
\]
is the canonical translation cross-section on the nonempty configurations,
and its return-time map is the identity: \(\kappa_{Y_{\rm can}}(\omega)=\omega\).
More generally, if \(Y\) and \(Z\) are \(K\)-invariant translation
cross-sections for the same action and \(Y\) is generating, then
\[
    T_{Y,Z}:=\kappa_Z\circ\kappa_Y^{-1}
\]
is a Borel \(G_K\)-equivariant map from the point-process realization defined
by \(Y\) to that defined by \(Z\). If \(Z\) is also generating, this map is a
measurable \(G_K\)-isomorphism. Thus changing generating cross-sections amounts
to an equivariant recoding of configurations.

We emphasize that a \(K\)-invariant translation cross-section need
not be a cross-section for the full \(G_K\)-action. Its full return
set is
\[
    \{g\in G_K:g.x\in Y\}
    =
    \bigcup_{u\in Y_x}
    \{(0,k)(u,e):k\in K\},
\]
which is not locally finite in \(G_K\) when \(K\) is nondiscrete. In the
higher-dimensional construction we also use a different, auxiliary
cross-section \(C\) for the full \(G_K\)-action. Its full return set is locally
finite and is used only to build framed orbit coordinates; the final
realization cross-section is the \(K\)-invariant translation cross-section
obtained from the equivariant point configuration below.

The following elementary converse will be used repeatedly.

\begin{proposition}[Configuration realization]
\label{prop:configuration-realization}
Let \(X_0\subset X\) be a \(G_K\)-invariant conull Borel set and let
\[
    \kappa:X_0\longrightarrow\mathcal N_s(V),
    \qquad
    \kappa(x)=\delta_{P_x},
\]
be Borel and \(G_K\)-equivariant. Suppose that \(P_x\neq\varnothing\)
for every \(x\in X_0\). Then
\[
    Y:=\{x\in X_0:0\in P_x\}
\]
is a \(K\)-invariant translation cross-section (with conull domain
\(X_0\)), and for every \(x\in X_0\),
\[
    Y_x=P_x,
    \qquad
    \kappa_Y|_{X_0}=\kappa.
\]
If the family \((P_x)_{x\in X_0}\) is uniformly Delone, then \(Y\)
is a Delone translation cross-section. If \(\kappa\) is injective, then
\(Y\) is generating.
\end{proposition}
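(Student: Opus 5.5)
The proof is a direct verification from the definitions, in four steps: the return sets, the cross-section axioms together with \(K\)-invariance, the Delone property, and generation.

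\emph{Return sets.} For \(x\in X_0\) and \(v\in V\), the point \((v,e).x\) lies in \(X_0\) by invariance. Equivariance gives
\[
    P_{(v,e).x}=P_x-v.
\]
Hence \(v\in Y_x\) if and only if \(0\in P_x-v\), that is, \(v\in P_x\). So \(Y_x=P_x\), and therefore \(\kappa_Y(x)=\delta_{P_x}=\kappa(x)\) on \(X_0\).

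\emph{Cross-section axioms.} Each \(P_x\) is locally finite, since \(\delta_{P_x}\in\mathcal N_s(V)\). Every \(V\)-orbit in \(X_0\) meets \(Y\), because \(P_x\neq\varnothing\): for any \(v\in P_x\) we have \((v,e).x\in Y\). The set \(Y\) is Borel, as it is the preimage under the Borel map \(\kappa\) of the Borel set \(\{\omega:\omega(\{0\})=1\}\). For \(K\)-invariance, take \(k\in K\) and \(x\in Y\). Then
\[
    P_{(0,k).x}=kP_x\ni k0=0,
\]
so \((0,k).x\in Y\). Applying the same argument with \(k^{-1}\) gives \((0,k).Y=Y\).

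\emph{Delone property.} This is immediate from \(Y_x=P_x\), since the separation and covering radius of \(Y_x\) are those of \(P_x\).

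\emph{Generation.} If \(\kappa\) is injective on \(X_0\), then \(\kappa_Y=\kappa\) is injective on the invariant conull set \(X_0\), which is exactly the definition of a generating cross-section.

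The only point requiring care is the measurability of \(Y\). This reduces to the fact that \(\omega\mapsto\omega(\{0\})\) is Borel on \(\mathcal N_s(V)\), which holds because evaluation on a closed set is a decreasing limit of evaluations on open balls. There is no substantial obstacle.
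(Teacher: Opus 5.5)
Your proposal is correct and follows essentially the same route as the paper: you get Borelness from \(Y=\{x\in X_0:\kappa(x)(\{0\})=1\}\), \(K\)-invariance from \(P_{(0,k).x}=kP_x\), and \(Y_x=P_x\) from \(P_{(u,e).x}=P_x-u\), and the remaining assertions are then immediate. Beyond the paper's proof, you also spell out the checks it leaves implicit: local finiteness, and that every \(V\)-orbit meets \(Y\) because \(P_x\neq\varnothing\).
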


\begin{proof}
The set \(Y\) is Borel because
\[
    Y=\{x\in X_0:\kappa(x)(\{0\})=1\},
\]
and evaluation \(\omega\mapsto\omega(B)\) is Borel on
\(\mathcal N_s(V)\) for every Borel set \(B\subset V\). Equivariance gives
\(P_{(0,k).x}=kP_x\), so \(Y\) is \(K\)-invariant. Moreover, for
\(u\in V\),
\[
    u\in Y_x
    \quad\Longleftrightarrow\quad
    0\in P_{(u,e).x}=P_x-u
    \quad\Longleftrightarrow\quad
    u\in P_x.
\]
Thus \(Y_x=P_x\), and the remaining assertions follow immediately.
\end{proof}


\subsection{Borel and geometric preliminaries}
\label{subsec:Borel-geometric-preliminaries}

We record several elementary measurable facts that will be used repeatedly
when configurations, Voronoi cells, and finite subsets depend on a
parameter.

Let \(Z\) and \(E\) be standard Borel spaces and let
\(\mathscr A\subset Z\times E\) be Borel. For \(z\in Z\), write
\[
    \mathscr A[z]
    :=
    \{u\in E:(z,u)\in\mathscr A\}
\]
for the section over \(z\).

We will repeatedly use the Lusin--Novikov theorem
\cite[Theorem~18.10]{Kec95}. In the form needed here, it says that if
every section \(\mathscr A[z]\) is countable, then there are countably
many Borel partial maps
\[
    a_n:D_n\longrightarrow E,
    \qquad D_n\subset Z\ \text{Borel},
\]
whose graphs cover \(\mathscr A\). Thus the points of every countable
section can be enumerated in a manner that is Borel in the parameter:
\[
    \mathscr A[z]
    =
    \{a_n(z):z\in D_n\}.
\]
When the sections are locally finite subsets of \(V\), this allows us,
for example, to check a condition against every point of the
configuration by checking countably many Borel conditions involving
the functions \(a_n\).

We also use the following parameter-integration fact. If
\(\mathscr A\subset Z\times V\) and
\(h:Z\times V\to[0,\infty]\) are Borel, then
\[
    z\longmapsto
    \int_{\mathscr A[z]}h(z,u)\,d\lambda_V(u)
\]
is Borel. Indeed, the integrand
\[
    (z,u)\longmapsto
    \mathbf 1_{\mathscr A}(z,u)h(z,u)
\]
is Borel, and the assertion follows from the usual parameterized
integration theorem. The same conclusion holds for real- or
complex-valued \(h\) whenever the integral is absolutely convergent.

Finally, we use Borel linear orders to make finite choices canonically.
A \emph{Borel linear order} on a standard Borel space \(E\) is a
linear order \(\preccurlyeq\) for which
\[
    \{(u,v)\in E\times E:u\preccurlyeq v\}
\]
is Borel. Every standard Borel space admits such an order: choose a
Borel injection \(b:E\to\mathbb R\) and pull back the usual order on
\(\mathbb R\). If \(\mathscr A\subset Z\times E\) is Borel with
finite nonempty sections, then
\[
    z\longmapsto\min_{\preccurlyeq}\mathscr A[z]
\]
is Borel. Indeed, enumerate each section by Lusin--Novikov and select
the least of the finitely many enumerated points.

For a locally finite set \(P\subset V\) and \(u\in P\), let
\[
    \operatorname{Vor}(P,u)
    :=
    \{v\in V:\|v-u\|\leq\|v-w\|
        \text{ for every }w\in P\}
\]
denote the closed Voronoi cell of \(u\).

\begin{lemma}
\label{lem:Delone-Voronoi-geometry}
If \(P\) is \((r,R)\)-Delone and \(u\in P\), then
\[
    \overline B_{r/2}(u)
    \subset
    \operatorname{Vor}(P,u)
    \subset
    \overline B_R(u).
\]
Moreover, \(\operatorname{Vor}(P,u)\) is a compact convex polytope
and its boundary has \(\lambda_V\)-measure zero.
\end{lemma}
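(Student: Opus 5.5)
The plan is to write the Voronoi cell as an intersection of closed half-spaces, and to derive the two inclusions directly from the separation and covering-radius bounds. For each \(w\in P\setminus\{u\}\) put
\[
    H_w:=\{v\in V:\|v-u\|\leq\|v-w\|\}
        =\{v:2\langle v,w-u\rangle\leq\|w\|^2-\|u\|^2\},
\]
a closed half-space. Then \(\operatorname{Vor}(P,u)=\bigcap_{w\neq u}H_w\), which is closed and convex.

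For the inner inclusion, take \(v\in\overline B_{r/2}(u)\) and any \(w\neq u\) in \(P\). Since \(\|w-u\|\geq r\), the triangle inequality gives
\[
    \|v-w\|\geq\|w-u\|-\|v-u\|\geq r-r/2\geq\|v-u\|,
\]
so \(v\in H_w\) for every such \(w\).

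For the outer inclusion, take \(v\in\operatorname{Vor}(P,u)\). Because \(P\) is locally finite, the distance \(\operatorname{dist}(v,P)\) is attained at some point of \(P\). Then
\[
    \|v-u\|=\min_{w\in P}\|v-w\|=\operatorname{dist}(v,P)\leq\operatorname{covrad}(P)\leq R.
\]
Hence the cell is closed and bounded, and therefore compact.

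To see that the cell is a polytope, note that it lies in \(\overline B_R(u)\). If \(\|w-u\|>2R\), then every \(v\in\overline B_R(u)\) satisfies
\[
    \|v-w\|\geq\|w-u\|-\|v-u\|>2R-R=R\geq\|v-u\|,
\]
so \(v\in H_w\) automatically. Consequently
\[
    \operatorname{Vor}(P,u)=\overline B_R(u)\cap\bigcap_{0<\|w-u\|\leq2R}H_w.
\]
By local finiteness only finitely many \(w\) occur in this intersection.

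This is not yet quite a polytope because of the ball factor, so the ball must be removed. Take \(\Pi:=\bigcap_{0<\|w-u\|\leq 2R}H_w\), which is a finite intersection of half-spaces. The step needing care is to show \(\Pi\subset\overline B_R(u)\). Suppose \(v\in\Pi\) with \(\|v-u\|>R\). By convexity, the segment from \(u\) to \(v\) lies in \(\Pi\), and it contains a point \(v'\) with \(R<\|v'-u\|<R+\delta\) for small \(\delta>0\). The nearest point of \(P\) to \(v'\) lies within distance \(R\) of \(v'\), hence within distance \(2R+\delta\) of \(u\). If the finite set used to define \(\Pi\) is enlarged to \(0<\|w-u\|\leq 3R\), this nearest point is among the defining points (for \(\delta<R\)). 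Then \(v'\in\Pi\) gives \(\|v'-u\|\leq\|v'-w\|\leq R\), a contradiction. So, with the index set taken as \(0<\|w-u\|\leq 3R\) (the representation of the cell still holds for this larger finite set), \(\operatorname{Vor}(P,u)\) equals a bounded finite intersection of half-spaces, that is, a compact convex polytope.

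The cell has nonempty interior, since it contains \(B_{r/2}(u)\), so it is a convex body. Its boundary is contained in the finitely many hyperplanes \(\partial H_w\), each of which is \(\lambda_V\)-null. Hence \(\lambda_V(\partial\operatorname{Vor}(P,u))=0\).
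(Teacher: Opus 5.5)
Your proof is correct and follows the same route as the paper. The inner ball comes from separation and the triangle inequality, the outer ball from the covering radius, a $2R$ cutoff reduces the cell to finitely many bisector half-spaces on $\overline B_R(u)$, and the boundary lies in finitely many hyperplanes. Your extra step is a useful detail that the paper glosses over: enlarging the cutoff to $3R$ and using convexity shows that these half-spaces alone already lie in $\overline B_R(u)$, which justifies the paper's assertion that the cell is a bounded intersection of finitely many half-spaces rather than a ball intersected with half-spaces.
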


\begin{proof}
If \(\|v-u\|\leq r/2\) and \(w\in P\setminus\{u\}\), then
\[
    \|v-w\|
    \geq \|w-u\|-\|v-u\|
    \geq r-\|v-u\|
    \geq \|v-u\|,
\]
which proves the first inclusion. If \(\|v-u\|>R\), the covering
property gives \(w\in P\) with
\(\|v-w\|\leq R<\|v-u\|\), proving the second.

Only the finitely many points of
\(P\cap\overline B_{2R}(u)\) can contribute a defining inequality
on \(\overline B_R(u)\). Hence the cell is a bounded intersection
of finitely many closed half-spaces. Its boundary is contained in
a finite union of bisector hyperplanes and is therefore
\(\lambda_V\)-null.
\end{proof}

We repeatedly use the following packing bound. If \(P\) is
\(r\)-separated, then for every \(v\in V\) and \(t>0\),
\begin{equation}
\label{eq:Delone-packing-bound}
    \#\bigl(P\cap B_t(v)\bigr)
    \leq
    \frac{\lambda_V(B_{t+r/2})}{\lambda_V(B_{r/2})}
    =
    \left(1+\frac{2t}{r}\right)^d.
\end{equation}
Indeed, the balls \(B_{r/2}(u)\), \(u\in P\cap B_t(v)\), are
pairwise disjoint and contained in \(B_{t+r/2}(v)\).

Let \(Z\) be a standard Borel space. Suppose now that
\(z\mapsto\delta_{P_z}\) is a Borel family of locally finite configurations
and that \(a:Z\to V\) is Borel with \(a(z)\in P_z\). Then the incidence
set
\[
    \mathscr V
    :=
    \{(z,v)\in Z\times V:
        v\in\operatorname{Vor}(P_z,a(z))\}
\]
is Borel. To see this, choose a Lusin--Novikov enumeration \(u_n(z)\)
of the points of \(P_z\). Then
\[
    v\in\operatorname{Vor}(P_z,a(z))
    \quad\Longleftrightarrow\quad
    \|v-a(z)\|\leq\|v-u_n(z)\|
\]
for every enumerated point \(u_n(z)\), so membership in \(\mathscr V\)
is a countable conjunction of Borel conditions. It follows from the
parameter-integration statement above that
\[
    z\longmapsto
    \int_{\operatorname{Vor}(P_z,a(z))}
        h(z,v)\,d\lambda_V(v)
\]
is Borel whenever \(h\) is Borel and the integral is absolutely
convergent.

Suppose in addition that the configurations \(P_z\) are uniformly
\((r,R)\)-Delone. Translate each distinguished cell to the origin and put
\[
    D_z
    :=
    \operatorname{Vor}(P_z,a(z))-a(z).
\]
By Lemma~\ref{lem:Delone-Voronoi-geometry},
\[
    \overline B_{r/2}\subset D_z\subset\overline B_R.
\]
Thus \(D_z\) belongs to the space
\(\mathcal K(\overline B_R)\) of nonempty compact subsets of
\(\overline B_R\), equipped with the Hausdorff metric. The map
\[
    z\longmapsto D_z
\]
is Borel. Indeed, the corresponding incidence relation is Borel and
has compact sections; equivalently, for every open
\(U\subset\overline B_R\), the set
\[
    \{z:D_z\cap U\neq\varnothing\}
\]
is Borel. We shall use this Borel dependence when geometric
constructions are applied to Voronoi cells parameter by parameter. See also
\cite[Proposition~7.2 and the proof of Proposition~7.4]{ABC25} for
measurable Voronoi constructions for locally finite subsets of general
locally compact second countable groups.

Closed Voronoi cells cover \(V\), but neighboring cells meet along
their common bisectors. These overlaps are contained in the cell
boundaries and hence are \(\lambda_V\)-null. Later our sites will carry
distinct Borel labels. Fixing a Borel linear order on the label space
then gives an exact partition: whenever a point belongs to several
closed Voronoi cells, assign it to the site with least label. This
alters each closed cell only on its null boundary. The labelled cells
\(W_c\) used below are obtained in exactly this way; their Borel and
\(G_K\)-covariance properties will be checked when they are introduced.


\subsection{Measured graphings and one-ended forests}
\label{subsec:measured-graphings-forests}

Let \((Z,\nu)\) be a standard probability space. A \emph{Borel graph}
on \(Z\) is a symmetric, irreflexive Borel relation
\[
    \mathcal G\subset Z\times Z;
\]
we regard \((z,w)\in\mathcal G\) as an undirected edge between \(z\)
and \(w\). It is \emph{locally finite} if every vertex has finite
degree. In that case its connected components are countable, and we
write \(\mathcal R_{\mathcal G}\) for the resulting countable Borel
equivalence relation.

A Borel graph is a \emph{forest} if it contains no finite cycles, or
equivalently if every connected component is a tree. A
\emph{spanning subforest} of \(\mathcal G\) is a Borel forest
\(\mathcal T\subseteq\mathcal G\) with the same vertex set. We do not
require a component of \(\mathcal G\) to remain connected after edges
are removed; the components of \(\mathcal T\) are the trees produced
by the spanning forest.

The graph \(\mathcal G\) is \emph{p.m.p.} if
\(\mathcal R_{\mathcal G}\) preserves \(\nu\): every Borel partial
bijection
\[
    \theta:A\longrightarrow B
\]
whose graph is contained in \(\mathcal R_{\mathcal G}\) satisfies
\(\nu(A)=\nu(B)\). It is \emph{aperiodic} if the connected component of \(z\) is infinite
for \(\nu\)-almost every \(z\in Z\).

We next recall the notion of an end. If \(G\) is a connected locally
finite infinite graph and \(F\subset V(G)\) is finite, deleting \(F\)
may separate \(G\) into several infinite components. The number of
\emph{ends} of \(G\) is
\[
    \sup_{F\subset V(G)\ {\rm finite}}
    \#\{\text{infinite connected components of }G\setminus F\}.
\]
This number may be \(1,2,3,\ldots\), or infinite. For example, three
rays joined at one vertex give a three-ended tree, while an infinite regular tree of degree at least three has infinitely many ends. A graph is \emph{one-ended} if
the number above is one and \emph{two-ended} if it is two.

We single out the one- and two-ended cases because they play different
roles in the measured forest theorem used below. One-ended trees are
the desired output of the theorem. They have a canonical direction
toward infinity: after deleting a vertex, exactly one adjacent
component is infinite, so there is a unique neighboring vertex lying
toward the end. This will give the parent map used later for
integerization. Two-ended components, on the other hand, are precisely
the obstruction appearing in the theorem: under the p.m.p.\ hypotheses
below, they prevent a spanning subforest from having one-ended components at
almost every vertex. Components with three or more ends, or infinitely many
ends, are not excluded by the theorem.

Accordingly, a p.m.p.\ Borel graph \(\mathcal G\) is called
\emph{\(\nu\)-nowhere two-ended} if the set of vertices belonging to
two-ended components has \(\nu\)-measure zero. A Borel spanning subforest
\(\mathcal T\subseteq\mathcal G\) is \emph{\(\nu\)-almost-everywhere
one-ended} if
\[
    \nu\bigl(
        \{z\in Z:\text{the \(\mathcal T\)-component of \(z\) is one-ended}\}
    \bigr)=1.
\]

The following theorem is the principal nonstandard external
measured-combinatorial input in the higher-dimensional construction. We use
exactly the statement recorded here.

\begin{theorem}\cite[Theorem~2.1]{CGMTD26}
\label{thm:measured-one-ended-forest}
Let \(\mathcal G\) be an aperiodic locally finite p.m.p.\ Borel
graph on \((Z,\nu)\). Then \(\mathcal G\) admits a Borel
\(\nu\)-almost-everywhere one-ended spanning subforest if and only
if \(\mathcal G\) is \(\nu\)-nowhere two-ended.
\end{theorem}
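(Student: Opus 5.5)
This is the measured one-ended forest theorem of \cite[Theorem~2.1]{CGMTD26}, which the paper imports as an external input. A self-contained proof is not intended here. Still, the argument I would follow to establish it has two directions of quite different difficulty.

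\emph{Necessity.} Suppose \(\mathcal T\subseteq\mathcal G\) is a Borel spanning subforest that is one-ended at almost every vertex. Suppose also, for contradiction, that the set \(A\) of vertices in two-ended \(\mathcal G\)-components has positive measure. Each two-ended component \(\Gamma\) contains a canonical finite structure: the collection of finite vertex sets of minimal size whose removal separates the two ends. This collection is nonempty and forms a \(\mathbb Z\)-like linear order up to reversal. Restricted to \(A\), this yields a Borel, \(\mathcal R_{\mathcal G}\)-invariant coarse copy of \(\mathbb Z\) in each component.

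The one-ended trees of \(\mathcal T\) inside \(\Gamma\) each have a well-defined direction toward infinity, which points toward one of the two ends of \(\Gamma\). I would then run a mass-transport argument. Each vertex sends unit mass to the first separator in its tree's end-direction. Since the trees are one-ended, every vertex has only finitely many descendants in its tree. Combined with the tree's escape to one end, this lets one Borel-select a consistent orientation of the \(\mathbb Z\)-structure, or else produces a vertex receiving infinite mass. Either outcome contradicts measure preservation. The first is impossible for an invariant random two-ended line by the standard nonamenability-type count on \(\mathbb Z\): orienting each \(\mathbb Z\)-component consistently would require an invariant ``first point'', whose existence violates the mass-transport principle.

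\emph{Sufficiency.} This is the substantial direction. I would decompose \(Z\) into the invariant pieces where components are finite (null by aperiodicity), one-ended, or have at least three ends. On the one-ended and multi-ended pieces, I would build the forest by an iterative Borel scheme. First, use a sequence of Borel maximal independent sets and finite Borel partitions into pieces of increasing diameter, produced by the standard Kechris--Solecki--Todorcevic toolkit. Second, at stage \(n\), merge the finite clusters from stage \(n-1\) into larger clusters, each spanned by a finite tree, and join them by edges chosen so that every existing cluster tree gets a single outgoing edge to a strictly larger cluster. Taking the limit, every vertex acquires a parent. One-endedness holds at a vertex exactly when every vertex is absorbed into a cluster at some finite stage, that is, when no infinite backward ray survives. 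So the key requirement is that the measure of vertices lying in ``unabsorbed'' regions tends to zero. Here the hypothesis enters. For components with one end or with \(\geq3\) ends, one can always choose the outgoing edges so that the unabsorbed mass decays geometrically. For multi-ended components this uses a measurable choice of a ``branch point'' structure. The main obstacle is precisely this quantitative control, namely choosing the merging edges measurably so that no bi-infinite path survives the limit. I would attack it through the mass-transport principle, showing that the expected number of surviving infinite backward rays through a vertex must vanish. The same principle shows that the construction fails in the two-ended case, which is consistent with the necessity direction.
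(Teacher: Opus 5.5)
The paper gives no proof of this statement. It is quoted from \cite{CGMTD26}, and only its ``if'' direction is used, for the proximity graph of Proposition~\ref{prop:proximity-graph}, whose components are all one-ended. Judged on its own, your outline has a genuine gap in each direction.

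\emph{Necessity.} The contradiction you aim for is the wrong one. A Borel consistent orientation of two-ended components is compatible with measure preservation. For an aperiodic p.m.p.\ automorphism $S$, the graphing $x\sim Sx$ has bi-infinite lines as components, and $x\mapsto Sx$ orients all of them Borel-measurably. Orienting a line needs no ``first point'', and $\mathbb Z$ is amenable, so no nonamenability count applies. What aperiodicity and invariance of $\nu$ actually forbid is a Borel set meeting every component of a positive-measure invariant set in a finite nonempty set. Your separator structure is also not as stated. Minimum-size end separators can cross: in the ladder $\mathbb Z\times\{0,1\}$, the separators $\{(n,0),(n+1,1)\}$ and $\{(n+1,0),(n,1)\}$ are incomparable. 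So ``the first separator in the end-direction'' is undefined. No mass transport is needed. In a two-ended component $\Gamma$, take a finite vertex set $S$ separating the ends, absorb the finite components of $\Gamma\setminus S$ into $S$, and let $\Gamma_\pm$ be the two infinite sides. Only finitely many trees of $\mathcal T$ meet $S$. A tree converging to the $+$ end has only finitely many vertices in $\Gamma_-$, since otherwise K\"onig's lemma produces a ray in it converging to the $-$ end. Consequently the trees cannot all converge to one end, because the other side would then be finite. Moreover, the set of vertices that are $\mathcal G$-adjacent to a tree converging to the opposite end is finite, nonempty and Borel. This is exactly the forbidden finite selector.

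\emph{Sufficiency.} The sentence asserting that the outgoing edges can always be chosen so that the unabsorbed mass decays geometrically is the entire content of the cited theorem. You give no mechanism for it, not even in the one-ended case the paper needs. Your success criterion is also wrong: every vertex eventually acquiring a parent does not give one-endedness. On a line (or on each row of $\mathbb Z^2$, which is one-ended), grow the cluster of a vertex by alternately absorbing a smaller interval on its left and attaching its root to a larger interval on its right, with all parent edges pointing right. Every vertex eventually gets a parent and every cluster attaches to a larger one. Yet the limit trees are two-ended lines with $\operatorname{par}(m)=m+1$, and every descendant set is infinite. Later attachments can keep enlarging the descendant sets of vertices that already have parents. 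What must be proved is that every descendant set stabilizes at a finite set, and this is where the nowhere-two-ended hypothesis has to enter quantitatively. The mass-transport principle cannot supply this on its own: with $\operatorname{par}=S$ in the automorphism example above, every vertex has an infinite backward ray and $\nu$ is preserved. Likewise, in components with infinitely many ends a finite cluster has no canonical outward direction, and your ``measurable choice of a branch point structure'' is left unspecified.
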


If \(\mathcal T\) is the forest supplied by the theorem, let
\[
    N_{\mathcal T}
    :=
    \{z\in Z:\text{the \(\mathcal T\)-component of \(z\) is not one-ended}\}.
\]
Then \(\nu(N_{\mathcal T})=0\). For the pointwise constructions below we
remove the entire \(\mathcal R_{\mathcal G}\)-saturation of this exceptional
set. Since \(\mathcal R_{\mathcal G}\) is a countable p.m.p.\ equivalence
relation, the saturation of a null set is again null. Thus, after restricting
to an \(\mathcal R_{\mathcal G}\)-invariant conull Borel subset, every
remaining component of \(\mathcal T\) is one-ended.

We record the elementary structure of such forests.

\begin{lemma}
\label{lem:one-ended-forest-orientation}
Let \(\mathcal T\) be a locally finite Borel forest on a standard Borel
space \(Z\), all of whose components are one-ended. There is a Borel map
\[
    \operatorname{par}:Z\longrightarrow Z
\]
such that \(\operatorname{par}(z)\) is the unique neighbor of \(z\)
lying toward the end of its component. If
\[
    \operatorname{Ch}(z)
    :=
    \{w\in Z:\operatorname{par}(w)=z\},
    \qquad
    \operatorname{Desc}(z)
    :=
    \{w\in Z:\operatorname{par}^{\,n}(w)=z
        \text{ for some }n\geq0\},
\]
then the descendant relation is Borel and every
\(\operatorname{Desc}(z)\) is finite. If there is \(D<\infty\) such that
\(\deg_{\mathcal T}(z)\leq D\) for every \(z\in Z\), then
\(\#\operatorname{Ch}(z)\leq D-1\) for every \(z\in Z\).
\end{lemma}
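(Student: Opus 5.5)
The plan is to build the parent map directly from the one-ended structure and then verify Borelness through countable descriptions obtained from Lusin--Novikov.

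\emph{Defining the parent.} Fix \(z\in Z\) and let \(T_z\) be its \(\mathcal T\)-component. Deleting \(z\) splits \(T_z\) into the subtrees hanging off its finitely many neighbors. Exactly one of these subtrees is infinite. At least one must be infinite because \(T_z\) is infinite: a one-ended graph is infinite by definition, and local finiteness forces some neighbor's subtree to be infinite. At most one is infinite because the component is one-ended. Let \(\operatorname{par}(z)\) be the neighbor whose subtree is infinite.

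\emph{Borelness of the parent map.} First apply Lusin--Novikov to \(\mathcal T\), viewed as a Borel set with finite sections, to enumerate the neighbors of \(z\) by Borel partial maps. Next I claim that the relation
\[
w \text{ lies in the branch at } z \text{ through the neighbor } y
\]
is Borel. It says that the unique \(\mathcal T\)-path from \(z\) to \(w\) has first step \(y\). This unfolds as a countable union over path lengths \(n\) of the conditions that there exist \(z=z_0,z_1=y,\dots,z_n=w\), pairwise adjacent and with no backtracking. Each existential quantifier ranges over countable (indeed finite) neighbor sets, so it can be replaced by a countable union over indices of the Lusin--Novikov maps. Hence the relation is Borel.

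\emph{Reformulating infiniteness.} "The branch through \(y\) is infinite" is equivalent to "for every \(m\), the branch contains a vertex at distance \(m\) from \(z\)". This holds by König's lemma, using local finiteness, and it is a countable intersection of countable unions of Borel conditions. Thus the graph of \(\operatorname{par}\) is Borel. A function with Borel graph between standard Borel spaces is Borel, so \(\operatorname{par}\) is Borel. The descendant relation \(\{(w,z):\exists n\geq0,\ \operatorname{par}^n(w)=z\}\) is then a countable union of Borel sets.

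\emph{Finiteness of \(\operatorname{Desc}(z)\).} I expect this to be the only real point. I would show that \(\operatorname{Desc}(z)\) is exactly the union of \(\{z\}\) with the finite branches at \(z\), that is, the branches not through \(\operatorname{par}(z)\). For the first direction, take \(w\neq z\) in a finite branch at \(z\) and follow parents from \(w\). Each parent step moves to the neighbor toward the end. For a vertex \(w'\) of that finite branch, the infinite part of \(T_z\setminus\{w'\}\) contains \(z\)'s side, so \(\operatorname{par}(w')\) is the next vertex on the path to \(z\); hence the parent chain reaches \(z\). For the converse, suppose \(\operatorname{par}^n(w)=z\). Then the path \(w,\operatorname{par}(w),\dots,z\) consists of distinct vertices, since parent chains in a tree never backtrack. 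Its last step enters \(z\) from a child \(c\) with \(\operatorname{par}(c)=z\). Because \(\operatorname{par}(c)=z\), the branch at \(c\) through \(z\) is the infinite one, so the branch at \(z\) through \(c\) is disjoint from it and hence finite, and \(w\) lies in it. Therefore \(\operatorname{Desc}(z)\) is a finite union of finite sets.

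\emph{Degree bound.} The children of \(z\) are neighbors of \(z\) distinct from \(\operatorname{par}(z)\). Hence \(\#\operatorname{Ch}(z)\leq\deg_{\mathcal T}(z)-1\leq D-1\).
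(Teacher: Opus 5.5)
Your proof is correct. For the definition of \(\operatorname{par}\), its Borelness (infiniteness of a side expressed through arbitrarily long non-backtracking paths) and the child bound, it follows the paper essentially step for step.

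The real difference is in the finiteness of \(\operatorname{Desc}(z)\). The paper argues by contradiction. An infinite descendant set is a finitely branching rooted tree, so K\"onig's lemma yields a ray \(z=z_0,z_1,\dots\) with \(\operatorname{par}(z_{n+1})=z_n\). This ray, together with \(z,\operatorname{par}(z),\operatorname{par}^2(z),\dots\), gives two infinite components after deleting \(z\). You instead show directly that \(\operatorname{Desc}(z)\) is contained in \(\{z\}\) together with the branches at \(z\) through its children, each of which is finite. Finiteness then needs only local finiteness at the single vertex \(z\) and no K\"onig argument, and you get an explicit description of descendant sets. The paper's route is shorter. Only this inclusion is needed; your reverse inclusion is extra.

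Two of your steps use one-endedness without saying so, and you should make this explicit.
\begin{itemize}
\item \emph{Parent chains never backtrack.} If \(u=\operatorname{par}(v)\) and \(\operatorname{par}(u)=v\), then both sides of the edge \(uv\) are infinite. This gives two infinite components of \(T_z\setminus\{v\}\), where \(T_z\) is the component. The paper's appeal to acyclicity for distinct iterates rests on the same fact.
\item \emph{The branch at \(z\) through a child \(c\) is finite.} This is not because it is disjoint from the infinite \(z\)-side of the edge \(zc\). It is because that \(z\)-side minus \(z\) is an infinite set made up of the \emph{other} finitely many components of \(T_z\setminus\{z\}\), so one of those is infinite. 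An infinite branch through \(c\) would then be a second infinite component.
\end{itemize}
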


\begin{proof}
For adjacent \(z,w\), the vertex \(w\) is
\(\operatorname{par}(z)\) precisely when the component containing
\(w\) after deleting \(z\) is infinite. In a one-ended tree there
is exactly one such neighbor. This condition is Borel: in a locally
finite Borel graph, that component is infinite if and only if for
every \(n\) there is a simple path of length \(n\) starting at
\(w\) and avoiding \(z\). Thus \(\operatorname{par}\) is Borel,
and so is the descendant relation.

Suppose that \(\operatorname{Desc}(z)\) is infinite. With the edges
oriented from each vertex to its parent, the descendant set is an infinite
rooted tree with root \(z\). Local finiteness of \(\mathcal T\) makes this
rooted tree finitely branching. K\"onig's lemma therefore gives an infinite
ray
\[
    z=z_0,z_1,z_2,\ldots,
    \qquad
    \operatorname{par}(z_{n+1})=z_n.
\]
This ray moves away from the unique end of the component. On the other hand,
the parent map is defined at every vertex, and acyclicity implies that its
successive iterates are distinct. Hence
\[
    z,\operatorname{par}(z),\operatorname{par}^{\,2}(z),\ldots
\]
is an infinite ray toward that end. After deleting \(z\), the two tails lie
in distinct infinite components, contradicting one-endedness. The child bound
follows since every vertex has exactly one parent.
\end{proof}


\subsection{Euclidean-motion actions with ergodic translations}
\label{subsec:Euclidean-actions}

Let \(G_K\curvearrowright(X,\mu)\) be a p.m.p.\ Borel action. We use
\(V\)-ergodic and \(V\)-weakly mixing, equivalently
\emph{translation-ergodic} and \emph{translation-weakly-mixing}, for the
corresponding properties of the restricted \(V\times\{e\}\)-action. The main results assume \(V\)-ergodicity, which implies
\(G_K\)-ergodicity. The next
proposition records a useful consequence of the additional
\(K\)-symmetry: if the nonzero \(K\)-orbits in \(V\) are
uncountable, then \(V\)-ergodicity already implies
\(V\)-weak mixing. In particular, this applies to \(K=SO(V)\) and
\(K=O(V)\) when \(d\geq2\).

Let \(\mathcal H_X:=L^2(X,\mu)\), and let \(\pi_X\) be the Koopman
\(G_K\)-representation on \(\mathcal H_X\), defined by
\[
    (\pi_X(g)F)(x):=F(g^{-1}.x),
    \qquad g\in G_K,\quad F\in\mathcal H_X.
\]
A vector \(\xi\in V\) is a
\emph{\(V\)-eigenvalue} if there is a nonzero
\(F\in\mathcal H_X\) such that
\[
    \pi_X((v,e))F=\chi_\xi(v)F
    \qquad (v\in V).
\]

\begin{proposition}[Rotational orbits of translation eigenvalues]
\label{prop:K-orbits-of-eigenvalues}
Suppose that the restricted \(V\)-action is ergodic. The
\(K\)-orbit of every \(V\)-eigenvalue is countable. Consequently,
if every nonzero \(K\)-orbit in \(V\) is uncountable, then the
restricted \(V\)-action is weakly mixing.
\end{proposition}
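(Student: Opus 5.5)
The plan is to show that the \(K\)-action permutes the translation eigenvalues and that ergodicity leaves only countably many of them. Let \(\xi\) be a \(V\)-eigenvalue with eigenfunction \(F\neq0\), and let \(k\in K\). Put \(F_k:=\pi_X((0,k))F\). The semidirect-product relation gives
\[
(0,k)^{-1}(v,e)(0,k)=(k^{-1}v,e).
\]
Using it, we compute
\[
    \pi_X((v,e))F_k
    =\pi_X((0,k))\pi_X((k^{-1}v,e))F
    =\chi_\xi(k^{-1}v)F_k
    =\chi_{k\xi}(v)F_k ,
\]
where the last equality uses orthogonality of \(k\). Thus \(k\xi\) is again a \(V\)-eigenvalue, with the nonzero eigenfunction \(F_k\). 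In particular, the whole \(K\)-orbit of \(\xi\) consists of \(V\)-eigenvalues.

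It therefore suffices to show that the set of \(V\)-eigenvalues is countable. Eigenfunctions for distinct characters \(\chi_\xi\neq\chi_{\xi'}\) are orthogonal. To see this, note that
\[
    \langle F,F'\rangle
    =\langle\pi_X(v)F,\pi_X(v)F'\rangle
    =\chi_\xi(v)\overline{\chi_{\xi'}(v)}\langle F,F'\rangle
\]
for every \(v\in V\). Choosing \(v\) with \(\chi_{\xi-\xi'}(v)\neq1\) forces \(\langle F,F'\rangle=0\). Since \(L^2(X,\mu)\) is separable (the space is a standard probability space), an orthogonal family of nonzero vectors is countable. Hence the eigenvalues form a countable set, and so every \(K\)-orbit of an eigenvalue is countable. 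Ergodicity is not really needed for countability itself. It becomes relevant for the weak-mixing consequence, and I would note this without fuss.

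For the consequence, suppose every nonzero \(K\)-orbit is uncountable. Then the countability just established rules out every nonzero eigenvalue, so \(0\) is the only possible eigenvalue. Moreover, \(V\)-ergodicity gives that the \(0\)-eigenspace consists of constants. I would then invoke the standard characterization for actions of the abelian lcsc group \(V\): an ergodic p.m.p.\ action is weakly mixing if and only if the Koopman representation on \(L^2_0\) has no finite-dimensional invariant subspace. Equivalently, there are no nonconstant eigenfunctions, i.e.\ the restricted spectral measures on \(L^2_0\) are atomless. This gives weak mixing of the diagonal action on \(X\times X\).

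The main obstacle is mild. It is only to ensure that this equivalence (``no nonconstant eigenfunctions'' with ``weak mixing'' as defined via ergodicity of the product action) is available for \(\mathbb R^d\)-actions. I would cite it as standard, or argue directly. The direct argument goes as follows. Suppose \(H\in L^2(X\times X)\) is invariant and orthogonal to constants. Expand it as a Hilbert–Schmidt operator on \(L^2(X)\); this operator commutes with the Koopman representation. A nonzero compact self-adjoint operator of the form \(HH^*\) then has a finite-dimensional invariant eigenspace. That eigenspace yields joint eigenvectors, i.e.\ nonconstant eigenfunctions, which is a contradiction.
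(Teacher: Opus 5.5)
Your proposal is correct and is essentially the paper's proof. The conjugation identity \((v,e)(0,k)=(0,k)(k^{-1}v,e)\) shows that \(\pi_X((0,k))\) carries the \(\xi\)-eigenspace onto the \(k\xi\)-eigenspace, orthogonality of distinct eigenspaces together with separability of \(L^2(X,\mu)\) makes the set of eigenvalues countable (without ergodicity, as you note), and weak mixing then follows from the standard eigenfunction criterion that the paper cites. If you use your direct Hilbert--Schmidt argument instead, add one observation: for invariant \(H\perp1\), the function \(x\mapsto\int_X\overline{H(y,x)}\,d\mu(y)\) is invariant with mean zero, so ergodicity gives \(H^*1=0\); hence the eigenspaces of \(HH^*\) for nonzero eigenvalues lie in \(L^2_0(X,\mu)\), which is what makes the resulting eigenfunctions nonconstant.
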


\begin{proof}
Let \(E_\xi\subset\mathcal H_X\) be the eigenspace associated with
\(\xi\). For \(v\in V\) and \(k\in K\),
\[
    \pi_X((v,e))\pi_X((0,k))
    =
    \pi_X((0,k))\pi_X((k^{-1}v,e)),
\]
so \(\pi_X((0,k))E_\xi=E_{k\xi}\). Eigenspaces corresponding
to distinct characters are orthogonal, so separability of
\(\mathcal H_X\) implies that there are at most countably many
\(V\)-eigenvalues. Hence every eigenvalue has a countable
\(K\)-orbit.

Under the additional hypothesis, \(0\) is the only
\(V\)-eigenvalue. Its eigenspace consists of the constants by
\(V\)-ergodicity, and the \(V\)-action is therefore weakly mixing;
see \cite[Proposition~1.2]{BR88}.
\end{proof}

\begin{corollary}[Isotropy and translation weak mixing]
\label{cor:isotropic-ergodic-weak-mixing}
Suppose \(d\geq2\) and \(K=SO(V)\) or \(K=O(V)\). If the restricted
\(V\)-action is ergodic, then it is weakly mixing. In particular,
every \(V\)-ergodic isotropic or orthogonally invariant point
process is \(V\)-weakly mixing.
\end{corollary}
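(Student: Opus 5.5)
We deduce the corollary from Proposition~\ref{prop:K-orbits-of-eigenvalues}, so the only thing to check is its orbit hypothesis. For \(K=SO(V)\) with \(d\geq2\), and a fortiori for \(K=O(V)\supseteq SO(V)\), the group \(SO(V)\) acts transitively on each sphere \(\{u\in V:\|u\|=r\}\). The reason is that any two unit vectors lie in a common two-dimensional plane, and a rotation in that plane, extended by the identity on its orthogonal complement, carries one to the other. Hence for \(\xi\neq0\) the \(K\)-orbit of \(\xi\) is the whole sphere of radius \(\|\xi\|\). When \(d\geq2\) this sphere is a connected manifold of positive dimension \(d-1\), so it is uncountable. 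Thus every nonzero \(K\)-orbit is uncountable.

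Now assume the restricted \(V\)-action is ergodic. Proposition~\ref{prop:K-orbits-of-eigenvalues} says that every \(V\)-eigenvalue has a countable \(K\)-orbit. By the previous paragraph, this forces every eigenvalue to be \(0\). The same proposition then gives that the restricted \(V\)-action is weakly mixing, which proves the first assertion.

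For the point-process statement, let \(\eta\) be a \(V\)-ergodic isotropic (respectively orthogonally invariant) point process. Apply the first assertion to the canonical \(G_K\)-action on \((\mathcal N_s(V),\eta)\), with \(K=SO(V)\) (respectively \(O(V)\)). This action is a p.m.p.\ Borel action because \(\eta\) is \(G_K\)-invariant, and its translation subaction is ergodic by hypothesis. Hence the translation subaction is weakly mixing, which is exactly the definition of \(V\)-weak mixing of \(\eta\).

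There is no real obstacle here. The only points requiring a little care are the transitivity argument, which needs \(d\geq2\) (for \(d=1\), \(SO(V)\) is trivial and \(O(V)\)-orbits are finite), and the observation that \(G_K\)-invariance of \(\eta\) is exactly what makes the canonical action a p.m.p.\ \(G_K\)-action, so that Proposition~\ref{prop:K-orbits-of-eigenvalues} applies.
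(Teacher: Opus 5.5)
Your proposal is correct and follows the same route as the paper: the paper's proof also just verifies, via transitivity of \(SO(V)\) on spheres for \(d\geq2\), that every nonzero \(K\)-orbit is an uncountable sphere, and then invokes Proposition~\ref{prop:K-orbits-of-eigenvalues}. Your explicit treatment of the point-process clause, applying this to the canonical \(G_K\)-action on \((\mathcal N_s(V),\eta)\), is the step the paper leaves implicit.
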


\begin{proof}
If \(\xi,\zeta\in V\setminus\{0\}\) have the same norm, there is
\(k\in SO(V)\) with \(k\xi=\zeta\). Thus every nonzero
\(SO(V)\)-orbit, and hence every nonzero \(O(V)\)-orbit, is a
sphere and is uncountable.
\end{proof}



\section{The one-dimensional construction}
\label{sec:dimension-one}

The one-dimensional argument has three parts. We first choose a separated
cocompact cross-section for the flow, which gives a suspension whose roof
function is bounded above and below. We then construct nested return sets
whose orbit intervals form successively coarser return blocks: every block at
the next level is the union of two or three blocks at the preceding level.
This hierarchy is used to assign an integer count to each roof interval, with
logarithmic discrepancy on every finite orbit interval. Finally, these integer
counts are realized by uniformly Delone point configurations containing
recognizable three-point markers.

\subsection{Bounded suspension model}
\label{subsec:bounded-suspension}

Throughout this section, let
\(\mathbb R\curvearrowright(X,\mu)\) be an essentially free ergodic
p.m.p.\ Borel action. After replacing \(X\) by an invariant conull Borel set,
we assume that the action is free.

By the cross-section results recalled in
Subsection~\ref{subsec:cross-sections-realizations}, choose a separated
cocompact Borel cross-section \(Z\subset X\). Thus there are constants
\(r,R>0\) such that every return set
\[
    Z_x:=\{t\in\mathbb R:t.x\in Z\}
\]
is \(r\)-separated and \(R\)-relatively dense. For \(z\in Z\), let
\[
    \tau(z):=\min\{t>0:t.z\in Z\}
\]
be the first positive return time and put
\[
    Tz:=\tau(z).z.
\]
The return-time relation is Borel and locally finite, so \(\tau\) and \(T\)
are Borel. Separation gives \(\tau(z)\ge r\). If two consecutive returns had
gap greater than \(2R\), the midpoint of that gap would have distance greater
than \(R\) from \(Z_z\), contradicting relative denseness. Hence
\begin{equation}
\label{eq:bounded-roof}
    0<\tau_-\leq\tau(z)\leq\tau_+<\infty,
    \qquad z\in Z,
\end{equation}
for instance with \(\tau_-=r\) and \(\tau_+=2R\).

Let \(\nu\) be the normalized transverse invariant measure on \(Z\). Then
\(T:(Z,\nu)\to(Z,\nu)\) is an ergodic p.m.p.\ automorphism, and the standard
cross-section representation identifies the original flow, modulo the
invariant null set already removed, with the suspension
\begin{equation}
\label{eq:suspension-measure}
    X
    \cong
    \{(z,s):z\in Z,\ 0\leq s<\tau(z)\},
    \qquad
    d\mu(z,s)
    =
    \frac{d\nu(z)\,ds}{\int_Z\tau\,d\nu};
\end{equation}
see \cite{AK42}. We use this suspension model throughout the section.

For \(z\in Z\) and \(n\in\mathbb Z\), define
\[
    \tau_n(z)
    :=
    \begin{cases}
        \displaystyle\sum_{j=0}^{n-1}\tau(T^jz), & n>0,\\[6pt]
        0, & n=0,\\[4pt]
        \displaystyle-\sum_{j=n}^{-1}\tau(T^jz), & n<0.
    \end{cases}
\]
Then
\begin{equation}
\label{eq:suspension-cocycle}
    T^nz=\tau_n(z).z,
    \qquad
    \tau_{n+m}(z)
    =
    \tau_n(T^mz)+\tau_m(z),
    \qquad z\in Z,\quad m,n\in\mathbb Z.
\end{equation}

\subsection{Nested return partitions}
\label{subsec:nested-return-partitions}

We next construct a nested sequence
\[
    Z=A_0\supset A_1\supset A_2\supset\cdots
\]
of Borel return sets. The only multiple-tower result needed is the following
special case of Alpern's theorem.

\begin{lemma}[Two-height Rokhlin partition]
\label{lem:two-three-Rokhlin}
Let \(S\) be an aperiodic invertible p.m.p.\ transformation of a standard
probability space \((W,\lambda)\). There are measurable sets
\(B^{(2)},B^{(3)}\subset W\), each of measure \(1/5\), such that
\[
    B^{(2)},\ SB^{(2)},\
    B^{(3)},\ SB^{(3)},\ S^2B^{(3)}
\]
form a partition of \(W\) modulo null sets.
\end{lemma}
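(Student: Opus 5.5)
We derive the lemma from the Rokhlin lemma together with the fact that every integer \(n\geq2\) is a sum of 2's and 3's. It is a special case of Alpern's multiple-tower theorem, but a direct argument suffices.

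\emph{Step 1: a tall tower.} First apply the Rokhlin lemma to the aperiodic map \(S\) with a large height \(N\) and small \(\varepsilon\). Better, use the Kakutani-skyscraper form: choose a measurable set \(F\subset W\) of small positive measure such that every return time to \(F\) is at least \(2\). This follows from aperiodicity via Rokhlin, for instance by taking \(F\) to be the base of a Rokhlin tower of height \(\geq2\) and then adjoining points of the error set suitably. Simpler still, take a base \(F\) whose return times \(r_F\) are all \(\geq 2\); such \(F\) exists by the standard marker lemma for aperiodic transformations. Every return time \(n=r_F(z)\geq2\) can be written as \(n=2a+3b\) with \(a,b\geq0\) chosen canonically, say \(b\in\{0,1\}\) according to parity. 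Each column \(\{z,Sz,\dots,S^{n-1}z\}\) of the skyscraper over \(F\) is then cut into consecutive blocks of length \(2\) and \(3\), in a fixed order. Let \(B^{(2)}\) be the set of starting points of the 2-blocks and \(B^{(3)}\) the set of starting points of the 3-blocks. By Kac's lemma the columns exhaust \(W\) modulo null sets, so the five sets \(B^{(2)},SB^{(2)},B^{(3)},SB^{(3)},S^2B^{(3)}\) partition \(W\). Measurability follows from Borelness of \(r_F\) and the canonical decomposition.

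\emph{Step 2: equalizing the measures.} The obstacle is the requirement \(\lambda(B^{(2)})=\lambda(B^{(3)})=1/5\), which means exactly equal mass, so that \(2\alpha+3\beta=1\) with \(\alpha=\beta\). A canonical split does not give this, so we choose decompositions with controlled proportions. Take a Rokhlin tower of height \(N\) (large) whose error set \(E\) has measure \(<\varepsilon\), with base \(F_0\). Then build \(F=F_0\), whose columns have length \(N\) plus excursions through \(E\). Each column length \(n\geq N\) is decomposed as \(n=2a+3b\) with \(|a-b|\leq 3\), which is possible for \(n\geq 10\): take \(a\approx b\approx n/5\) and fix the residue mod 5 using small adjustments. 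This gives \(|\lambda(B^{(2)})-\lambda(B^{(3)})|\leq 3\lambda(F)\leq 3/N\).

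To get exact equality, exploit the freedom left. On a measurable subset of columns, each trade of three 2-blocks for two 3-blocks shifts mass continuously, since \(\lambda\) is nonatomic. Varying the subset of columns on which one performs the trade makes \(\lambda(B^{(2)})-\lambda(B^{(3)})\) a continuous function of the subset's measure. That function takes values of both signs, provided the columns are long enough to admit both a decomposition with \(a\geq b+3\) and one with \(b\geq a+3\), which holds for \(n\geq 30\), say. The intermediate value theorem, applied along a measurable increasing family of subsets of \(F\) (via a Borel order or the nonatomicity of \(\lambda|_F\)), then yields a choice with \(\lambda(B^{(2)})=\lambda(B^{(3)})\).

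\emph{Conclusion.} Once the two measures are equal, the partition forces \(5\lambda(B^{(2)})=1\), so each set has measure \(1/5\). The main obstacle is the exact equalization in Step 2; the rest is routine Rokhlin–Kakutani bookkeeping.
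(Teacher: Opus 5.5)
Your argument is correct, but it is not the paper's route. The paper simply applies the multiple Rokhlin tower theorem of Alpern, in the form of Eigen--Prasad \cite[Theorem~1.1]{EP97}, with heights \(2,3\) and base measures \(1/5,1/5\), checking only \(2/5+3/5=1\).

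You instead prove this special case by hand. You build a Kakutani skyscraper over a base \(F\) with all return times \(r_F\geq N\). In each column you choose between a splitting \(r_F=2a+3b\) with \(a>b\) and one with \(a<b\); both exist once \(r_F\geq15\), so the margins \(|a-b|\geq3\) are unnecessary. You then apply the intermediate value theorem along a nested measurable family \(G_t\subset F\) with \(\lambda(G_t)=t\lambda(F)\), using the second splitting exactly on \(G_t\). The difference \(D(t)=\lambda(B^{(2)})-\lambda(B^{(3)})\) is continuous, since the integrand is dominated by \(r_F\), which is integrable by Kac, and \(D(0)>0>D(1)\). Nonatomicity of \(\lambda\) is automatic for an aperiodic invariant probability measure.

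A one-parameter argument suffices because, once the five sets partition \(W\), the relation \(2\lambda(B^{(2)})+3\lambda(B^{(3)})=1\) holds automatically. Prescribing both base measures is therefore a single scalar equation. With three or more heights this is no longer true, and that is what the general theorem handles. Your version is self-contained and shows exactly why the coprime heights \(2,3\) work; the citation is shorter and general.

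Two points to tidy. First, the bound \(|\lambda(B^{(2)})-\lambda(B^{(3)})|\leq 3/N\) plays no role and can be dropped. Second, the lemma assumes only aperiodicity, not ergodicity, so \(F\) must be a complete section. Take it from the marker lemma with all return times \(\geq N\), not as an arbitrary Rokhlin base; otherwise the columns exhaust only \(\bigcup_{n\in\mathbb Z}S^nF\), not \(W\).
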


\begin{proof}
Apply the multiple Rokhlin tower theorem
\cite[Theorem~1.1]{EP97} with heights \(2\) and \(3\) and prescribed base
measures \(1/5\) and \(1/5\). The compatibility condition is
\(2/5+3/5=1\).
\end{proof}

We apply the lemma recursively. Set \(A_0:=Z\) and \(T_0:=T\). Suppose that
\(A_k\) has been defined with positive measure. Let \(T_k\) be the first-return
transformation of \(T\) to \(A_k\), and let
\[
    \nu_k:=\frac{\nu|_{A_k}}{\nu(A_k)}.
\]
The induced transformation \(T_k\) is an aperiodic p.m.p.\ automorphism of
\((A_k,\nu_k)\). Applying Lemma~\ref{lem:two-three-Rokhlin} to \(T_k\) gives
measurable sets
\(A_{k+1}^{(2)},A_{k+1}^{(3)}\subset A_k\) such that
\begin{equation}
\label{eq:23-multitower}
\begin{aligned}
    A_k
    ={}&
    A_{k+1}^{(2)}
    \sqcup T_kA_{k+1}^{(2)}\\
    &\sqcup A_{k+1}^{(3)}
    \sqcup T_kA_{k+1}^{(3)}
    \sqcup T_k^2A_{k+1}^{(3)}
\end{aligned}
\end{equation}
modulo null sets, with
\[
    \nu_k(A_{k+1}^{(2)})
    =
    \nu_k(A_{k+1}^{(3)})=\frac15.
\]
Put
\[
    A_{k+1}:=A_{k+1}^{(2)}\sqcup A_{k+1}^{(3)}.
\]

The partition \eqref{eq:23-multitower} determines the return time from
\(A_{k+1}\) to itself under \(T_k\). A point of
\(A_{k+1}^{(2)}\) returns after exactly two iterates of \(T_k\), while a point
of \(A_{k+1}^{(3)}\) returns after exactly three. Indeed, the intermediate
tower levels are disjoint from \(A_{k+1}\), and applying \(T_k\) to
\eqref{eq:23-multitower} shows, after cancelling the common non-top levels,
that
\[
    T_k^2A_{k+1}^{(2)}\sqcup T_k^3A_{k+1}^{(3)}
    =
    A_{k+1}^{(2)}\sqcup A_{k+1}^{(3)}
\]
modulo null sets. Consequently, between two successive visits to
\(A_{k+1}\) there are exactly two or three successive visits to \(A_k\).

The base measures satisfy
\begin{equation}
\label{eq:hierarchy-base-measures}
    \nu(A_{k+1})=\frac25\nu(A_k),
    \qquad
    \nu(A_k)=\left(\frac25\right)^k.
\end{equation}
Hence \(\nu(\bigcap_{k\geq0}A_k)=0\).

The preceding statements initially hold modulo null sets. Choose Borel
representatives of all tower bases, and for each \(k\) let \(D_k\) be the null
set on which the level-\(k\) partition or the asserted first-return description
fails. Remove the \(T\)-saturation of
\[
    \bigcap_{k\geq0}A_k
    \ \cup\
    \bigcup_{k\geq0}D_k.
\]
After this single invariant null-set removal, all tower partitions and
first-return statements hold pointwise, the inclusions
\(A_{k+1}\subset A_k\) are exact, and
\begin{equation}
\label{eq:empty-hierarchy-intersection}
    \bigcap_{k\geq0}A_k=\varnothing.
\end{equation}
We keep the same notation for the restricted objects.

For \(z\in A_k\), let
\[
    r_k(z):=\min\{n\geq1:T^nz\in A_k\}
\]
and define the \emph{level-\(k\) return block}
\[
    B_k(z):=\{z,Tz,\ldots,T^{r_k(z)-1}z\}.
\]
Thus the level-\(k\) return blocks are precisely the orbit intervals between
successive visits to \(A_k\). For each \(k\) they partition every \(T\)-orbit.
Moreover, the partitions are nested: every level-\((k+1)\) return block is,
in orbit order, the disjoint union of either two or three level-\(k\) return
blocks. It follows inductively that
\begin{equation}
\label{eq:canonical-block-size}
    2^k\leq r_k(z)\leq3^k,
    \qquad z\in A_k.
\end{equation}

We shall also need to know at which level a cut between two consecutive
level-zero blocks ceases to be a block boundary. For \(z\in Z\), consider the
cut between \(T^{-1}z\) and \(z\). Since the starting points of the level-\(k\)
return blocks are exactly the points of \(A_k\), this cut is a level-\(k\)
block boundary if and only if \(z\in A_k\). By
\eqref{eq:empty-hierarchy-intersection},
\[
    \ell(z):=\max\{k\geq0:z\in A_k\}<\infty.
\]
Hence the cut is a boundary at levels \(0,1,\ldots,\ell(z)\), but not at level
\(\ell(z)+1\). Equivalently, the level-\(\ell(z)\) return blocks immediately
on the two sides of the cut are two consecutive children of the same
level-\((\ell(z)+1)\) return block. The map
\(\ell:Z\to\mathbb N_0\) is Borel because
\[
    \{\ell=k\}=A_k\setminus A_{k+1}.
\]


\subsection{Hierarchical integer rounding}
\label{subsec:hierarchical-rounding}

Fix \(\rho>0\), and for \(s\in\mathbb R\) write
\[
    \{s\}:=s-\lfloor s\rfloor
\]
for its fractional part. The real mass assigned to the suspension interval
over \(z\in Z\) is \(\rho\tau(z)\). We shall define an integer
\(q_\rho(z)\) for each \(z\) so that sums of the \(q_\rho\)'s differ from the
corresponding real masses by at most a logarithmic term on every finite orbit
interval.

For a level-\(k\) return block \(B\), define
\[
    L_\rho(B)
    :=
    \rho\sum_{z\in B}\tau(z),
    \qquad
    n_\rho(B)
    :=
    \lfloor L_\rho(B)\rfloor.
\]
Suppose that a level-\((k+1)\) return block \(B\) is the union, in orbit order,
of its level-\(k\) children \(B_1,\ldots,B_m\), where \(m\in\{2,3\}\). Define
\[
    c_\rho(B)
    :=
    n_\rho(B)-\sum_{i=1}^m n_\rho(B_i).
\]
Since \(L_\rho(B)=\sum_iL_\rho(B_i)\),
\begin{equation}
\label{eq:one-dimensional-child-correction}
    c_\rho(B)
    =
    \left\lfloor
        \sum_{i=1}^m\{L_\rho(B_i)\}
    \right\rfloor,
    \qquad
    0\leq c_\rho(B)\leq m-1.
\end{equation}
There are \(m-1\) cuts between consecutive children of \(B\). Select the
first \(c_\rho(B)\) of these cuts in orbit order, and assign one unit to the
level-zero block immediately to the right of each selected cut.

This prescription associates each cut with exactly one level. To see this,
fix \(z\in Z\) and put \(k:=\ell(z)\). The cut immediately to the left of
\(z\) is a boundary between level-\(k\) return blocks, but not between
level-\((k+1)\) return blocks. Hence it is one of the child boundaries of the
unique level-\((k+1)\) return block containing it, and this is the only
transition at which the cut occurs as a boundary between children. Let
\(\iota_\rho(z)\in\{0,1\}\) be the indicator that this cut is among the
selected child boundaries of that block, and define
\begin{equation}
\label{eq:one-dimensional-local-count}
    q_\rho(z)
    :=
    \lfloor\rho\tau(z)\rfloor+\iota_\rho(z).
\end{equation}

The function \(q_\rho:Z\to\mathbb Z_{\geq0}\) is Borel. Indeed,
\(k=\ell(z)\) is Borel. Since the return time from \(A_{k+1}\) to itself under
\(T_k\) is either two or three, the initial point \(a\in A_{k+1}\) of the
unique level-\((k+1)\) return block containing the cut is the unique point of
\[
    \{T_k^{-1}z,T_k^{-2}z\}
\]
that belongs to \(A_{k+1}\). Thus \(a\), the number
\(m\in\{2,3\}\) of level-\(k\) children of \(B_{k+1}(a)\), and the position of
\(z\) among the starting points of the noninitial children are Borel functions
of \(z\). The child masses in \eqref{eq:one-dimensional-child-correction} are
finite sums of the Borel function \(\tau\). Therefore the condition that the
cut before \(z\) is one of the first \(c_\rho(B_{k+1}(a))\) child boundaries
is Borel, proving the claim.

The next lemma gives the exact sum of the local integer counts on a return
block.

\begin{lemma}[Return-block rounding]
\label{lem:canonical-block-rounding}
Let \(B=B_k(a)\) be a level-\(k\) return block. Then
\begin{equation}
\label{eq:canonical-block-rounding}
    \sum_{w\in B}q_\rho(w)
    =
    n_\rho(B)+\zeta_\rho(B),
    \qquad
    \zeta_\rho(B)\in\{0,1\},
\end{equation}
where \(\zeta_\rho(B)=\iota_\rho(a)\) is the possible contribution assigned
to the cut immediately to the left of \(B\). In particular,
\begin{equation}
\label{eq:canonical-block-discrepancy}
    \left|
        \sum_{w\in B}q_\rho(w)-L_\rho(B)
    \right|
    \leq1.
\end{equation}
\end{lemma}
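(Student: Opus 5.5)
We argue by induction on the level \(k\). The claim to establish at each level is the following: for every level-\(k\) return block \(B=B_k(a)\), the sum \(\sum_{w\in B}\iota_\rho(w)\) equals \(\sum_{w\in B}\bigl(q_\rho(w)-\lfloor\rho\tau(w)\rfloor\bigr)\), and this in turn equals \(n_\rho(B)-\sum_{w\in B}\lfloor\rho\tau(w)\rfloor+\iota_\rho(a)\). The key bookkeeping fact is the following. Each \(w\in B\) other than \(a\) has its left cut strictly inside \(B\). That cut is a child boundary of a block of level at most \(k\) contained in \(B\), because \(w\notin A_k\) forces \(\ell(w)<k\). The cut to the left of \(a\) is instead assigned to a level strictly greater than \(k-1\), since \(\ell(a)\geq k\). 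So the units carried by the interior cuts of \(B\) are exactly the units distributed by the corrections \(c_\rho(B')\) over all blocks \(B'\subseteq B\) of levels \(1,\ldots,k\). The single extra unit \(\iota_\rho(a)\) comes from an ancestor.

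The base case is \(k=0\). Here \(B=\{a\}\), so \(n_\rho(B)=\lfloor\rho\tau(a)\rfloor\) and \(q_\rho(a)=n_\rho(B)+\iota_\rho(a)\) by definition. For the inductive step, write a level-\((k+1)\) block as \(B=B_1\sqcup\cdots\sqcup B_m\) in orbit order, with initial points \(a=a_1,a_2,\ldots,a_m\in A_k\). By the induction hypothesis,
\[
\sum_{w\in B}q_\rho(w)=\sum_{i=1}^m n_\rho(B_i)+\sum_{i=1}^m\iota_\rho(a_i).
\]
For \(i\geq2\), the cut left of \(a_i\) is a child boundary of \(B\). Indeed, \(a_i\in A_k\setminus A_{k+1}\), so \(\ell(a_i)=k\), and the unique level-\((k+1)\) block in which this cut appears as a child boundary is \(B\) itself. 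Hence \(\sum_{i\geq2}\iota_\rho(a_i)\) is the number of selected cuts of \(B\), which is \(c_\rho(B)\); this is admissible because \(0\leq c_\rho(B)\leq m-1\) by \eqref{eq:one-dimensional-child-correction}. Adding this to \(\sum_i n_\rho(B_i)\) gives \(n_\rho(B)\) by the definition of \(c_\rho(B)\). The remaining term is \(\iota_\rho(a_1)=\iota_\rho(a)\), which is exactly \(\zeta_\rho(B)\in\{0,1\}\). This proves \eqref{eq:canonical-block-rounding}.

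The discrepancy bound \eqref{eq:canonical-block-discrepancy} then follows immediately. We have \(0\leq L_\rho(B)-n_\rho(B)<1\) and \(\zeta_\rho(B)\in\{0,1\}\), so the difference \(\sum_{w\in B}q_\rho(w)-L_\rho(B)\) lies in \((-1,1]\).

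The only delicate point is verifying that \(\ell(a_i)=k\) exactly for the noninitial children, so that their cuts are charged to \(B\) and not to a different level. This uses that the starting points of level-\(j\) blocks are exactly \(A_j\), together with the pointwise validity of the nested tower structure after the null-set removal.
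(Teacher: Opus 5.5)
Your proof is correct and is essentially the paper's argument: you induct on \(k\) through the two-or-three-child decomposition, and the selected child boundaries of \(B\) contribute exactly \(c_\rho(B)\). The only cosmetic difference is that the paper inducts on the sum with the left-boundary correction \(\iota_\rho(a)\) omitted and adds it at the end, whereas you carry the terms \(\iota_\rho(a_i)\) through the induction and explicitly check that \(\ell(a_i)=k\) for the noninitial children; that check makes precise the step the paper covers with ``by construction''.
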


\begin{proof}
We first sum only the corrections assigned to cuts strictly inside \(B\), so
that the correction \(\iota_\rho(a)\) at the left boundary is omitted. We
claim by induction on \(k\) that the resulting sum is exactly \(n_\rho(B)\).

For \(k=0\), the block is \(B=\{a\}\), there are no internal cuts, and the
claim is
\[
    \lfloor\rho\tau(a)\rfloor=n_\rho(B).
\]
Suppose now that \(k\geq1\) and that
\[
    B=B_1\sqcup\cdots\sqcup B_m,
    \qquad m\in\{2,3\},
\]
is the decomposition of \(B\) into its level-\((k-1)\) children. By the
induction hypothesis, the initial floors together with all corrections at
cuts strictly inside the children contribute
\[
    \sum_{i=1}^m n_\rho(B_i).
\]
The remaining cuts strictly inside \(B\) are precisely the \(m-1\) boundaries
between these children. By construction, exactly \(c_\rho(B)\) of those
boundaries are selected, so their contribution changes the preceding sum to
\[
    \sum_{i=1}^m n_\rho(B_i)+c_\rho(B)=n_\rho(B).
\]
This proves the induction claim.

The full sum \(\sum_{w\in B}q_\rho(w)\) contains one further possible
correction: \(\iota_\rho(a)\), attached to the cut immediately to the left of
the first level-zero block of \(B\). This gives
\eqref{eq:canonical-block-rounding}. Since
\(n_\rho(B)=\lfloor L_\rho(B)\rfloor\),
\eqref{eq:canonical-block-discrepancy} follows.
\end{proof}

We next decompose an arbitrary finite orbit interval into return blocks for
which Lemma~\ref{lem:canonical-block-rounding} applies.

\begin{lemma}[Logarithmic return-block decomposition]
\label{lem:canonical-decomposition}
Let \(z\in Z\) and let \(m<n\) be integers. Put
\[
    J=\{T^mz,\ldots,T^{n-1}z\},
    \qquad
    N:=n-m.
\]
Consider all return blocks \(B_k(a)\) that are contained in \(J\), and retain
those that are maximal under inclusion. These maximal return blocks form a
partition of \(J\). At each level \(k\) there are at most four of them, and
their total number is at most
\begin{equation}
\label{eq:number-maximal-canonical-blocks}
    4\bigl(1+\lfloor\log_2N\rfloor\bigr).
\end{equation}
\end{lemma}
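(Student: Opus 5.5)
The plan is to exploit the laminar (nested) structure of the return blocks. For each level \(k\), the level-\(k\) return blocks partition the orbit, and each level-\((k+1)\) block is a union of level-\(k\) blocks. Hence any two return blocks (at any levels) are either disjoint or nested. Every level-zero block \(\{w\}\), \(w\in J\), is a return block contained in \(J\). Consequently each \(w\in J\) lies in some maximal return block contained in \(J\). Two distinct maximal blocks cannot intersect, since by laminarity one would then contain the other, contradicting maximality. So the maximal blocks partition \(J\).

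To count, I fix \(k\) and consider the maximal blocks of level exactly \(k\); the aim is to show there are at most four. Each such block \(B\subset J\) is a child of a unique level-\((k+1)\) block \(B'\), and maximality forces \(B'\not\subset J\). Since \(B'\) is an orbit interval meeting \(J\) but not contained in it, \(B'\) contains either the left endpoint \(T^{m-1}z\) or the right endpoint \(T^{n}z\) just outside \(J\). Thus \(B'\) is one of at most two level-\((k+1)\) blocks, namely those containing \(T^{m-1}z\) and \(T^nz\), each having at most three children. A finer count gives the bound four. In the left parent, the child containing \(T^{m-1}z\) is not inside \(J\), so at most two children qualify; symmetrically at most two in the right parent. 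If the two parents coincide, a single block contains both ends and at most one child qualifies. This gives at most four per level.

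It remains to bound the range of levels. A level-\(k\) block contained in \(J\) has at least \(2^k\) elements by \eqref{eq:canonical-block-size}, so \(2^k\le N\), that is \(k\le\lfloor\log_2N\rfloor\). Summing over \(k=0,\dots,\lfloor\log_2N\rfloor\) gives \eqref{eq:number-maximal-canonical-blocks}.

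The only delicate point is the ambiguity that the same orbit set may be a return block at several levels, when a block has a single-block representation; but here each block has two or three children, so levels are well defined and distinct levels give distinct sets except trivially. I will simply count pairs \((B,k)\), which only overcounts and so preserves the upper bound. The main obstacle is the careful per-level count showing that the parent lies outside \(J\) yet meets it. I will treat this via the endpoint argument above, noting that orbits are infinite, so the points \(T^{m-1}z\) and \(T^nz\) exist and lie outside \(J\) by freeness and aperiodicity.
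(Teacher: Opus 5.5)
Your proposal is correct and follows the paper's proof essentially step for step. Laminarity of the nested return partitions gives the partition of \(J\) into maximal blocks. Each maximal level-\(k\) block has a parent that is not contained in \(J\) and so straddles one of the two boundary cuts, and each such parent contributes at most two children. The bound \(r_k\geq 2^k\) then restricts the levels to \(k\leq\lfloor\log_2N\rfloor\).

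The one point you leave implicit is the existence of a largest return block through each point of \(J\). It follows from the finiteness of \(J\), or from the level bound as the paper uses it.
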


\begin{proof}
Because every level-\((k+1)\) return block is a union of consecutive
level-\(k\) return blocks, any two return blocks, possibly from different
levels, are either disjoint or one contains the other. Every point of \(J\)
is itself a level-zero return block contained in \(J\). Moreover, a return
block contained in \(J\) can have level at most \(\lfloor\log_2N\rfloor\), by
\eqref{eq:canonical-block-size}. Hence, for each point of \(J\), there is a
largest return block containing that point and contained in \(J\). It follows
that the return blocks maximal under inclusion are pairwise disjoint and cover
\(J\).

Fix a level \(k\), and let \(B\) be a maximal level-\(k\) return block in
\(J\). Its unique level-\((k+1)\) parent is not contained in \(J\), for
otherwise \(B\) would not be maximal. Since the parent contains \(B\subset J\)
but is not contained in \(J\), it crosses at least one of the two boundary
cuts
\[
    (T^{m-1}z,T^mz),
    \qquad
    (T^{n-1}z,T^nz).
\]
For each boundary cut there is at most one level-\((k+1)\) return block that
crosses it, because the level-\((k+1)\) blocks partition the orbit. Such a
parent has at most three level-\(k\) children, and at least one of those
children is not contained in \(J\); hence at most two of its children can be
maximal level-\(k\) blocks contained in \(J\). With two boundary cuts, there
are therefore at most four maximal level-\(k\) return blocks.

Finally, every level-\(k\) return block contains at least \(2^k\) level-zero
blocks by \eqref{eq:canonical-block-size}. Therefore no level
\(k>\lfloor\log_2N\rfloor\) can occur, and summing the bound of four over the
possible levels gives \eqref{eq:number-maximal-canonical-blocks}.
\end{proof}

Combining the preceding two lemmas gives the required orbit discrepancy
estimate.

\begin{proposition}[Orbit discrepancy and exact mean]
\label{prop:one-dimensional-orbit-discrepancy}
For every \(z\in Z\) and integers \(m<n\),
\begin{equation}
\label{eq:one-dimensional-orbit-discrepancy}
    \left|
        \sum_{j=m}^{n-1}q_\rho(T^jz)
        -
        \rho\sum_{j=m}^{n-1}\tau(T^jz)
    \right|
    \leq
    4\bigl(1+\lfloor\log_2(n-m)\rfloor\bigr).
\end{equation}
Moreover,
\begin{equation}
\label{eq:one-dimensional-exact-mean}
    \int_Zq_\rho\,d\nu
    =
    \rho\int_Z\tau\,d\nu.
\end{equation}
\end{proposition}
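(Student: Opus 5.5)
The discrepancy estimate \eqref{eq:one-dimensional-orbit-discrepancy} follows by combining Lemma~\ref{lem:canonical-decomposition} with Lemma~\ref{lem:canonical-block-rounding}. Fix \(z\in Z\) and \(m<n\), and let \(J=\{T^mz,\ldots,T^{n-1}z\}\). By Lemma~\ref{lem:canonical-decomposition}, \(J\) is partitioned by its maximal return blocks \(B^{(1)},\ldots,B^{(M)}\), with \(M\leq4(1+\lfloor\log_2(n-m)\rfloor)\). Both sums in \eqref{eq:one-dimensional-orbit-discrepancy} are additive over this partition. For each \(i\), the definition of \(L_\rho\) gives \(\rho\sum_{w\in B^{(i)}}\tau(w)=L_\rho(B^{(i)})\), and \eqref{eq:canonical-block-discrepancy} bounds the blockwise difference by \(1\). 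The triangle inequality then gives the bound \(M\). No further care is needed at the cuts. The only subtlety is that each block's count includes the correction \(\iota_\rho\) at its left boundary cut, and this is already accounted for in \eqref{eq:canonical-block-rounding}.

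For the exact mean \eqref{eq:one-dimensional-exact-mean}, I would use ergodicity of \(T\) on \((Z,\nu)\) together with Birkhoff's theorem. The function \(q_\rho\) is Borel and bounded, since \(0\leq q_\rho\leq\rho\tau_++1\), so it is integrable. By the pointwise ergodic theorem, for \(\nu\)-almost every \(z\),
\[
    \frac1n\sum_{j=0}^{n-1}q_\rho(T^jz)\to\int_Zq_\rho\,d\nu,
    \qquad
    \frac1n\sum_{j=0}^{n-1}\rho\tau(T^jz)\to\rho\int_Z\tau\,d\nu.
\]
Apply \eqref{eq:one-dimensional-orbit-discrepancy} with \(m=0\) at such a point \(z\). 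The difference of the two Birkhoff averages is at most \(4(1+\log_2n)/n\to0\), so the two limits coincide.

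The only point needing care is that the induced system \(T\) on \((Z,\nu)\) is ergodic, and this was noted in the suspension representation \eqref{eq:suspension-measure}. Even ergodicity could be avoided: the same argument with conditional expectations onto the invariant \(\sigma\)-algebra shows \(\mathbb E[q_\rho\mid\mathcal I]=\rho\,\mathbb E[\tau\mid\mathcal I]\), and integrating gives the identity. I therefore expect no real obstacle; the substantive work is already in the two preceding lemmas.
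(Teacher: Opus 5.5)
Your argument for \eqref{eq:one-dimensional-orbit-discrepancy} is the paper's. You partition the orbit interval into maximal return blocks via Lemma~\ref{lem:canonical-decomposition}, bound each block's error by one using \eqref{eq:canonical-block-discrepancy}, and count the blocks.

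For \eqref{eq:one-dimensional-exact-mean} you take a different and heavier route. The paper integrates \eqref{eq:one-dimensional-orbit-discrepancy}, with $m=0$ and $n=N$, over $Z$. By $T$-invariance of $\nu$, $\int_Z\sum_{j=0}^{N-1}(q_\rho-\rho\tau)(T^jz)\,d\nu(z)=N\int_Z(q_\rho-\rho\tau)\,d\nu$. Hence $N\bigl|\int_Z(q_\rho-\rho\tau)\,d\nu\bigr|\le 4(1+\lfloor\log_2N\rfloor)$, and dividing by $N$ and letting $N\to\infty$ finishes the proof. That argument uses only invariance of $\nu$ and boundedness of $q_\rho$ and $\tau$; it needs no ergodicity, no pointwise ergodic theorem and no conditional expectations.

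Your Birkhoff argument is nonetheless correct. Ergodicity of the induced map is available from the suspension representation, and your conditional-expectation variant removes even that hypothesis. The extra machinery simply gains nothing here: the discrepancy bound already holds on every orbit, and the identity concerns only the means, so integrating the pathwise inequality is the most direct route.
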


\begin{proof}
Apply Lemma~\ref{lem:canonical-decomposition} to the orbit interval
\[
    J=\{T^mz,\ldots,T^{n-1}z\}.
\]
On each return block in that partition, the difference between the sum of the
\(q_\rho\)'s and the corresponding real mass has absolute value at most one
by \eqref{eq:canonical-block-discrepancy}. The number of blocks is bounded by
\eqref{eq:number-maximal-canonical-blocks}, which proves
\eqref{eq:one-dimensional-orbit-discrepancy}.

Taking \(m=0\) and \(n=N\), integrating
\eqref{eq:one-dimensional-orbit-discrepancy}, and using \(T\)-invariance of
\(\nu\), we obtain
\[
    N\left|
        \int_Z(q_\rho-\rho\tau)\,d\nu
    \right|
    \leq
    4\bigl(1+\lfloor\log_2N\rfloor\bigr).
\]
The function \(q_\rho\) is bounded because \(\tau\) is bounded. Dividing by
\(N\) and letting \(N\to\infty\) proves
\eqref{eq:one-dimensional-exact-mean}.
\end{proof}



\subsection{Delone placement, generation, and rigidity}
\label{subsec:one-dimensional-placement-generation}

\emph{Placement and local geometry.}
We realize the integer counts \(q_\rho(z)\) geometrically.
Assume
\begin{equation}
\label{eq:one-dimensional-intensity-threshold}
    \rho\tau_-\geq20
\end{equation}
and set \(h:=\rho^{-1}\). By construction,
\[
    \lfloor\rho\tau(z)\rfloor
    \leq q_\rho(z)
    \leq \lfloor\rho\tau(z)\rfloor+1,
\]
so in particular \(q_\rho(z)\geq20\).

Fix a Borel injection \(b:Z\to(0,1)\) and put
\(\varepsilon_0:=10^{-2}\). In the roof interval
\([0,\tau(z))\), place the following three-point marker, which
we call the \emph{marker}:
\[
    \operatorname{Mark}_\rho(z)
    :=
    h\left\{
        1,\,
        1+\varepsilon_0,\,
        1+(3+b(z))\varepsilon_0
    \right\}.
\]
The remaining \(q_\rho(z)-3\) points are placed at
\[
    5h+j\,d_\rho(z),
    \qquad
    j=0,1,\ldots,q_\rho(z)-4,
    \qquad
    d_\rho(z)
    :=
    \frac{\tau(z)-7h}{q_\rho(z)-4}.
\]
Denote the resulting \(q_\rho(z)\)-point subset of
\([0,\tau(z))\) by \(\mathcal A_\rho(z)\). For \(z\in Z\), define the
corresponding configuration on the whole orbit line by
\begin{equation}
\label{eq:one-dimensional-global-configuration}
    P_z
    :=
    \bigcup_{n\in\mathbb Z}
    \left(
        \tau_n(z)+\mathcal A_\rho(T^nz)
    \right).
\end{equation}

\begin{lemma}[One-dimensional Delone bounds]
\label{lem:one-dimensional-local-geometry}
For every \(z\in Z\),
\[
    \frac34h<d_\rho(z)<h,
\]
and
\[
    \operatorname{sep}(P_z)\geq\varepsilon_0h,
    \qquad
    \operatorname{covrad}(P_z)\leq2h.
\]
In particular, the family \((P_z)_{z\in Z}\) is uniformly Delone.
\end{lemma}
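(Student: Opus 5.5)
The plan is to compute directly from the explicit placement. We first bound \(d_\rho(z)\), then examine the gaps inside a single roof interval, and finally the gaps across the junction of two consecutive roof intervals.

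\emph{Step 1: the spacing \(d_\rho(z)\).} Write \(q=q_\rho(z)\) and \(L=\rho\tau(z)\), so that \(\lfloor L\rfloor\le q\le\lfloor L\rfloor+1\) and \(L\ge20\) by \eqref{eq:one-dimensional-intensity-threshold}. In units of \(h\) we have \(d_\rho(z)/h=(L-7)/(q-4)\). For the upper bound, \(q-4\ge\lfloor L\rfloor-4>L-5>L-7\), so the ratio is \(<1\). For the lower bound, \(q-4\le L-3\), so the ratio is at least \((L-7)/(L-3)\). This is increasing in \(L\), hence at least \(13/17>3/4\) when \(L\ge20\).

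\emph{Step 2: separation.} Inside \([0,\tau(z))\) the marker points are \(h\), \(1.01h\) and \(h(1+(3+b)\varepsilon_0)\in(1.03h,1.04h)\). Their mutual gaps are therefore at least \(\varepsilon_0h\). The regular points run from \(5h\) to
\[
    5h+(q-4)d_\rho=5h+\tau(z)-7h=\tau(z)-2h,
\]
with gaps \(d_\rho>\tfrac34h\). The gap between the last marker point and \(5h\) exceeds \(3h\). Across a junction, the last point of one interval is at \(\tau(z)-2h\), and the first point of the next interval sits at relative position \(h\). The junction gap is therefore exactly \(3h\). All gaps are thus at least \(\varepsilon_0h\), and since the configuration is ordered along the line, this gives \(\operatorname{sep}(P_z)\ge\varepsilon_0h\).

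\emph{Step 3: covering radius.} Every consecutive gap is at most \(\max\{1.04h\text{-to-}5h\text{ gap},\,d_\rho,\,3h\}<4h\). A gap of length at most \(4h\) places every point of the line within \(2h\) of \(P_z\). Since the intervals \(\tau_n(z)+[0,\tau(T^nz))\) tile \(\mathbb R\) by \eqref{eq:suspension-cocycle}, \(P_z\) is bi-infinite with no larger gaps, and so \(\operatorname{covrad}(P_z)\le2h\).

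The constants \((\varepsilon_0h,2h)\) do not depend on \(z\), which gives uniform Delone. The only delicate point is the lower bound on \(d_\rho\), where \(q\) may be \(\lfloor L\rfloor+1\). That case is handled by the inequality \(q-4\le L-3\) together with the threshold \(L\ge20\).
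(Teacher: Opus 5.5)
Your proposal is correct and takes essentially the same route as the paper. It uses the same bounds \(L-5\le q_\rho(z)-4\le L-3\), which give \(\tfrac{L-7}{L-3}h\le d_\rho(z)<h\). It then does the same gap bookkeeping to get separation at least \(\varepsilon_0h\) and every gap below \(4h\): marker gaps at least \(\varepsilon_0h\), a marker-to-regular gap in \((3h,4h)\), regular gaps \(d_\rho\), and a junction gap of exactly \(3h\).
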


\begin{proof}
Write \(x:=\rho\tau(z)\geq20\). The pointwise bounds on
\(q_\rho\) give
\[
    x-5
    \leq q_\rho(z)-4
    \leq x-3,
\]
and therefore
\[
    \frac{x-7}{x-3}h
    \leq d_\rho(z)
    \leq
    \frac{x-7}{x-5}h.
\]
For \(x\geq20\), the left-hand side is greater than \(3h/4\)
and the right-hand side is less than \(h\).

The smallest distance inside a marker is
\(\varepsilon_0h\). The distance from the marker to the first remaining point is
greater than \(3h\), consecutive remaining points are separated by more than
\(3h/4\), and the last remaining point in the interval \([0,\tau(z))\) is
\(\tau(z)-2h\). The first point in the next roof interval is its marker point
at distance \(h\) from the next roof boundary, so the gap across two
successive roof intervals is \(3h\). These estimates apply to every translate
appearing in \eqref{eq:one-dimensional-global-configuration}. Hence
\(\operatorname{sep}(P_z)\geq\varepsilon_0h\), while every gap is smaller
than \(4h\), giving \(\operatorname{covrad}(P_z)\leq2h\).
\end{proof}

\emph{Global configuration and generation.}
The cocycle identity \eqref{eq:suspension-cocycle} gives
\begin{equation}
\label{eq:one-dimensional-base-covariance}
    P_{T^nz}=P_z-\tau_n(z),
    \qquad z\in Z,\quad n\in\mathbb Z.
\end{equation}
If \(x=s.z\), with \(0\leq s<\tau(z)\), set
\begin{equation}
\label{eq:one-dimensional-suspension-configuration}
    P_x:=P_z-s.
\end{equation}

\begin{proposition}[One-dimensional configuration map]
\label{prop:one-dimensional-configuration-map}
The map
\[
    \kappa_\rho:X\longrightarrow\mathcal N_s(\mathbb R),
    \qquad
    \kappa_\rho(x):=\delta_{P_x},
\]
is Borel and satisfies
\begin{equation}
\label{eq:one-dimensional-translation-covariance}
    P_{t.x}=P_x-t
    \qquad
    (t\in\mathbb R,\ x\in X).
\end{equation}
Moreover, \(\kappa_\rho\) is injective.
\end{proposition}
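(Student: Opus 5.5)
The plan is to treat the three assertions separately: Borel measurability, translation covariance, and injectivity. For measurability, I would work in the suspension model \eqref{eq:suspension-measure}. The first step is to show that \(z\mapsto\delta_{P_z}\) is Borel on \(Z\). For each fixed \(n\), the map \(z\mapsto\tau_n(z)\) is a finite sum of the Borel functions \(\tau\circ T^j\), hence Borel. The finite set \(\mathcal A_\rho(T^nz)\) is obtained from \(\tau(T^nz)\), \(q_\rho(T^nz)\) and \(b(T^nz)\) by explicit continuous formulas, so it is Borel as well. Consequently, for every \(f\in C_c(\mathbb R)\), the quantity \(\delta_{P_z}(f)\) is a countable sum of Borel functions; the sum is locally finite by the bound \eqref{eq:bounded-roof}. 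This gives Borel measurability into \(\mathcal N_s(\mathbb R)\). The second step passes to \(X\): the map \(x=s.z\mapsto(z,s)\) is the Borel suspension isomorphism, and translation by \(s\) acts continuously on configurations.

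For covariance, I would reduce \eqref{eq:one-dimensional-translation-covariance} to \eqref{eq:one-dimensional-base-covariance}. Write \(x=s.z\) and \(t.x=(s+t).z\). Then choose the unique \(n\) with \(\tau_n(z)\le s+t<\tau_{n+1}(z)\), so that \(t.x=(s+t-\tau_n(z)).T^nz\). Using \eqref{eq:one-dimensional-base-covariance} and \eqref{eq:one-dimensional-suspension-configuration},
\[
P_{t.x}=P_{T^nz}-(s+t-\tau_n(z))=P_z-\tau_n(z)-s-t+\tau_n(z)=P_x-t .
\]
The identity \eqref{eq:one-dimensional-base-covariance} itself follows by reindexing the union \eqref{eq:one-dimensional-global-configuration} with the cocycle identity \eqref{eq:suspension-cocycle}, namely \(\tau_{m}(T^nz)=\tau_{m+n}(z)-\tau_n(z)\).

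Injectivity is the main step: \(P_x\) must determine both \(z\) and \(s\). The key is to recognize markers intrinsically. By Lemma~\ref{lem:one-dimensional-local-geometry} and the explicit placement, the only pairs of points of \(P_x\) at distance exactly \(\varepsilon_0h\) are the first two points of each marker. Marker-internal distances are \(\varepsilon_0h\), \((2+b)\varepsilon_0h\) and \((3+b)\varepsilon_0h\), all less than \(4\varepsilon_0h<3h/4\), while every other gap exceeds \(3h/4\). Hence the markers in \(P_x\) are exactly the maximal clusters of points with mutual gaps less than \(h/2\), and each marker is read off as an ordered triple \(u<u+\varepsilon_0h<u+(3+b)\varepsilon_0h\). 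From such a triple we recover \(b(T^nz)\), and therefore \(T^nz\) itself, since \(b\) is injective. We also recover its base position \(u-h=\tau_n(z)-s\).

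To finish, suppose \(P_x=P_{x'}\) with \(x=s.z\) and \(x'=s'.z'\). I would select the marker whose base position is the largest value \(\le0\), that is, the one with \(n=0\). This is well defined because \(0\le s<\tau(z)\) means the relevant roof interval contains the origin, and roof intervals tile the line. That marker then determines \(z\) and \(-s\), and likewise \(z'\) and \(-s'\), so \(z=z'\) and \(s=s'\), hence \(x=x'\). The only delicate point I expect is verifying that the cluster recognition is unambiguous across the \(3h\) gaps between roof intervals; the separation estimates already proved in Lemma~\ref{lem:one-dimensional-local-geometry} suffice for this.
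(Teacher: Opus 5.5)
Your proposal is correct and follows the paper's proof in all three parts. For measurability you use local finiteness of the roof decomposition, for covariance the unique \(n\) with \(\tau_n(z)\le s+t<\tau_{n+1}(z)\), and for injectivity you recognize the markers as the nontrivial short-distance clusters and decode from each its label \(b(T^nz)\) and roof boundary \(\tau_n(z)-s\). The only difference is minor: the paper first proves the orbit-recovery implication \(P_w=P_z-u\Rightarrow u=\tau_n(z),\ w=T^nz\), whereas you canonically select the marker with largest base position \(\le 0\) and read off \((z,s)\) directly. That variant is equally valid.
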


\begin{proof}
The maps \(z\mapsto q_\rho(z)\) and
\(z\mapsto\mathcal A_\rho(z)\) are Borel. Since
\(\tau\geq\tau_->0\), only uniformly finitely many terms in
\eqref{eq:one-dimensional-global-configuration} meet a fixed
compact interval. Hence \(z\mapsto\delta_{P_z}\), and therefore
\(x\mapsto\delta_{P_x}\), is Borel.

To verify covariance, write \(x=s.z\) and choose the unique
\(n\in\mathbb Z\) and
\(s'\in[0,\tau(T^nz))\) such that
\[
    s+t=\tau_n(z)+s'.
\]
Then \(t.x=s'.T^nz\), so
\[
    P_{t.x}
    =
    P_{T^nz}-s'
    =
    P_z-\tau_n(z)-s'
    =
    P_x-t.
\]

It remains to prove injectivity. Join two points of a configuration
by a short edge when their distance is less than \(h/10\).
Every pairwise distance inside a marker is less than
\(4\varepsilon_0h<h/10\), whereas every distance involving one of the remaining
points is greater than \(3h/4\). Thus the nontrivial connected
components of the short-edge graph are exactly the three-point
markers.

A marker can be decoded intrinsically. Its unique shortest edge
identifies its first two points; among their two endpoints, the
first marker point is the one farther from the third point. If
these points are denoted by \(u_1,u_2,u_3\) accordingly, then
\begin{equation}
\label{eq:one-dimensional-marker-decoding}
    b(z)
    =
    \frac{|u_3-u_1|}{\varepsilon_0h}-3,
    \qquad
    \text{left roof boundary}=u_1-h.
\end{equation}

Suppose \(z,w\in Z\), \(u\in\mathbb R\), and
\[
    P_w=P_z-u.
\]
The marker based at the zero roof boundary in \(P_w\) must
correspond to one of the markers in \(P_z-u\). Decoding its roof
boundary gives
\[
    u=\tau_n(z)
\]
for some \(n\in\mathbb Z\), while decoding its label gives
\(b(w)=b(T^nz)\). Since \(b\) is injective,
\[
    w=T^nz.
\]
Thus
\begin{equation}
\label{eq:one-dimensional-orbit-recovery}
    P_w=P_z-u
    \quad\Longrightarrow\quad
    u=\tau_n(z)
    \ \text{ and }\
    w=T^nz
\end{equation}
for some \(n\in\mathbb Z\).

Finally, if \(x=s.z\), \(y=t.w\), and \(P_x=P_y\), then
\(P_w=P_z-(s-t)\). By
\eqref{eq:one-dimensional-orbit-recovery},
\(w=T^nz\) and \(s-t=\tau_n(z)\) for some \(n\). Hence
\[
    y=t.w
    =(t+\tau_n(z)).z
    =s.z
    =x,
\]
which proves injectivity.
\end{proof}

Let
\[
    \eta_\rho:=(\kappa_\rho)_*\mu.
\]
The process \(\eta_\rho\) has the prescribed intensity. Since each
fundamental roof interval contains exactly \(q_\rho(z)\) points,
decomposing these local points into finitely many Borel graphs over
\(Z\) and applying the suspension formula gives
\[
    \rho_{\eta_\rho}
    =
    \frac{\displaystyle\int_Zq_\rho\,d\nu}
         {\displaystyle\int_Z\tau\,d\nu}
    =
    \rho.
\]
where the last equality is
\eqref{eq:one-dimensional-exact-mean}.

Set
\[
    Y_\rho:=\{x\in X:0\in P_x\}.
\]
By Proposition~\ref{prop:configuration-realization},
\(Y_\rho\) is a generating Delone translation cross-section,
its return set at \(x\) is \(P_x\), and its return-time process is
\(\eta_\rho\). In particular, \(Y_\rho\) has intensity \(\rho\).

\emph{Maximal rigidity.}
\begin{proposition}[Maximal rigidity from three-point markers]
\label{prop:one-dimensional-maximal-rigidity}
The point process \(\eta_\rho\) is maximally rigid. More precisely,
for every bounded Borel set \(A\subset\mathbb R\), there is a
Borel recovery map from configurations on \(A^c\) to
\(\mathcal N_s(\mathbb R)\) which recovers
\(\delta_{P_x}\) from \(P_x\cap A^c\) for every \(x\) in the
invariant conull set on which the suspension construction is defined.
\end{proposition}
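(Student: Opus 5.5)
The idea is to reuse the injectivity argument of Proposition~\ref{prop:one-dimensional-configuration-map}, applied to a single complete marker lying outside \(A\). Fix a bounded Borel set \(A\subset\mathbb R\) and choose \(M>0\) with \(A\subset[-M,M]\). Every roof interval has length at most \(\tau_+\). By Lemma~\ref{lem:one-dimensional-local-geometry}, markers are isolated: every distance from a marker point to a non-marker point exceeds \(3h/4\). Hence for every \(x\) there is a marker of \(P_x\) contained entirely in \((M+h,\infty)\), for instance the marker of the first roof interval starting to the right of \(M+h\). That marker and its \(h/10\)-neighbourhood lie in \(A^c\).

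The recovery map \(\mathcal R_A\) is defined as follows. Given \(\omega=\delta_{P_x}|_{A^c}\), form the short-edge graph (distance less than \(h/10\)) on the points of \(\omega\) lying in \((M+h/10,\infty)\). Keep only those connected components that have exactly three points and whose three mutual distances fall in the marker pattern \(\{\varepsilon_0h,(2+b)\varepsilon_0h,(3+b)\varepsilon_0h\}\) with \(b\in(0,1)\). Among these, select the one with least leftmost point.

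This component is a genuine complete marker. All points within \(h/10\) of any point of the component lie in \(A^c\), so no neighbour has been truncated. By the analysis in the injectivity proof, the nontrivial short-edge components of the full \(P_x\) are exactly the markers. So a component whose \(h/10\)-neighbourhood avoids \(A\) is a full component of \(P_x\) and hence a marker.

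Decode this marker via \eqref{eq:one-dimensional-marker-decoding}. This gives the left roof boundary \(u=u_1-h\) and the label \(b(w)\). Then \(w=b^{-1}(\cdot)\) on the Borel image \(b(Z)\), using that \(b^{-1}\) is Borel by Lusin--Souslin. Moreover, \(u\) lies in \(P_x\)'s roof grid, i.e.\ \(x=(-u).w\) in the suspension coordinates. More precisely, with \(x=s.z\), the decoded marker belongs to \(\tau_n(z)-s+\mathcal A_\rho(T^nz)\), so \(w=T^nz\) and \(u=\tau_n(z)-s\). Therefore \(P_x=P_w+u\), using \eqref{eq:one-dimensional-base-covariance}. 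Set \(\mathcal R_A(\omega):=\delta_{P_w+u}\). This recovers \(\delta_{P_x}\) for every \(x\) in the conull set, and in fact recovers \(x\) itself.

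The main thing to check is Borel measurability of \(\mathcal R_A\) on \(\mathcal N_s(A^c)\). Enumerate the points of \(\omega\) in a Borel way by Lusin--Novikov. The conditions defining the selected component are then countable Boolean combinations of Borel conditions on finitely many enumerated points. Choosing the leftmost such component is a Borel selection, as in Subsection~\ref{subsec:Borel-geometric-preliminaries}. Finally, \((w,u)\mapsto\delta_{P_w+u}\) is Borel because \(w\mapsto\delta_{P_w}\) is Borel and translation is continuous.

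The only genuine subtlety is completeness of the chosen marker. It is handled by requiring the \(h/10\)-neighbourhood to lie in \(A^c\), which works because the threshold \(M+h/10\) keeps that neighbourhood away from \(A\). Where \(\omega\) is not in the image of the construction, \(\mathcal R_A\) is defined arbitrarily; this affects only a Borel null set.
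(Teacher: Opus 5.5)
Your proposal is correct and follows essentially the paper's argument: isolate a complete marker lying more than \(h/10\) from \(A\), decode its roof boundary \(u\) and label \(b(w)\), recover \(x=(-u).w\), and apply \(\kappa_\rho\), with your half-line \((M+h/10,\infty)\) and leftmost choice in place of the paper's set \(E_A\) and Lusin--Novikov selection. One justification needs repair: a component of your restricted graph whose \(h/10\)-neighbourhood avoids \(A\) need not be a full component of \(P_x\), because your own cut at \(M+h/10\) can split a marker that straddles it. Completeness should instead come, as in the paper, from the fact that restriction creates no edges, so a three-point component lies in a single full component, and the nontrivial full components are exactly the three-point markers.
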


\begin{proof}
Fix a bounded Borel set \(A\subset\mathbb R\), put
\[
    \theta:=\frac h{10},
    \qquad
    E_A
    :=
    \{u\in\mathbb R:\operatorname{dist}(u,A)>\theta\},
\]
and let \(x=s.z\), with \(0\leq s<\tau(z)\).

For \(n\in\mathbb Z\), the marker belonging to the roof
\(T^nz\) appears in \(P_x\) at
\[
    \tau_n(z)-s
    +
    h\left\{
        1,\,
        1+\varepsilon_0,\,
        1+(3+b(T^nz))\varepsilon_0
    \right\}.
\]
Since \(\tau\geq\tau_->0\),
\[
    \tau_n(z)-s\longrightarrow+\infty
    \quad(n\to+\infty),
    \qquad
    \tau_n(z)-s\longrightarrow-\infty
    \quad(n\to-\infty).
\]
Moreover, every marker has diameter less than
\(4\varepsilon_0h<\theta\). Hence infinitely many complete markers
are contained in \(E_A\).

From the outside configuration \(P_x\cap A^c\) we can recover
\(P_x\cap E_A\). Form on this set the graph whose edges join pairs
at distance less than \(\theta\). Every three-point nontrivial
component of this graph is a complete marker. Indeed, in the full
configuration the nontrivial components of the graph just defined
are exactly the markers; restricting to \(E_A\) can delete or split
a marker, but cannot create a new edge. At least one complete
three-point component exists by the preceding paragraph.

More explicitly, the relation consisting of pairs \((\omega,M)\), where
\(\omega\) is an outside configuration and \(M\) is a complete
three-point marker component of \(\omega|_{E_A}\), is Borel and has
countable sections. By the preceding paragraph, its projection contains every
outside configuration arising from the process. Lusin--Novikov therefore
shows that this projection is Borel and provides a Borel choice of one such
component there; extend the choice arbitrarily off the projection.
Decode
the chosen marker as in
\eqref{eq:one-dimensional-marker-decoding}, and write its
intrinsically ordered points as \(u_1,u_2,u_3\). Put
\[
    a:=u_1-h,
    \qquad
    \beta
    :=
    \frac{|u_3-u_1|}{\varepsilon_0h}-3.
\]
If the chosen marker belongs to the roof \(w=T^nz\), then
\[
    a=\tau_n(z)-s,
    \qquad
    \beta=b(w).
\]
Since \(b\) is a Borel injection between standard Borel spaces,
Lusin--Souslin gives a Borel inverse on \(b(Z)\), so the marker
determines \(w=b^{-1}(\beta)\). It then determines the entire
suspension state:
\[
    x=(-a).w.
\]
Finally, the Borel map \(\kappa_\rho\) recovers
\(\delta_{P_x}\). Thus the restriction to \(A^c\) determines the
whole configuration by a Borel map, as claimed.
\end{proof}


\subsection{Euclidean discrepancy and spectral consequences}
\label{subsec:one-dimensional-spectral-consequences}

We first extend
Proposition~\ref{prop:one-dimensional-orbit-discrepancy} from
unions of suspension intervals to arbitrary intervals in
\(\mathbb R\).

\begin{proposition}[Logarithmic growth of discrepancy]
\label{prop:one-dimensional-Euclidean-discrepancy}
There is a constant \(C_\rho<\infty\) such that, for every
\(x\in X\) and every bounded interval \(I\subset\mathbb R\),
\begin{equation}
\label{eq:one-dimensional-Euclidean-discrepancy}
    \left|
        \#(P_x\cap I)-\rho|I|
    \right|
    \leq
    C_\rho\log(2+|I|).
\end{equation}
\end{proposition}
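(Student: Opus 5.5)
The plan is to reduce an arbitrary bounded interval \(I\) to a union of consecutive roof intervals, apply Proposition~\ref{prop:one-dimensional-orbit-discrepancy}, and absorb the two partial roof intervals at the ends into a bounded error.

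Since \(P_{t.x}=P_x-t\) by \eqref{eq:one-dimensional-translation-covariance}, we can write \(x=s.z\) with \(z\in Z\) and \(0\leq s<\tau(z)\). Then \(P_x=P_z-s\), so it suffices to treat \(P_z\) and the translated interval \(I+s\). The set \(P_z\) is the disjoint union of the blocks \(\tau_n(z)+\mathcal A_\rho(T^nz)\), where the \(n\)-th block lies in the roof interval
\[
    J_n:=[\tau_n(z),\tau_{n+1}(z)),
\]
which has length \(\tau(T^nz)\in[\tau_-,\tau_+]\) and contains exactly \(q_\rho(T^nz)\) points.

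Given a bounded interval \(I\), let \(m<n\) be such that \(J_m,\dots,J_{n-1}\) are exactly the roof intervals entirely contained in \(I\). If there are none, then \(|I|<2\tau_+\). In that case \(\#(P_z\cap I)\leq 2\max q_\rho\leq 2(\rho\tau_++1)\), so the discrepancy is bounded by a constant depending only on \(\rho\).

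Otherwise \(I\) equals \(J:=J_m\cup\dots\cup J_{n-1}\) together with at most two partial pieces, each lying inside a single roof interval. Each partial piece therefore contributes at most \(\rho\tau_++1\) points and has length at most \(\tau_+\). Hence
\[
    \bigl|\#(P_z\cap I)-\rho|I|\bigr|
    \leq
    \bigl|\#(P_z\cap J)-\rho|J|\bigr|+2(2\rho\tau_++1).
\]
We have \(\#(P_z\cap J)=\sum_{j=m}^{n-1}q_\rho(T^jz)\) and \(|J|=\sum_{j=m}^{n-1}\tau(T^jz)\). Proposition~\ref{prop:one-dimensional-orbit-discrepancy} then bounds the first term by \(4(1+\log_2(n-m))\). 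Since each roof interval has length at least \(\tau_-\), we get \(n-m\leq|I|/\tau_-\). This yields a bound \(4+4\log_2(1+|I|/\tau_-)\), which is \(\leq C\log(2+|I|)\) for a constant \(C\) depending on \(\tau_-\) and \(\rho\).

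Combining the two cases, and using \(\log(2+|I|)\geq\log 2\) to absorb all additive constants, gives \eqref{eq:one-dimensional-Euclidean-discrepancy} with \(C_\rho\) independent of \(x\) and \(I\). Because the orbit estimate holds pointwise for every \(z\) in the restricted set, the bound holds for every \(x\) in the invariant conull set on which the suspension model is defined.

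The only real input is Proposition~\ref{prop:one-dimensional-orbit-discrepancy}. The one delicate point is the bookkeeping of the boundary pieces: the interval must be split at roof boundaries, not at points, so that the counts of the full blocks are exactly the \(q_\rho\)'s. Apart from that, the step is routine.
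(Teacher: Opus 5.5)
Your proposal is correct and follows essentially the same route as the paper. You reduce to \(x=z\in Z\) by translation covariance, split \(I\) into consecutive complete roof intervals plus at most two partial pieces, apply Proposition~\ref{prop:one-dimensional-orbit-discrepancy} to the complete part, bound each partial piece by a constant depending on \(\rho\tau_+\), and control the number of complete intervals by \(|I|/\tau_-\). Your separate treatment of the case with no complete roof interval replaces the paper's \(\max\{1,N\}\), and your count \(n-m\leq|I|/\tau_-\) is slightly sharper than the paper's \(N\leq|I|/\tau_-+2\); neither difference changes the argument.
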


\begin{proof}
By translation covariance it is enough to consider \(x=z\in Z\).
The interval \(I\) differs from a union
\[
    [\tau_m(z),\tau_n(z))
\]
of consecutive complete suspension intervals by at most two
partial suspension intervals. Each partial interval has length at
most \(\tau_+\) and contains at most
\(\lfloor\rho\tau_+\rfloor+1\) points. Its contribution to the
absolute discrepancy is therefore at most
\[
    \lfloor\rho\tau_+\rfloor+1+\rho\tau_+.
\]

If \(N:=n-m\) is the number of complete suspension intervals, then
\[
    N\leq\frac{|I|}{\tau_-}+2.
\]
Hence Proposition~\ref{prop:one-dimensional-orbit-discrepancy}
gives
\[
    \left|
        \#(P_z\cap I)-\rho|I|
    \right|
    \leq
    4\bigl(1+\lfloor\log_2\max\{1,N\}\rfloor\bigr)
    +
    2\bigl(\lfloor\rho\tau_+\rfloor+1+\rho\tau_+\bigr),
\]
which implies \eqref{eq:one-dimensional-Euclidean-discrepancy}
after enlarging the constant.
\end{proof}

The logarithmic bound above cannot in general be replaced by a uniform
bound: the next proposition shows that such a bound forces a nonzero Koopman
eigenvalue.

\begin{proposition}[Bounded discrepancy forces an eigenvalue]
\label{prop:bounded-discrepancy-eigenvalue}
Let \(\mathbb R\curvearrowright(X,\mu)\) be an ergodic p.m.p.\ Borel
action and let \(x\mapsto P_x\) be a Borel equivariant family of
locally finite configurations,
\[
    P_{t.x}=P_x-t
    \qquad(t\in\mathbb R,\ x\in X).
\]
Suppose that, for some \(\rho>0\), there are an invariant conull Borel
set \(X_0\subset X\) and \(C<\infty\) such that
\[
    \left|\#(P_x\cap I)-\rho|I|\right|\leq C
\]
for every \(x\in X_0\) and every bounded interval \(I\subset\mathbb R\).
Then \(\rho\) is a nonzero eigenvalue of the flow. In particular, a
weakly mixing flow admits no such realization.
\end{proposition}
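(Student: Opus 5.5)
The plan is to build an explicit eigenfunction from the discrepancy function. For \(x\in X_0\) put
\[
    D(x):=\#\bigl(P_x\cap[0,1)\bigr)\ \text{-type counting function},
\]
more precisely define the counting cocycle \(N(x,t):=\#(P_x\cap[0,t))\) for \(t\geq0\) (with the signed convention for \(t<0\)), and set
\[
    \Phi(x,t):=N(x,t)-\rho t .
\]
Equivariance \(P_{s.x}=P_x-s\) gives the additive cocycle identity \(N(x,s+t)=N(x,s)+N(s.x,t)\), and hence \(\Phi(x,s+t)=\Phi(x,s)+\Phi(s.x,t)\). The hypothesis says exactly that \(|\Phi(x,t)|\leq C\) for all \(x\in X_0\) and all \(t\). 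A bounded real cocycle over a p.m.p.\ flow is a coboundary. I would prove this directly rather than cite it: put
\[
    F(x):=\limsup_{T\to\infty}\frac1T\int_0^T\Phi(x,t)\,dt ,
\]
or more robustly \(F(x):=\sup_{t\in\mathbb R}\Phi(x,t)\), which is bounded by \(C\). The cocycle identity gives \(F(s.x)=\sup_t\Phi(s.x,t)=\sup_t\bigl(\Phi(x,s+t)-\Phi(x,s)\bigr)=F(x)-\Phi(x,s)\). Borel measurability of \(F\) follows because \(\Phi(x,\cdot)\) is right-continuous with jumps at points of \(P_x\), so the supremum may be taken over rational \(t\) together with one-sided limits, or simply over rational \(t\) using right-continuity.

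Next I exponentiate. Since \(N\) is integer valued,
\[
    e^{2\pi i\Phi(x,s)}=e^{-2\pi i\rho s}.
\]
Define \(G(x):=e^{2\pi iF(x)}\), a Borel function of modulus one, hence nonzero in \(L^2(\mu)\). Then
\[
    G(s.x)=e^{2\pi iF(x)}e^{-2\pi i\Phi(x,s)}=e^{2\pi i\rho s}G(x).
\]
With the Koopman convention \(\pi_X(s)G(x)=G((-s).x)\) this reads \(\pi_X(s)G=\chi_{-\rho}(s)G\), so \(\pm\rho\) is an eigenvalue; eigenvalues of a real (measure-preserving) flow are symmetric under conjugation (\(\overline G\) has eigenvalue \(-\)that of \(G\)), so \(\rho\) itself is an eigenvalue. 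It is nonzero since \(\rho>0\). All identities hold on the invariant conull set \(X_0\), which suffices.

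Finally, for the last assertion: a weakly mixing flow has no nonzero eigenvalue, since an eigenfunction \(G\) with \(|G|=1\) yields the nonconstant invariant function \(G\otimes\overline G\) for the diagonal action. Hence no such realization exists for a weakly mixing flow. Ergodicity is not really needed for the construction itself; it only guarantees that \(|G|\) is constant, which here is automatic.

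The only delicate step is measurability and finiteness of \(F\). Boundedness comes directly from the uniform constant \(C\). Measurability I would handle through right-continuity of \(t\mapsto N(x,t)\) and the Borel dependence of \(N(x,t)\) on \(x\), which holds because \(x\mapsto\delta_{P_x}\) is Borel. Everything else is formal.
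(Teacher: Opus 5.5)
Your argument is correct and is essentially the paper's proof: the signed counting cocycle \(N(x,t)-\rho t\) is bounded, hence a coboundary via a supremum over \(t\), and integrality of the counts lets you exponentiate it into an eigenfunction (you conjugate at the end to get \(\rho\) rather than \(-\rho\)). The only differences are cosmetic. The paper takes an essential supremum over \(t\) and gets measurability from Fubini, whereas you take a pointwise supremum over rationals; that also works, although with the half-open convention \([0,t)\) the map \(t\mapsto N(x,t)\) is left-continuous rather than right-continuous, which does not affect the step.
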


\begin{proof}
For \(x\in X_0\), define the signed counting cocycle
\[
    n(t,x):=
    \begin{cases}
        \#(P_x\cap[0,t)),&t\geq0,\\
        -\#(P_x\cap[t,0)),&t<0,
    \end{cases}
    \qquad
    c(t,x):=n(t,x)-\rho t.
\]
Equivariance gives
\[
    c(s+t,x)=c(t,x)+c(s,t.x)
    \qquad(s,t\in\mathbb R,\ x\in X_0),
\]
and the discrepancy assumption gives \(|c(t,x)|\leq C\) for all
\(t\in\mathbb R\) and \(x\in X_0\). The cocycle is therefore bounded. We record the short
coboundary argument directly. For \(n\geq1\), put
\[
    \psi_n(x)
    :=
    \operatorname*{ess\,sup}_{|t|\leq n}c(t,x),
    \qquad
    \psi(x):=\sup_{n\geq1}\psi_n(x).
\]
Since \((t,x)\mapsto c(t,x)\) is Borel, each \(\psi_n\) is
measurable: for every \(a\in\mathbb R\), the set
\(\{x:\psi_n(x)>a\}\) is the set of \(x\) for which
\(\{t\in[-n,n]:c(t,x)>a\}\) has positive Lebesgue measure, hence is
measurable by Fubini. Thus \(\psi\) is measurable and, since
\(|c|\leq C\), bounded. Translation invariance of Lebesgue measure
together with the cocycle identity gives
\[
    \psi(s.x)=\psi(x)-c(s,x)
    \qquad(s\in\mathbb R,\ x\in X_0).
\]
Thus \(c(s,x)=\psi(x)-\psi(s.x)\). Since \(n(s,x)\in\mathbb Z\), the
function
\[
    F(x):=e^{-2\pi i\psi(x)}
\]
satisfies, for \(s\in\mathbb R\) and \(x\in X_0\),
\[
    F(s.x)=e^{-2\pi i\rho s}F(x).
\]
Equivalently,
\[
    F((-s).x)=e^{2\pi i\rho s}F(x),
\]
so \(F\) is a nonzero eigenfunction of frequency \(\rho\).
\end{proof}

\begin{corollary}[Logarithmic number variance]
\label{cor:one-dimensional-number-variance}
As \(R\to\infty\),
\begin{equation}
\label{eq:one-dimensional-number-variance}
    \operatorname{Var}_{\eta_\rho}(N_{[0,R)})
    =
    O(\log^2 R).
\end{equation}
\end{corollary}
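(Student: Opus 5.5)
Since the intensity equals \(\rho\), we have \(\mathbb E_{\eta_\rho}[N_{[0,R)}]=\rho R\). For any random variable \(Z\) and constant \(c\), \(\operatorname{Var}(Z)\leq\mathbb E[(Z-c)^2]\). Taking \(c=\rho R\) gives
\[
    \operatorname{Var}_{\eta_\rho}(N_{[0,R)})
    \leq
    \int_X\bigl(\#(P_x\cap[0,R))-\rho R\bigr)^2\,d\mu(x).
\]
Here we use that \(\eta_\rho=(\kappa_\rho)_*\mu\), and that \(N_{[0,R)}(\delta_{P_x})=\#(P_x\cap[0,R))\).

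The integrand is controlled pointwise by Proposition~\ref{prop:one-dimensional-Euclidean-discrepancy}, applied with \(I=[0,R)\). For every \(x\) in the invariant conull set where the construction is defined, it gives
\[
    \bigl|\#(P_x\cap[0,R))-\rho R\bigr|\leq C_\rho\log(2+R).
\]
Squaring and integrating against the probability measure \(\mu\) yields
\[
    \operatorname{Var}_{\eta_\rho}(N_{[0,R)})\leq C_\rho^2\log^2(2+R)=O(\log^2R)
\]
as \(R\to\infty\).

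The only point needing care is that the expectation of \(N_{[0,R)}\) is exactly \(\rho R\). This is the exact-mean identity \eqref{eq:one-dimensional-exact-mean}, already converted into the intensity computation \(\rho_{\eta_\rho}=\rho\) above. Without it, the centering would introduce a term of order \(R\). The argument does not even need that centering: bounding the variance by the second moment about any constant suffices. Local second moments hold because the configurations are uniformly separated by Lemma~\ref{lem:one-dimensional-local-geometry}, so all quantities are finite.
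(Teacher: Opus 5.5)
Your proposal is correct and is the same argument as the paper: apply Proposition~\ref{prop:one-dimensional-Euclidean-discrepancy} to $I=[0,R)$ pointwise on the invariant conull set, then square and integrate. One small remark: since $\operatorname{Var}(Z)\leq\mathbb E[(Z-c)^2]$ for every constant $c$, the exact intensity $\rho_{\eta_\rho}=\rho$ is not actually needed, so your comment that a different centering would introduce a term of order $R$ does not apply to the argument you wrote.
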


\begin{proof}
The process \(\eta_\rho\) has intensity \(\rho\), so
Proposition~\ref{prop:one-dimensional-Euclidean-discrepancy}
applied to \([0,R)\) gives
\[
    \left|
        \omega([0,R))-\rho R
    \right|
    \leq
    C_\rho\log(2+R)
\]
for \(\eta_\rho\)-almost every \(\omega\). Squaring and integrating
proves \eqref{eq:one-dimensional-number-variance}.
\end{proof}

We finally estimate the Bartlett spectrum
\(\sigma_{\eta_\rho}\). For \(f_R:=\mathbf 1_{[0,R)}\), a direct
calculation gives
\[
    \left|\widehat f_R(\xi)\right|
    =
    \left|
        \frac{\sin(\pi R\xi)}{\pi\xi}
    \right|,
\]
with the value \(R\) at \(\xi=0\).

\begin{proposition}[Near-quadratic Bartlett suppression]
\label{prop:one-dimensional-Bartlett-decay}
As \(\varepsilon\downarrow0\),
\begin{equation}
\label{eq:one-dimensional-Bartlett-decay}
    \sigma_{\eta_\rho}([-\varepsilon,\varepsilon])
    =
    O\!\left(
        \varepsilon^2
        \log^2\!\frac{e}{\varepsilon}
    \right).
\end{equation}
In particular, \(\eta_\rho\) is hyperuniform.
\end{proposition}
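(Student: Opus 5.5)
Test the variance bound of Corollary~\ref{cor:one-dimensional-number-variance} against the Bartlett identity of Lemma~\ref{lem:Bartlett-indicators} with \(A=[0,R)\), and choose \(R\) at the scale \(1/\varepsilon\). Since \(\lambda(\partial[0,R))=0\), the lemma gives
\[
    \operatorname{Var}_{\eta_\rho}(N_{[0,R)})
    =
    \int_{\mathbb R}\left|\frac{\sin(\pi R\xi)}{\pi\xi}\right|^2\,d\sigma_{\eta_\rho}(\xi).
\]
The integrand is nonnegative, so we may restrict to \([-\varepsilon,\varepsilon]\) and bound it below there.

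Take \(R:=1/(2\varepsilon)\). For \(|\xi|\leq\varepsilon\) we have \(|\pi R\xi|\leq\pi/2\). Jordan's inequality \(|\sin y|\geq 2|y|/\pi\) on \([-\pi/2,\pi/2]\) then gives \(|\sin(\pi R\xi)|\geq 2R|\xi|\), hence
\[
    \left|\frac{\sin(\pi R\xi)}{\pi\xi}\right|^2\geq\frac{4R^2}{\pi^2}.
\]
At \(\xi=0\) the integrand equals \(R^2\), so the bound holds there too. This yields
\[
    \sigma_{\eta_\rho}([-\varepsilon,\varepsilon])
    \leq
    \frac{\pi^2}{4R^2}\operatorname{Var}_{\eta_\rho}(N_{[0,R)}).
\]
By Corollary~\ref{cor:one-dimensional-number-variance}, in its pointwise form \(C_\rho^2\log^2(2+R)\), valid for every \(R>0\), the right side is at most \(\pi^2\varepsilon^2C_\rho^2\log^2(2+1/(2\varepsilon))\). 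For \(\varepsilon\leq1\) this is \(O(\varepsilon^2\log^2(e/\varepsilon))\). This proves \eqref{eq:one-dimensional-Bartlett-decay}.

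Hyperuniformity follows from the criterion recalled in Subsection~\ref{subsec:linear-statistics-Bartlett}. In \(d=1\) it requires \(\sigma(B_\varepsilon)=o(\varepsilon)\), and \(\varepsilon^2\log^2(e/\varepsilon)=o(\varepsilon)\). Alternatively, \(O(\log^2R)=o(R)\) gives it directly.

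The hypotheses of Lemma~\ref{lem:Bartlett-indicators} hold: \(\eta_\rho\) is translation invariant, and its uniformly separated support (Lemma~\ref{lem:one-dimensional-local-geometry}) gives local second moments. The only delicate step is to use the almost-sure discrepancy bound for every \(R\), not just asymptotically. Since \(\eta_\rho\) has intensity exactly \(\rho\), the mean \(\rho R\) is the correct centering, and the variance is at most the second moment of \(\omega([0,R))-\rho R\), which is at most \(C_\rho^2\log^2(2+R)\) for all \(R\).
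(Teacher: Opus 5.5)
Your proposal is correct and follows the paper's proof essentially step for step. Both arguments take \(R=1/(2\varepsilon)\) and use Jordan's inequality to bound \(|\widehat{\mathbf 1}_{[0,R)}|\geq 2R/\pi\) on \([-\varepsilon,\varepsilon]\), then combine Lemma~\ref{lem:Bartlett-indicators} with the logarithmic variance bound of Corollary~\ref{cor:one-dimensional-number-variance}. Your additional checks (the value at \(\xi=0\), the variance bound holding for every \(R\) and not just asymptotically, and local second moments coming from uniform separation) are details the paper leaves implicit.
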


\begin{proof}
For \(0<\varepsilon\leq1/2\), set
\[
    R:=\frac{1}{2\varepsilon}.
\]
If \(|\xi|\leq\varepsilon\), then
\(|\pi R\xi|\leq\pi/2\). Since
\(\sin t\geq2t/\pi\) for \(0\leq t\leq\pi/2\),
\[
    |\widehat f_R(\xi)|
    \geq
    \frac{2R}{\pi}.
\]
Lemma~\ref{lem:Bartlett-indicators} and the preceding bound give
\[
\begin{aligned}
    \operatorname{Var}_{\eta_\rho}(\bS f_R)
    &=
    \int_{\mathbb R}
        |\widehat f_R(\xi)|^2
        \,d\sigma_{\eta_\rho}(\xi) \\
    &\geq
    \frac{4R^2}{\pi^2}
    \sigma_{\eta_\rho}([-\varepsilon,\varepsilon]).
\end{aligned}
\]
By Corollary~\ref{cor:one-dimensional-number-variance},
\[
    \operatorname{Var}_{\eta_\rho}(\bS f_R)
    =
    O(\log^2 R).
\]
Since \(R=(2\varepsilon)^{-1}\), this proves
\eqref{eq:one-dimensional-Bartlett-decay}. The right-hand side is
\(o(\varepsilon)\), so hyperuniformity follows from
Subsection~\ref{subsec:linear-statistics-Bartlett}.
\end{proof}


\section{Orbit geometry and integerization}
\label{sec:orbit-geometry-integerization}

Throughout this section, let \(d\geq2\) and let
\(G_K\curvearrowright(X,\mu)\) be an essentially free p.m.p.\
Borel action whose restricted \(V\)-action is ergodic. After
restricting to a \(G_K\)-invariant conull Borel set, we assume that
the action is free. The cross-section \(C\) introduced below is an auxiliary
cross-section for the full \(G_K\)-action; it should not be confused with the
\(K\)-invariant translation cross-section produced from the final point
configuration.

All geometric constants introduced in this section are fixed independently of
\(c\) and \(\rho\). The threshold \(\rho_0\) may be increased finitely many
times after those constants are fixed; every such increase is uniform in
\(c\).

The goal of this section is to construct the bounded sets that will be
discretized in Section~\ref{sec:local-filling-assembly}. For every sufficiently
large \(\rho\) we shall define a Borel family
\[
    \Pi_\rho(c)\subset V,
    \qquad c\in C,
\]
with a constant \(r_*>0\), independent of \(c\) and \(\rho\), such that
\[
    \overline B_{r_*}\subset\Pi_\rho(c),
    \qquad
    \rho\lambda_V(\Pi_\rho(c))\in\mathbb N,
\]
and with all \(\Pi_\rho(c)\) contained in one fixed ball.
For each \(x\in X\), the copies of these sets placed at the returns of \(x\)
will form an exact partition of \(V\). The placement is compatible with the
\(G_K\)-action; the precise transformation law is
\eqref{eq:physical-packet-covariance} below.

The integer-mass condition is needed in Section~\ref{sec:local-filling-assembly},
where \(\Pi_\rho(c)\) is replaced by exactly
\(\rho\lambda_V(\Pi_\rho(c))\) points. Related constructions based on
invariant partitions appear in \cite{KLLY25,LK26}. Fair partitions have equal
cell volumes, so a fixed number of points can be assigned to every cell;
Lotz--Klatt \cite[Theorem~5.2 and Remark~5.3]{LK26} also treat
moment-preserving rearrangements for more general invariant partitions. Here
the initial Voronoi cells are determined by the prescribed action and need not
have equal volume. We change each cell by a set of \(\rho\)-mass bounded
independently of \(\rho\) so that its resulting \(\rho\)-mass is an integer.
The construction has four parts. Subsection~\ref{subsec:framed-return-geometry}
produces the labelled Delone scaffold and the placement maps.
Subsection~\ref{subsec:proximity-forest} constructs a bounded-range parent map
with finite descendant sets. Subsection~\ref{subsec:labelled-Voronoi-cells}
turns the closed Voronoi cells into an exact Borel partition. Finally,
Subsection~\ref{subsec:integerization-packet-transfer} modifies those cells
along the parent map so that their \(\rho\)-masses are integers.


\subsection{Framed return geometry}
\label{subsec:framed-return-geometry}

Fix \(r_{\rm sep}>0\) and put
\[
    L_{r_{\rm sep}}
    :=
    \{(u,k)\in G_K:\|u\|\leq r_{\rm sep}\}.
\]
Since \(K\) is compact, \(L_{r_{\rm sep}}\) is compact. By standard
cocompact cross-section theory, choose a Borel cross-section
\(C\subset X\) for the full \(G_K\)-action such that
\begin{equation}
\label{eq:full-cross-section-separation}
    L_{r_{\rm sep}}.c_0\cap L_{r_{\rm sep}}.c_1=\varnothing
    \qquad(c_0\neq c_1\in C);
\end{equation}
see, for instance, \cite[Theorem~A.3]{CGMTD26}. Choose a compact set
\(\Omega\subset G_K\) with \(\Omega.C=X\), and set
\begin{equation}
\label{eq:Rcov-definition}
    R_{\rm cov}
    :=
    \sup\{\|a\|:(a,k)\in\Omega\}<\infty.
\end{equation}

For \(x\in X\), let
\[
    C_x:=\{g\in G_K:g.x\in C\}.
\]
A return \(g=(a,k)\in C_x\) determines three pieces of data:
\begin{equation}
\label{eq:framed-return-coordinates}
\begin{aligned}
    \operatorname{pos}_x(g)&:=k^{-1}a\in V,\\
    \operatorname{fr}_x(g)&:=k^{-1}\in K,\\
    \operatorname{lab}_x(g)&:=g.x\in C.
\end{aligned}
\end{equation}
We call these, respectively, the \emph{position}, \emph{frame}, and
\emph{label} of the return. The position is the image of the coset \(Kg\)
under the identification \(K\backslash G_K\cong V\) from
Subsection~\ref{subsec:Euclidean-Fourier-conventions}. The frame records how a
local configuration written in the coordinates of its label is rotated when
placed in the orbit of \(x\), while the label specifies the cross-section
point from which that local configuration is taken.

The set of positions
\begin{equation}
\label{eq:framed-scaffold}
    \mathscr P_x
    :=
    \{\operatorname{pos}_x(g):g\in C_x\}
\end{equation}
is the \emph{scaffold} associated with \(x\). If \(c\in C\), then the
identity return \((0,e)\in C_c\) gives the distinguished site
\(0\in\mathscr P_c\), with frame \(e\) and label \(c\).

It will be convenient to package the position and frame into the affine
isometry
\begin{equation}
\label{eq:framed-placement-map}
    \Phi_{x,g}:V\longrightarrow V,
    \qquad
    \Phi_{x,g}(y)
    :=
    \operatorname{pos}_x(g)+\operatorname{fr}_x(g)y.
\end{equation}
Thus a set constructed in the local coordinates attached to
\(\operatorname{lab}_x(g)\) is placed in physical coordinates by
\(\Phi_{x,g}\).

\begin{proposition}[Framed Delone scaffold]
\label{prop:framed-Delone-scaffold}
For every \(x\in X\), the map
\(g\mapsto\operatorname{pos}_x(g)\) is injective on \(C_x\), and
\(\mathscr P_x\) is \((r_{\rm sep},R_{\rm cov})\)-Delone. The relation
\[
    \mathscr S
    :=
    \{(x,u)\in X\times V:u\in\mathscr P_x\}
\]
is Borel, and the unique return \(g\in C_x\) associated with
\((x,u)\in\mathscr S\) is a Borel function of \((x,u)\). Consequently, its
frame in \(K\) and label in \(C\) are Borel functions of \((x,u)\), and
\(x\mapsto\delta_{\mathscr P_x}\) is Borel.

If \(h=(v,\ell)\in G_K\), then
\begin{equation}
\label{eq:framed-return-covariance}
    C_{h.x}=C_xh^{-1},
\end{equation}
and, for \(g\in C_x\),
\begin{equation}
\label{eq:framed-coordinate-covariance}
\begin{aligned}
    \operatorname{pos}_{h.x}(gh^{-1})
    &=\ell\operatorname{pos}_x(g)-v,\\
    \operatorname{fr}_{h.x}(gh^{-1})
    &=\ell\operatorname{fr}_x(g),\\
    \operatorname{lab}_{h.x}(gh^{-1})
    &=\operatorname{lab}_x(g).
\end{aligned}
\end{equation}
Equivalently,
\begin{equation}
\label{eq:framed-transport-identity}
    \Phi_{h.x,gh^{-1}}(y)
    =
    \ell\Phi_{x,g}(y)-v,
    \qquad y\in V.
\end{equation}
In particular,
\[
    \mathscr P_{(v,\ell).x}=\ell\mathscr P_x-v.
\]
\end{proposition}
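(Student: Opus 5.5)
The plan is to prove the covariance identities first, since they are pure group algebra, and then derive the injectivity, Delone and Borel assertions from the separation property \eqref{eq:full-cross-section-separation} and the covering property \(\Omega.C=X\).

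\emph{Covariance.} For \eqref{eq:framed-return-covariance}, note that \(g'\in C_{h.x}\) iff \(g'h.x\in C\) iff \(g'h\in C_x\), so \(C_{h.x}=C_xh^{-1}\). Now write \(g=(a,k)\) and \(h=(v,\ell)\). Using \((v,\ell)^{-1}=(-\ell^{-1}v,\ell^{-1})\), we get \(gh^{-1}=(a-k\ell^{-1}v,\,k\ell^{-1})\). Hence the frame is \((k\ell^{-1})^{-1}=\ell k^{-1}=\ell\operatorname{fr}_x(g)\), and the position is \(\ell k^{-1}(a-k\ell^{-1}v)=\ell k^{-1}a-v\). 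The label is \(gh^{-1}.(h.x)=g.x\). Composing these gives \eqref{eq:framed-transport-identity}, and taking images gives the formula for \(\mathscr P_{(v,\ell).x}\).

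\emph{Injectivity and separation.} Suppose \(g_0=(a_0,k_0)\) and \(g_1=(a_1,k_1)\) in \(C_x\) have \(\|k_0^{-1}a_0-k_1^{-1}a_1\|<r_{\rm sep}\), and put \(c_i=g_i.x\). Then \(c_1=g_1g_0^{-1}.c_0\), and
\[
g_1g_0^{-1}=(a_1-k_1k_0^{-1}a_0,\,k_1k_0^{-1}),
\qquad
\|a_1-k_1k_0^{-1}a_0\|=\|k_1^{-1}a_1-k_0^{-1}a_0\|<r_{\rm sep}.
\]
So \(g_1g_0^{-1}\in L_{r_{\rm sep}}\). Since \((0,e)\in L_{r_{\rm sep}}\), it follows that \(c_1\in L_{r_{\rm sep}}.c_0\cap L_{r_{\rm sep}}.c_1\), and \eqref{eq:full-cross-section-separation} forces \(c_0=c_1\). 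Then \(g_1g_0^{-1}\) stabilizes \(c_0\), and freeness gives \(g_0=g_1\). This proves injectivity of \(\operatorname{pos}_x\) and separation at least \(r_{\rm sep}\). (Strictly this argument gives \(\geq r_{\rm sep}\) by taking the closed-ball version; I expect one may need to shrink the inequality to non-strict, which is harmless.)

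\emph{Covering radius.} Fix \(u\in V\) and apply \(\Omega.C=X\) to the point \((u,e).x\). This gives \((b,m)\in\Omega\) and \(c\in C\) with \((u,e).x=(b,m).c\). Hence \(g:=(b,m)^{-1}(u,e)\in C_x\). Computing, \(g=(m^{-1}(u-b),m^{-1})\), so \(\operatorname{pos}_x(g)=u-b\), which lies within \(\|b\|\le R_{\rm cov}\) of \(u\).

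\emph{Borel measurability.} I expect this to be the main technical step. The relation \(\{(x,g):g.x\in C\}\subset X\times G_K\) is Borel with countable sections: the sections are discrete, because the separation argument shows that returns are \(L_{r_{\rm sep}}\)-separated. Lusin--Novikov then gives Borel enumerations \(g_n(x)\). Consequently
\[
\mathscr S=\bigcup_n\{(x,\operatorname{pos}_x(g_n(x)))\}
\]
is a countable union of Borel graphs and is therefore Borel. The inverse map \((x,u)\mapsto g\) is Borel by injectivity: on the Borel set where \(u=\operatorname{pos}_x(g_n(x))\), take \(g=g_n(x)\). Frames and labels are then Borel as compositions. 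Finally, \(x\mapsto\delta_{\mathscr P_x}(B)\) is a countable sum of Borel indicators, which gives Borelness of \(x\mapsto\delta_{\mathscr P_x}\).
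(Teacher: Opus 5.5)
Your proposal is correct and follows the paper's proof essentially step by step: the same group computation for covariance, the same use of \eqref{eq:full-cross-section-separation} plus freeness for injectivity and separation, and the same $\Omega.C=X$ argument for the covering radius. The only variation is the Borel inverse $(x,u)\mapsto g$, which you build piecewise from a Lusin--Novikov enumeration while the paper applies Lusin--Souslin to the injective Borel map $(x,g)\mapsto(x,\operatorname{pos}_x(g))$. Also, your parenthetical worry about strict versus non-strict inequality is unnecessary: the contrapositive of your argument already gives $\|\operatorname{pos}_x(g_0)-\operatorname{pos}_x(g_1)\|\geq r_{\rm sep}$ for distinct returns, which is exactly $\operatorname{sep}(\mathscr P_x)\geq r_{\rm sep}$.
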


\begin{proof}
Suppose that \(g_i=(a_i,k_i)\in C_x\) have the same position \(u\). Then
\(a_i=k_i u\), and hence
\[
    g_2g_1^{-1}=(0,k_2k_1^{-1})\in L_{r_{\rm sep}}.
\]
Writing \(c_i:=g_i.x\in C\), we have
\(c_2=(g_2g_1^{-1}).c_1\). If \(c_1\neq c_2\), then
\(c_2\in L_{r_{\rm sep}}.c_1\cap L_{r_{\rm sep}}.c_2\), contrary to
\eqref{eq:full-cross-section-separation}. Thus \(c_1=c_2\), and freeness gives
\(g_1=g_2\). In particular, each scaffold site has a unique frame and label.

Let \(u_1,u_2\in\mathscr P_x\) correspond to distinct returns
\(g_i=(k_i u_i,k_i)\). Then
\[
    g_2g_1^{-1}
    =
    \bigl(k_2(u_2-u_1),k_2k_1^{-1}\bigr).
\]
If \(\|u_2-u_1\|\leq r_{\rm sep}\), the preceding argument again
contradicts \eqref{eq:full-cross-section-separation}. Hence
\(\operatorname{sep}(\mathscr P_x)\geq r_{\rm sep}\).

For the covering bound, fix \(w\in V\). Choose \(f=(a,k)\in\Omega\) and
\(c\in C\) such that
\[
    (w,e).x=f.c.
\]
Then \(g:=f^{-1}(w,e)\) belongs to \(C_x\), and
\[
    g=(k^{-1}(w-a),k^{-1}),
    \qquad
    \operatorname{pos}_x(g)=w-a.
\]
Therefore
\[
    \operatorname{dist}(w,\mathscr P_x)
    \leq\|a\|\leq R_{\rm cov},
\]
which proves the Delone assertion.

Let \(h=(v,\ell)\). Since \(g\in C_{h.x}\) if and only if \(gh\in C_x\),
\eqref{eq:framed-return-covariance} follows. If \(g=(a,k)\in C_x\), then
\[
    gh^{-1}
    =
    (a-k\ell^{-1}v,k\ell^{-1}),
\]
and \eqref{eq:framed-coordinate-covariance} follows directly from
\eqref{eq:framed-return-coordinates}; equation
\eqref{eq:framed-transport-identity} is the same identity written in terms of
\(\Phi\).

Finally, the incidence relation
\[
    \mathscr C
    :=
    \{(x,g)\in X\times G_K:g.x\in C\}
\]
is Borel and has countable sections. The map
\[
    \mathscr C\longrightarrow X\times V,
    \qquad
    (x,g)\longmapsto(x,\operatorname{pos}_x(g)),
\]
is Borel and injective. By the Lusin--Souslin theorem, its image
\(\mathscr S\) is Borel and its inverse is Borel. Composing this inverse with
\(\operatorname{fr}_x\) and \(\operatorname{lab}_x\) gives the asserted
Borel frame and label maps. The Borelness of
\(x\mapsto\delta_{\mathscr P_x}\) then follows from Lusin--Novikov
enumeration and local finiteness.
\end{proof}

For later use, if \(g\in C_x\) has position \(u\), frame \(r\), and label
\(c\), then
\begin{equation}
\label{eq:framed-recovery}
    x=(-u,r).c.
\end{equation}
Thus one labelled and framed scaffold site determines the original state.

The p.m.p.\ action induces the usual invariant transverse measure on the
cross-section \(C\). Since \(G_K\) is unimodular, we normalize this measure to
a probability measure and denote it by \(\nu_C\). For background on
transverse measures and cross-section formulas, see, for instance,
\cite[Sections~3 and~4]{BHK25}; the formulation used here is also recorded in
\cite[Proposition~A.8]{CGMTD26}.


\subsection{Proximity graph and one-ended forest}
\label{subsec:proximity-forest}

The formulas in Subsection~\ref{subsec:integerization-packet-transfer} use a
map \(\operatorname{par}:C\to C\) with three properties: the distance
\(d_C(c,\operatorname{par}(c))\) is uniformly bounded, every vertex has
uniformly finitely many children, and every descendant set is finite. We now
construct such a map from a one-ended spanning forest.

Let \(\mathcal R_C\) be the orbit equivalence relation on \(C\). For
\((c,c')\in\mathcal R_C\), freeness gives a unique
\(\gamma(c,c')=(a,k)\in G_K\) such that
\[
    c'=\gamma(c,c').c.
\]
The cocycle \(\gamma\) is Borel. Indeed, on the Borel set
\[
    \{(c,g)\in C\times G_K:g.c\in C\}
\]
the map \((c,g)\mapsto(c,g.c)\) is injective, with image
\(\mathcal R_C\), and the assertion follows from Lusin--Souslin.

Define
\begin{equation}
\label{eq:cross-section-metric}
    d_C(c,c'):=\|a\|,
    \qquad \gamma(c,c')=(a,k).
\end{equation}
If \(c_1,c_2\) belong to the same \(\mathcal R_C\)-class and
\(g_i=(a_i,k_i)\in C_c\) satisfy \(g_i.c=c_i\), write
\(u_i:=\operatorname{pos}_c(g_i)\). Then
\[
    \gamma(c_1,c_2)
    =
    \bigl(k_2(u_2-u_1),k_2k_1^{-1}\bigr),
\]
and therefore
\begin{equation}
\label{eq:cross-section-metric-scaffold}
    d_C(c_1,c_2)=\|u_2-u_1\|.
\end{equation}
Thus \(d_C(c_1,c_2)=\|u_2-u_1\|\); in particular, \(d_C\) is a Borel
metric on each \(\mathcal R_C\)-class.

Set
\begin{equation}
\label{eq:Rips-radius}
    D_{\rm Rips}:=3R_{\rm cov}
\end{equation}
and define a Borel graph \(\mathcal G\) on \(C\) by
\begin{equation}
\label{eq:proximity-graph}
    c\sim_{\mathcal G}c'
    \quad\Longleftrightarrow\quad
    (c,c')\in\mathcal R_C,
    \quad 0<d_C(c,c')\leq D_{\rm Rips}.
\end{equation}
Under the identification of an \(\mathcal R_C\)-class with its scaffold,
\(\mathcal G\) joins precisely the pairs of sites at distance at most
\(D_{\rm Rips}\).

\begin{proposition}[Proximity graph]
\label{prop:proximity-graph}
The graph \(\mathcal G\) is locally finite, p.m.p., and aperiodic, and
\[
    \mathcal R_{\mathcal G}=\mathcal R_C.
\]
Moreover,
\begin{equation}
\label{eq:proximity-degree-bound}
    \deg_{\mathcal G}(c)
    \leq
    \left(1+\frac{2D_{\rm Rips}}{r_{\rm sep}}\right)^d-1
    \qquad(c\in C),
\end{equation}
and every connected component of \(\mathcal G\) is one-ended.
\end{proposition}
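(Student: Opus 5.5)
We identify each \(\mathcal R_C\)-class with a scaffold. Fix \(c\in C\). By \eqref{eq:cross-section-metric-scaffold}, the map \(g\mapsto g.c\), \(g\in C_c\), composed with the inverse of the injective map \(g\mapsto\operatorname{pos}_c(g)\) from Proposition~\ref{prop:framed-Delone-scaffold}, gives a bijection between the class \([c]_{\mathcal R_C}\) and \(\mathscr P_c\). Under this bijection \(d_C\) becomes the Euclidean distance. Hence \(\mathcal G\) restricted to the class is isomorphic to the graph on \(\mathscr P_c\) that joins sites at distance in \((0,D_{\rm Rips}]\). Every assertion except p.m.p.\ and Borelness is therefore a statement about a single \((r_{\rm sep},R_{\rm cov})\)-Delone set with \(D_{\rm Rips}=3R_{\rm cov}\).

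\emph{Borelness, degree bound, aperiodicity.} \(\mathcal G\) is Borel because \(\gamma\) is Borel and \(\mathcal R_C\) is Borel. The neighbours of a site \(u\) lie in \(\overline B_{D_{\rm Rips}}(u)\). Applying the packing bound \eqref{eq:Delone-packing-bound} (with closed balls, via a limiting argument) and removing \(u\) itself gives \eqref{eq:proximity-degree-bound}, and in particular local finiteness. Since \(\mathscr P_c\) is relatively dense in \(V\), it is infinite, so every class is infinite; this gives aperiodicity.

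\emph{Connectivity.} To show \(\mathcal R_{\mathcal G}=\mathcal R_C\), take \(u,u'\in\mathscr P_c\). Subdivide the segment \([u,u']\) by points \(w_0=u,\dots,w_n=u'\) with consecutive distances at most \(R_{\rm cov}\). Choose \(s_i\in\mathscr P_c\) with \(\|s_i-w_i\|\le R_{\rm cov}\), taking \(s_0=u\) and \(s_n=u'\). Then \(\|s_i-s_{i+1}\|\le 3R_{\rm cov}=D_{\rm Rips}\). Discard repeated consecutive sites; what remains is a \(\mathcal G\)-path from \(u\) to \(u'\).

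\emph{p.m.p.} Since \(\mathcal R_{\mathcal G}=\mathcal R_C\), it suffices that \(\mathcal R_C\) preserves \(\nu_C\). This is the standard invariance of the normalized transverse measure of a cross-section of a unimodular group, as recorded in \cite[Proposition~A.8]{CGMTD26}.

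\emph{One-endedness (the main step).} Let \(F\subset\mathscr P_c\) be finite, and choose \(M\) with \(F\subset B_M\). Since \(d\ge2\), the set \(V\setminus \overline B_{M+3R_{\rm cov}}\) is connected. Take two sites \(u,u'\in\mathscr P_c\) outside \(B_{M+5R_{\rm cov}}\). Join \(u\) to \(u'\) by a piecewise linear path that stays in \(V\setminus\overline B_{M+4R_{\rm cov}}\), for example along a sphere and then radial segments. Subdivide this path at mesh at most \(R_{\rm cov}\) and snap the subdivision points to the scaffold as in the connectivity step. The snapped sites lie outside \(B_{M+3R_{\rm cov}}\), so they avoid \(F\), and consecutive ones are \(\mathcal G\)-adjacent. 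Hence all sites outside \(B_{M+5R_{\rm cov}}\) lie in a single component of \(\mathcal G\setminus F\).

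Only finitely many sites lie inside \(B_{M+5R_{\rm cov}}\), so any infinite component of \(\mathcal G\setminus F\) contains far sites. It is therefore the unique component containing all far sites, and \(\mathcal G\setminus F\) has exactly one infinite component. This is where \(d\ge2\) is essential: in dimension one the complement of a ball is disconnected and the graph is two-ended.
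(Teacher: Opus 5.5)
Your proof is correct and follows the paper's argument essentially step for step. You identify each \(\mathcal R_C\)-class with the scaffold \(\mathscr P_c\) via \eqref{eq:cross-section-metric-scaffold} and get the degree bound from the packing estimate. Connectivity and one-endedness come from snapping a subdivided path to scaffold sites within \(R_{\rm cov}\): a segment for connectivity, and a path avoiding a large ball for one-endedness, which is where \(d\geq2\) enters. Finally, p.m.p.\ and aperiodicity are inherited from \(\mathcal R_{\mathcal G}=\mathcal R_C\) and relative denseness. Your treatment of closed balls in the packing bound and your discarding of repeated consecutive sites supply small details the paper leaves implicit.
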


\begin{proof}
Fix \(c\in C\). The map
\[
    c'\longmapsto
    \operatorname{pos}_c(\gamma(c,c'))
\]
identifies the \(\mathcal R_C\)-class of \(c\) with \(\mathscr P_c\), and
\eqref{eq:cross-section-metric-scaffold} identifies \(d_C\) with Euclidean
distance. Hence the neighbors of \(c\) correspond to the points of
\(\mathscr P_c\cap B_{D_{\rm Rips}}(0)\) other than the origin. The packing
bound \eqref{eq:Delone-packing-bound} gives
\eqref{eq:proximity-degree-bound}.

To prove connectedness, take two sites \(u,v\in\mathscr P_c\). Subdivide the
segment from \(u\) to \(v\) into pieces of length at most
\(R_{\rm cov}\). At each intermediate subdivision point choose a site of
\(\mathscr P_c\) within distance \(R_{\rm cov}\), and use \(u\) and \(v\)
as the endpoints. Consecutive chosen sites are at distance at most
\(3R_{\rm cov}=D_{\rm Rips}\), hence are adjacent in \(\mathcal G\). Thus
\(\mathcal G\) is connected on every \(\mathcal R_C\)-class, so
\(\mathcal R_{\mathcal G}=\mathcal R_C\).

We next prove one-endedness. Fix a finite set \(F\) in one component and
identify that component with \(\mathscr P_c\). Choose \(A<\infty\) so that
the sites corresponding to \(F\) lie in \(B_A(0)\). If
\(u,v\in\mathscr P_c\) lie outside \(B_{A+3R_{\rm cov}}(0)\), then, because
\(d\geq2\), they can be joined by a path in
\[
    V\setminus\overline B_{A+2R_{\rm cov}}(0).
\]
Subdivide this path into pieces of length at most \(R_{\rm cov}\) and
approximate the intermediate points by scaffold sites as above. All chosen
sites lie outside \(B_A(0)\), and consecutive sites are adjacent. Hence all
vertices outside \(B_{A+3R_{\rm cov}}(0)\) belong to a single component of
\(\mathcal G\setminus F\). Since bounded sets contain only finitely many
scaffold sites, every infinite component of \(\mathcal G\setminus F\) meets
this exterior component. Thus \(\mathcal G\setminus F\) has exactly one
infinite component.

The scaffold \(\mathscr P_c\) is relatively dense in the unbounded space
\(V\), so every component of \(\mathcal G\) is infinite; hence the graph is
aperiodic. Finally, the transverse measure \(\nu_C\) is invariant under the
full relation \(\mathcal R_C\). Since
\(\mathcal R_{\mathcal G}=\mathcal R_C\), the graph is p.m.p. in the sense of
Subsection~\ref{subsec:measured-graphings-forests}.
\end{proof}

The proof of one-endedness is the only place here where \(d\geq2\) is used:
the complement of a closed ball in \(V\) is path connected, so the
approximation argument can be carried out without meeting the prescribed
finite set.

Proposition~\ref{prop:proximity-graph} verifies that \(\mathcal G\) is locally
finite, p.m.p., aperiodic, and one-ended componentwise; in particular, it is
\(\nu_C\)-nowhere two-ended. Thus all hypotheses of
Theorem~\ref{thm:measured-one-ended-forest} hold, and \(\mathcal G\) has a Borel
\(\nu_C\)-almost-everywhere one-ended spanning subforest
\(\mathcal T\subseteq\mathcal G\). We remove the
\(\mathcal R_{\mathcal G}\)-saturation of the exceptional null set as in
Subsection~\ref{subsec:measured-graphings-forests}. The corresponding
\(G_K\)-saturation in \(X\) is conull by the cross-section measure formula;
see \cite[Sections~3 and~4]{BHK25} or
\cite[Proposition~A.8]{CGMTD26}. After this restriction, every component of
\(\mathcal T\) is one-ended.

Lemma~\ref{lem:one-ended-forest-orientation} now gives a Borel map
\[
    \operatorname{par}:C\longrightarrow C.
\]
Define
\[
    \operatorname{Ch}(c)
    :=
    \{w\in C:\operatorname{par}(w)=c\},
    \qquad
    \operatorname{Desc}(c)
    :=
    \{w\in C:\operatorname{par}^{\,n}(w)=c
        \text{ for some }n\geq0\}.
\]
The child and descendant relations are Borel, every descendant set is finite,
and \(d_C(c,\operatorname{par}(c))\leq D_{\rm Rips}\) for every \(c\). Set
\begin{equation}
\label{eq:forest-child-bound}
    b_{\mathcal T}
    :=
    \left\lceil
        \left(1+\frac{2D_{\rm Rips}}{r_{\rm sep}}\right)^d
    \right\rceil,
\end{equation}
so that
\[
    \#\operatorname{Ch}(c)< b_{\mathcal T}
    \qquad(c\in C).
\]
Thus the parent map has the three properties stated at the beginning of this
subsection.


\subsection{Labelled Voronoi cells}
\label{subsec:labelled-Voronoi-cells}

For \(c\in C\), let
\[
    \operatorname{Vor}(c)
    :=
    \operatorname{Vor}(\mathscr P_c,0)
\]
be the closed Voronoi cell of the distinguished site \(0\in\mathscr P_c\).
Set
\begin{equation}
\label{eq:Voronoi-radii}
    r_{\rm Vor}:=\frac{r_{\rm sep}}2,
    \qquad
    R_{\rm Vor}:=R_{\rm cov}.
\end{equation}
By Lemma~\ref{lem:Delone-Voronoi-geometry},
\[
    \overline B_{r_{\rm Vor}}
    \subset
    \operatorname{Vor}(c)
    \subset
    \overline B_{R_{\rm Vor}},
\]
and \(\operatorname{Vor}(c)\) is a compact convex polytope with
\(\lambda_V\)-null boundary. The parameterized Voronoi discussion in
Subsection~\ref{subsec:Borel-geometric-preliminaries} shows that
\(c\mapsto\operatorname{Vor}(c)\) is Borel as a compact-valued map.

The closed Voronoi cells overlap on their boundaries. To obtain disjoint
cells, fix a Borel linear order \(\preccurlyeq\) on \(C\), as in
Subsection~\ref{subsec:Borel-geometric-preliminaries}, and use the labels to
break ties between nearest scaffold sites. For \(c\in C\) and \(y\in V\), let
\[
    \mathcal M_c(y)
    :=
    \left\{
        g\in C_c:
        \|y-\operatorname{pos}_c(g)\|
        =
        \operatorname{dist}(y,\mathscr P_c)
    \right\}.
\]
This set is nonempty and finite, and its elements have distinct labels. Among
its elements choose the one with least label. Define \(W_c\) to be the set of
\(y\) for which this choice is the identity return \((0,e)\in C_c\).
Equivalently,
\begin{equation}
\label{eq:labelled-cell-definition}
    W_c
    :=
    \left\{
        y\in V:
        (\|y\|,c)
        \leq_{\rm lex}
        \bigl(
            \|y-\operatorname{pos}_c(g)\|,
            \operatorname{lab}_c(g)
        \bigr)
        \text{ for every }g\in C_c
    \right\},
\end{equation}
where the first coordinate is ordered in the usual way and the second by
\(\preccurlyeq\). Thus \(W_c\subset\operatorname{Vor}(c)\), and the difference can occur only
on \(\partial\operatorname{Vor}(c)\).

\begin{proposition}[Labelled Voronoi partition]
\label{prop:labelled-Voronoi-partition}
The set
\[
    \mathscr W:=\{(c,y)\in C\times V:y\in W_c\}
\]
is Borel. For every \(c\in C\),
\begin{equation}
\label{eq:labelled-cell-geometry}
    B_{r_{\rm Vor}}
    \subset
    W_c
    \subset
    \operatorname{Vor}(c)
    \subset
    \overline B_{R_{\rm Vor}},
\end{equation}
and
\[
    \lambda_V\bigl(\operatorname{Vor}(c)\setminus W_c\bigr)=0,
    \qquad
    \lambda_V(W_c)=\lambda_V(\operatorname{Vor}(c)).
\]
In particular,
\[
    0<\lambda_V(B_{r_{\rm Vor}})
    \leq \lambda_V(W_c)
    \leq \lambda_V(B_{R_{\rm Vor}})<\infty.
\]

For every \(x\in X\), the sets
\begin{equation}
\label{eq:physical-cell-partition}
    \left\{
        \Phi_{x,g}
        \bigl(W_{\operatorname{lab}_x(g)}\bigr):
        g\in C_x
    \right\}
\end{equation}
form an exact Borel partition of \(V\). If \(h=(v,\ell)\in G_K\), then
\begin{equation}
\label{eq:physical-cell-covariance}
    \Phi_{h.x,gh^{-1}}
        \bigl(W_{\operatorname{lab}_{h.x}(gh^{-1})}\bigr)
    =
    \ell\,
    \Phi_{x,g}
        \bigl(W_{\operatorname{lab}_x(g)}\bigr)-v.
\end{equation}
Finally, for every polynomial \(P\) on \(V\), the maps
\[
    c\longmapsto\lambda_V(W_c),
    \qquad
    c\longmapsto\int_{W_c}P(y)\,d\lambda_V(y)
\]
are Borel.
\end{proposition}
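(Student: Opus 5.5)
I will prove the assertions in the order stated, using Proposition~\ref{prop:framed-Delone-scaffold} throughout.

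\emph{Borelness of \(\mathscr W\).} Choose a Lusin--Novikov enumeration \(g_n(c)\) of the returns in \(C_c\); the positions and labels of these returns are Borel in \(c\) by Proposition~\ref{prop:framed-Delone-scaffold}. Then membership \((c,y)\in\mathscr W\) is the countable conjunction over \(n\) of the lexicographic conditions in \eqref{eq:labelled-cell-definition}. Each of these conditions is Borel, because \(\preccurlyeq\) is a Borel order.

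\emph{Geometry of \(W_c\).} For \eqref{eq:labelled-cell-geometry}, the first inequality in \eqref{eq:labelled-cell-definition} forces \(\|y\|\leq\|y-u\|\) for every site \(u\), so \(W_c\subset\operatorname{Vor}(c)\). Now let \(\|y\|<r_{\rm Vor}\). The argument of Lemma~\ref{lem:Delone-Voronoi-geometry} gives the strict inequality \(\|y-u\|>\|y\|\) for every other site \(u\), so no tie occurs and \(y\in W_c\). The points of \(\operatorname{Vor}(c)\setminus W_c\) are tie points, and these lie on \(\partial\operatorname{Vor}(c)\), which is \(\lambda_V\)-null by Lemma~\ref{lem:Delone-Voronoi-geometry}. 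This yields the volume identities and the two-sided volume bounds. The measurability of \(c\mapsto\lambda_V(W_c)\) and of \(c\mapsto\int_{W_c}P\,d\lambda_V\) then follows from the parameter-integration fact applied to the Borel set \(\mathscr W\), since \(P\) is bounded on \(\overline B_{R_{\rm Vor}}\).

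\emph{Exact partition.} The key tool is a transport identity: for \(g\in C_x\) with label \(c=g.x\), I will show that
\[
    \Phi_{x,g}(W_c)
    =
    \bigl\{z\in V:
    g\text{ is the least-label element of }\mathcal M_x(z)\bigr\},
\]
where \(\mathcal M_x(z)\) is defined as \(\mathcal M_c(y)\) with \(x\) in place of \(c\).

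To prove this, apply \eqref{eq:framed-return-covariance} and \eqref{eq:framed-coordinate-covariance} with \(h=g\). Then \(C_c=C_xg^{-1}\), and labels are preserved. The position map \(\operatorname{pos}_c(g'g^{-1})\) equals \(\ell\operatorname{pos}_x(g')-v\), where \(g=(v,\ell)\), so it is an isometric image of \(\operatorname{pos}_x(g')\). One checks that the isometry \(y\mapsto \ell y - v\) is exactly \(\Phi_{x,g}^{-1}\): this is \eqref{eq:framed-transport-identity} combined with \(\Phi_{c,(0,e)}=\mathrm{id}\). Hence distances to scaffold sites, and the labels attached to those sites, correspond under \(\Phi_{x,g}\). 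Because the identity return of \(c\) corresponds to \(g\), the identity follows.

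Given the identity, every \(z\in V\) lies in exactly one of the sets in \eqref{eq:physical-cell-partition}. Indeed, \(\mathcal M_x(z)\) is nonempty and finite because \(\mathscr P_x\) is Delone. Its elements have distinct labels, by the injectivity argument in the proof of Proposition~\ref{prop:framed-Delone-scaffold}. So \(\mathcal M_x(z)\) has a unique least-label element. Each piece of the partition is Borel, being the image of a Borel set under an isometry.

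\emph{Covariance.} Formula \eqref{eq:physical-cell-covariance} follows directly from \eqref{eq:framed-transport-identity} together with \(\operatorname{lab}_{h.x}(gh^{-1})=\operatorname{lab}_x(g)\).

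The main obstacle is the bookkeeping in the transport identity. One has to check carefully that the label order and the Euclidean distances are transported consistently between the local coordinates at \(c\) and the physical coordinates at \(x\), and in particular that the sign and frame conventions of \(\Phi\) match those of \(\operatorname{pos}\). I will verify this first, before the partition argument.
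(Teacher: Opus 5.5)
Your proposal is correct and follows the paper's proof. The steps match: Lusin--Novikov enumeration for the Borelness of \(\mathscr W\), the strict-inequality argument for \(B_{r_{\rm Vor}}\subset W_c\), tie points lying on the null boundary, and transport of the nearest-site comparison and tie-breaking labels under \(\Phi_{x,g}\). Your explicit use of the covariance identities with \(h=g\), including the check that \(\Phi_{x,g}^{-1}(z)=\ell z-v\), simply spells out the paper's step \(\mathscr P_x=p+r\mathscr P_c\) together with \(\operatorname{lab}_c(g'g^{-1})=\operatorname{lab}_x(g')\).
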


\begin{proof}
The lexicographic description \eqref{eq:labelled-cell-definition} and a Borel
Lusin--Novikov enumeration of \(C_c\) show that \(\mathscr W\) is Borel: the
defining condition is a countable intersection of Borel inequalities.

If \(y\in W_c\), then the origin realizes the minimum distance from \(y\) to
\(\mathscr P_c\), so \(y\in\operatorname{Vor}(c)\). Conversely, a point of
\(\operatorname{Vor}(c)\) can fail to belong to \(W_c\) only when it is
equidistant from the origin and another scaffold site. Hence
\[
    \operatorname{Vor}(c)\setminus W_c
    \subset
    \partial\operatorname{Vor}(c),
\]
which proves the null-boundary assertion. If \(y\in B_{r_{\rm Vor}}\) and
\(u\in\mathscr P_c\setminus\{0\}\), then \(\|u\|\geq r_{\rm sep}\), and
therefore
\[
    \|y-u\|
    \geq \|u\|-\|y\|
    > r_{\rm sep}-r_{\rm Vor}
    =r_{\rm Vor}
    >\|y\|.
\]
Thus the origin is the unique closest site, and
\eqref{eq:labelled-cell-geometry} follows.

To identify the cells in the orbit of an arbitrary \(x\), fix
\(g=(a,k)\in C_x\) and put
\[
    p:=\operatorname{pos}_x(g)=k^{-1}a,
    \qquad
    r:=\operatorname{fr}_x(g)=k^{-1},
    \qquad
    c:=\operatorname{lab}_x(g)=g.x\in C.
\]
The scaffold covariance gives
\[
    \mathscr P_c=k\mathscr P_x-a,
    \qquad\text{hence}\qquad
    \mathscr P_x=p+r\mathscr P_c.
\]
Thus \(\Phi_{x,g}(y)=p+ry\) sends the distinguished site
\(0\in\mathscr P_c\) to \(p\). If \(g'\in C_x\), then
\(g'g^{-1}\in C_c\) and
\[
    \operatorname{lab}_c(g'g^{-1})
    =
    \operatorname{lab}_x(g').
\]
Therefore \(\Phi_{x,g}\) preserves the nearest-site comparison and the label
used to break ties. It sends \(W_c\) onto the cell assigned to \(p\) in
\(\mathscr P_x\), which proves \eqref{eq:physical-cell-partition}. Equation
\eqref{eq:physical-cell-covariance} follows from
\eqref{eq:framed-transport-identity} and
\eqref{eq:framed-coordinate-covariance}.

Finally, \(\mathscr W\) is Borel and all its sections lie in the fixed ball
\(\overline B_{R_{\rm Vor}}\). The parameter-integration statement in
Subsection~\ref{subsec:Borel-geometric-preliminaries} gives the asserted Borel
dependence of volumes and polynomial moments.
\end{proof}


\subsection{Integerization and packet transfer}
\label{subsec:integerization-packet-transfer}

We now modify the cells \(W_c\) so that their \(\rho\)-masses are integers.
By \emph{integerization} we mean the following two steps: first define integers
\(q_\rho(c)\) whose difference from \(\rho\lambda_V(W_c)\) is expressed along
the parent map; then change \(W_c\) by sets of the corresponding masses so
that the resulting set has \(\rho\)-mass exactly \(q_\rho(c)\). In
Section~\ref{sec:local-filling-assembly}, that set will be replaced by exactly
\(q_\rho(c)\) points.

\emph{Integer masses.}
Fix \(\rho>0\), and write \(\{s\}:=s-\lfloor s\rfloor\) for the fractional
part of \(s\in\mathbb R\). For \(c\in C\), define the real mass of its
labelled Voronoi cell and the total real mass of its finite descendant subtree by
\begin{equation}
\label{eq:cell-mass-integerization}
    m_\rho(c):=\rho\lambda_V(W_c),
    \qquad
    M_\rho(c)
    :=
    \sum_{w\in\operatorname{Desc}(c)}m_\rho(w).
\end{equation}
The sum defining \(M_\rho(c)\) is finite by
Subsection~\ref{subsec:proximity-forest}. Define the target integer mass
\begin{equation}
\label{eq:integerized-cell-mass}
    q_\rho(c)
    :=
    \lfloor M_\rho(c)\rfloor
    -
    \sum_{w\in\operatorname{Ch}(c)}
        \lfloor M_\rho(w)\rfloor.
\end{equation}
All functions in
\eqref{eq:cell-mass-integerization}--\eqref{eq:integerized-cell-mass} are
Borel, because the child and descendant relations are Borel with finite
sections.

\begin{lemma}[Forest integerization]
\label{lem:forest-integerization}
For every \(c\in C\),
\begin{equation}
\label{eq:forest-divergence-identity}
    q_\rho(c)-m_\rho(c)
    =
    \sum_{w\in\operatorname{Ch}(c)}
        \{M_\rho(w)\}
    -
    \{M_\rho(c)\}.
\end{equation}
Consequently,
\begin{equation}
\label{eq:integerization-error-bounds}
    -1<q_\rho(c)-m_\rho(c)<b_{\mathcal T}.
\end{equation}
In particular, there is \(\rho_0<\infty\), depending only on the fixed
geometric constants, such that
\[
    q_\rho(c)\geq1
    \qquad(c\in C,\ \rho\geq\rho_0).
\]
\end{lemma}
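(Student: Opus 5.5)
The identity \eqref{eq:forest-divergence-identity} is a telescoping computation based on the recursive structure of descendant sets. First I will verify that \(\operatorname{Desc}(c)\) is the disjoint union of \(\{c\}\) and the sets \(\operatorname{Desc}(w)\), \(w\in\operatorname{Ch}(c)\). Any \(w'\neq c\) with \(\operatorname{par}^n(w')=c\), \(n\geq1\), has \(\operatorname{par}^{n-1}(w')\in\operatorname{Ch}(c)\). Disjointness holds because the parent map is single-valued, and acyclicity of the forest excludes \(c\) itself from these sets. Since all descendant sets are finite by Lemma~\ref{lem:one-ended-forest-orientation}, the sum defining \(M_\rho\) splits accordingly:
\[
    M_\rho(c)=m_\rho(c)+\sum_{w\in\operatorname{Ch}(c)}M_\rho(w).
\]
Substituting \(\lfloor s\rfloor=s-\{s\}\) into \eqref{eq:integerized-cell-mass} and using this recursion, the real parts collapse to \(m_\rho(c)\). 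What remains is exactly \(\sum_{w\in\operatorname{Ch}(c)}\{M_\rho(w)\}-\{M_\rho(c)\}\), which is \eqref{eq:forest-divergence-identity}.

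The bounds \eqref{eq:integerization-error-bounds} then follow from \(0\le\{s\}<1\). The sum over children is nonnegative and \(-\{M_\rho(c)\}>-1\), which gives the lower bound. For the upper bound, the sum has at most \(\#\operatorname{Ch}(c)\le b_{\mathcal T}-1\) terms, each less than \(1\), and subtracting \(\{M_\rho(c)\}\ge0\) keeps the total strictly below \(b_{\mathcal T}\). The child count comes from \(\#\operatorname{Ch}(c)<b_{\mathcal T}\) in Subsection~\ref{subsec:proximity-forest}, which in turn rests on the degree bound \eqref{eq:proximity-degree-bound} together with \(\mathcal T\subseteq\mathcal G\).

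For positivity, Proposition~\ref{prop:labelled-Voronoi-partition} gives \(m_\rho(c)\ge\rho\lambda_V(B_{r_{\rm Vor}})\). Combined with the lower bound, \(q_\rho(c)>\rho\lambda_V(B_{r_{\rm Vor}})-1\). Since \(q_\rho(c)\) is an integer, it suffices to take \(\rho_0:=1/\lambda_V(B_{r_{\rm Vor}})\): for \(\rho\ge\rho_0\) we get \(q_\rho(c)>0\), hence \(q_\rho(c)\ge1\). This \(\rho_0\) depends only on \(r_{\rm sep}\) and \(d\).

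The only point requiring care is the disjoint decomposition of \(\operatorname{Desc}(c)\), which uses acyclicity and the fact that \(\operatorname{par}\) is a function. Everything else is arithmetic with fractional parts.
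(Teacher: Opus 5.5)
Your proof is correct and follows the paper's argument: the decomposition $\operatorname{Desc}(c)=\{c\}\sqcup\bigsqcup_{w\in\operatorname{Ch}(c)}\operatorname{Desc}(w)$ gives the recursion for $M_\rho$, the fractional-part bounds together with $\#\operatorname{Ch}(c)<b_{\mathcal T}$ give the two-sided error estimate, and $m_\rho(c)\geq\rho\lambda_V(B_{r_{\rm Vor}})$ combined with integrality gives $q_\rho(c)\geq1$. Your justification of the decomposition goes beyond the paper, which takes it for granted. One small correction there: disjointness of $\operatorname{Desc}(w_1)$ and $\operatorname{Desc}(w_2)$ for distinct children $w_1\neq w_2$ needs that the iterates of $\operatorname{par}$ are distinct (acyclicity) as well as single-valuedness. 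Otherwise a common descendant could reach $c$ at two different times.
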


\begin{proof}
The descendant decomposition
\[
    \operatorname{Desc}(c)
    =
    \{c\}
    \sqcup
    \bigsqcup_{w\in\operatorname{Ch}(c)}
        \operatorname{Desc}(w)
\]
gives
\[
    M_\rho(c)
    =
    m_\rho(c)
    +
    \sum_{w\in\operatorname{Ch}(c)}M_\rho(w).
\]
Subtracting integer parts gives
\eqref{eq:forest-divergence-identity}. Since every fractional part belongs to
\([0,1)\),
\[
    q_\rho(c)-m_\rho(c)
    \leq\#\operatorname{Ch}(c)
    <b_{\mathcal T},
\]
and the lower bound is strict because \(\{M_\rho(c)\}<1\). This proves
\eqref{eq:integerization-error-bounds}.

By \eqref{eq:labelled-cell-geometry},
\[
    m_\rho(c)
    \geq
    \rho\lambda_V(B_{r_{\rm Vor}}).
\]
Thus any \(\rho_0\) with
\(\rho_0\lambda_V(B_{r_{\rm Vor}})\geq1\) makes \(q_\rho(c)>0\); since
\(q_\rho(c)\) is an integer, it is then at least one.
\end{proof}

Put
\[
    \theta_\rho(c):=\{M_\rho(c)\}\in[0,1).
\]
Then \eqref{eq:forest-divergence-identity} becomes
\[
    q_\rho(c)
    =
    m_\rho(c)-\theta_\rho(c)
    +\sum_{w\in\operatorname{Ch}(c)}\theta_\rho(w).
\]
Accordingly, we remove from \(W_c\) a set of \(\rho\)-mass
\(\theta_\rho(c)\) and add, for each child \(w\), the set of \(\rho\)-mass
\(\theta_\rho(w)\) removed from \(W_w\).

\emph{Modification of the cells.}
From now on assume \(\rho\geq\rho_0\). For \(c\in C\), define
\begin{equation}
\label{eq:packet-homothety-factor}
    \alpha_\rho(c)
    :=
    \left(
        1-
        \frac{\theta_\rho(c)}{m_\rho(c)}
    \right)^{1/d}
\end{equation}
and, with dilations taken about the origin,
\begin{equation}
\label{eq:outgoing-shell-definition}
    \mathscr O_\rho(c)
    :=
    W_c
    \setminus
    \alpha_\rho(c)\operatorname{Vor}(c).
\end{equation}
If \(\alpha_\rho(c)<1\), then
\(\alpha_\rho(c)\operatorname{Vor}(c)\) is contained in the interior of
\(\operatorname{Vor}(c)\), hence in \(W_c\); when
\(\alpha_\rho(c)=1\), the outgoing shell is empty. Therefore
\begin{equation}
\label{eq:outgoing-shell-mass}
    \rho\lambda_V(\mathscr O_\rho(c))
    =
    \theta_\rho(c).
\end{equation}
Moreover,
\begin{equation}
\label{eq:homothety-thickness}
    1-\alpha_\rho(c)
    \leq
    \frac{1}{
        \rho\lambda_V(B_{r_{\rm Vor}})
    },
\end{equation}
and hence
\begin{equation}
\label{eq:outgoing-shell-thickness}
    \sup_{y\in\mathscr O_\rho(c)}
    \operatorname{dist}
        \bigl(y,\partial\operatorname{Vor}(c)\bigr)
    \leq
    \frac{R_{\rm Vor}}{
        \rho\lambda_V(B_{r_{\rm Vor}})
    }.
\end{equation}
Indeed, if \(y=r\theta\in\mathscr O_\rho(c)\) and
\(s\theta\in\partial\operatorname{Vor}(c)\) lies on the same ray, then
\(r>\alpha_\rho(c)s\). Hence
\[
    \operatorname{dist}
        \bigl(y,\partial\operatorname{Vor}(c)\bigr)
    \leq
    s-r
    \leq
    R_{\rm Vor}\bigl(1-\alpha_\rho(c)\bigr),
\]
and \eqref{eq:outgoing-shell-thickness} follows from
\eqref{eq:homothety-thickness}.

The set removed from a child \(w\) is expressed in the local coordinates of
\(w\). To include the same subset of \(V\) in the set indexed by its parent
\(c\), write
\(\gamma(c,w)=(a,k)\), so that \(w=(a,k).c\), and define the affine isometry
\begin{equation}
\label{eq:packet-coordinate-transition}
    \operatorname{Iso}_{w\to c}(y)
    :=
    k^{-1}a+k^{-1}y,
    \qquad y\in V.
\end{equation}
Equivalently,
\[
    \operatorname{Iso}_{w\to c}(y)
    =
    \operatorname{pos}_c(\gamma(c,w))
    +
    \operatorname{fr}_c(\gamma(c,w))y.
\]

\begin{lemma}[Coordinate transfer along the forest]
\label{lem:packet-coordinate-transition}
Let \(x\in X\), let \(g\in C_x\) have label \(c\), and let
\(w\mathrel{\mathcal R_C}c\). Then \(\gamma(c,w)g\in C_x\) has label \(w\),
and
\begin{equation}
\label{eq:same-physical-point-coordinate-change}
    \Phi_{x,\gamma(c,w)g}(y)
    =
    \Phi_{x,g}
        \bigl(\operatorname{Iso}_{w\to c}(y)\bigr),
    \qquad y\in V.
\end{equation}
Thus the two sides of \eqref{eq:same-physical-point-coordinate-change}
represent the same point of \(V\) in the two local coordinate systems.
\end{lemma}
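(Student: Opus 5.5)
The argument is a direct computation in the group \(G_K\), using only the definitions of the cocycle \(\gamma\), the framed coordinates \eqref{eq:framed-return-coordinates}, and the placement map \eqref{eq:framed-placement-map}.

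\emph{Membership and label.} Write \(g=(b,m)\in C_x\), so that \(c=g.x\). Write \(\gamma(c,w)=(a,k)\), so that \(w=(a,k).c\) by definition of the cocycle. Then
\[
    \bigl(\gamma(c,w)g\bigr).x=\gamma(c,w).(g.x)=\gamma(c,w).c=w\in C.
\]
Hence \(\gamma(c,w)g\in C_x\), and its label is \(w\) by the definition of \(\operatorname{lab}_x\).

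\emph{Coordinates of the product.} Using the multiplication rule \((v,k)(w,\ell)=(v+kw,k\ell)\), we get
\[
    \gamma(c,w)g=(a+kb,\;km).
\]
Its position is therefore
\[
    (km)^{-1}(a+kb)=m^{-1}k^{-1}a+m^{-1}b,
\]
and its frame is \(m^{-1}k^{-1}\). This gives
\[
    \Phi_{x,\gamma(c,w)g}(y)=m^{-1}b+m^{-1}k^{-1}a+m^{-1}k^{-1}y .
\]

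\emph{Right-hand side.} Since \(\operatorname{pos}_x(g)=m^{-1}b\) and \(\operatorname{fr}_x(g)=m^{-1}\), we have
\[
    \Phi_{x,g}(z)=m^{-1}b+m^{-1}z .
\]
Substituting \(z=\operatorname{Iso}_{w\to c}(y)=k^{-1}a+k^{-1}y\) from \eqref{eq:packet-coordinate-transition} yields exactly the same expression. This proves \eqref{eq:same-physical-point-coordinate-change}.

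One could also derive the identity from the transport identity \eqref{eq:framed-transport-identity}, but the direct computation is shorter. No step is a real obstacle; the only care needed is the order of multiplication and the inversions in the definition of position and frame.
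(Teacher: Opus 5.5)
Your proof is correct and follows the paper's own argument: it computes the product \((a+kb,km)\), reads off its position and frame, and compares the result with \(\Phi_{x,g}\bigl(\operatorname{Iso}_{w\to c}(y)\bigr)\). You also check explicitly that \(\gamma(c,w)g\in C_x\) with label \(w\), via \((\gamma(c,w)g).x=\gamma(c,w).c=w\); the paper leaves this step implicit.
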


\begin{proof}
Write \(g=(b,\ell)\) and \(\gamma(c,w)=(a,k)\). Then
\[
    \gamma(c,w)g=(a+kb,k\ell),
\]
and therefore
\[
\begin{aligned}
    \operatorname{pos}_x(\gamma(c,w)g)
    &=
    \ell^{-1}b+\ell^{-1}k^{-1}a,
    \\
    \operatorname{fr}_x(\gamma(c,w)g)
    &=
    \ell^{-1}k^{-1}.
\end{aligned}
\]
Since \(\operatorname{pos}_x(g)=\ell^{-1}b\) and
\(\operatorname{fr}_x(g)=\ell^{-1}\), this is exactly
\eqref{eq:same-physical-point-coordinate-change}.
\end{proof}

For \(\rho\geq\rho_0\) and \(c\in C\), define
\begin{equation}
\label{eq:packet-definition}
    \Pi_\rho(c)
    :=
    \bigl(W_c\setminus\mathscr O_\rho(c)\bigr)
    \cup
    \bigcup_{w\in\operatorname{Ch}(c)}
        \operatorname{Iso}_{w\to c}
            \bigl(\mathscr O_\rho(w)\bigr).
\end{equation}
The first term is the part retained from \(W_c\); the remaining terms are the
sets removed from the children, written in the coordinates of \(c\).

\begin{proposition}[Integer-mass packets]
\label{prop:integer-mass-packets}
After increasing \(\rho_0\) if necessary, the following hold for all
\(\rho\geq\rho_0\).

The family \(c\mapsto\Pi_\rho(c)\) is Borel, and for every \(x\in X\),
\begin{equation}
\label{eq:physical-packet-partition}
    V
    =
    \bigsqcup_{g\in C_x}
        \Phi_{x,g}
        \bigl(\Pi_\rho(\operatorname{lab}_x(g))\bigr).
\end{equation}
Moreover, for every \(c\in C\),
\begin{equation}
\label{eq:integer-packet-mass}
    \rho\lambda_V(\Pi_\rho(c))
    =
    q_\rho(c)
    \in\mathbb N,
\end{equation}
and
\begin{equation}
\label{eq:packet-symmetric-difference}
    \rho\lambda_V
        \bigl(\Pi_\rho(c)\triangle W_c\bigr)
    <
    1+b_{\mathcal T}.
\end{equation}
There is a common radius
\begin{equation}
\label{eq:packet-core-radius}
    r_*:=\frac{r_{\rm Vor}}2
\end{equation}
such that
\begin{equation}
\label{eq:packet-core-and-support}
    \overline B_{r_*}
    \subset
    W_c\setminus\mathscr O_\rho(c)
    \subset
    \Pi_\rho(c)
    \subset
    \overline B_{D_{\rm Rips}+R_{\rm Vor}}.
\end{equation}
Finally, for \(x\in X\), \(g\in C_x\), and \(h=(v,\ell)\in G_K\),
\begin{equation}
\label{eq:physical-packet-covariance}
    \Phi_{h.x,gh^{-1}}
        \bigl(\Pi_\rho(\operatorname{lab}_{h.x}(gh^{-1}))\bigr)
    =
    \ell\,
    \Phi_{x,g}
        \bigl(\Pi_\rho(\operatorname{lab}_x(g))\bigr)-v.
\end{equation}
\end{proposition}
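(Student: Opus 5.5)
The proof is mostly bookkeeping on top of Proposition~\ref{prop:labelled-Voronoi-partition} and Lemma~\ref{lem:forest-integerization}. I would first fix the geometry. Increase \(\rho_0\) so that the shell thickness bound \eqref{eq:outgoing-shell-thickness} is at most \(r_{\rm Vor}/2\). Equivalently, by \eqref{eq:homothety-thickness}, \(\alpha_\rho(c)\geq 1/2\) uniformly in \(c\). Since \(\overline B_{r_{\rm Vor}}\subset\operatorname{Vor}(c)\), the dilate \(\alpha_\rho(c)\operatorname{Vor}(c)\) contains \(\overline B_{r_{\rm Vor}/2}=\overline B_{r_*}\). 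This gives the first two inclusions in \eqref{eq:packet-core-and-support}. For the outer inclusion, note that \(\operatorname{Iso}_{w\to c}(\mathscr O_\rho(w))\subset\operatorname{Iso}_{w\to c}(\overline B_{R_{\rm Vor}})\), which is the ball of radius \(R_{\rm Vor}\) about \(\operatorname{pos}_c(\gamma(c,w))\). That centre has norm \(d_C(c,w)\leq D_{\rm Rips}\) by \eqref{eq:cross-section-metric-scaffold}. Borelness of \(c\mapsto\Pi_\rho(c)\) follows as in the earlier propositions. The incidence set is a finite union, over the Borel finite child relation enumerated by Lusin--Novikov, of sets defined from \(\mathscr W\), from the Borel maps \(\theta_\rho\) and \(\alpha_\rho\), from \(\operatorname{Vor}(c)\), and from the Borel cocycle \(\gamma\).

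The key step is the partition identity \eqref{eq:physical-packet-partition}. Fix \(x\) and pass to physical coordinates. For \(g\in C_x\) with label \(c\), set \(\widetilde W_g:=\Phi_{x,g}(W_c)\), \(\widetilde O_g:=\Phi_{x,g}(\mathscr O_\rho(c))\subset\widetilde W_g\), and define \(\operatorname{par}_x(g):=\gamma(c,\operatorname{par}(c))g\). By Lemma~\ref{lem:packet-coordinate-transition}, \(\operatorname{par}_x(g)\in C_x\) has label \(\operatorname{par}(c)\). The same lemma, applied with \(c\) replaced by the parent and \(w\) by \(c\), shows that for each child \(w\) the set \(\Phi_{x,g}(\operatorname{Iso}_{w\to c}(\mathscr O_\rho(w)))\) equals \(\widetilde O_{g'}\), where \(g'=\gamma(c,w)g\) is the unique return with label \(w\) in the orbit of \(x\). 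Hence
\[
\Phi_{x,g}(\Pi_\rho(c))=(\widetilde W_g\setminus\widetilde O_g)\cup\bigcup_{g':\operatorname{par}_x(g')=g}\widetilde O_{g'}.
\]
The \(\widetilde W_g\) partition \(V\) by \eqref{eq:physical-cell-partition}. Each \(\widetilde O_{g'}\) is therefore assigned to exactly one packet, namely that of its parent, since the parent map is a function. Injectivity of labels on \(C_x\) (Proposition~\ref{prop:framed-Delone-scaffold} with freeness) makes the correspondence between children and returns a bijection. So the packets are pairwise disjoint and cover \(V\).

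The mass identity \eqref{eq:integer-packet-mass} comes from the same disjointness, now in the coordinates of \(c\): the shells of distinct children lie in disjoint cells \(\widetilde W_{g'}\), and none meets \(\widetilde W_g\). Since the \(\operatorname{Iso}\) maps are isometries, \eqref{eq:outgoing-shell-mass} gives
\[
\rho\lambda_V(\Pi_\rho(c))=m_\rho(c)-\theta_\rho(c)+\sum_{w\in\operatorname{Ch}(c)}\theta_\rho(w)=q_\rho(c)
\]
by \eqref{eq:forest-divergence-identity}. The symmetric difference \(\Pi_\rho(c)\triangle W_c\) is contained in the outgoing shell together with the incoming shells. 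Its \(\rho\)-mass is therefore at most \(\theta_\rho(c)+\sum_{w}\theta_\rho(w)<1+\#\operatorname{Ch}(c)\leq b_{\mathcal T}\), which gives \eqref{eq:packet-symmetric-difference}.

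For covariance \eqref{eq:physical-packet-covariance}, labels are unchanged by \eqref{eq:framed-coordinate-covariance}, so the same \(\Pi_\rho(c)\) is placed. The claim then follows from \eqref{eq:framed-transport-identity}. The only point needing care is the partition step: checking that \(\operatorname{par}_x\) is well defined and that the children of \(g\) in \(C_x\) correspond bijectively to \(\operatorname{Ch}(c)\). Both rest on freeness and on the cocycle identity \(\gamma(c,w)\gamma(w,c)=e\).
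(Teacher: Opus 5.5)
Your proposal is correct and follows the paper's proof essentially step for step. The main difference is that you make the partition step explicit: you transport the parent map to returns, \(\operatorname{par}_x(g)=\gamma(c,\operatorname{par}(c))g\), and check via freeness that \(\{g'\in C_x:\operatorname{par}_x(g')=g\}\) corresponds bijectively to \(\operatorname{Ch}(c)\), which the paper compresses into a single appeal to Lemma~\ref{lem:packet-coordinate-transition}. Two small remarks: the shell-thickness condition implies \(\alpha_\rho(c)\geq1/2\) rather than being equivalent to it, and your count \(1+\#\operatorname{Ch}(c)\leq b_{\mathcal T}\) in fact gives the slightly sharper bound \(<b_{\mathcal T}\) in \eqref{eq:packet-symmetric-difference}.
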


Since the placement maps \(\Phi_{x,g}\) are isometries, every physical packet
has diameter at most \(2(D_{\rm Rips}+R_{\rm Vor})\). Hence any physical
packet selected according to the stationary transverse law has diameter
bounded by the same constant almost surely, and therefore has polynomial and
exponential moments of every order.

\begin{proof}
By Proposition~\ref{prop:labelled-Voronoi-partition}, the sets in
\eqref{eq:physical-cell-partition} partition \(V\). For each \(c\),
\(\mathscr O_\rho(c)\subset W_c\). It is removed from the term indexed by
\(c\), and because \(c\) has exactly one parent it is added exactly once to
the term indexed by \(\operatorname{par}(c)\). Lemma~\ref{lem:packet-coordinate-transition}
shows that this addition is the same subset of \(V\), expressed in the
parent's coordinates. Therefore \eqref{eq:physical-packet-partition} is an
exact partition.

The pieces in \eqref{eq:packet-definition} are therefore disjoint. Using
\eqref{eq:outgoing-shell-mass} and
\eqref{eq:forest-divergence-identity}, we obtain
\[
\begin{aligned}
    \rho\lambda_V(\Pi_\rho(c))
    &=
    m_\rho(c)-\theta_\rho(c)
    +
    \sum_{w\in\operatorname{Ch}(c)}\theta_\rho(w)
    \\
    &=
    q_\rho(c),
\end{aligned}
\]
which proves \eqref{eq:integer-packet-mass}. The same disjointness gives
\[
    \rho\lambda_V
        \bigl(\Pi_\rho(c)\triangle W_c\bigr)
    =
    \theta_\rho(c)
    +
    \sum_{w\in\operatorname{Ch}(c)}\theta_\rho(w)
    <
    1+b_{\mathcal T},
\]
proving \eqref{eq:packet-symmetric-difference}.

Increase \(\rho_0\) so that \(\alpha_\rho(c)\geq1/2\) for every \(c\) and
\(\rho\geq\rho_0\). Since
\(\overline B_{r_{\rm Vor}}\subset\operatorname{Vor}(c)\), the set
\(W_c\setminus\mathscr O_\rho(c)\) contains
\(\overline B_{r_{\rm Vor}/2}\). This proves the first
two inclusions in \eqref{eq:packet-core-and-support}.

If \(w\in\operatorname{Ch}(c)\), then \(d_C(c,w)\leq D_{\rm Rips}\). Hence
\(\operatorname{Iso}_{w\to c}(W_w)\) is contained in the ball of radius
\(R_{\rm Vor}\) about a point at distance at most \(D_{\rm Rips}\) from the
origin. The last inclusion in \eqref{eq:packet-core-and-support} follows.

For Borelness, the functions \(m_\rho,M_\rho,q_\rho,\theta_\rho\) are Borel,
the child relation has finite sections, and
\[
    (c,w,y)
    \longmapsto
    \operatorname{Iso}_{w\to c}(y)
\]
is Borel by the Borelness of \(\gamma\). Together with the Borel cell family
from Subsection~\ref{subsec:labelled-Voronoi-cells}, this shows that
\[
    \{(c,y)\in C\times V:y\in\Pi_\rho(c)\}
\]
is Borel. Finally, \eqref{eq:physical-packet-covariance} follows from
\eqref{eq:framed-transport-identity} and the invariance of labels in
\eqref{eq:framed-coordinate-covariance}.
\end{proof}


\section{Local discretization and global realization}
\label{sec:local-filling-assembly}

We retain the standing higher-dimensional setup of
Section~\ref{sec:orbit-geometry-integerization}. Fix an integer \(p\geq1\) and,
for \(\rho\geq\rho_0\), put
\[
    h:=\rho^{-1/d}.
\]
Constants introduced in this section may depend on \(d\), \(p\), and the
fixed geometric constants from Section~\ref{sec:orbit-geometry-integerization},
but not on \(c\) or \(\rho\). We may increase \(\rho_0\) finitely many
times; every such increase is uniform in \(c\).

For every \(c\in C\), Proposition~\ref{prop:integer-mass-packets} gives a
bounded set \(\Pi_\rho(c)\) with
\[
    \rho\lambda_V(\Pi_\rho(c))=q_\rho(c)\in\mathbb N.
\]

By Proposition~\ref{prop:configuration-realization}, it suffices to construct a
Borel \(G_K\)-equivariant map
\[
    \kappa_\rho:X\longrightarrow\mathcal N_s(V),
    \qquad
    \kappa_\rho(x)=\delta_{P_x},
\]
whose configurations \(P_x\) are uniformly Delone and for which
\(\kappa_\rho\) is injective. Then
\[
    Y_\rho:=\{x\in X:0\in P_x\}
\]
is a \(K\)-invariant generating Delone translation cross-section with return
sets \(P_x\).
We construct \(P_x\) from local sets \(Q_\rho(c)\). Each \(Q_\rho(c)\) must
contain exactly \(q_\rho(c)\) points, all lying in
\(W_c\setminus\mathscr O_\rho(c)\), with separation, boundary-distance, and
covering bounds of order \(h\). It must also satisfy
\[
    \sum_{u\in Q_\rho(c)}P(u)
    =
    \rho\int_{\Pi_\rho(c)}P(y)\,d\lambda_V(y)
\]
for every polynomial \(P\) of degree less than \(p\). These identities are the
input for Section~\ref{sec:spectral-decay-rigidity}.

To prove injectivity, fix a
Borel injection \(b:C\to(0,1)\) and construct finite sets \(S_s\),
\(s\in(0,1)\), such that an unlabelled placed copy
\[
    a+r hS_s,
    \qquad a\in V,\quad r\in K,
\]
determines \(a\), \(r\), and \(s\). We arrange
\(hS_{b(c)}\subset Q_\rho(c)\) and separate this subset from all other short
interpoint distances. After global placement, one such copy determines a
position, frame, and label; equation~\eqref{eq:framed-recovery} then
determines the original state.

The proof proceeds in three local steps: quadrature on
\(\operatorname{Vor}(c)\), finite insertion of \(hS_{b(c)}\), and an
\(O(h)\) correction inside \(B_{r_*}\) that makes the moments of degree less
than \(p\) exact.

Ignoring the additional geometric and measurable requirements, the exact
moment identities are equal-weight quadrature formulas: since
\(q_\rho(c)=\rho\lambda_V(\Pi_\rho(c))\), they say that the points of
\(Q_\rho(c)\) form an averaging set for normalized Lebesgue measure on
\(\Pi_\rho(c)\) through degree \(p-1\). Averaging sets were introduced by
Seymour--Zaslavsky \cite{SZ84}; see also Wagner--Volkmann \cite{WV91} and,
for the closely related theory of spherical designs,
\cite{BV10,BRV13}. Moment-preserving point placements were used in
hyperuniform constructions by Gabrielli--Joyce--Torquato \cite{GJT08}.
More recently, Lotz--Klatt \cite[Appendix~C]{LK26} develop averaging sets for
precisely this type of application; their Theorem~C.8 gives existence under
uniform geometric control, and Remark~C.9 discusses the additional
measurability issue. The theorem below also requires Borel dependence on
\(c\), uniform separation, boundary-distance and covering estimates, and the
prescribed finite subset \(S_{b(c)}\) used to recover the original state.

We do not optimize the threshold in \(p\): although the correction space has
dimension \(\binom{d+p-1}{d}-1=O_d(p^d)\), the \(p\)-dependence of the
conditioning and derivative constants is not tracked, so the proof gives no
useful bound on \(\rho_0\) or the required local cardinalities.

\subsection{Local discretization theorem}
\label{subsec:local-filling}

\begin{theorem}[Uniform local discretization with exact moments]
\label{thm:local-filling}
There exist \(\rho_0<\infty\) and constants
\(c_{\rm fill},c_{\rm bd},C_{\rm fill},c_{\rm rec}>0\) such that, for every
\(\rho\geq\rho_0\), there is an assignment
\[
    c\longmapsto Q_\rho(c)\subset V
\]
for which \(c\mapsto\delta_{Q_\rho(c)}\) is Borel and the following
properties hold.

For every \(c\in C\),
\begin{equation}
\label{eq:local-filling-cardinality}
    Q_\rho(c)
    \subset
    W_c\setminus\mathscr O_\rho(c)
    \subset
    \Pi_\rho(c),
    \qquad
    \#Q_\rho(c)=q_\rho(c).
\end{equation}
Moreover, for every \(c\in C\),
\begin{equation}
\label{eq:local-filling-geometry}
\begin{aligned}
    \operatorname{sep}(Q_\rho(c))
    &\geq c_{\rm fill}h,\\
    \operatorname{dist}
        \bigl(Q_\rho(c),\partial\operatorname{Vor}(c)\bigr)
    &\geq c_{\rm bd}h,\\
    \sup_{y\in\operatorname{Vor}(c)}
        \operatorname{dist}(y,Q_\rho(c))
    &\leq C_{\rm fill}h.
\end{aligned}
\end{equation}
For every \(c\in C\) and every \(\mathbf j\in\mathbb N_0^d\) with
\(|\mathbf j|<p\),
\begin{equation}
\label{eq:local-filling-moments}
    \sum_{u\in Q_\rho(c)}u^{\mathbf j}
    =
    \rho\int_{\Pi_\rho(c)}
        y^{\mathbf j}\,d\lambda_V(y).
\end{equation}

For every \(c\in C\) and every Lipschitz function \(f\) on
\(\overline B_{D_{\rm Rips}+R_{\rm Vor}}\),
\begin{equation}
\label{eq:local-filling-Lipschitz}
    \left|
        \sum_{u\in Q_\rho(c)}f(u)
        -
        \rho\int_{\Pi_\rho(c)}
            f\,d\lambda_V
    \right|
    \leq
    C_{\rm fill}\rho h\,\operatorname{Lip}(f).
\end{equation}

There is also a family of sets
\[
    S_s\subset V,
    \qquad s\in(0,1),
    \qquad \#S_s=d+1,
\]
such that \(s\mapsto\delta_{S_s}\) is Borel and, for every \(c\in C\),
\begin{equation}
\label{eq:local-filling-marker}
    hS_{b(c)}\subset Q_\rho(c).
\end{equation}
In addition,
for every \(c\in C\),
\begin{equation}
\label{eq:local-filling-recognition-gap}
    \operatorname{diam}(hS_{b(c)})<\frac{c_{\rm rec}h}{5},
    \qquad
    \|u-v\|\geq c_{\rm rec}h
\end{equation}
whenever \(u\neq v\) belong to \(Q_\rho(c)\) and
\(\{u,v\}\not\subset hS_{b(c)}\). From any set
\[
    a+rhS_s,
    \qquad
    a\in V,\quad r\in K,\quad s\in(0,1),
\]
the map \((a,r,s)\mapsto\delta_{a+rhS_s}\) is injective and has a Borel inverse on its image.
\end{theorem}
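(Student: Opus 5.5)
The construction has three local stages, carried out in the coordinates of the label \(c\) with \(h=\rho^{-1/d}\): a coarse grid filling of the Voronoi cell, insertion of the recognition set \(hS_{b(c)}\), and a small correction near the origin that makes the low moments exact. Borel dependence on \(c\) is obtained throughout by making every choice canonical in \(c\): we take minima for a fixed Borel order, use fixed grids, and use implicit-function solutions given by contraction iterations whose steps are continuous in the Borel data \(W_c\), \(\mathscr O_\rho(c)\), \(q_\rho(c)\) and the moments \(\rho\int_{\Pi_\rho(c)}y^{\mathbf j}\), which are Borel by Propositions~\ref{prop:labelled-Voronoi-partition} and~\ref{prop:integer-mass-packets}.

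\emph{Step 1 (coarse filling).} Fix a small constant \(\beta\). Let \(G_c\) be the set of points of the grid \(\beta h\mathbb Z^d\) (in the fixed basis) lying in \(\operatorname{Vor}(c)\) at distance at least \(c_0h\) from \(\partial\operatorname{Vor}(c)\). By \eqref{eq:outgoing-shell-thickness}, for \(\rho\) large the shell \(\mathscr O_\rho(c)\) has thickness \(O(h^d)\ll h\), so \(G_c\subset W_c\setminus\mathscr O_\rho(c)\). Since \(\operatorname{Vor}(c)\) is convex with inradius at least \(r_{\rm Vor}\), \(\#G_c=\beta^{-d}\rho\lambda_V(\operatorname{Vor}(c))+O(h^{1-d})\). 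Replacing the scale by an adjusted scale \(\beta_c h\) with \(\beta_c\) in a fixed compact range, we choose the grid density so that \(\#G_c\) slightly exceeds \(q_\rho(c)\). We then delete points to reach exactly \(q_\rho(c)-(d+1)-N_{\rm corr}\), where \(N_{\rm corr}\) is a fixed number of correction points. The deletions are spread along a fixed layer near \(\partial\operatorname{Vor}(c)\), using the lexicographic order, so that the covering bound \(C_{\rm fill}h\) survives. This gives separation \(\gtrsim h\), boundary distance \(\gtrsim h\), and covering \(\lesssim h\). Alternatively, one can scale a fixed grid about the origin by a factor in \([1,2]\) chosen by an intermediate-value argument and then remove at most \(O(h^{1-d})\) boundary points.

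\emph{Step 2 (marker and correction set).} Clear the ball \(B_{r_*/2}\) of grid points. Inside it, place \(hS_{b(c)}\) near the origin and a fixed configuration \(F=\{hz_1,\dots,hz_N\}\) of correction points at mutual distances \(\ge c_{\rm rec}h\). For \(S_s\) take a tiny \((d+1)\)-point simplex, for instance \(\{0,\epsilon e_1,2\epsilon e_2,\dots,d\epsilon e_d\}\) perturbed so that the side lengths are pairwise distinct and one side length encodes \(s\). The distinct side lengths determine the labelling of the vertices, hence \(a\) and the orthogonal map \(r\) (which is determined on a spanning simplex), while the encoded length gives \(s\). This yields the injectivity and Borel-inverse clause via Lusin--Souslin. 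Its diameter is \(\ll c_{\rm rec}h\), and all other distances are \(\ge c_{\rm rec}h\), which gives \eqref{eq:local-filling-recognition-gap}.

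\emph{Step 3 (exact moments).} The total count is now \(q_\rho(c)\), so the degree-\(0\) moment is exact. Write the degree \(<p\) moment discrepancy \(E_{\mathbf j}\). We claim \(|E_{\mathbf j}|\lesssim \rho h\). This holds because the deleted layer and the shell transfers have \(\rho\)-mass \(O(h^{1-d}\cdot)\) per point, the grid quadrature error on a convex body is \(O(\rho h)\), and \(\Pi_\rho(c)\triangle W_c\) has \(\rho\)-mass less than \(1+b_{\mathcal T}\) at distance \(O(1)\) by \eqref{eq:packet-symmetric-difference}. Sharper: after rescaling \(u=hz\), we need to move the correction points \(z_i\) so that \(\sum_i (hz_i)^{\mathbf j}\) changes by \(-E_{\mathbf j}\), i.e.\ \(\sum_i z_i^{\mathbf j}\) changes by \(h^{-|\mathbf j|}E_{\mathbf j}\). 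This requires that the moments of the correction points span all monomials of degree \(1\le|\mathbf j|<p\) with room to absorb errors of the stated size. Here I would instead use many correction points spread over \(B_{r_*/2}\) at unit (not \(h\)) scale, that is, \(N\) points each displaced by \(O(h)\). The Jacobian of \((y_i)\mapsto(\sum_i y_i^{\mathbf j})_{1\le|\mathbf j|<p}\) at a fixed generic configuration \(y^0\subset B_{r_*/2}\) is surjective, with its conditioning bounded independently of \(c\) and \(\rho\). An error of size \(E=O(\rho h)\) then requires displacements of size \(O(\rho h/\rho)\)? This needs care: the grid points themselves must be used. So I would take the correction set to be all grid points in \(B_{r_*/2}\), numbering about \(\rho\), displaced by a smooth vector field \(h\,\Psi_\lambda\) with \(\lambda\) in a finite-dimensional parameter space. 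Displacements of size \(O(h)\) change the moments by \(\sim\rho h\), which matches the error. We solve for \(\lambda\) by the inverse function theorem, uniformly, since the linearization converges to an integral operator \(\lambda\mapsto\rho h\int_{B_{r_*/2}}\nabla y^{\mathbf j}\cdot\Psi_\lambda\) that is uniformly invertible. Choosing \(|\lambda|\) small keeps separation and the recognition gap. The Lipschitz estimate \eqref{eq:local-filling-Lipschitz} then follows by pairing each point with a cell of \(\rho\)-mass \(1\) of diameter \(O(h)\), up to \(O(\rho h)\) boundary mismatch.

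The main obstacle I expect is this last uniform solvability: verifying that the nonlinear correction converges with constants independent of \(c\) and \(\rho\), while the discrepancy bound \(E=O(\rho h)\) holds uniformly and Borel measurability is retained. Everything else is bookkeeping.
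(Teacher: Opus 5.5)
Your plan works. At its decisive step it is the paper's argument. The paper also corrects the moments with a finite-dimensional family of smooth, compactly supported displacement fields acting on the $\asymp\rho$ points of a fixed open set $U_0\subset B_{r_*}$. Concretely, $F_t(u)=u+\sum_a t_a\chi(u)\nabla P_a(u)$ with $\|t\|=O(h)$, which is your $h\Psi_\lambda$ with $\lambda=t/h$. After normalization the discrete Jacobian is within $O(h)$ of a fixed invertible matrix, and the equation is solved by a contraction whose Picard iterates give Borel dependence.

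The paper's choice $\Psi_a=\chi\nabla P_a$ is what turns your assertion ``uniformly invertible'' into a proof. The limit matrix is then the Gram matrix $J$, which is positive definite because a nonzero polynomial vanishing at $0$ cannot have its gradient vanish on an open set. You should make such a choice explicit.

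Where you genuinely differ is the initial discretization. The paper pulls back normalized Lebesgue measure on $\operatorname{Vor}(c)$ to $[0,1]^d$ through a uniformly bi-Lipschitz radial map. It partitions the cube into $q_\rho(c)$ equal-mass rectangles with sides $\asymp h$ and takes the images of their centers. This gives exactly $q_\rho(c)$ points, each attached to its own cell of mass one and diameter $O(h)$. Separation, boundary distance, covering and the $O(\rho h\operatorname{Lip}f)$ quadrature bound are then automatic, and no counting is needed.

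Your rescaled grid is more elementary, but exact cardinality costs an extra argument. The number of grid points in the inner parallel body is monotone in the scale, yet it can jump by $\asymp h^{1-d}$ at a single scale, for instance when a facet of $\operatorname{Vor}(c)$ is aligned with the grid. You may therefore have to delete that many points. You must check that the deletions leave no hole larger than $O(h)$, for instance by deleting among the points that enter at the critical scale, or within a boundary layer of bounded depth. This works, and the resulting $O(h^{1-d})=O(\rho h)$ mismatch is within the allowed error. Likewise, you compensate for the points cleared for the marker by boundary deletions, whereas the paper uses a reservoir of at most $N_*$ auxiliary points near $x_{\rm aux}$, which keeps the modification local and bounded.

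Three repairs are needed.

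First, discard the version of Step~2 that clears all of $B_{r_*/2}$ and inserts $N$ fixed points at scale $h$. That leaves a hole of fixed radius and violates the covering bound. Only an $O(h)$-ball around the marker should be cleared, as your Step~3 implicitly assumes.

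Second, the correction field must vanish on a neighborhood of the marker; the paper takes $\chi$ supported in $U_0$ with $0\notin\overline U_1$. Otherwise two things can go wrong:
\begin{enumerate}
\item if the marker is moved by the field, $hS_{b(c)}$ is displaced and distorted, which destroys the exact inclusion $hS_{b(c)}\subset Q_\rho(c)$ and the decoding of $b(c)$;
\item if only the grid points move, they travel a distance $\asymp h$ with an uncontrolled constant towards the fixed marker.
\end{enumerate}
Relatedly, $\lambda$ cannot be chosen small: it is forced to be of size $E/(\rho h)=O(1)$. What actually preserves separation and the recognition gap is the following. A single map $u\mapsto u+h\Psi_\lambda(u)$, with $\operatorname{Lip}(h\Psi_\lambda)=O(h)$, is applied to the whole configuration. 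It is therefore bi-Lipschitz with constants close to $1$, and it fixes the marker pointwise.

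Third, your example simplex needs a genuine perturbation. For $d\geq5$ it has $\|3\epsilon e_3-4\epsilon e_4\|=5\epsilon$, equal to another side. The perturbation must also keep the ordering of all distances independent of $s$, so that the ranks identify the vertices.
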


Throughout the proof, \(\rho_0\) denotes the threshold inherited from
Proposition~\ref{prop:integer-mass-packets} and may be increased finitely
many times. All subsequent assertions are for \(\rho\geq\rho_0\).



\subsection{Uniform discretization of convex cells}
\label{subsec:quantization}

We first construct the required number of points in the uniformly controlled
convex cells \(\operatorname{Vor}(c)\).

\begin{lemma}[Equal-mass rectangular partition]
\label{lem:equal-mass-rectangles}
Let \(\lambda_n\) denote Lebesgue measure on \([0,1]^n\). Let
\(n\geq1\) and \(0<m\leq M<\infty\). There are constants
\(a_{n,m,M},A_{n,m,M}>0\) such that the following holds. Suppose that \(\nu\)
is a probability measure on \([0,1]^n\) of the form
\[
    d\nu=f\,d\lambda_n,
    \qquad
    m\leq f\leq M
    \quad \lambda_n\text{-a.e.}
\]
Then, for every integer \(q\geq1\), there is a partition into axis-parallel
rectangles
\[
    [0,1]^n=\bigsqcup_{j=1}^q R_j,
    \qquad
    \nu(R_j)=\frac1q,
\]
with a fixed half-open boundary convention, such that every side length
\(\ell\) of every \(R_j\) satisfies
\begin{equation}
\label{eq:equal-mass-rectangle-sides}
    a_{n,m,M}q^{-1/n}\leq \ell\leq A_{n,m,M}q^{-1/n}.
\end{equation}

Suppose additionally that \(Z\) is a standard Borel space and
\(z\mapsto\nu_z\) is a Borel family of probability measures, meaning that
\(z\mapsto\nu_z(B)\) is Borel for every Borel \(B\subset[0,1]^n\), with
\(d\nu_z=f_z\,d\lambda_n\) and \(m\leq f_z\leq M\) almost everywhere. Then
the rectangles can be indexed as
\[
    R_j(z)=I_{j,1}(z)\times\cdots\times I_{j,n}(z),
    \qquad 1\leq j\leq q,
\]
so that the lower and upper coordinate bounds of every interval
\(I_{j,k}(z)\) are Borel functions of \(z\).
\end{lemma}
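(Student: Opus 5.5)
Proof plan. We argue by induction on \(n\), using recursive slicing along the first coordinate, in the style of a kd-tree. Write \(q=q_1\cdots\) only implicitly. The idea is to choose a number \(k\) of slabs with \(k\approx q^{1/n}\), split \([0,1]\) in the first coordinate into \(k\) slabs of prescribed \(\nu\)-masses, and recurse inside each slab on the normalized conditional measure. Concretely, set \(k:=\lceil q^{1/n}\rceil\) (or \(k=q\) when \(n=1\)), and write \(q=\sum_{i=1}^k q_i\) with each \(q_i\in\{\lfloor q/k\rfloor,\lceil q/k\rceil\}\), chosen canonically (the larger values first). Then cut \([0,1]^n\) by hyperplanes \(x_1=t_i\), where \(t_i\) is defined by
\[
    \nu\bigl([0,t_i)\times[0,1]^{n-1}\bigr)=\frac{q_1+\cdots+q_i}{q}.
\]
This is well defined and unique because the distribution function \(F(t):=\nu([0,t)\times[0,1]^{n-1})\) is continuous and strictly increasing, with \(m\le F'\le M\) almost everywhere. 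Hence \(t_i=F^{-1}(\cdot)\).

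Slab widths follow from these density bounds: the \(i\)-th slab has width between \(q_i/(qM)\) and \(q_i/(qm)\), and \(q_i/q\asymp q^{-1/n}\) uniformly. On the \(i\)-th slab \(I_i\times[0,1]^{n-1}\), disintegrate \(\nu\) and push it forward to \([0,1]^{n-1}\) by projection, normalized by \(q/q_i\). The resulting density is \(g(y)=\frac{q}{q_i}\int_{I_i}f(x_1,y)\,dx_1\), which lies between \(m|I_i|q/q_i\ge m/M\) and \(M/m\). So the inductive hypothesis applies with constants \(m'=m/M\) and \(M'=M/m\), and it gives \(q_i\) rectangles in \([0,1]^{n-1}\) with sides \(\asymp q_i^{-1/(n-1)}\asymp (q^{(n-1)/n})^{-1/(n-1)}=q^{-1/n}\). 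Taking products with \(I_i\) yields rectangles of \(\nu\)-mass \((q_i/q)(1/q_i)=1/q\).

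The main bookkeeping concerns small \(q\) and rounding. When \(q\) is small (say \(q<2^n\)), \(k\) and the \(q_i\) are bounded, and the constants are absorbed trivially; one may take \(k=1\) when \(q_i\) is small. In general \(q/k\) ranges between \(q^{(n-1)/n}/2\) and \(q^{(n-1)/n}\) for \(q\ge1\), and the ratios \(\lceil q/k\rceil/\lfloor q/k\rfloor\le2\) whenever \(q/k\ge1\). So all comparisons hold with constants depending only on \(n,m,M\), and the recursion depth is \(n\), so the constants \(a_{n,m,M},A_{n,m,M}\) are finite. The base case \(n=1\) is the slicing step alone, with \(k=q\) and widths in \([1/(qM),1/(qm)]\).

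For the Borel statement, note that the indexing \((i,\text{recursive index})\) is independent of \(z\), since \(k\) and the \(q_i\) depend only on \(q\). It therefore suffices that each cut \(t_i(z)\) is Borel and that the conditional families remain Borel. We have \(\{z:t_i(z)<s\}=\{z:\nu_z([0,s)\times[0,1]^{n-1})>\alpha_i\}\) by strict monotonicity; restricting to rational \(s\) makes this Borel. In the recursion, the measure on a slab is \(B\mapsto\frac{q}{q_i}\nu_z([t_{i-1}(z),t_i(z))\times B)\). Its Borelness in \(z\) follows because \((z,s)\mapsto\nu_z([0,s)\times B)\) is continuous in \(s\) and Borel in \(z\), hence jointly Borel, and composing with Borel \(t_i\) gives a Borel map. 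We expect this measurability of the conditional families through the recursion to be the main, though mild, obstacle. Applying the induction hypothesis to the Borel family over \(Z\) then finishes the argument.
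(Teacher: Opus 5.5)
Your proposal is correct and follows the paper's proof essentially step for step. The shared ingredients are induction on \(n\), slicing the first coordinate into about \(q^{1/n}\) strips of \(\nu\)-masses \(q_i/q\) with the \(q_i\) differing by at most one, the observation that the normalized strip marginals have densities in \([m/M,M/m]\), and Borel cuts from \(\{z:t_i(z)<s\}=\{z:F_z(s)>\alpha_i\}\) for rational \(s\). The differences are cosmetic: you use \(\lceil q^{1/n}\rceil\) strips instead of \(\lfloor q^{1/n}\rfloor\), and you derive Borelness of the strip measures from joint measurability of the Carath\'eodory function \((z,s)\mapsto\nu_z([0,s)\times B)\) rather than from the parameter-integration statement.
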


\begin{proof}
We argue by induction on \(n\). For \(n=1\), let
\[
    F(t):=\nu([0,t]),
    \qquad 0\leq t\leq1.
\]
The density bounds give
\[
    m(t-s)\leq F(t)-F(s)\leq M(t-s)
    \qquad(0\leq s<t\leq1).
\]
Thus \(F\) is continuous and strictly increasing. If
\(F(t_j)=j/q\), then
\[
    \frac1{Mq}
    \leq t_j-t_{j-1}
    \leq\frac1{mq},
\]
and the intervals determined by the \(t_j\)'s give the required
partition.

Assume the result in dimension \(n-1\), with \(n\geq2\). For
\(q>1\), let \(N:=\lfloor q^{1/n}\rfloor\) and choose positive
integers \(q_1,\ldots,q_N\), differing by at most one, with
\[
    q_1+\cdots+q_N=q.
\]
Then
\begin{equation}
\label{eq:qi-comparable}
    c_n q^{(n-1)/n}
    \leq q_i\leq
    C_n q^{(n-1)/n}
\end{equation}
for constants depending only on \(n\). The case \(q=1\) is trivial. If
\(N=1\), then \(q<2^n\) and \(q_1=q\), so
\eqref{eq:qi-comparable} still holds after adjusting the constants depending
only on \(n\); the single strip constructed below is the whole cube in the
first coordinate, and the induction continues in dimension \(n-1\).

Let
\[
    F(t):=\nu([0,t]\times[0,1]^{n-1}).
\]
Again,
\[
    m(t-s)\leq F(t)-F(s)\leq M(t-s).
\]
Choose
\[
    0=t_0<t_1<\cdots<t_N=1
\]
so that
\[
    F(t_i)-F(t_{i-1})=\frac{q_i}{q},
\]
and let \(I_i\) be the corresponding half-open intervals. Writing
\(w_i:=|I_i|\), we obtain
\begin{equation}
\label{eq:strip-widths}
    \frac{q_i}{Mq}
    \leq w_i\leq
    \frac{q_i}{mq}.
\end{equation}
By \eqref{eq:qi-comparable}, these widths are comparable to
\(q^{-1/n}\).

Define a probability measure on \([0,1]^{n-1}\) by
\[
    \nu_i(A)
    :=
    \frac{q}{q_i}\,
    \nu(I_i\times A).
\]
It has density with respect to Lebesgue measure on \([0,1]^{n-1}\),
\[
    f_i(y)
    =
    \frac{q}{q_i}
    \int_{I_i}f(t,y)\,dt.
\]
Using \eqref{eq:strip-widths},
\[
    \frac{m}{M}
    \leq f_i(y)\leq
    \frac{M}{m}
    \qquad\text{a.e.}
\]
The induction hypothesis partitions \([0,1]^{n-1}\) into
\(q_i\) rectangles \(S_{ij}\), each of \(\nu_i\)-mass
\(1/q_i\), whose side lengths are comparable to
\(q_i^{-1/(n-1)}\). Hence
\[
    R_{ij}:=I_i\times S_{ij}
\]
has \(\nu\)-mass \(1/q\), and all its side lengths are comparable
to \(q^{-1/n}\) by \eqref{eq:qi-comparable} and
\eqref{eq:strip-widths}.

It remains to verify the parameterized statement. For \(n=1\), define
\[
    F_z(t):=\nu_z([0,t]).
\]
For \(1\leq j<q\), the coordinate \(t_j(z)\) is the unique solution of
\(F_z(t)=j/q\). The map \((z,t)\mapsto F_z(t)\) is Borel, and each
\(t\mapsto F_z(t)\) is continuous and strictly increasing. Hence \(t_j\) is
Borel; for rational \(r\),
\[
    \{z:t_j(z)<r\}=\{z:F_z(r)>j/q\}.
\]

For \(n\geq2\), use
\[
    F_z(t):=\nu_z([0,t]\times[0,1]^{n-1}),
    \qquad
    \alpha_i:=\frac{q_1+\cdots+q_i}{q}.
\]
The first-coordinate cut \(t_i(z)\) is the unique solution of
\(F_z(t)=\alpha_i\), and the same argument shows that \(t_i\) is Borel. Thus
the strip intervals \(I_i(z)\) have Borel coordinate bounds. For every Borel
set \(A\subset[0,1]^{n-1}\),
\[
    \nu_{z,i}(A):=\frac{q}{q_i}\nu_z(I_i(z)\times A)
\]
defines a Borel family of probability measures. Indeed, the indicator of
\(\{(z,t,y):t\in I_i(z),\ y\in A\}\) is jointly Borel, so the
parameter-integration statement in
Subsection~\ref{subsec:Borel-geometric-preliminaries} gives the Borel
dependence on \(z\). The induction hypothesis, applied to each family
\(z\mapsto\nu_{z,i}\), gives Borel lower and upper coordinate bounds for
the remaining coordinates of every rectangle.
\end{proof}

\begin{lemma}[Uniform \(q\)-point discretization of convex sets]
\label{lem:uniform-convex-quantization}
Fix \(0<r<R<\infty\). There are constants \(a,A>0\) and a Borel
assignment
\[
    (D,q)\longmapsto A_q(D)\subset\operatorname{int}D,
\]
defined for compact convex sets \(D\subset V\), equipped with the Borel
structure induced by the Hausdorff topology, satisfying
\[
    \overline B_r\subset D\subset\overline B_R
\]
and integers \(q\geq1\), such that
\begin{equation}
\label{eq:convex-quantization-geometry}
\begin{aligned}
    \#A_q(D)&=q,\\
    \operatorname{sep}(A_q(D))&\geq aq^{-1/d},\\
    \operatorname{dist}(A_q(D),\partial D)&\geq aq^{-1/d},\\
    \sup_{y\in D}\operatorname{dist}(y,A_q(D))&\leq Aq^{-1/d}.
\end{aligned}
\end{equation}
For every Lipschitz function \(f\) on \(\overline B_R\),
\begin{equation}
\label{eq:convex-quantization-Lipschitz}
    \left|
        \sum_{u\in A_q(D)}f(u)
        -
        \frac{q}{\lambda_V(D)}
        \int_D f\,d\lambda_V
    \right|
    \leq
    Aq^{1-1/d}\operatorname{Lip}(f).
\end{equation}
\end{lemma}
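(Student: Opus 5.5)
The plan is to transport \(D\) to the unit cube by a Borel bi-Lipschitz map with uniformly controlled constants, apply Lemma~\ref{lem:equal-mass-rectangles} to the pushed-forward normalized Lebesgue measure, and take one point per rectangle. Since \(\overline B_r\subset D\subset\overline B_R\), the radial (gauge) function \(\|\cdot\|_D\) is Lipschitz with constants depending only on \(r,R\). The radial map \(y\mapsto \|y\|_D\,y/\|y\|\) is then bi-Lipschitz from \(D\) onto \(\overline B_1\), with constants depending only on \(r,R\). Composing with a fixed bi-Lipschitz homeomorphism \(\overline B_1\to[0,1]^d\) (for instance the radial map to the cube \([-1,1]^d\), followed by an affine rescaling) gives \(\Psi_D:D\to[0,1]^d\). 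This map is bi-Lipschitz with constants \(L=L(r,R,d)\) and is Borel jointly in \((D,y)\). The Borelness holds because \(D\mapsto\|u\|_D\) is Borel on the Hausdorff space: it is determined by the conditions \(\{D: tu\in D\}\), which are closed conditions.

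Let \(\nu_D:=(\Psi_D)_*(\lambda_V|_D/\lambda_V(D))\). The Jacobian of a bi-Lipschitz map is bounded above and below by powers of \(L\), and \(\lambda_V(D)\) is bounded between \(\lambda_V(B_r)\) and \(\lambda_V(B_R)\). Hence \(\nu_D\) has a density \(f_D\) satisfying \(m\leq f_D\leq M\) with \(m,M\) depending only on \(r,R,d\). The dependence \(D\mapsto\nu_D(B)\) is Borel by parameter integration. Lemma~\ref{lem:equal-mass-rectangles}, in its parametrized form, then yields rectangles \(R_j(D)\) of \(\nu_D\)-mass \(1/q\) with all sides comparable to \(q^{-1/d}\) and Borel corner coordinates. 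Let \(A_q(D)\) consist of the points \(\Psi_D^{-1}(\text{centre of }R_j(D))\), for \(j=1,\dots,q\).

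The geometric estimates then follow from the rectangle geometry. Each centre lies at distance \(\geq \tfrac12 a_{d,m,M}q^{-1/d}\) from the boundary of its rectangle, and hence from the other centres and from \(\partial[0,1]^d\). Pulling back by \(\Psi_D^{-1}\) changes distances by at most a factor of \(L\), and \(\Psi_D^{-1}\) maps \(\partial[0,1]^d\) onto \(\partial D\). This gives the separation bound and the boundary-distance bound, and it also shows \(A_q(D)\subset\operatorname{int}D\). For covering, every \(y\in D\) lies in some \(\Psi_D^{-1}(R_j)\), which has diameter \(\leq L\sqrt d\,A_{d,m,M}q^{-1/d}\). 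For \eqref{eq:convex-quantization-Lipschitz}, split the integral as \(\frac{q}{\lambda_V(D)}\int_D f=\sum_j q\int_{\Psi_D^{-1}(R_j)}f\,d\lambda_V/\lambda_V(D)\). Each cell has normalized mass \(1/q\), so the \(j\)-th term differs from \(f(u_j)\) by at most \(\operatorname{Lip}(f)\operatorname{diam}(\Psi_D^{-1}R_j)\lesssim\operatorname{Lip}(f)q^{-1/d}\). Summing over the \(q\) cells gives \(Aq^{1-1/d}\operatorname{Lip}(f)\).

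I expect the main obstacle to be measurability rather than geometry. Specifically, one must check that \((D,y)\mapsto\Psi_D(y)\) and its inverse are jointly Borel, and that the densities \(f_D\) form a Borel family in the sense required by Lemma~\ref{lem:equal-mass-rectangles}. I would handle this by writing \(\Psi_D^{-1}(x)=\Psi_0^{-1}(x)\cdot\|\Psi_0^{-1}(x)\|/\|\Psi_0^{-1}(x)\|_D\), with \(\Psi_0\) the fixed ball-to-cube map. This formula is continuous in \(x\) and Borel in \(D\), hence jointly Borel. The density \(f_D\) is then given by the explicit radial Jacobian formula, which is Borel because the gauge is Borel. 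The resulting map \(D\mapsto\delta_{A_q(D)}\) is Borel since the rectangle centres are Borel functions of \(D\).
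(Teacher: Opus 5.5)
Your proposal is correct and follows the paper's proof essentially step by step. Both use a uniformly bi-Lipschitz radial (Minkowski-gauge) map between $D$ and the unit cube, pull back normalized Lebesgue measure with two-sided density bounds, apply the parametrized equal-mass rectangle lemma, map the rectangle centres back into $D$, and prove the quadrature bound by the same cellwise Lipschitz estimate. The differences are only cosmetic: you work with $\Psi_D=F_D^{-1}$, you get Borelness of the gauge from the closed conditions $\{D: tu\in D\}$ rather than from Hausdorff continuity, and you observe that the radial part has an explicit Jacobian (a power of the radial function) in place of the general area formula.
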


\begin{proof}
\emph{Uniform parametrization.}
Let \(Q=[0,1]^d\), and fix once and for all a bi-Lipschitz homeomorphism
\(G:Q\to\overline B_1\). For a convex set \(D\) as above, its
Minkowski functional is
\[
    p_D(x):=\inf\{t>0:x\in tD\},
    \qquad
    tD:=\{ty:y\in D\}.
\]
The inclusions \(\overline B_r\subset D\subset\overline B_R\)
give
\[
    \frac{\|x\|}{R}
    \leq p_D(x)
    \leq\frac{\|x\|}{r}.
\]
Subadditivity gives
\[
    p_D(x)-p_D(y)\leq p_D(x-y),
    \qquad
    p_D(y)-p_D(x)\leq p_D(y-x),
\]
and therefore
\[
    |p_D(x)-p_D(y)|\leq\frac{\|x-y\|}{r}.
\]
For \(\theta\in S^{d-1}\), put
\[
    \operatorname{rad}_D(\theta):=p_D(\theta)^{-1}.
\]
Then
\[
    r\leq\operatorname{rad}_D(\theta)\leq R,
    \qquad
    |\operatorname{rad}_D(\theta)
      -\operatorname{rad}_D(\varphi)|
    \leq
    \frac{R^2}{r}\|\theta-\varphi\|.
\]
Define \(H_D(0)=0\) and
\[
    H_D(s\theta)
    :=s\operatorname{rad}_D(\theta)\theta,
    \qquad 0<s\leq1.
\]
The maps \(H_D:\overline B_1\to D\) and \(H_D^{-1}\) have
Lipschitz constants bounded in terms of \(r,R\). To see this uniformly,
write \(x=s\theta\) and \(y=t\varphi\). Then
\(|s-t|\leq\|x-y\|\) and
\(\min\{s,t\}\,\|\theta-\varphi\|\leq2\|x-y\|\); together with
the preceding Lipschitz bound for \(\operatorname{rad}_D\), this gives a
uniform Lipschitz bound for \(H_D\). The inverse is
\[
    H_D^{-1}(y)=p_D(y)\frac{y}{\|y\|}
    \qquad(y\neq0),
\]
and
\[
    \left|\frac1{\operatorname{rad}_D(\theta)}
      -\frac1{\operatorname{rad}_D(\varphi)}\right|
    \leq \frac{R^2}{r^3}\|\theta-\varphi\|.
\]
The same polar estimate therefore gives a uniform Lipschitz bound for
\(H_D^{-1}\). Thus
\[
    F_D:=H_D\circ G:Q\longrightarrow D
\]
is \(L\)-bi-Lipschitz for a constant depending only on \(d,r,R\).
The dependence \(D\mapsto F_D\) is Borel: if
\(d_H(D,E)=\delta<r/2\), then
\(D\subset E+\delta B_1\subset(1+\delta/r)E\) and conversely. Hence
the Minkowski functionals, and therefore the radial functions, converge
uniformly on the sphere under Hausdorff convergence in this class. Hence
\((D,z)\mapsto F_D(z)\) is jointly Borel, and the explicit formula for
\(H_D^{-1}\) shows that \((D,u)\mapsto F_D^{-1}(u)\) is jointly Borel on
\(\{(D,u):u\in D\}\).

\emph{Equal-mass partition.}
Pull normalized Lebesgue measure on \(D\) back to \(Q\):
\[
    \nu_D
    :=
    (F_D^{-1})_*
    \left(
        \frac{\lambda_V|_D}{\lambda_V(D)}
    \right).
\]
By Rademacher's theorem and the area formula for Lipschitz maps; see, for
example, \cite[Chapters~3 and~6]{EG15}, the \(L\)-bi-Lipschitz map \(F_D\) is
differentiable almost everywhere, and its Jacobian satisfies
\[
    L^{-d}\leq |\det DF_D|\leq L^d
    \qquad\text{a.e. on }Q.
\]
The change-of-variables formula for the Lipschitz injection \(F_D\), together
with
\(\lambda_V(B_r)\leq\lambda_V(D)\leq\lambda_V(B_R)\), therefore shows that
\(\nu_D\) has a density with respect to Lebesgue measure on \(Q\), bounded above and below by positive constants depending
only on \(d,r,R\). The family \(D\mapsto\nu_D\) is Borel: for every bounded
Borel \(f:Q\to\mathbb R\),
\(
\int_Q f\,d\nu_D=\lambda_V(D)^{-1}
\int_D f(F_D^{-1}(u))\,d\lambda_V(u)
\), which depends Borel measurably on \(D\) by the joint Borelness above and
the parameter-integration statement in
Subsection~\ref{subsec:Borel-geometric-preliminaries}.

Apply Lemma~\ref{lem:equal-mass-rectangles} to \(\nu_D\). Let
\[
    Q=\bigsqcup_{j=1}^q R_j,
    \qquad
    \nu_D(R_j)=\frac1q,
\]
and let \(z_j\) be the center of \(R_j\). Set
\[
    A_q(D):=\{F_D(z_1),\ldots,F_D(z_q)\}.
\]

\emph{Geometry, quadrature estimate, and Borel dependence.}
Distinct rectangles have disjoint interiors, so in some coordinate
their centers differ by at least half the sum of the corresponding
side lengths. Also
\[
    \operatorname{dist}(z_j,\partial Q)
    \geq \frac12\min\{\text{side lengths of }R_j\}.
\]
Together with the diameter bound for \(R_j\), the bi-Lipschitz bounds
for \(F_D\) give all three geometric estimates in
\eqref{eq:convex-quantization-geometry}.

The measure
\[
    \frac{q}{\lambda_V(D)}\lambda_V|_D
\]
has mass one on each \(F_D(R_j)\). Hence
\[
\begin{aligned}
    &\left|
        \sum_{j=1}^q f(F_D(z_j))
        -
        \frac{q}{\lambda_V(D)}
        \int_D f\,d\lambda_V
    \right|\\
    &\qquad\leq
    Aq^{-1/d}\operatorname{Lip}(f)
    \sum_{j=1}^q
    \frac{q}{\lambda_V(D)}
    \lambda_V(F_D(R_j))
    =Aq^{1-1/d}\operatorname{Lip}(f).
\end{aligned}
\]
The parameterized part of Lemma~\ref{lem:equal-mass-rectangles}
and the Borel dependence of \(F_D\) show that
\((D,q)\mapsto\delta_{A_q(D)}\) is Borel.
\end{proof}

Apply the lemma with
\[
    D=\operatorname{Vor}(c),
    \qquad q=q_\rho(c),
\]
and put
\begin{equation}
\label{eq:initial-quantizer}
    A_\rho(c):=
    A_{q_\rho(c)}(\operatorname{Vor}(c)).
\end{equation}
By \eqref{eq:integerization-error-bounds} and
\eqref{eq:labelled-cell-geometry}, after increasing \(\rho_0\) there
are constants \(a_1,A_1>0\) such that
\begin{equation}
\label{eq:q-rho-comparable}
    a_1\rho\leq q_\rho(c)\leq A_1\rho
    \qquad(c\in C,\ \rho\geq\rho_0).
\end{equation}
Thus, with \(h=\rho^{-1/d}\),
\begin{equation}
\label{eq:initial-quantizer-geometry}
\begin{aligned}
    \#A_\rho(c)&=q_\rho(c),\\
    \operatorname{sep}(A_\rho(c))&\geq a_2h,\\
    \operatorname{dist}
        (A_\rho(c),\partial\operatorname{Vor}(c))
        &\geq a_2h,\\
    \sup_{y\in\operatorname{Vor}(c)}
        \operatorname{dist}(y,A_\rho(c))
        &\leq A_2h
\end{aligned}
\end{equation}
for constants \(a_2,A_2>0\).

Since \(A_\rho(c)\subset\operatorname{int}\operatorname{Vor}(c)\),
we have \(A_\rho(c)\subset W_c\). By
\eqref{eq:outgoing-shell-thickness}, every point of
\(\mathscr O_\rho(c)\) is within \(O(\rho^{-1})\) of
\(\partial\operatorname{Vor}(c)\). Since \(d\geq2\),
\(\rho^{-1}=o(h)\); after increasing \(\rho_0\),
\begin{equation}
\label{eq:initial-quantizer-confinement}
    A_\rho(c)
    \subset
    W_c\setminus\mathscr O_\rho(c)
    \subset
    \Pi_\rho(c).
\end{equation}

Let \(v_c:=\lambda_V(\operatorname{Vor}(c))\), and define
\[
    \mu_c^{\rm ref}
    :=
    \frac{q_\rho(c)}{v_c}
        \lambda_V|_{\operatorname{Vor}(c)},
    \qquad
    \mu_c^{\rm pkt}
    :=
    \rho\lambda_V|_{\Pi_\rho(c)}.
\]
Both measures have mass \(q_\rho(c)\). Since \(W_c\) and
\(\operatorname{Vor}(c)\) differ by a null set,
\eqref{eq:integerization-error-bounds} and
\eqref{eq:packet-symmetric-difference} give
\[
    \|\mu_c^{\rm ref}-\mu_c^{\rm pkt}\|_{\rm TV}
    \leq C
\]
with \(C\) independent of \(c\) and \(\rho\). Thus the two measures differ by uniformly bounded total variation. Both are
supported in
\(\overline B_{D_{\rm Rips}+R_{\rm Vor}}\). Subtracting
\(f(0)\), there is \(A_{\rm ref}<\infty\), independent of \(c\) and \(\rho\),
such that, for every Lipschitz \(f\) on that ball,
\begin{equation}
\label{eq:reference-packet-Lipschitz}
    \left|
        \frac{q_\rho(c)}{v_c}
        \int_{\operatorname{Vor}(c)}f\,d\lambda_V
        -
        \rho\int_{\Pi_\rho(c)}f\,d\lambda_V
    \right|
    \leq
    A_{\rm ref}\operatorname{Lip}(f).
\end{equation}
Combining
\eqref{eq:convex-quantization-Lipschitz},
\eqref{eq:q-rho-comparable}, and
\eqref{eq:reference-packet-Lipschitz} yields, for some
\(A_4<\infty\), independent of \(c\) and \(\rho\),
\begin{equation}
\label{eq:initial-quantizer-Lipschitz}
    \left|
        \sum_{u\in A_\rho(c)}f(u)
        -
        \rho\int_{\Pi_\rho(c)}f\,d\lambda_V
    \right|
    \leq
    A_4\rho h\operatorname{Lip}(f).
\end{equation}


\subsection{Finite sets determining position, frame, and label}
\label{subsec:finite-recovery-sets}

Equation~\eqref{eq:framed-recovery} shows that a position, its frame,
and its label determine the state \(x\). Fix the Borel injection
\(b:C\to(0,1)\) from Subsection~\ref{subsec:local-filling}. We construct sets
\(S_s\), \(s\in(0,1)\), so that every unlabelled copy \(a+r hS_s\)
determines \(a\), \(r\), and \(s\). After taking \(s=b(c)\), the
injectivity of \(b\) then also determines the label \(c\).

Use the orthonormal basis \(e_1,\ldots,e_d\) fixed in
Subsection~\ref{subsec:Euclidean-Fourier-conventions}. Put
\[
    \lambda_1(s):=1+\frac{s}{4},
    \qquad
    \lambda_i:=4^{i-1}\quad(2\leq i\leq d),
\]
and, for \(\delta>0\), define
\begin{equation}
\label{eq:full-frame-marker}
    S_s
    :=
    \{0,\delta\lambda_1(s)e_1,
        \delta\lambda_2e_2,\ldots,
        \delta\lambda_de_d\}.
\end{equation}

\begin{lemma}[Recovery of parameters from \(S_s\)]
\label{lem:full-frame-marker}
For every \(s\in(0,1)\), the pairwise distances in \(S_s\) are distinct. For
fixed \(h>0\), the map
\[
    (a,r,s)\longmapsto\delta_{a+r hS_s},
    \qquad
    a\in V,\quad r\in K,\quad s\in(0,1),
\]
is Borel and injective, and its inverse on its image is Borel.
\end{lemma}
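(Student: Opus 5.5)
Compute the pairwise distances of \(S_s\) and show they are distinct; then recover the origin point, the directions \(re_i\), and \(s\) intrinsically from the unlabelled set \(a+rhS_s\).

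\emph{Distinct distances.} The \(d+1\) points are \(0\) and \(\delta\lambda_ie_i\). The distances from \(0\) are \(\delta\lambda_1(s)\in(\delta,\tfrac54\delta)\) and \(\delta 4^{i-1}\) for \(i\ge2\). The distances between \(\delta\lambda_ie_i\) and \(\delta\lambda_je_j\) are \(\delta\sqrt{\lambda_i^2+\lambda_j^2}\). For \(i<j\) we have \(\lambda_j\ge4\lambda_i\), at least when \(i\ge2\); for \(i=1\), \(\lambda_1<5/4\), so \(\lambda_j/\lambda_1>16/5\) once \(j\ge 2\) (since \(\lambda_2=4\)). Hence \(\sqrt{\lambda_i^2+\lambda_j^2}\in(\lambda_j,\lambda_j\sqrt{1+1/9})\subset(\lambda_j,\tfrac{5}{4}\lambda_j)\). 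All distances therefore fall into disjoint windows. Within each window \([\delta\lambda_j,\tfrac54\delta\lambda_j)\), the pieces are: \(\lambda_j\) itself (distance from \(0\)), and \(\sqrt{\lambda_i^2+\lambda_j^2}\) for \(i<j\). These are strictly increasing in \(\lambda_i\), and the \(\lambda_i\), \(i<j\), are distinct. Distances \(\sqrt{\lambda_i^2+\lambda_j^2}\) with larger index \(j\) lie in the window of \(j\). The window of \(1\) contains only \(\lambda_1(s)\). So all \(\binom{d+1}{2}\) distances are distinct. I expect only this bookkeeping to need care, specifically checking that \(\sqrt{1+(\lambda_i/\lambda_j)^2}<5/4\), i.e. \(\lambda_i/\lambda_j<3/4\), which holds since the ratio is at most \(5/16\).

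\emph{Decoding.} Given \(F=a+rhS_s\), I identify the image of \(0\) as the unique point whose distances to all others realize the \(d\) values that are minimal in their windows. More simply, the image of \(0\) is the common endpoint of the shortest edge \(\delta h\lambda_1(s)\) and the edge of length \(\delta h\lambda_2\) (the shortest edge in the second window, whose other candidates \(\sqrt{\lambda_1^2+\lambda_2^2}\) are longer). Its distances to the other points recover each \(h\delta\lambda_i\), hence which point is \(a+h\delta\lambda_irh e_i\). This yields \(a\), the vectors \(re_i=(u_i-a)/|u_i-a|\), hence \(r\) (as the linear map sending the basis \(e_i\) to these orthonormal vectors), and \(s=4(|u_1-a|/(h\delta)-1)\). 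This proves injectivity.

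\emph{Borelness.} The forward map is continuous in \((a,r,s)\) into \(\mathcal N_s(V)\). It is injective, and its domain \(V\times K\times(0,1)\) is standard Borel, so Lusin--Souslin \cite[Theorem~15.1]{Kec95} gives that the image is Borel and the inverse is Borel. Alternatively, the explicit decoding above uses a Lusin--Novikov enumeration of the \(d+1\) points and finitely many Borel comparisons of distances, giving the inverse directly.
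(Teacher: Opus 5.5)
Your proof is correct and follows the paper's argument. You split the distances into disjoint windows starting at each $\lambda_j$ (factor $5/4$ where the paper uses $17/16$) and use monotonicity in $\lambda_i$ within a window. You then locate the vertex $0$ from the shortest edge: the paper takes the endpoint whose remaining distances are $\lambda_2,\ldots,\lambda_d$, while you take the common endpoint with the second-shortest edge; both rely on $d\geq2$, which is the standing assumption there. Finally you read off $a$, $r$, $s$ and obtain the Borel inverse from Lusin--Souslin, as the paper does.
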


\begin{proof}
Put \(v_1=\lambda_1(s)e_1\) and
\(v_i=\lambda_i e_i\) for \(i\geq2\). Apart from the factor
\(\delta\), the nonzero distances are
\[
    \lambda_i
    \quad\text{and}\quad
    (\lambda_i^2+\lambda_j^2)^{1/2}
    \qquad(i<j).
\]
For \(j\geq2\) and \(i<j\),
\begin{equation}
\label{eq:marker-distance-levels}
    \lambda_j
    <
    (\lambda_i^2+\lambda_j^2)^{1/2}
    <
    \frac{17}{16}\lambda_j.
\end{equation}
Since \(\lambda_{j+1}=4\lambda_j\), these intervals are disjoint for distinct
\(j\); for fixed \(j\), the middle term is strictly increasing in
\(\lambda_i\). Thus all pairwise distances are distinct.

The unique shortest edge joins \(0\) to \(v_1\). Of its two endpoints, \(0\)
is characterized by having distances \(\lambda_2,\ldots,\lambda_d\) to the
remaining vertices. Hence the metric structure determines \(0\) and then the
ordered vertices \(v_1,\ldots,v_d\). The shortest edge has length
\(\delta\lambda_1(s)\), so it determines \(s\).

For a placed copy \(a+r hS_s\), the vertex corresponding to \(0\) gives
\(a\). If \(z_i\) corresponds to \(\delta\lambda_i(s)e_i\), with
\(\lambda_i(s)=\lambda_i\) for \(i\geq2\), then
\[
    re_i=\frac{z_i-a}{h\delta\lambda_i(s)}.
\]
Thus \(r\) is determined. The parameter map is Borel; since it is injective,
Lusin--Souslin gives a Borel image and a Borel inverse on that image.
\end{proof}

\subsection{Insertion while preserving cardinality}
\label{subsec:marker-reservoir}

We now modify \(A_\rho(c)\) in two small regions contained in \(B_{r_*}\). Near the origin we insert \(hS_{b(c)}\). The number of points of
\(A_\rho(c)\) removed from these regions can depend on \(c\); a second fixed
region near a point \(x_{\rm aux}\) is used to insert exactly enough auxiliary
points to preserve the cardinality \(q_\rho(c)\). The moment correction in the
next subsection will move points only in an open set \(U_0\) disjoint from both
regions.

Choose bounded open sets
\[
    U_0\Subset U_1\Subset B_{r_*}
\]
with \(0\notin\overline U_1\), and choose
\[
    x_{\rm aux}\in
    B_{r_*}\setminus(\overline U_1\cup\{0\}).
\]

Let \(a_2,A_2\) be as in
\eqref{eq:initial-quantizer-geometry}. Choose \(L>A_2\) such that
\begin{equation}
\label{eq:reservoir-supply-choice}
    \left(\frac{L-A_2}{A_2}\right)^d\geq d+1.
\end{equation}
For \(\rho\) sufficiently large, the balls
\(\overline B_h(0)\) and
\(\overline B_{Lh}(x_{\rm aux})\) are disjoint subsets of
\(B_{r_*}\setminus\overline U_1\). Put
\begin{equation}
\label{eq:marker-reservoir-deletion-region}
    D_\rho
    :=
    \overline B_h(0)
    \cup
    \overline B_{Lh}(x_{\rm aux}).
\end{equation}

After increasing \(\rho_0\) if necessary, there is \(N_*<\infty\),
independent of \(c\) and \(\rho\), such that
\begin{equation}
\label{eq:marker-reservoir-count-bounds}
    d+1
    \leq
    \#\bigl(A_\rho(c)\cap D_\rho\bigr)
    \leq N_*
    \qquad(c\in C,\ \rho\geq\rho_0).
\end{equation}
The upper bound follows from the separation estimate in
\eqref{eq:initial-quantizer-geometry}. For the lower bound, for
large \(\rho\),
\[
    B_{(L-A_2)h}(x_{\rm aux})
    \subset
    \bigcup_{
        u\in A_\rho(c)\cap B_{Lh}(x_{\rm aux})
    }
    B_{A_2h}(u).
\]
Comparison of volumes and
\eqref{eq:reservoir-supply-choice} give
\[
    \#\bigl(
        A_\rho(c)\cap B_{Lh}(x_{\rm aux})
    \bigr)
    \geq d+1.
\]

Choose distinct
\(t_1,\ldots,t_{N_*}\in B_{L/4}\), let
\[
    T_n:=\{t_1,\ldots,t_n\}
    \quad(1\leq n\leq N_*),
    \qquad
    T_0:=\varnothing,
\]
and put
\[
    s_T:=\min_{i\neq j}\|t_i-t_j\|>0,
    \qquad
    c_0:=\min\{a_2,s_T,1/2,L/2\}.
\]
Choose \(\delta>0\) in \eqref{eq:full-frame-marker} so that
\begin{equation}
\label{eq:marker-size-choice}
    \sup_{s\in(0,1)}\operatorname{diam}(S_s)
    <\frac{c_0}{10},
    \qquad
    \sup_{s\in(0,1)}\sup_{u\in S_s}\|u\|
    <\frac14.
\end{equation}

For this choice of \(\delta\) and the fixed scale \(h\), put
\[
    \mathscr M_h
    :=
    \{\delta_{a+r hS_s}:a\in V,\ r\in K,\ s\in(0,1)\}.
\]
By Lemma~\ref{lem:full-frame-marker}, \(\mathscr M_h\) is Borel and the
parameter map has a Borel inverse on \(\mathscr M_h\).

For \(c\in C\), let
\[
    N_\rho(c)
    :=
    \#\bigl(A_\rho(c)\cap D_\rho\bigr),
    \qquad
    n_\rho(c):=N_\rho(c)-(d+1),
\]
and define
\begin{equation}
\label{eq:marked-quantizer}
\begin{aligned}
    \widetilde A_\rho(c)
    &:=
    \bigl(A_\rho(c)\setminus D_\rho\bigr)
    \cup hS_{b(c)}\\
    &\qquad\cup
    \bigl(x_{\rm aux}+hT_{n_\rho(c)}\bigr).
\end{aligned}
\end{equation}

\begin{proposition}[Finite modification of the initial discretization]
\label{prop:marker-reservoir}
After increasing \(\rho_0\), the map
\(c\mapsto\delta_{\widetilde A_\rho(c)}\) is Borel. There are
constants \(a_3,A_3,A_5>0\) such that, for every \(c\in C\),
\begin{equation}
\label{eq:marked-quantizer-geometry}
\begin{aligned}
    \widetilde A_\rho(c)
    &\subset W_c\setminus\mathscr O_\rho(c),
    &
    \#\widetilde A_\rho(c)
    &=q_\rho(c),\\
    \operatorname{sep}(\widetilde A_\rho(c))
    &\geq a_3h,
    &
    \operatorname{dist}
        \bigl(
            \widetilde A_\rho(c),
            \partial\operatorname{Vor}(c)
        \bigr)
    &\geq a_3h,\\
    \sup_{y\in\operatorname{Vor}(c)}
        \operatorname{dist}
            (y,\widetilde A_\rho(c))
    &\leq A_3h.
\end{aligned}
\end{equation}
Furthermore,
\begin{equation}
\label{eq:marker-correction-region-unchanged}
    \widetilde A_\rho(c)\cap U_1
    =
    A_\rho(c)\cap U_1,
\end{equation}
and, for every \(c\in C\) and every Lipschitz \(f\) on
\(\overline B_{D_{\rm Rips}+R_{\rm Vor}}\),
\begin{equation}
\label{eq:marked-quantizer-Lipschitz}
    \left|
        \sum_{u\in \widetilde A_\rho(c)}f(u)
        -
        \rho\int_{\Pi_\rho(c)}f\,d\lambda_V
    \right|
    \leq
    A_5\rho h\operatorname{Lip}(f).
\end{equation}

For every \(c\in C\), every pair of distinct points
\(u,v\in \widetilde A_\rho(c)\) with
\(\{u,v\}\not\subset hS_{b(c)}\) satisfies
\[
    \|u-v\|\geq c_0h.
\]
Consequently, the graph on \(\widetilde A_\rho(c)\) with edges
\[
    \|u-v\|<\frac{c_0h}{5}
\]
has exactly one component with more than one vertex, namely
\(hS_{b(c)}\).
\end{proposition}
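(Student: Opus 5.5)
The plan is to verify the listed properties one at a time directly from the definition \eqref{eq:marked-quantizer}, using that all modifications happen inside the fixed region \(D_\rho\subset B_{r_*}\setminus\overline U_1\).

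\emph{Cardinality, Borelness, and the region \(U_1\).} The three pieces in \eqref{eq:marked-quantizer} are pairwise disjoint. The marker \(hS_{b(c)}\) lies in \(\overline B_{h/4}(0)\subset D_\rho\), by \eqref{eq:marker-size-choice}. The auxiliary points \(x_{\rm aux}+hT_{n}\) lie in \(B_{Lh/4}(x_{\rm aux})\subset D_\rho\). By \eqref{eq:marker-reservoir-count-bounds}, \(0\leq n_\rho(c)\leq N_*-(d+1)\), so \(T_{n_\rho(c)}\) is defined. Counting then gives
\[
(q_\rho(c)-N_\rho(c))+(d+1)+n_\rho(c)=q_\rho(c).
\]
Borelness holds because \(N_\rho\) is a Borel count of a Borel finite family (Lusin--Novikov) and \(b\) is Borel. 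Identity \eqref{eq:marker-correction-region-unchanged} holds because \(D_\rho\cap\overline U_1=\varnothing\), and every inserted point lies in \(D_\rho\). Confinement to \(W_c\setminus\mathscr O_\rho(c)\) follows from \eqref{eq:initial-quantizer-confinement} for the retained points. The inserted points lie in \(B_{r_*}\subset W_c\setminus\mathscr O_\rho(c)\) by \eqref{eq:packet-core-and-support}.

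\emph{Separation and the short-edge graph.} This is the main bookkeeping step, and I would split it by pair type.
\begin{itemize}
\item Two retained points are at distance \(\geq a_2h\geq c_0h\).
\item Two auxiliary points are at distance \(\geq hs_T\geq c_0h\).
\item A retained point lies outside \(\overline B_h(0)\), while marker points lie within \(h/4\) of \(0\). Their distance is therefore \(>3h/4\geq c_0h\), using \(c_0\leq 1/2\).
\item A retained point lies outside \(\overline B_{Lh}(x_{\rm aux})\), while auxiliary points lie within \(Lh/4\) of \(x_{\rm aux}\). Their distance is therefore \(\geq 3Lh/4\geq c_0h\).
\item Marker and auxiliary points lie in disjoint, well-separated balls, since \(|x_{\rm aux}|\) is a fixed positive constant.
\end{itemize}
Hence every pair not contained in \(hS_{b(c)}\) is at distance \(\geq c_0h\). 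Inside the marker, distances are positive but less than \(c_0h/10\). So the graph with threshold \(c_0h/5\) has the marker, which is connected, as its only nontrivial component. The separation bound \(a_3h\) follows with
\[
a_3=\min\{c_0,\ \delta\cdot\min\text{distance in }S_s\}.
\]
A caveat: as \(s\to0\) the shortest marker distance stays \(\geq\delta\), since \(\lambda_1\geq1\), so this minimum is bounded below uniformly. The boundary-distance bound holds because inserted points lie in \(B_{r_*}\), at distance \(\geq r_{\rm Vor}/2\gg h\) from \(\partial\operatorname{Vor}(c)\). Retained points keep the bound \(a_2h\).

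\emph{Covering.} The only removed points lie in \(D_\rho\), a set of diameter \(O(h)\). A point \(y\) whose \(A_2h\)-nearest point of \(A_\rho(c)\) was deleted lies within \((2L+A_2)h\) of the marker or of \(x_{\rm aux}\). The marker and the point \(x_{\rm aux}+ht_1\) are present, since \(d+1\geq1\) marker points always exist and \(x_{\rm aux}\) is within \(O(h)\) of the marker-free reservoir. This gives the covering bound with some \(A_3=O(L+A_2)\).

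\emph{Lipschitz estimate.} Modifying at most \(N_*\) deletions and \(N_*\) insertions changes \(\sum f(u)\) by a bounded number of terms. Each of these points lies in \(B_{r_*}\), so only the oscillation of \(f\) matters, via the pairing of removed and inserted points. Both multisets have the same cardinality \(N_\rho(c)\) and lie in \(D_\rho\), whose two components each have diameter \(O(h)\). I would pair points within the same ball where possible. The imbalance between the two balls costs at most \(N_*\operatorname{Lip}(f)\,|x_{\rm aux}|\), which is \(O(\operatorname{Lip}(f))\). Since \(\rho h=\rho^{1-1/d}\geq1\), this is \(\leq C\rho h\operatorname{Lip}(f)\). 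Combining with \eqref{eq:initial-quantizer-Lipschitz} gives \eqref{eq:marked-quantizer-Lipschitz}.
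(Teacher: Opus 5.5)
Your overall approach matches the paper's: you verify each property directly from \eqref{eq:marked-quantizer}, split the separation bound by pair type, and control the Lipschitz error by pairing removed points with inserted points in $B_{r_*}$. The cardinality, confinement, Borel, $U_1$-identity, separation, boundary-distance, short-edge-graph and Lipschitz parts are correct.

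The gap is in the covering estimate near the reservoir. There you use that $x_{\rm aux}+ht_1$ belongs to $\widetilde A_\rho(c)$. That point is inserted only if $n_\rho(c)\geq1$, i.e.\ $N_\rho(c)\geq d+2$. But \eqref{eq:marker-reservoir-count-bounds} gives only $N_\rho(c)\geq d+1$, and you yourself record $0\leq n_\rho(c)$. If $n_\rho(c)=0$, then $T_0=\varnothing$ and nothing is inserted near $x_{\rm aux}$. Take a point $y$ near $x_{\rm aux}$ whose nearby points of $A_\rho(c)$ all lie in $\overline B_{Lh}(x_{\rm aux})$ and were deleted. The only inserted points you exhibit are the marker points near $0$, at distance roughly $\|x_{\rm aux}\|$, which is a fixed constant rather than $O(h)$. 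The phrase ``$x_{\rm aux}$ is within $O(h)$ of the marker-free reservoir'' does not supply a point, since $x_{\rm aux}$ is the centre of that reservoir and the reservoir has been emptied. Relatedly, $D_\rho$ does not have diameter $O(h)$; only its two components do.

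The paper closes this with a surviving point rather than an inserted one:
\begin{itemize}
\item Fix a unit vector $e$ and set $z_\rho:=x_{\rm aux}+(L+2A_2)he$. For large $\rho$ this lies in $B_{r_*}\subset\operatorname{Vor}(c)$.
\item The covering bound in \eqref{eq:initial-quantizer-geometry} gives $u_\rho\in A_\rho(c)$ with $\|u_\rho-z_\rho\|\leq A_2h$.
\item Then $\|u_\rho-x_{\rm aux}\|\geq(L+A_2)h>Lh$ and $u_\rho$ is far from $0$, so $u_\rho\notin D_\rho$ and it is retained.
\item Every point within $A_2h$ of $\overline B_{Lh}(x_{\rm aux})$ is then within $(2L+4A_2)h$ of $u_\rho$.
\end{itemize}
Alternatively, you could prove $N_\rho(c)\geq d+2$ by a strict form of the volume comparison: a ball of radius $(L-A_2)h>A_2h$ cannot be covered by finitely many closed $A_2h$-balls with pairwise null overlaps, so the count strictly exceeds $((L-A_2)/A_2)^d\geq d+1$. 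Your proposal gives neither argument.

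A minor slip elsewhere: the separation constant should be $a_3=\min\{c_0,\delta\}$. This is because the smallest distance in $hS_s$ is $\delta\lambda_1(s)h\geq\delta h$.
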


\begin{proof}
In \eqref{eq:marked-quantizer},
\(N_\rho(c)\) points are removed and
\[
    (d+1)+n_\rho(c)=N_\rho(c)
\]
points are inserted. Hence the cardinality is unchanged. All
inserted points belong to \(B_{r_*}\), while
\[
    \overline B_{r_*}
    \subset
    W_c\setminus\mathscr O_\rho(c)
\]
by \eqref{eq:packet-core-and-support}. This proves the first line of
\eqref{eq:marked-quantizer-geometry}.

Points of \(A_\rho(c)\setminus D_\rho\) remain \(a_2h\)-separated.
The points \(x_{\rm aux}+hT_{n_\rho(c)}\) are \(s_Th\)-separated
and have distance at least \(3Lh/4\) from
\(A_\rho(c)\setminus D_\rho\). By
\eqref{eq:marker-size-choice}, \(hS_{b(c)}\subset B_{h/4}(0)\), so
its distance from \(A_\rho(c)\setminus D_\rho\) is at least
\(3h/4\). Moreover,
\[
    \operatorname{dist}
        \bigl(hS_{b(c)},x_{\rm aux}+hT_{n_\rho(c)}\bigr)
    \geq
    \|x_{\rm aux}\|-\frac{L+1}{4}h
    \geq c_0h
\]
for sufficiently large \(\rho\). Since the
minimum distance between distinct points of \(S_s\) is at least
\(\delta\), uniformly in \(s\), the separation estimate follows.

For \(u\in B_{r_*}\),
\[
    \operatorname{dist}
        (u,\partial\operatorname{Vor}(c))
    \geq r_{\rm Vor}-r_*=r_*.
\]
Together with
\eqref{eq:initial-quantizer-geometry}, this proves the second
estimate in the second line of
\eqref{eq:marked-quantizer-geometry}, after increasing \(\rho_0\).

It remains to check the covering estimate. If the
\(A_2h\)-neighborhood of a point does not meet \(D_\rho\), the
estimate follows from
\eqref{eq:initial-quantizer-geometry}. Points within \(A_2h\) of
\(\overline B_h(0)\) have distance at most
\((A_2+1)h\) from \(0\in hS_{b(c)}\).

For the second ball, fix a unit vector \(e\). For sufficiently large
\(\rho\), the point
\[
    z_\rho
    :=
    x_{\rm aux}+(L+2A_2)he
\]
belongs to \(B_{r_*}\). Choose
\(u_\rho\in A_\rho(c)\) with
\(\|u_\rho-z_\rho\|\leq A_2h\). Then
\(u_\rho\notin D_\rho\). Thus every point within \(A_2h\) of
\(B_{Lh}(x_{\rm aux})\) has distance at most
\((2L+4A_2)h\) from \(u_\rho\). This proves the last line of
\eqref{eq:marked-quantizer-geometry}.

Equation
\eqref{eq:marker-correction-region-unchanged} follows from
\(D_\rho\cap U_1=\varnothing\) and from the fact that the inserted
points lie outside \(U_1\). Since at most \(N_*\) points are changed
and all changed points lie in \(B_{r_*}\),
\[
    \left|
        \sum_{u\in \widetilde A_\rho(c)}f(u)
        -
        \sum_{u\in A_\rho(c)}f(u)
    \right|
    \leq
    2r_*N_*\operatorname{Lip}(f).
\]
Together with \eqref{eq:initial-quantizer-Lipschitz}, this gives
\eqref{eq:marked-quantizer-Lipschitz}.

Finally, by \eqref{eq:marker-size-choice}, every two points of
\(hS_{b(c)}\) have distance less than \(c_0h/10\). Every other
pair of distinct points has distance at least \(c_0h\), after
increasing \(\rho_0\). This proves the last assertion.

The Borel assertion follows from the Borelness of
\(c\mapsto\delta_{A_\rho(c)}\), of \(b\), and of
\(c\mapsto N_\rho(c)\).
\end{proof}



\subsection{Moment correction}
\label{subsec:moment-correction}

For \(p=1\), the only moment condition in
Theorem~\ref{thm:local-filling} is the cardinality identity, so we put
\(Q_\rho(c):=\widetilde A_\rho(c)\). Assume from now on that \(p\geq2\).
The set \(hS_{b(c)}\) and all auxiliary points lie outside \(U_0\), and
\(\widetilde A_\rho(c)\cap U_1=A_\rho(c)\cap U_1\). We perturb only the
points in \(U_0\). The next argument shows that a perturbation of size
\(O(h)\) can make all nonconstant polynomial moments of degree at most
\(p-1\) exact.

Let \(\mathcal P_{p-1}^0\) be the vector space of real polynomial
functions on \(V\) of degree at most \(p-1\) that vanish at the
origin. Its dimension is
\[
    m:=\dim\mathcal P_{p-1}^0
      =\binom{d+p-1}{d}-1.
\]
Fix a basis \(P_1,\ldots,P_m\) of \(\mathcal P_{p-1}^0\), and define
\(\mathbf P:V\to\mathbb R^m\) by
\[
    \mathbf P(u):=(P_1(u),\ldots,P_m(u)).
\]
Choose \(\chi\in C_c^\infty(U_0)\) with \(\chi\geq0\) and
\(\chi>0\) on a nonempty open set, and define
\begin{equation}
\label{eq:correction-vector-fields}
    \mathcal X_a(u):=\chi(u)\nabla P_a(u),
    \qquad 1\leq a\leq m.
\end{equation}
For \(t=(t_1,\ldots,t_m)\in\mathbb R^m\), define
\(F_t:V\to V\) by
\begin{equation}
\label{eq:correction-map}
    F_t(u):=u+\sum_{a=1}^m t_a\mathcal X_a(u).
\end{equation}
Thus \(F_t(u)=u\) for \(u\notin U_0\).

Define
\begin{equation}
\label{eq:continuous-moment-Jacobian}
    J_{ba}
    :=
    \int_V
        \chi(u)\,
        \nabla P_b(u)\cdot\nabla P_a(u)
        \,d\lambda_V(u).
\end{equation}

\begin{lemma}[Positive definiteness of the moment Gram matrix]
\label{lem:continuous-moment-Jacobian}
The matrix \(J\) is positive definite.
\end{lemma}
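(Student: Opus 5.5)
The plan is to identify \(J\) as the Gram matrix of the functions \(\nabla P_1,\ldots,\nabla P_m\) in the weighted space \(L^2(\chi\,d\lambda_V;V)\). Symmetry is immediate from \eqref{eq:continuous-moment-Jacobian}. For \(t\in\mathbb R^m\), put \(P_t:=\sum_a t_aP_a\). Bilinearity of the integrand gives
\[
    t^{\mathsf T}Jt
    =
    \int_V\chi(u)\,\|\nabla P_t(u)\|^2\,d\lambda_V(u)\geq0,
\]
because \(\chi\geq0\). So positive semidefiniteness is automatic, and the remaining task is to show that \(t^{\mathsf T}Jt=0\) forces \(t=0\).

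Suppose \(t^{\mathsf T}Jt=0\). Fix a nonempty open set \(U\) on which \(\chi>0\). The integrand is continuous and nonnegative, so \(\nabla P_t\) vanishes identically on \(U\), and \(P_t\) is constant on each connected component of \(U\). Shrinking \(U\) to a small ball, we may take \(P_t\) constant on a nonempty open ball.

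A polynomial that is constant on a nonempty open set is constant on all of \(V\). This follows from the identity theorem for real-analytic functions, or more elementarily because \(P_t-\text{const}\) then has all derivatives zero at an interior point, so all its Taylor coefficients vanish. Now \(P_t\in\mathcal P_{p-1}^0\) vanishes at the origin, so the constant is \(0\) and \(P_t\equiv0\). Since \(P_1,\ldots,P_m\) is a basis of \(\mathcal P_{p-1}^0\), this gives \(t=0\), and \(J\) is positive definite.

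The only point that needs care is the passage from the vanishing of \(\nabla P_t\) on the set \(\{\chi>0\}\) to \(P_t\equiv0\). This is exactly why \(\chi\) was required to be positive on a nonempty open set, and why the correction space excludes constants by imposing vanishing at the origin. Without that normalization, the constant polynomial would give a nonzero vector in the kernel of \(J\). Everything else is routine linear algebra.
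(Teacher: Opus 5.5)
Your proposal is correct and follows the paper's proof: writing \(t^{\mathsf T}Jt=\int_V\chi\,\|\nabla P_t\|^2\,d\lambda_V\), you deduce from vanishing that \(\nabla P_t=0\) on a nonempty open set, hence \(P_t\) is constant. Then \(P_t(0)=0\) and the basis property force \(t=0\). Your extra remarks on continuity of the integrand and on the role of the normalization \(P(0)=0\) just make explicit what the paper's short argument leaves implicit.
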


\begin{proof}
For \(\xi=(\xi_1,\ldots,\xi_m)\in\mathbb R^m\), let
\(P_\xi:=\sum_a\xi_aP_a\). Then
\[
    \xi^{\mathsf T}J\xi
    =
    \int_V
        \chi(u)\|\nabla P_\xi(u)\|^2
        \,d\lambda_V(u).
\]
If this vanishes, then \(\nabla P_\xi\) vanishes on a nonempty open
set, hence everywhere. Thus \(P_\xi\) is constant. Since
\(P_\xi(0)=0\), it is zero, and therefore \(\xi=0\).
\end{proof}

The packet relation
\[
    \{(c,y)\in C\times V:y\in\Pi_\rho(c)\}
\]
is Borel, and all packets lie in the fixed ball
\(\overline B_{D_{\rm Rips}+R_{\rm Vor}}\). Hence parameter
integration shows that
\[
    c\longmapsto
    \int_{\Pi_\rho(c)}P(y)\,d\lambda_V(y)
\]
is Borel for every polynomial \(P\).

For \(c\in C\), define \(G_{\rho,c}:\mathbb R^m\to\mathbb R^m\) by
\begin{equation}
\label{eq:discrete-moment-map}
    G_{\rho,c}(t)
    :=
    \frac1\rho
    \left(
        \sum_{u\in \widetilde A_\rho(c)}
            \mathbf P(F_t(u))
        -
        \rho\int_{\Pi_\rho(c)}
            \mathbf P(y)\,d\lambda_V(y)
    \right).
\end{equation}
Thus \(G_{\rho,c}(t)\) is the normalized discrepancy between the polynomial
moments of the perturbed point set and those of the packet measure. By
\eqref{eq:marked-quantizer-Lipschitz}, applied to the components of
\(\mathbf P\), there is \(C_0<\infty\) such that
\begin{equation}
\label{eq:moment-initial-residual}
    \|G_{\rho,c}(0)\|\leq C_0h.
\end{equation}
Moreover,
\[
    DG_{\rho,c}(0)_{ba}
    =
    \frac1\rho
    \sum_{u\in \widetilde A_\rho(c)}
        \nabla P_b(u)\cdot\mathcal X_a(u).
\]
The functions
\(u\mapsto\nabla P_b(u)\cdot\mathcal X_a(u)\) are Lipschitz and
supported in \(U_0\). Since
\[
    U_0\subset B_{r_*}\subset\Pi_\rho(c),
\]
another application of
\eqref{eq:marked-quantizer-Lipschitz} gives
\begin{equation}
\label{eq:discrete-Jacobian-approximation}
    \|DG_{\rho,c}(0)-J\|\leq C_1h
\end{equation}
with \(C_1\) independent of \(c\) and \(\rho\).

Let \(\lambda_*>0\) be the least eigenvalue of \(J\). After
increasing \(\rho_0\),
\begin{equation}
\label{eq:discrete-Jacobian-inverse}
    \|DG_{\rho,c}(0)^{-1}\|
    \leq\frac{2}{\lambda_*}
    =:L_*.
\end{equation}
There are also \(\tau,C_2>0\), independent of \(c\) and \(\rho\),
such that
\begin{equation}
\label{eq:moment-second-derivative-bound}
    \sup_{\|t\|\leq\tau}
        \|D^2G_{\rho,c}(t)\|
    \leq C_2.
\end{equation}
Indeed, \(F_t\) is affine in \(t\), the Hessians of the \(P_a\) are
bounded on a fixed ball for \(\|t\|\leq\tau\), the fields
\(\mathcal X_a\) are bounded, and
\(q_\rho(c)\leq A_1\rho\) by \eqref{eq:q-rho-comparable}.

\begin{proposition}[Exact finite-dimensional moment correction]
\label{prop:exact-moment-correction}
After increasing \(\rho_0\), there are a constant \(C_3<\infty\) and a
Borel map \(c\mapsto t_\rho(c)\in\mathbb R^m\) such that
\begin{equation}
\label{eq:correction-parameter-bound}
    G_{\rho,c}(t_\rho(c))=0,
    \qquad
    \|t_\rho(c)\|\leq C_3h.
\end{equation}
If
\begin{equation}
\label{eq:corrected-local-configuration}
    Q_\rho(c)
    :=
    F_{t_\rho(c)}
        \bigl(\widetilde A_\rho(c)\bigr),
\end{equation}
then the conclusions of Theorem~\ref{thm:local-filling} hold.
\end{proposition}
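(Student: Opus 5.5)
The plan is to obtain \(t_\rho(c)\) from a quantitative inverse function theorem applied to \(G_{\rho,c}\) near \(0\), with constants uniform in \(c\) and \(\rho\). Then we check that moving points by \(O(h)\) inside \(U_0\) keeps every geometric property of Proposition~\ref{prop:marker-reservoir}.

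\emph{Existence.} Put \(A:=DG_{\rho,c}(0)\) and consider the Newton-type map
\[
    \Psi(t):=t-A^{-1}G_{\rho,c}(t)
\]
on the closed ball \(\overline B_{2L_*C_0h}\subset\mathbb R^m\). By \eqref{eq:moment-initial-residual} and \eqref{eq:discrete-Jacobian-inverse}, \(\|\Psi(0)\|\leq L_*C_0h\). By \eqref{eq:moment-second-derivative-bound}, \(\|D\Psi(t)\|\leq L_*C_2\|t\|\leq 2L_*^2C_0C_2h\). After increasing \(\rho_0\) we have \(2L_*C_0h\leq\tau\) and \(D\Psi\leq 1/2\) on the ball. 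Hence \(\Psi\) is a contraction of that ball into itself. Its unique fixed point \(t_\rho(c)\) satisfies \(G_{\rho,c}(t_\rho(c))=0\) and \(\|t_\rho(c)\|\leq C_3h\) with \(C_3:=2L_*C_0\).

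\emph{Borel dependence.} The map \((c,t)\mapsto G_{\rho,c}(t)\) is Borel in \(c\) and continuous in \(t\). This follows from the Borelness of \(c\mapsto\delta_{\widetilde A_\rho(c)}\), Lusin--Novikov enumeration, and parameter integration of the packet moments. The iterates \(\Psi^n(0)\) are therefore Borel in \(c\), and so is their limit \(t_\rho(c)\). This is a cleaner route than a measurable selection argument.

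\emph{Conclusions of Theorem~\ref{thm:local-filling}.}

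The moment identities \eqref{eq:local-filling-moments} hold for \(1\leq|\mathbf j|<p\) because \(G=0\) and \(\mathbf P\) spans \(\mathcal P^0_{p-1}\). For \(|\mathbf j|=0\) they are the cardinality identity \(\#Q_\rho(c)=q_\rho(c)\), which needs injectivity of \(F_t\).

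\(F_t\) is a diffeomorphism of \(V\) equal to the identity off \(U_0\): \(\|DF_t-I\|\leq C\|t\|=O(h)<1/2\). This gives injectivity. It also gives that \(F_t\) maps \(U_0\) into itself, so \(Q_\rho(c)\subset W_c\setminus\mathscr O_\rho(c)\).

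\(F_t\) moves each point by at most \(C'h^{\,}\!\cdot\|t\|/h\). Because \(F_t\) is \((1\pm O(h))\)-bi-Lipschitz, separation degrades at most by a factor \(1-O(h)\), giving \(c_{\rm fill}\). The boundary distance is unaffected, since \(U_0\subset B_{r_*}\) is at distance at least \(r_*\) from \(\partial\operatorname{Vor}(c)\). Covering degrades by at most \(O(h^2)\), giving \(C_{\rm fill}\).

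The Lipschitz estimate \eqref{eq:local-filling-Lipschitz} follows from \eqref{eq:marked-quantizer-Lipschitz} plus \(\sum_u|f(F_t u)-f(u)|\leq q_\rho(c)\cdot O(h^2)\operatorname{Lip}(f)=O(\rho h)\operatorname{Lip}(f)\).

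\emph{Marker and recognition gap.} \(hS_{b(c)}\) and the auxiliary points lie outside \(U_1\supset U_0\), so they are fixed. This gives \eqref{eq:local-filling-marker}. The recognition gap \eqref{eq:local-filling-recognition-gap} follows from Proposition~\ref{prop:marker-reservoir} with \(c_{\rm rec}:=c_0/2\), since the perturbation changes distances by \(O(h^2)\). The statements about \(S_s\) are Lemma~\ref{lem:full-frame-marker}.

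\emph{Main obstacle.} The delicate point is keeping every constant uniform in \(c\). In particular, \(D^2G\) must stay bounded on a \(t\)-ball of radius \(\tau\) independent of \(\rho\). This holds because of the normalization \(1/\rho\) together with \(q_\rho\leq A_1\rho\).
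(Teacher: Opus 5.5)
Your proposal is correct and follows the paper's proof essentially step for step. It uses the same frozen-Jacobian Newton map \(t\mapsto t-DG_{\rho,c}(0)^{-1}G_{\rho,c}(t)\) contracting \(\overline B_{C_3h}(0)\) with \(C_3=2L_*C_0\) (you also make explicit the needed condition \(C_3h\le\tau\)), gets Borelness from the Picard iterates, and checks the geometry perturbatively using that \(F_{t_\rho(c)}\) is a \((1\pm O(h))\)-bi-Lipschitz bijection equal to the identity off \(U_0\).

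There is one slip to fix. In the Lipschitz estimate each point moves by \(\|F_t(u)-u\|\le C\|t\|=O(h)\), not \(O(h^2)\). This still gives the required \(q_\rho(c)\cdot O(h)\operatorname{Lip}(f)=O(\rho h)\operatorname{Lip}(f)\).

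It is also worth recording \(c_{\rm bd}=a_2\) rather than the weaker \(a_3\) from Proposition~\ref{prop:marker-reservoir}. Points inherited unmoved from \(A_\rho(c)\) keep boundary distance \(a_2h\), and every other point lies in \(B_{r_*}\). The paper needs this because Proposition~\ref{prop:global-assembly} later uses \(2c_{\rm bd}\ge4c_{\rm rec}\).
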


\begin{proof}
\emph{Fixed point.}
Put \(J_{\rho,c}:=DG_{\rho,c}(0)\) and define
\[
    \mathcal N_{\rho,c}(t)
    :=
    t-J_{\rho,c}^{-1}G_{\rho,c}(t).
\]
Let \(C_3:=2L_*C_0\). For \(\|t\|\leq C_3h\),
\eqref{eq:moment-second-derivative-bound} gives
\[
    \|D\mathcal N_{\rho,c}(t)\|
    \leq L_*C_2C_3h.
\]
After increasing \(\rho_0\), this is at most \(1/2\). Also,
\eqref{eq:moment-initial-residual} gives
\[
    \|\mathcal N_{\rho,c}(0)\|
    \leq L_*C_0h
    =\frac{C_3h}{2}.
\]
Thus \(\mathcal N_{\rho,c}\) maps
\(\overline B_{C_3h}(0)\) into itself and is a contraction there.
Its unique fixed point \(t_\rho(c)\) satisfies
\eqref{eq:correction-parameter-bound}.

\emph{Borel dependence.}
The map \((c,t)\mapsto G_{\rho,c}(t)\) is Borel in \(c\) and
continuous in \(t\). The matrix \(J_{\rho,c}\) depends Borel
measurably on \(c\), and matrix inversion is continuous on the set of
invertible matrices. Hence \((c,t)\mapsto\mathcal N_{\rho,c}(t)\)
is Borel in \(c\) and continuous in \(t\). Starting with
\(t_0(c)=0\) and defining
\[
    t_{n+1}(c):=\mathcal N_{\rho,c}(t_n(c))
\]
gives Borel maps converging pointwise to \(t_\rho(c)\). Thus
\(c\mapsto t_\rho(c)\) is Borel.

\emph{Moment identities and geometry.}
There are constants \(L_{\mathcal X},M_{\mathcal X}<\infty\) such
that
\[
    \left\|D\sum_a t_a\mathcal X_a\right\|_\infty
    \leq L_{\mathcal X}\|t\|,
    \qquad
    \left\|\sum_a t_a\mathcal X_a\right\|_\infty
    \leq M_{\mathcal X}\|t\|.
\]
For sufficiently large \(\rho\),
\(L_{\mathcal X}\|t_\rho(c)\|\leq1/2\), and therefore
\begin{equation}
\label{eq:correction-bi-Lipschitz}
    \frac12\|u-v\|
    \leq
    \|F_{t_\rho(c)}(u)-F_{t_\rho(c)}(v)\|
    \leq
    \frac32\|u-v\|.
\end{equation}
After increasing \(\rho_0\) once more,
\[
    M_{\mathcal X}\|t_\rho(c)\|
    <
    \operatorname{dist}(\overline U_0,U_1^c),
\]
so \(F_{t_\rho(c)}(U_0)\subset U_1\). Points outside \(U_0\) are
fixed.

It follows from \eqref{eq:correction-bi-Lipschitz} that
\(F_{t_\rho(c)}\) is injective, and hence
\(\#Q_\rho(c)=q_\rho(c)\). Since
\(G_{\rho,c}(t_\rho(c))=0\),
\begin{equation}
\label{eq:corrected-basis-moments}
    \sum_{u\in Q_\rho(c)}P_a(u)
    =
    \rho\int_{\Pi_\rho(c)}
        P_a(y)\,d\lambda_V(y),
    \qquad 1\leq a\leq m.
\end{equation}
Together with the cardinality identity, this gives
\eqref{eq:local-filling-moments} for every
\(|\mathbf j|<p\).

Since
\[
    U_1\subset B_{r_*}
    \subset W_c\setminus\mathscr O_\rho(c)
\]
and points outside \(U_0\) are fixed,
Proposition~\ref{prop:marker-reservoir} gives
\[
    Q_\rho(c)\subset
    W_c\setminus\mathscr O_\rho(c).
\]
Equation \eqref{eq:correction-bi-Lipschitz} also gives
\[
    \operatorname{sep}(Q_\rho(c))
    \geq\frac{a_3}{2}h.
\]

Set
\[
    d_1:=
    \operatorname{dist}
        \bigl(\overline U_1,V\setminus B_{r_{\rm Vor}}\bigr)>0.
\]
Points moved by \(F_{t_\rho(c)}\) have distance at least \(d_1\)
from \(\partial\operatorname{Vor}(c)\). Every unchanged point inherited
from \(A_\rho(c)\) has boundary distance at least \(a_2h\) by
\eqref{eq:initial-quantizer-geometry}, while every inserted point lies in the fixed ball \(B_{r_*}\) and hence has boundary
distance at least \(r_*\). After increasing \(\rho_0\) so that
\(a_2h\leq\min\{d_1,r_*\}\), we therefore have the uniform estimate
\[
    \operatorname{dist}
        \bigl(Q_\rho(c),
              \partial\operatorname{Vor}(c)\bigr)
    \geq a_2h.
\]

Every point moves by at most \(M_{\mathcal X}C_3h\). Therefore
\eqref{eq:marked-quantizer-geometry} gives
\[
    \sup_{y\in\operatorname{Vor}(c)}
        \operatorname{dist}(y,Q_\rho(c))
    \leq
    (A_3+M_{\mathcal X}C_3)h.
\]
For every Lipschitz \(f\),
\[
    \left|
        \sum_{u\in Q_\rho(c)}f(u)
        -
        \sum_{u\in \widetilde A_\rho(c)}f(u)
    \right|
    \leq
    A_1M_{\mathcal X}C_3
        \rho h\operatorname{Lip}(f).
\]
Together with \eqref{eq:marked-quantizer-Lipschitz}, this proves
\eqref{eq:local-filling-Lipschitz}.

\emph{Isolation of \(hS_{b(c)}\) and Borelness.}
The set \(hS_{b(c)}\) is disjoint from \(U_0\) for sufficiently
large \(\rho\), and is therefore fixed pointwise. By
Proposition~\ref{prop:marker-reservoir}, every pair
\(u,v\in \widetilde A_\rho(c)\) with
\(\{u,v\}\not\subset hS_{b(c)}\) satisfies
\[
    \|u-v\|\geq c_0h.
\]
Hence \eqref{eq:correction-bi-Lipschitz} gives
\[
    \|F_{t_\rho(c)}(u)-F_{t_\rho(c)}(v)\|
    \geq\frac{c_0h}{2}.
\]
Since \(hS_{b(c)}\) is fixed and has diameter less than \(c_0h/10\),
the graph on \(Q_\rho(c)\) with edges of length less than
\(c_0h/5\) has \(hS_{b(c)}\) as its unique nontrivial component.
Thus \eqref{eq:local-filling-recognition-gap} holds with
\(c_{\rm rec}:=c_0/2\), and Lemma~\ref{lem:full-frame-marker} gives the
Borel recovery of \(a,r,s\).

For \(p\geq2\), we may therefore take, for example,
\[
    c_{\rm fill}:=a_3/2,
    \qquad
    c_{\rm bd}:=a_2,
    \qquad
    C_{\rm fill}:=
    \max\{A_3+M_{\mathcal X}C_3,
              A_5+A_1M_{\mathcal X}C_3\},
    \qquad
    c_{\rm rec}:=c_0/2.
\]
For \(p=1\), the same theorem holds directly from
Proposition~\ref{prop:marker-reservoir}. Indeed, the unchanged points of \(A_\rho(c)\) retain boundary distance \(a_2h\), while all inserted points have a
fixed positive boundary distance; after increasing \(\rho_0\), one may take
\[
    c_{\rm fill}=a_3,
    \qquad
    c_{\rm bd}=a_2,
    \qquad
    C_{\rm fill}=\max\{A_3,A_5\},
    \qquad
    c_{\rm rec}=c_0/2.
\]

Finally, \eqref{eq:corrected-local-configuration} and the Borelness
of \(c\mapsto t_\rho(c)\) show that
\(c\mapsto\delta_{Q_\rho(c)}\) is Borel.
\end{proof}

Proposition~\ref{prop:exact-moment-correction} completes the proof of
Theorem~\ref{thm:local-filling}.

\paragraph{Borel dependence.}
The measurable dependence in Theorem~\ref{thm:local-filling} follows through
the same steps as the construction. Proposition~\ref{prop:integer-mass-packets}
and its proof give Borel maps \(c\mapsto\Pi_\rho(c)\) and
\(c\mapsto q_\rho(c)\), while
Proposition~\ref{prop:labelled-Voronoi-partition} gives Borel dependence of
\(\operatorname{Vor}(c)\). The parameterized part of
Lemma~\ref{lem:equal-mass-rectangles} and
Lemma~\ref{lem:uniform-convex-quantization} show that
\((D,q)\mapsto\delta_{A_q(D)}\) is jointly Borel for the admissible compact
convex sets \(D\) and integers \(q\geq1\). Hence
\[
    c\longmapsto
    \delta_{A_{q_\rho(c)}(\operatorname{Vor}(c))}
\]
is Borel even though \(q_\rho(c)\) varies with \(c\).
Proposition~\ref{prop:marker-reservoir} preserves Borel dependence under the
finite modification inserting \(hS_{b(c)}\). Finally,
Proposition~\ref{prop:exact-moment-correction} obtains \(t_\rho(c)\) as the
pointwise limit of Borel Picard iterates and therefore gives Borel dependence
of \(Q_\rho(c)\). Thus every parameter-dependent step in the local
construction is Borel before the configurations are assembled globally.
For fixed \(p\), Sections~\ref{sec:orbit-geometry-integerization} and
\ref{sec:local-filling-assembly} impose only finitely many uniform lower
bounds on \(\rho\); their maximum is the final threshold \(\rho_0\) used
below.



\subsection{Global realization, intensity, and maximal rigidity}
\label{subsec:assembly-generation}

Fix \(\rho\geq\rho_0\), and let \(Q_\rho(c)\) be given by
Theorem~\ref{thm:local-filling}. We now verify the hypotheses of
Proposition~\ref{prop:configuration-realization}. For \(x\in X\), define
\begin{equation}
\label{eq:global-configuration}
    P_x
    :=
    \bigcup_{g\in C_x}
        \Phi_{x,g}
        \bigl(Q_\rho(\operatorname{lab}_x(g))\bigr).
\end{equation}
For brevity, when \(g\in C_x\), write
\[
    p_g:=\operatorname{pos}_x(g),
    \qquad
    r_g:=\operatorname{fr}_x(g),
    \qquad
    c_g:=\operatorname{lab}_x(g).
\]

\begin{proposition}[Global Delone realization and generation]
\label{prop:global-assembly}
The map
\[
    \kappa_\rho:X\longrightarrow\mathcal N_s(V),
    \qquad
    \kappa_\rho(x):=\delta_{P_x},
\]
is Borel and \(G_K\)-equivariant. There are constants
\(a,A>0\), independent of \(x\), such that
\begin{equation}
\label{eq:global-Delone-bounds}
    \operatorname{sep}(P_x)\geq ah,
    \qquad
    \sup_{y\in V}\operatorname{dist}(y,P_x)\leq Ah.
\end{equation}
Moreover, \(\kappa_\rho\) is injective.
\end{proposition}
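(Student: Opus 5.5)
We verify the three assertions, Borelness with equivariance, the uniform Delone bounds, and injectivity, in that order.

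\emph{Borelness and equivariance.} By Proposition~\ref{prop:framed-Delone-scaffold}, the scaffold relation \(\mathscr S\) is Borel. The return \(g\) attached to a site, together with its frame and label, depends Borel measurably on \((x,u)\). Theorem~\ref{thm:local-filling} gives Borel dependence of \(c\mapsto\delta_{Q_\rho(c)}\). We enumerate the sites of \(\mathscr P_x\) by Lusin--Novikov and the points of each \(Q_\rho(c)\) likewise. The incidence relation \(\{(x,y):y\in P_x\}\) is then a countable union of graphs of Borel partial maps \(x\mapsto p_g+r_gu\). Since all packets lie in a fixed ball and \(\mathscr P_x\) is uniformly separated, only boundedly many terms meet any compact set. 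Hence \(x\mapsto\delta_{P_x}\) is Borel.

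For equivariance with \(h=(v,\ell)\), we use \(C_{h.x}=C_xh^{-1}\) and the transport identity \eqref{eq:framed-transport-identity}, with labels unchanged. Reindexing the union \eqref{eq:global-configuration} by \(g\mapsto gh^{-1}\) then gives \(P_{h.x}=\ell P_x-v\).

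\emph{Delone bounds.} We compare with the packet partition. Each placed set \(\Phi_{x,g}(Q_\rho(c_g))\) lies inside \(\Phi_{x,g}(W_{c_g}\setminus\mathscr O_\rho(c_g))\). By \eqref{eq:physical-cell-partition} and \eqref{eq:physical-packet-partition}, these sets are pairwise disjoint for distinct \(g\). Consider two points in the same placed set. Separation \(\ge c_{\rm fill}h\) is immediate from \eqref{eq:local-filling-geometry}, since \(\Phi_{x,g}\) is an isometry. Now consider points in different placed sets. They lie in distinct physical Voronoi cells \(\Phi_{x,g}(\operatorname{Vor}(c_g))\), which by Proposition~\ref{prop:labelled-Voronoi-partition} are the closed Voronoi cells of \(\mathscr P_x\). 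Each point has distance \(\ge c_{\rm bd}h\) from the boundary of its own cell. The segment between two such points exits the first cell, so its length is at least \(c_{\rm bd}h\); in fact it is at least \(2c_{\rm bd}h\) by convexity, but one copy of \(c_{\rm bd}h\) is enough. This gives \(a=\min\{c_{\rm fill},c_{\rm bd}\}\).

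For covering, take \(y\in V\). It lies in some closed physical Voronoi cell \(\Phi_{x,g}(\operatorname{Vor}(c_g))\), since these cover \(V\) (Lemma~\ref{lem:Delone-Voronoi-geometry} applied to \(\mathscr P_x\)). The third line of \eqref{eq:local-filling-geometry}, transported by the isometry, then gives \(\operatorname{dist}(y,P_x)\le C_{\rm fill}h\). So \(A=C_{\rm fill}\).

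\emph{Injectivity.} This is the main step. We define the short-edge graph on \(P_x\) by joining points at distance \(<c_{\rm rec}h/5\), and we must show its nontrivial components are exactly the placed copies \(p_g+r_ghS_{b(c_g)}\). Within one placed set this is \eqref{eq:local-filling-recognition-gap}. Across different placed sets, the separation just proved is at least \(c_{\rm bd}h\). This exceeds \(c_{\rm rec}h/5\) once the constants are arranged: from the proofs, \(c_{\rm rec}=c_0/2\le a_2/2\) and \(c_{\rm bd}=a_2\), so this holds. Each such component has the intrinsic form \(a+rhS_s\). By Lemma~\ref{lem:full-frame-marker}, it determines \((a,r,s)=(p_g,r_g,b(c_g))\), and injectivity of \(b\) recovers \(c_g\). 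Equation~\eqref{eq:framed-recovery} then gives \(x=(-p_g,r_g).c_g\). Thus if \(P_x=P_{x'}\), we pick any nontrivial short-edge component, for instance one with a vertex nearest the origin chosen canonically. It decodes to the same triple for both configurations, so \(x=x'\).

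The obstacle I anticipate is precisely this cross-cell isolation: ensuring that no short edge joins points from neighbouring packets. This requires \(c_{\rm rec}/5<c_{\rm bd}\), which I would check against the explicit constants above.
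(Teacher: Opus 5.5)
Your proposal is correct and follows essentially the same route as the paper's proof. The steps match: disjointness from the labelled Voronoi partition, separation via distance to cell boundaries, covering via the closed cells, equivariance by reindexing $g\mapsto gh^{-1}$, and injectivity by isolating the placed copies $p_g+r_ghS_{b(c_g)}$ using $c_{\rm rec}\le c_{\rm bd}/2$, then decoding via Lemma~\ref{lem:full-frame-marker} and \eqref{eq:framed-recovery}. The only differences are cosmetic: you take $a=\min\{c_{\rm fill},c_{\rm bd}\}$ rather than $\min\{c_{\rm fill},2c_{\rm bd}\}$, and an edge threshold $c_{\rm rec}h/5$ rather than $c_{\rm rec}h/2$, and both choices work.
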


\begin{proof}
\emph{Delone geometry.}
By Proposition~\ref{prop:labelled-Voronoi-partition}, the sets
\[
    \Phi_{x,g}(W_{c_g}),
    \qquad g\in C_x,
\]
form an exact partition of \(V\). Theorem~\ref{thm:local-filling} gives
\[
    \Phi_{x,g}(Q_\rho(c_g))
    \subset
    \Phi_{x,g}(W_{c_g}),
\]
so the union in \eqref{eq:global-configuration} is disjoint.

Two points in the same local configuration are separated by at least
\(c_{\rm fill}h\). If \(u\) and \(v\) lie in distinct physical Voronoi
cells, the segment \([u,v]\) meets the boundary of each cell. By
\eqref{eq:local-filling-geometry} and invariance under the isometries
\(\Phi_{x,g}\), both \(u\) and \(v\) have distance at least
\(c_{\rm bd}h\) from the corresponding boundary. Hence
\[
    \|u-v\|\geq2c_{\rm bd}h.
\]
Thus the separation estimate in \eqref{eq:global-Delone-bounds} holds with
\(a:=\min\{c_{\rm fill},2c_{\rm bd}\}\).

For \(y\in V\), choose \(g\in C_x\) such that
\[
    y\in\Phi_{x,g}(\operatorname{Vor}(c_g)).
\]
The covering estimate in \eqref{eq:local-filling-geometry}, transported by
\(\Phi_{x,g}\), gives
\[
    \operatorname{dist}(y,P_x)\leq C_{\rm fill}h.
\]
This proves \eqref{eq:global-Delone-bounds}.

\emph{Borelness and equivariance.}
The return relation may be enumerated by Borel maps
\(g_n\) on Borel subsets of \(X\). Since
\(c\mapsto\delta_{Q_\rho(c)}\) and the frame and position maps are
Borel, each corresponding summand in
\eqref{eq:global-configuration} depends Borel measurably on \(x\).
For a compact set \(D\subset V\), only returns satisfying
\[
    p_g\in D+\overline B_{R_{\rm Vor}}
\]
can contribute points to \(D\), and their number is uniformly
finite by the separation of \(\mathscr P_x\). Hence
\(x\mapsto\delta_{P_x}\) is Borel.

Let \(h_0=(v,\ell)\in G_K\). By
\eqref{eq:framed-return-covariance}, the map
\(g\mapsto gh_0^{-1}\) is a bijection from \(C_x\) onto
\(C_{h_0.x}\). The label is unchanged under this bijection, and
\eqref{eq:framed-transport-identity} gives
\[
    \Phi_{h_0.x,gh_0^{-1}}(y)
    =\ell\Phi_{x,g}(y)-v.
\]
Applying this identity to \eqref{eq:global-configuration} gives
\begin{equation}
\label{eq:global-configuration-covariance}
    P_{(v,\ell).x}=\ell P_x-v
    \qquad(x\in X).
\end{equation}

\emph{Injectivity.}
Let \(c_{\rm rec}\) be the constant from
Theorem~\ref{thm:local-filling}. By
\eqref{eq:local-filling-recognition-gap}, the set \(hS_{b(c)}\) has diameter less than \(c_{\rm rec}h/5\), and
\begin{equation}
\label{eq:nonmarker-distance}
    \|u-v\|\geq c_{\rm rec}h
\end{equation}
whenever \(u,v\in Q_\rho(c)\) and
\(\{u,v\}\not\subset hS_{b(c)}\).

By \eqref{eq:local-filling-geometry}, points belonging to distinct
Voronoi cells have distance at least \(2c_{\rm bd}h\). With the choices at
the end of Proposition~\ref{prop:exact-moment-correction},
\(c_{\rm bd}=a_2\) and
\(c_{\rm rec}=c_0/2\leq a_2/2\), so in particular
\[
    2c_{\rm bd}h\geq4c_{\rm rec}h.
\]
It follows that the connected
components with more than one vertex of the graph on \(P_x\) with
edges
\begin{equation}
\label{eq:global-marker-threshold}
    \|u-v\|<\frac{c_{\rm rec}h}{2}
\end{equation}
are precisely the sets
\begin{equation}
\label{eq:global-markers}
    p_g+r_ghS_{b(c_g)},
    \qquad g\in C_x.
\end{equation}

By Lemma~\ref{lem:full-frame-marker}, an unlabelled set in
\eqref{eq:global-markers} determines \(p_g\), \(r_g\), and \(b(c_g)\) by
a Borel map. Since \(b\) is injective, the set also determines \(c_g\).
Equation~\eqref{eq:framed-recovery} then gives
\[
    x=(-p_g,r_g).c_g.
\]
Thus \(P_x\) determines \(x\), and \(\kappa_\rho\) is injective.
\end{proof}

Proposition~\ref{prop:global-assembly} verifies the hypotheses of
Proposition~\ref{prop:configuration-realization}, including injectivity. Hence,
with
\begin{equation}
\label{eq:higher-dimensional-cross-section}
    Y_\rho:=\{x\in X:0\in P_x\},
    \qquad
    \eta_\rho:=(\kappa_\rho)_*\mu,
\end{equation}
Proposition~\ref{prop:configuration-realization} shows that \(Y_\rho\) is a
\(K\)-invariant generating Delone translation cross-section,
\begin{equation}
\label{eq:higher-dimensional-return-set}
    (Y_\rho)_x=P_x,
\end{equation}
and its return-time process is \(\eta_\rho\).

\begin{proposition}[Exact prescribed intensity]
\label{prop:higher-dimensional-exact-intensity}
The point process \(\eta_\rho\) has intensity \(\rho\).
\end{proposition}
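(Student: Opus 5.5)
We need to prove that \(\eta_\rho\) has intensity \(\rho\). The plan is to compute the intensity through the cross-section formula (a Campbell/transverse measure identity) for the auxiliary cross-section \(C\), and then show \(\int_C q_\rho\,d\nu_C=\rho\int_C\lambda_V(W_c)\,d\nu_C(c)\). The second identity is a mass-transport statement along the parent forest.

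\emph{Step 1: reduce to packets.} Since \(\rho_{\eta_\rho}\) is determined by \(\int\#(P_x\cap B)\,d\mu(x)=\rho_{\eta_\rho}\lambda_V(B)\), and \(P_x\) is the disjoint union of the placed local sets \(\Phi_{x,g}(Q_\rho(c_g))\), we apply the cross-section (Palm) formula for \(C\) with normalized transverse measure \(\nu_C\) and covolume constant \(\operatorname{covol}(C)\); see \cite[Proposition~A.8]{CGMTD26}. This gives
\[
\int_X\#(P_x\cap B)\,d\mu(x)=\frac{1}{\operatorname{covol}}\int_C\int_{G_K}\#\bigl(\Phi\text{-placed }Q_\rho(c)\cap B\bigr)\,dg\,d\nu_C(c)=\frac{\lambda_V(B)}{\operatorname{covol}}\int_C q_\rho(c)\,d\nu_C(c).
\]
Integrating over the \(K\)-component costs nothing, since \(K\) is compact and Haar-normalized. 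The same formula applied to the Lebesgue partition \eqref{eq:physical-cell-partition} gives
\[
\lambda_V(B)=\frac{\lambda_V(B)}{\operatorname{covol}}\int_C\lambda_V(W_c)\,d\nu_C,
\]
that is, \(\operatorname{covol}=\int_C\lambda_V(W_c)\,d\nu_C\). Hence \(\rho_{\eta_\rho}=\int q_\rho\,d\nu_C\big/\int\lambda_V(W_c)\,d\nu_C\). The integrals are finite because \(q_\rho\) and \(\lambda_V(W_c)\) are uniformly bounded by \eqref{eq:integerization-error-bounds} and \eqref{eq:labelled-cell-geometry}.

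\emph{Step 2: mass transport.} By Lemma~\ref{lem:forest-integerization},
\[
q_\rho(c)-m_\rho(c)=\sum_{w\in\operatorname{Ch}(c)}\theta_\rho(w)-\theta_\rho(c).
\]
Integrate this over \(\nu_C\). Since \(\nu_C\) is \(\mathcal R_C\)-invariant, the mass transport principle for the Borel function \(F(w,c)=\theta_\rho(w)\mathbf 1_{\operatorname{par}(w)=c}\) yields
\[
\int_C\sum_{w\in\operatorname{Ch}(c)}\theta_\rho(w)\,d\nu_C(c)=\int_C\sum_{c}F(w,c)\,d\nu_C(w)=\int_C\theta_\rho\,d\nu_C.
\]
Everything is bounded (\(\#\operatorname{Ch}<b_{\mathcal T}\), \(0\le\theta_\rho<1\)), so the transport is justified. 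Therefore \(\int q_\rho\,d\nu_C=\rho\int\lambda_V(W_c)\,d\nu_C\), and \(\rho_{\eta_\rho}=\rho\). Equivalently, we can bypass the ratio by noting directly that the packets \(\Pi_\rho\) partition \(V\) by \eqref{eq:physical-packet-partition} and carry \(\rho\)-mass \(q_\rho\) by \eqref{eq:integer-packet-mass}. Applying the cross-section formula to \(\rho\lambda_V=\sum\rho\lambda_V|_{\text{packets}}\) then gives \(\rho\lambda_V(B)=\frac{\lambda_V(B)}{\operatorname{covol}}\int q_\rho\,d\nu_C\). This version needs no mass transport at all, and we prefer it.

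\emph{Main obstacle.} The real work is the precise form of the cross-section integration formula for the full \(G_K\)-action. We must get correctly the identification of \(g\in C_x\) with the placement \(\Phi_{x,g}\), the unimodularity, and the compact \(K\)-factor, since each state \(x\) sees each packet once per return \(g\), not once per \(K\)-coset. First we will check that the map \((c,g)\mapsto g^{-1}.c\) is a bijection \(C\times G_K\supset\{\ldots\}\to\) with the Haar-times-transverse measure pushing to a multiple of \(\mu\). Then we compute the left side with the integrand \(\#(\Phi_{g^{-1}.c,g}(Q_\rho(c))\cap B)\); this equals \(\#(Q_\rho(c)\cap\text{isometric image of }B)\), whose \(G_K\)-integral is \(q_\rho(c)\lambda_V(B)\) by invariance of Lebesgue measure. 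The unknown normalization constant cancels when the same identity is run for \(\rho\lambda_V\) itself.
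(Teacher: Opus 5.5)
Your argument is correct, but it follows a different route from the paper. The paper never uses the transverse measure. Instead it proves the deterministic, uniform-in-$x$ estimate $|\#(P_x\cap B_R)-\rho\lambda_V(B_R)|\le C'\rho R^{d-1}$. Each placed signed measure $\delta_{p_g+r_gQ_\rho(c_g)}-\rho\lambda_V|_{p_g+r_g\Pi_\rho(c_g)}$ has total mass zero and support in $p_g+\overline B_{R_{\rm pkt}}$. So only packets within $R_{\rm pkt}$ of the sphere contribute; there are $O(R^{d-1})$ of them by packing the $r_{\rm sep}$-separated positions in an annulus, and each contributes at most $2A_1\rho$. Taking expectations and dividing by $\lambda_V(B_R)$ gives $\rho_{\eta_\rho}=\rho$.

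You instead average first, using the Campbell identity
$\int_X\sum_{g\in C_x}f(g,g.x)\,d\mu(x)=\operatorname{covol}^{-1}\int_C\int_{G_K}f(g,c)\,dg\,d\nu_C(c)$. This works because $\Phi_{x,g}(y)=g^{-1}.y$ depends only on $g$. Your route needs only the exact packet partition and the mass identity $\#Q_\rho(c)=\rho\lambda_V(\Pi_\rho(c))$, with Tonelli handling all interchanges. It does not need the support bound or the annulus count. It is the higher-dimensional analogue of the suspension-formula intensity computation the paper itself uses in dimension one. The cost is reliance on the precise form of the cross-section formula, which the paper only cites. The paper's route is self-contained and yields a pointwise ball-discrepancy bound as a byproduct.

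One correction: the map $(c,g)\mapsto g^{-1}.c$ is countable-to-one, not a bijection. What your Step 1 actually uses is the Campbell identity with a sum over returns, and that is the correct statement to invoke. Your mass-transport variant is also valid, but it is redundant once you run the direct packet version.
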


\begin{proof}
Let
\[
    R_{\rm pkt}:=D_{\rm Rips}+R_{\rm Vor}.
\]
By \eqref{eq:packet-core-and-support}, the physical packet corresponding to
\(g\in C_x\) is contained in
\[
    p_g+\overline B_{R_{\rm pkt}}.
\]
Moreover,
\[
    \#Q_\rho(c_g)
    =
    \rho\lambda_V(\Pi_\rho(c_g))
    =
    q_\rho(c_g),
\]
and \eqref{eq:q-rho-comparable} gives
\(q_\rho(c_g)\leq A_1\rho\).

Consider \(B_R\), with \(R\geq2R_{\rm pkt}\). For each \(g\in C_x\),
the signed measure
\[
    \delta_{p_g+r_gQ_\rho(c_g)}
    -\rho\lambda_V|_{p_g+r_g\Pi_\rho(c_g)}
\]
has total mass zero and is supported in
\(p_g+\overline B_{R_{\rm pkt}}\). Hence it contributes zero to
\[
    \#(P_x\cap B_R)-\rho\lambda_V(B_R)
\]
whenever \(p_g+\overline B_{R_{\rm pkt}}\) is contained in \(B_R\) or
disjoint from it. Every remaining packet therefore has
\[
    \bigl|\|p_g\|-R\bigr|\leq R_{\rm pkt}.
\]
Since the set of positions is \(r_{\rm sep}\)-separated, a packing
estimate in this annulus gives
\[
    \#\left\{
        g\in C_x:
        \bigl|\|p_g\|-R\bigr|\leq R_{\rm pkt}
    \right\}
    \leq C R^{d-1},
\]
where \(C\) is independent of \(x\) and \(R\). Each such packet
contributes at most \(2A_1\rho\) in absolute value. Therefore
\begin{equation}
\label{eq:higher-dimensional-boundary-discrepancy}
    \left|
        \#(P_x\cap B_R)-\rho\lambda_V(B_R)
    \right|
    \leq C'\rho R^{d-1}.
\end{equation}
This ball estimate is used only to identify the mean intensity; the
stronger uniform boundary-discrepancy statement is proved in
Subsection~\ref{subsec:boundary-discrepancy-bd} from the first-order derivative
representation.

Let \(\rho_{\eta_\rho}\) be the intensity of \(\eta_\rho\).
Taking expectations in
\eqref{eq:higher-dimensional-boundary-discrepancy} gives
\[
    |\rho_{\eta_\rho}-\rho|\lambda_V(B_R)
    \leq C'\rho R^{d-1}.
\]
Dividing by \(\lambda_V(B_R)\) and letting \(R\to\infty\) proves
\(\rho_{\eta_\rho}=\rho\).
\end{proof}



\begin{proposition}[Maximal rigidity]
\label{prop:higher-dimensional-maximal-rigidity}
The point process \(\eta_\rho\) is maximally rigid. More precisely,
for every bounded Borel set \(A\subset V\), there is a Borel recovery
map from configurations on \(A^c\) to \(\mathcal N_s(V)\) which
recovers \(\delta_{P_x}\) from \(P_x\cap A^c\) for every \(x\) in the
invariant conull set on which the construction is defined.
\end{proposition}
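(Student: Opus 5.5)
The plan is to follow the one-dimensional argument of Proposition~\ref{prop:one-dimensional-maximal-rigidity}, replacing the three-point markers by the recovery sets \(p_g+r_ghS_{b(c_g)}\) from \eqref{eq:global-markers}. Fix a bounded Borel set \(A\subset V\) and put
\[
    \theta:=\frac{c_{\rm rec}h}{2},
    \qquad
    E_A:=\{u\in V:\operatorname{dist}(u,A)>\theta\}.
\]
From the outside configuration \(\omega=P_x\cap A^c\) we can read off \(P_x\cap E_A\). On this set we form the graph whose edges join pairs at distance less than \(\theta\). By the proof of Proposition~\ref{prop:global-assembly}, the nontrivial components of this graph on the full configuration \(P_x\) are exactly the placed recovery sets. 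Restricting to \(E_A\) can only delete vertices or split components; it cannot create new edges.

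We therefore call a component of the restricted graph \emph{complete} if it has exactly \(d+1\) points and lies in the image \(\mathscr M_h\) of the parameter map from Lemma~\ref{lem:full-frame-marker}. Such a component must be a full recovery set. The reason is that a \((d+1)\)-point component is contained in a single nontrivial component of the full graph, and those components have exactly \(d+1\) points. Existence of complete components is also clear: the scaffold \(\mathscr P_x\) is relatively dense, so there are sites \(p_g\) with \(\|p_g\|\) arbitrarily large. Each placed set \(p_g+r_ghS_{b(c_g)}\) has diameter less than \(h/4\), so for \(p_g\) far from the bounded set \(A\) the whole set lies in \(E_A\) and is a complete component. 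In fact infinitely many such components exist.

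For measurability, I would consider the relation of pairs \((\omega,M)\) where \(M\) is a complete component of \(\omega|_{E_A}\). It is Borel, since it is defined through Lusin--Novikov enumerations, the distance conditions, and the Borel set \(\mathscr M_h\). Its sections are countable, and its projection contains every outside configuration arising from the process. Lusin--Novikov then gives a Borel selection of one complete component \(M(\omega)\). Lemma~\ref{lem:full-frame-marker} decodes \(M(\omega)\) Borel-measurably into \((a,r,s)\) with \(a=p_g\), \(r=r_g\) and \(s=b(c_g)\). Lusin--Souslin supplies a Borel inverse of \(b\) on \(b(C)\), which recovers \(c_g\). Equation~\eqref{eq:framed-recovery} then yields \(x=(-a,r).b^{-1}(s)\), and applying the Borel map \(\kappa_\rho\) recovers \(\delta_{P_x}\).

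The composed map is Borel. It is well-defined on the conull invariant set where the construction holds, regardless of which complete component was selected, because every complete component decodes to the same state \(x\) by injectivity.

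The main obstacle is the combinatorial claim that a \((d+1)\)-point component of the truncated graph is a genuine recovery set. This needs the gap estimate from Proposition~\ref{prop:global-assembly}: non-marker pairs are at distance at least \(c_{\rm rec}h\), while marker diameters are less than \(c_{\rm rec}h/5\). The remaining points are routine Borel bookkeeping in Lusin--Novikov form.
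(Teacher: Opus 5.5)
Your proposal is correct and follows essentially the same route as the paper. You pass to the interior set $E_A$ and use the edge threshold $c_{\rm rec}h/2$ from Proposition~\ref{prop:global-assembly}, so that $(d+1)$-point components lying in $\mathscr M_h$ are complete recovery sets; you then select one by Lusin--Novikov, decode it via Lemma~\ref{lem:full-frame-marker} and $b^{-1}$, and recover $x$ by \eqref{eq:framed-recovery} before applying $\kappa_\rho$. Your smaller margin $\theta=c_{\rm rec}h/2$ for $E_A$ (the paper uses $c_{\rm rec}h$) changes nothing, since, as you observe, a $(d+1)$-point component of any restriction is already a full marker.
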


\begin{proof}
Fix a bounded Borel set \(A\subset V\), let \(c_{\rm rec}\) be the
constant from Theorem~\ref{thm:local-filling}, and put
\[
    \theta:=c_{\rm rec}h,
    \qquad
    E_A:=\{u\in V:\operatorname{dist}(u,A)>\theta\}.
\]
From the outside configuration \(P_x\cap A^c\) one can recover
\(P_x\cap E_A\). Form on this set the graph whose edges join pairs at
distance less than \(\theta/2\).

In the full configuration, Proposition~\ref{prop:global-assembly}
shows that the nontrivial components of this graph are exactly the sets
\[
    p_g+r_ghS_{b(c_g)},
    \qquad g\in C_x,
\]
and every such set has exactly \(d+1\) vertices and diameter less than
\(\theta/5\). Restriction to \(E_A\) can delete vertices and edges but
cannot create an edge. Consequently, every component of cardinality
\(d+1\) in the restricted graph is one of these complete \(d+1\)-point sets.

Such a component always exists. Indeed, the set of positions
\(\{p_g:g\in C_x\}\) is relatively dense and therefore has points
arbitrarily far from the bounded set \(A\). Choose \(g\in C_x\) with
\[
    \operatorname{dist}(p_g,A)>\theta+h.
\]
By \eqref{eq:marker-size-choice}, the set \(p_g+r_ghS_{b(c_g)}\) lies in the
\(h/4\)-neighborhood of \(p_g\), so it is contained in
\(E_A\); in fact infinitely many such complete sets exist.

The relation consisting of pairs \((\omega,M)\), where \(\omega\) is an
outside configuration, \(M\) is a \((d+1)\)-point component of the graph just
defined on \(\omega|_{E_A}\), and \(\delta_M\in\mathscr M_h\), is Borel and has
countable sections. Its projection contains every
outside configuration arising from the process. Lusin--Novikov therefore
shows that the projection is Borel and provides a Borel choice of one such
component there; extend the choice arbitrarily off the projection.
Lemma~\ref{lem:full-frame-marker} recovers from the chosen component the
position \(p_g\), the frame \(r_g\), and the parameter \(b(c_g)\). By
Lusin--Souslin, \(b(C)\) is Borel and the inverse of the Borel injection
\(b\) on its image is Borel. Hence \(c_g\) is recovered by a Borel map, and
\eqref{eq:framed-recovery} gives
\[
    x=(-p_g,r_g).c_g.
\]
Applying the Borel map \(\kappa_\rho\) to the recovered state gives the
entire configuration \(\delta_{P_x}\). This is the required Borel
recovery map.
\end{proof}

\section{Consequences of the local moment identities}
\label{sec:spectral-decay-rigidity}

We retain the integer \(p\geq1\) and the standing higher-dimensional setup of
Sections~\ref{sec:orbit-geometry-integerization} and
\ref{sec:local-filling-assembly}. Fix \(\rho\geq\rho_0\), and let \(\eta=\eta_\rho\) and \(P_x\) be the point
process and configurations constructed in
Section~\ref{sec:local-filling-assembly}. Let \(X_0\subset X\) be the
\(G_K\)-invariant conull Borel set on which the construction is defined.

For \(c\in C\), set
\[
    \Delta_c
    :=
    \delta_{Q_\rho(c)}
    -
    \rho\lambda_V|_{\Pi_\rho(c)}.
\]
By Theorem~\ref{thm:local-filling}, \(\Delta_c\) is supported in a fixed ball
and
\[
    \int_V P\,d\Delta_c=0
    \qquad
    (\deg P<p).
\]
Placed in the orbit coordinates from Section~\ref{sec:orbit-geometry-integerization},
these measures sum to \(\delta_{P_x}-\rho\lambda_V\).

The order-one consequence of this cancellation gives the boundary discrepancy
needed for bounded displacement to a lattice. The full cancellation through
degree \(p-1\) gives an order-\(p\) distributional derivative identity, from
which we prove
\[
    \sigma_\eta(B_\varepsilon)=o(\varepsilon^{2p})
    \qquad(\varepsilon\downarrow0).
\]
We then deduce surface-order ball variance and finite-order linear rigidity.

Recall that \(C_x=\{g\in G_K:g.x\in C\}\). Random variables on
\((\mathcal N_s(V),\eta)\) will be identified with their
pullbacks under \(\kappa_\rho\) to \((X,\mu)\).


\subsection{Distributional derivative representation}
\label{subsec:packet-errors-primitives}

Put
\[
    R_{\rm pkt}:=D_{\rm Rips}+R_{\rm Vor}.
\]
Theorem~\ref{thm:local-filling} gives
\begin{equation}
\label{eq:canonical-packet-cancellation}
    \int_V P(u)\,d\Delta_c(u)=0
\end{equation}
for every polynomial \(P\) of degree less than \(p\), and
\begin{equation}
\label{eq:canonical-packet-error-bounds}
    \operatorname{supp}\Delta_c
    \subset\overline B_{R_{\rm pkt}},
    \qquad
    \|\Delta_c\|_{\rm TV}
    \leq2A_1\rho.
\end{equation}

For \(x\in X\) and \(g\in C_x\), write
\[
    p_g:=\operatorname{pos}_x(g),
    \qquad
    r_g:=\operatorname{fr}_x(g),
    \qquad
    c_g:=\operatorname{lab}_x(g),
\]
and let
\begin{equation}
\label{eq:physical-packet-error}
    \Delta_{x,g}
    :=
    (u\mapsto p_g+r_gu)_*\Delta_{c_g}.
\end{equation}
Since the physical packets partition \(V\),
\begin{equation}
\label{eq:global-error-packet-sum}
    \Delta_x
    :=
    \delta_{P_x}-\rho\lambda_V
    =
    \sum_{g\in C_x}\Delta_{x,g}
\end{equation}
as locally finite signed measures.

Taylor's formula gives the distributional identity used below. For a multi-index
\(\mathbf j=(j_1,\ldots,j_d)\), write
\[
    \mathbf j!:=j_1!\cdots j_d!.
\]

\begin{lemma}[Derivative representation for moment-canceling measures]
\label{lem:Taylor-measure-primitives}
Let \(p\geq1\), \(R>0\), and let \(\nu\) be a finite signed measure supported
in \(\overline B_R\). Suppose that
\[
    \int_V P\,d\nu=0
\]
for every polynomial \(P\) of degree less than \(p\). For
\(|\mathbf j|=p\), define a finite signed measure
\(\Theta_{\mathbf j}(\nu)\) by
\begin{equation}
\label{eq:Taylor-primitive-definition}
\begin{aligned}
    \int_V\psi(y)\,d\Theta_{\mathbf j}(\nu)(y)
    &:=
    (-1)^p\frac{p}{\mathbf j!}
    \int_V\int_0^1
        (1-t)^{p-1}u^{\mathbf j}\psi(tu)
        \,dt\,d\nu(u)
\end{aligned}
\end{equation}
for \(\psi\in C_c(V)\). Then
\begin{equation}
\label{eq:Taylor-primitive-identity}
    \nu
    =
    \sum_{|\mathbf j|=p}
        \partial^{\mathbf j}\Theta_{\mathbf j}(\nu)
\end{equation}
as distributions. Furthermore,
\begin{equation}
\label{eq:Taylor-primitive-bounds}
    \operatorname{supp}\Theta_{\mathbf j}(\nu)
    \subset\overline B_R,
    \qquad
    \|\Theta_{\mathbf j}(\nu)\|_{\rm TV}
    \leq
    \frac{R^p}{\mathbf j!}\|\nu\|_{\rm TV}.
\end{equation}
\end{lemma}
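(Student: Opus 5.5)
The plan is to test both sides of \eqref{eq:Taylor-primitive-identity} against an arbitrary \(\psi\in C_c^\infty(V)\) and to reduce the identity to Taylor's formula with integral remainder along rays from the origin. By definition of distributional derivatives,
\[
    \bigl\langle\partial^{\mathbf j}\Theta_{\mathbf j}(\nu),\psi\bigr\rangle
    =(-1)^{p}\int_V\partial^{\mathbf j}\psi\,d\Theta_{\mathbf j}(\nu),
\]
since \(|\mathbf j|=p\). Inserting \eqref{eq:Taylor-primitive-definition} with \(\partial^{\mathbf j}\psi\) in place of \(\psi\), the signs \((-1)^p\) cancel. Summing over \(|\mathbf j|=p\) and using Fubini (the integrand is bounded and \(\nu\) is finite with bounded support), we obtain
\[
    \int_V\Bigl[\,p\int_0^1(1-t)^{p-1}\sum_{|\mathbf j|=p}\frac{u^{\mathbf j}}{\mathbf j!}(\partial^{\mathbf j}\psi)(tu)\,dt\Bigr]\,d\nu(u).
\]
The bracket is exactly the integral remainder in Taylor's formula of order \(p\) for \(t\mapsto\psi(tu)\) on \([0,1]\). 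Indeed, \(\frac{d^p}{dt^p}\psi(tu)=\sum_{|\mathbf j|=p}\frac{p!}{\mathbf j!}u^{\mathbf j}(\partial^{\mathbf j}\psi)(tu)\) by the multinomial expansion, and the remainder is \(\frac1{(p-1)!}\int_0^1(1-t)^{p-1}g^{(p)}(t)\,dt\).

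Hence the bracket equals \(\psi(u)-T_{p-1}\psi(u)\), where \(T_{p-1}\psi(u)=\sum_{|\mathbf i|<p}\partial^{\mathbf i}\psi(0)u^{\mathbf i}/\mathbf i!\) is a polynomial in \(u\) of degree less than \(p\). Integrating against \(\nu\), the polynomial part vanishes by the moment hypothesis, leaving \(\int\psi\,d\nu\). This gives \eqref{eq:Taylor-primitive-identity}.

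For \eqref{eq:Taylor-primitive-bounds}, first note that \(\Theta_{\mathbf j}(\nu)\) is a well-defined finite signed measure. It is the pushforward of the finite signed measure \(\frac{p}{\mathbf j!}(1-t)^{p-1}u^{\mathbf j}\,dt\,d\nu(u)\) on \([0,1]\times V\) under \((t,u)\mapsto tu\), times \((-1)^p\), and the Riesz representation confirms the defining functional. Since \(\|tu\|\le\|u\|\le R\) on the support, the support lies in \(\overline B_R\). The total variation is at most
\[
    \frac{p}{\mathbf j!}\int_0^1(1-t)^{p-1}dt\cdot\sup_{\|u\|\le R}|u^{\mathbf j}|\cdot\|\nu\|_{\rm TV}\le\frac{R^p}{\mathbf j!}\|\nu\|_{\rm TV},
\]
using \(\int_0^1 p(1-t)^{p-1}dt=1\) and \(|u^{\mathbf j}|\le\|u\|^{|\mathbf j|}\), since each coordinate is bounded by the norm.

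The main point to get right is the multinomial bookkeeping, so that the coefficient \(p/\mathbf j!\) matches the Taylor remainder exactly. The remaining steps are routine.
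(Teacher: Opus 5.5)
Your proposal is correct and follows the paper's proof: you test against \(\psi\in C_c^\infty(V)\), use \(\langle\partial^{\mathbf j}\Theta,\psi\rangle=(-1)^p\int\partial^{\mathbf j}\psi\,d\Theta\), recognize the ray Taylor remainder with coefficient \(p/\mathbf j!\), kill the degree-\(<p\) part by the moment hypothesis, and get the support and total-variation bounds from \(\|tu\|\le\|u\|\le R\), \(\int_0^1p(1-t)^{p-1}dt=1\) and \(|u^{\mathbf j}|\le\|u\|^p\). Your pushforward description of \(\Theta_{\mathbf j}(\nu)\) under \((t,u)\mapsto tu\) is a useful explicit justification that the defining functional is a finite signed measure; the paper leaves that step implicit.
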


\begin{proof}
For \(\psi\in C_c^\infty(V)\), Taylor's formula along the segment
\(t\mapsto tu\) gives
\begin{equation}
\label{eq:Taylor-integral-remainder}
\begin{aligned}
    \psi(u)
    &=
    \sum_{|\boldsymbol\beta|<p}
        \frac{
            \partial^{\boldsymbol\beta}\psi(0)
        }{
            \boldsymbol\beta!
        }
        u^{\boldsymbol\beta}\\
    &\quad+
    p\sum_{|\mathbf j|=p}
        \frac{u^{\mathbf j}}{\mathbf j!}
        \int_0^1
            (1-t)^{p-1}
            \partial^{\mathbf j}\psi(tu)
        \,dt .
\end{aligned}
\end{equation}
After integration against \(\nu\), the first sum vanishes. By the
definition of distributional derivatives,
\[
    \left\langle
        \partial^{\mathbf j}\Theta_{\mathbf j}(\nu),
        \psi
    \right\rangle
    =
    (-1)^p
    \int_V
        \partial^{\mathbf j}\psi
        \,d\Theta_{\mathbf j}(\nu),
\]
and \eqref{eq:Taylor-primitive-identity} follows from
\eqref{eq:Taylor-primitive-definition} and
\eqref{eq:Taylor-integral-remainder}.

Since \(tu\in\overline B_R\) whenever
\(u\in\overline B_R\) and \(0\leq t\leq1\), the support assertion
follows. If \(\|\psi\|_\infty\leq1\), then
\[
\begin{aligned}
    \left|
        \int_V\psi\,d\Theta_{\mathbf j}(\nu)
    \right|
    &\leq
    \frac{p}{\mathbf j!}
    \int_V
        |u^{\mathbf j}|
        \int_0^1(1-t)^{p-1}\,dt
        \,d|\nu|(u)\\
    &\leq
    \frac{R^p}{\mathbf j!}\|\nu\|_{\rm TV},
\end{aligned}
\]
which proves \eqref{eq:Taylor-primitive-bounds}.
\end{proof}

For \(x\in X\) and \(g\in C_x\), translate \(\Delta_{x,g}\) by \(-p_g\):
\begin{equation}
\label{eq:centered-physical-packet-error}
    \widetilde\Delta_{x,g}
    :=
    (u\mapsto u-p_g)_*\Delta_{x,g}
    =
    (r_g)_*\Delta_{c_g}.
\end{equation}
The measure \(\widetilde\Delta_{x,g}\) is supported in
\(\overline B_{R_{\rm pkt}}\) and annihilates every polynomial of degree less
than \(p\). For \(|\mathbf j|=p\), define
\begin{equation}
\label{eq:packet-primitive}
    \Theta_{\mathbf j,x,g}
    :=
    (u\mapsto p_g+u)_*
    \Theta_{\mathbf j}
        (\widetilde\Delta_{x,g}).
\end{equation}
Lemma~\ref{lem:Taylor-measure-primitives} gives
\begin{equation}
\label{eq:physical-packet-primitive-identity}
    \Delta_{x,g}
    =
    \sum_{|\mathbf j|=p}
        \partial^{\mathbf j}\Theta_{\mathbf j,x,g}.
\end{equation}
It also gives a constant \(C_\Theta<\infty\), independent of \(x\), \(g\), and
\(\rho\geq\rho_0\), such that
\begin{equation}
\label{eq:physical-packet-primitive-bounds}
    \operatorname{supp}\Theta_{\mathbf j,x,g}
    \subset
    p_g+\overline B_{R_{\rm pkt}},
    \qquad
    \|\Theta_{\mathbf j,x,g}\|_{\rm TV}
    \leq C_{\Theta}\rho.
\end{equation}

For \(x\in X\) and \(|\mathbf j|=p\), define
\begin{equation}
\label{eq:global-primitives}
    \Theta_{\mathbf j,x}
    :=
    \sum_{g\in C_x}\Theta_{\mathbf j,x,g}.
\end{equation}
The sum is locally finite. Indeed, a summand can meet a bounded set
\(D\subset V\) only if
\[
    p_g\in D+\overline B_{R_{\rm pkt}},
\]
and the positions \(p_g\) form an \(r_{\rm sep}\)-separated set.
Consequently, for every \(|\mathbf j|=p\), there is a constant
\(C'_\Theta<\infty\), independent of \(\rho\geq\rho_0\), such that
\begin{equation}
\label{eq:global-primitive-local-bound}
    \sup_{x\in X}\sup_{z\in V}
        |\Theta_{\mathbf j,x}|(B_1(z))
    \leq C'_\Theta\rho.
\end{equation}

Summing
\eqref{eq:physical-packet-primitive-identity} and using
\eqref{eq:global-error-packet-sum} gives
\begin{equation}
\label{eq:global-primitive-identity}
    \Delta_x
    =
    \sum_{|\mathbf j|=p}
        \partial^{\mathbf j}\Theta_{\mathbf j,x}
\end{equation}
as distributions.

For a signed Radon measure \(\nu\), write
\(\tau_v\nu:=(u\mapsto u-v)_*\nu\). If \(x\in X\) and \(g\in C_x\), then
\(g(v,e)^{-1}\in C_{(v,e).x}\), with
\[
    p_{g(v,e)^{-1}}=p_g-v,
    \qquad
    r_{g(v,e)^{-1}}=r_g,
    \qquad
    c_{g(v,e)^{-1}}=c_g.
\]
It follows from the definitions that
\begin{equation}
\label{eq:primitive-translation-covariance}
    \Theta_{\mathbf j,(v,e).x}
    =
    \tau_v\Theta_{\mathbf j,x},
    \qquad v\in V,\quad x\in X,\quad |\mathbf j|=p.
\end{equation}
The maps \(x\mapsto\Theta_{\mathbf j,x}\) are Borel: the measures
\(\Delta_{x,g}\) depend Borel measurably on \((x,g)\),
\eqref{eq:Taylor-primitive-definition} preserves Borel dependence, and the
sums in \eqref{eq:global-primitives} are locally finite.


\subsection{Boundary discrepancy and bounded displacement}
\label{subsec:boundary-discrepancy-bd}

Only the identity \(\Delta_c(V)=0\) is needed here. Using the orthonormal
basis fixed in
Subsection~\ref{subsec:Euclidean-Fourier-conventions}, identify
\(V\cong\mathbb R^d\) and use the integer grid in these coordinates.

\begin{proposition}[Uniform boundary discrepancy]
\label{prop:uniform-boundary-discrepancy}
There is \(C_\rho<\infty\) such that, for every \(x\in X_0\) and every
finite union \(U\) of unit cubes,
\begin{equation}
\label{eq:Laczkovich-discrepancy}
    \left|
        \#(P_x\cap U)-\rho\lambda_V(U)
    \right|
    \leq
    C_\rho\,\mathcal H^{d-1}(\partial U).
\end{equation}
\end{proposition}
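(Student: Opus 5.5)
The plan is to argue packet by packet, using only the zero-mass identity \(\Delta_c(V)=0\). Write \(\Delta_x=\sum_{g\in C_x}\Delta_{x,g}\) as in \eqref{eq:global-error-packet-sum}. Each \(\Delta_{x,g}\) has total mass zero, is supported in \(p_g+\overline B_{R_{\rm pkt}}\), and has total variation at most \(2A_1\rho\) by \eqref{eq:canonical-packet-error-bounds}. For a finite union \(U\) of unit cubes, the contribution \(\Delta_{x,g}(U)\) vanishes whenever the ball \(p_g+\overline B_{R_{\rm pkt}}\) lies entirely in \(U\) or entirely in its complement. Only the packets whose supports meet \(\partial U\) can contribute. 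Each such packet contributes at most \(2A_1\rho\) in absolute value, so
\[
    \bigl|\#(P_x\cap U)-\rho\lambda_V(U)\bigr|
    \leq 2A_1\rho\,\#\{g\in C_x:\operatorname{dist}(p_g,\partial U)\leq R_{\rm pkt}\}.
\]
Evaluating \(\Delta_x\) on the bounded Borel set \(U\) is legitimate here: the sum is locally finite and the boundary of \(U\) carries no points problematically, since we evaluate the measures \(\delta_{P_x}\) and \(\lambda_V\) directly on \(U\).

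The second step bounds the number of such positions. The scaffold \(\{p_g\}\) is \(r_{\rm sep}\)-separated (Proposition~\ref{prop:framed-Delone-scaffold}). By the packing bound \eqref{eq:Delone-packing-bound}, each unit cube contains at most a constant \(C_d\) (depending only on \(d\) and \(r_{\rm sep}\)) of positions. Any position within \(R_{\rm pkt}\) of \(\partial U\) lies in a grid unit cube within distance \(R_{\rm pkt}+\sqrt d\) of some unit boundary face of \(U\). The boundary \(\partial U\) is a union of unit \((d-1)\)-dimensional grid faces, and their number equals \(\mathcal H^{d-1}(\partial U)\). Around each face, at most \((2R_{\rm pkt}+2\sqrt d+3)^d\) grid cubes lie within that distance. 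Combining these,
\[
    \#\{g:\operatorname{dist}(p_g,\partial U)\leq R_{\rm pkt}\}
    \leq C_d(2R_{\rm pkt}+2\sqrt d+3)^d\,\mathcal H^{d-1}(\partial U).
\]
This gives \eqref{eq:Laczkovich-discrepancy} with \(C_\rho\) proportional to \(\rho\).

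The only point requiring care is the counting of boundary faces. I will check that for a finite union of closed or half-open grid cubes, \(\partial U\) is exactly a union of faces shared by a cube in \(U\) and a cube not in \(U\), possibly together with lower-dimensional pieces. The \(\mathcal H^{d-1}\)-measure then counts those faces, since lower-dimensional pieces have zero \(\mathcal H^{d-1}\)-measure. I will also check that every point within \(R_{\rm pkt}\) of \(\partial U\) is within that distance of one of those faces. I do not expect a genuine obstacle. The alternative route through smoothed indicators and the first-order divergence representation \(\Delta_x=\sum_i\partial_i\Xi_{i,x}\), sketched in the introduction, is unnecessary for this statement. The direct packet count suffices, and it holds for every \(x\) in the conull set \(X_0\) where the construction is defined.
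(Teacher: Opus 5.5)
Your argument is correct, but it is not the route the paper takes.

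\textbf{The paper's route.} The paper applies Lemma~\ref{lem:Taylor-measure-primitives} at order one to get a divergence representation $\delta_{P_x}-\rho\lambda_V=\sum_i\partial_i\Xi_{i,x}$, with uniformly bounded variation on unit balls. It tests this against the mollified indicator $f_U=\mathbf 1_U*\varphi$, whose gradient lives in the $1/10$-neighbourhood of $\partial U$. The remaining term $(\delta_{P_x}-\rho\lambda_V)(\mathbf 1_U-f_U)$ is then handled separately, using the separation of $P_x$.

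\textbf{Your route.} You use the zero total mass and bounded support of each packet error $\Delta_{x,g}$ directly. By connectedness of the ball $p_g+\overline B_{R_{\rm pkt}}$, only packets with $\operatorname{dist}(p_g,\partial U)\le R_{\rm pkt}$ can contribute. You then count those packets via the $r_{\rm sep}$-separation of the scaffold. This is exactly the mechanism of Proposition~\ref{prop:higher-dimensional-exact-intensity}, transplanted from balls to unions of grid cubes.

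Your face count is sound. Suppose $y\in\partial U$ lies in the relative interior of a $k$-dimensional grid face. The grid cubes containing $y$, with adjacency across $(d-1)$-faces, form the connected graph $\{0,1\}^{d-k}$. So some $(d-1)$-face through $y$ separates a cube of $U$ from a cube outside $U$. Hence $\partial U$ is exactly the union of the closed separating faces, and $\mathcal H^{d-1}(\partial U)$ is their number.

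\textbf{Comparison.} Your route is more elementary: it needs no distributions, no mollifier and no two-term split, and it gives an explicit constant linear in $\rho$. The paper's route buys structure rather than a sharper estimate. It yields $|\Delta_x(f)|\le\|\nabla f\|_\infty\sum_i|\Xi_{i,x}|(\operatorname{supp}\nabla f)$ for arbitrary smooth test functions $f$. It is also the order-one instance of the derivative identity \eqref{eq:global-primitive-identity} that drives the Bartlett estimates, so discrepancy and spectral decay come out of one framework. Both proofs ultimately rest on covering a bounded-width neighbourhood of $\partial U$ by $O(\mathcal H^{d-1}(\partial U))$ unit-scale pieces.
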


\begin{proof}
Constants in this proof may depend on the fixed intensity \(\rho\). Applying
Lemma~\ref{lem:Taylor-measure-primitives} with order \(1\) to the measures
\(\widetilde\Delta_{x,g}\) and summing gives signed Radon measures
\(\Xi_{1,x},\ldots,\Xi_{d,x}\) such that
\begin{equation}
\label{eq:first-order-balance}
    \delta_{P_x}-\rho\lambda_V
    =
    \sum_{i=1}^d\partial_i\Xi_{i,x}
\end{equation}
as distributions and
\begin{equation}
\label{eq:first-order-balance-local-bound}
    \sup_{x\in X}\sup_{z\in V}|\Xi_{i,x}|(B_1(z))<\infty
    \qquad(1\leq i\leq d).
\end{equation}

Fix a nonnegative mollifier \(\varphi\in C_c^\infty(B_{1/10})\) of
integral one and put \(f_U:=\mathbf 1_U*\varphi\). Then
\(f_U-\mathbf 1_U\) and \(\nabla f_U\) are supported in
\[
    \mathcal N_{1/10}(\partial U)
    :=\{y:\operatorname{dist}(y,\partial U)<1/10\},
\]
and \(\|\nabla f_U\|_\infty\) is bounded independently of \(U\).
The distributional identity \eqref{eq:first-order-balance} and
\eqref{eq:first-order-balance-local-bound} therefore give
\[
    |(\delta_{P_x}-\rho\lambda_V)(f_U)|
    \leq C\,\mathcal H^{d-1}(\partial U).
\]
Indeed, up to lower-dimensional intersections, \(\partial U\) is the
union of its exposed unit faces in the integer grid. Each such face has
\(\mathcal H^{d-1}\)-measure one, and its \(1/10\)-neighborhood can be
covered by a number of unit balls depending only on \(d\). The same covering,
together with the uniform separation of \(P_x\), gives
\[
    |(\delta_{P_x}-\rho\lambda_V)(\mathbf 1_U-f_U)|
    \leq C'\,\mathcal H^{d-1}(\partial U).
\]
Combining the two estimates proves
\eqref{eq:Laczkovich-discrepancy}.
\end{proof}

Put \(\Lambda_\rho:=\rho^{-1/d}\mathbb Z^d\subset V\).

\begin{proposition}[Bounded displacement to a lattice]
\label{prop:bounded-displacement-lattice}
For every \(x\in X_0\), the Delone set \(P_x\) is bounded-displacement
equivalent
to \(\Lambda_\rho\). In particular, \(P_x\) is bi-Lipschitz equivalent
to \(\Lambda_\rho\).
\end{proposition}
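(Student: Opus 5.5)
The plan is to combine Proposition~\ref{prop:uniform-boundary-discrepancy} with Laczkovich's criterion \cite{Lac92}, in the form recorded in \cite[Theorem~1.4]{Sol14}. That criterion says the following: a Delone (indeed, any) set \(P\subset\mathbb R^d\) is bounded-displacement equivalent to \(\alpha\mathbb Z^d\) if and only if there is a constant \(C\) such that
\[
    \bigl|\#(P\cap U)-\alpha^{-d}\lambda(U)\bigr|\leq C\,\mathcal H^{d-1}(\partial U)
\]
for every finite union \(U\) of unit cubes of the integer grid. We apply this with \(\alpha=\rho^{-1/d}\), so that \(\alpha^{-d}=\rho\), using the fixed orthonormal identification \(V\cong\mathbb R^d\).

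The first step is to check that the hypotheses match exactly. The estimate \eqref{eq:Laczkovich-discrepancy} is stated for unions of unit cubes of the integer grid in these coordinates. Its density is \(\rho\), which is precisely the covolume normalization of \(\Lambda_\rho=\rho^{-1/d}\mathbb Z^d\). The constant \(C_\rho\) is uniform in \(x\in X_0\), although uniformity is not even needed, since the criterion is applied separately to each \(x\).

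One point deserves care: whether Laczkovich's criterion is formulated relative to the grid \(\mathbb Z^d\) or relative to a grid adapted to the lattice scale. If the cited form requires unions of cubes of side \(\alpha\), we would rescale by \(\rho^{1/d}\). The rescaled set \(\rho^{1/d}P_x\) has density one, and unions of unit cubes for it correspond to unions of cubes of side \(\rho^{-1/d}\) for \(P_x\). In that case we would rerun the proof of Proposition~\ref{prop:uniform-boundary-discrepancy} with a mollifier at scale \(\rho^{-1/d}\). The same covering argument goes through, using \eqref{eq:first-order-balance-local-bound} and the uniform separation of \(P_x\), and gives a discrepancy bound proportional to the boundary measure, with a \(\rho\)-dependent constant.

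Alternatively, we would note that the class of sets with boundary-proportional discrepancy is insensitive to the choice of cube scale. A union of cubes of side \(s\) differs from a union of unit cubes only within an \(O(1)\)-neighborhood of its boundary. The discrepancy on that neighborhood is controlled by the Delone bounds \eqref{eq:global-Delone-bounds} times \(\mathcal H^{d-1}(\partial U)\). We expect this normalization bookkeeping to be the only real obstacle; it is routine.

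Once bounded-displacement equivalence is established, the bi-Lipschitz statement follows by a standard argument. Let \(\phi:P_x\to\Lambda_\rho\) be a bijection with \(\sup\|\phi(u)-u\|\leq M\). Both sets are uniformly separated, so for distinct \(u,v\) we have \(\|\phi(u)-\phi(v)\|\leq\|u-v\|+2M\leq(1+2M/\operatorname{sep}(P_x))\|u-v\|\). The symmetric bound holds for \(\phi^{-1}\), using the separation \(\rho^{-1/d}\) of \(\Lambda_\rho\). Hence \(\phi\) is bi-Lipschitz.
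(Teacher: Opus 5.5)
Your proposal is correct and follows the paper's proof. The paper feeds Proposition~\ref{prop:uniform-boundary-discrepancy} directly into Laczkovich's criterion in Solomon's form with density \(\rho\), and upgrades to bi-Lipschitz using the same separation bounds \(1+2M/r\) that you use. Your scale-normalization detour is unnecessary: the cited form tests finite unions of unit cubes of \(\mathbb Z^d\) for an arbitrary density \(\beta\) and gives bounded displacement to \(\beta^{-1/d}\mathbb Z^d\) directly.
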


\begin{proof}
Proposition~\ref{prop:uniform-boundary-discrepancy} is precisely
condition~(ii) in Laczkovich's bounded-displacement criterion, in the form
recorded in \cite[Theorem~1.4]{Sol14} from \cite{Lac92}. The criterion tests
finite unions of unit cubes in the integer grid and, with \(\beta=\rho\),
yields bounded-displacement equivalence to \(\rho^{-1/d}\mathbb Z^d\). Hence
there is a bijection
\[
    \phi:P_x\longrightarrow\Lambda_\rho
\]
and a constant \(D<\infty\) such that
\(\|\phi(u)-u\|\leq D\) for all \(u\in P_x\).

If \(r_P\) and \(r_\Lambda\) are separation constants for \(P_x\) and
\(\Lambda_\rho\), then the bound \(\|\phi(u)-u\|\leq D\) gives Lipschitz
constants at most \(1+2D/r_P\) for \(\phi\) and
\(1+2D/r_\Lambda\) for \(\phi^{-1}\). Thus \(\phi\) is bi-Lipschitz. No
measurable or translation-equivariant choice of \(\phi\) is asserted.
\end{proof}



\subsection{Bartlett spectral decay}
\label{subsec:Bartlett-spectral-decay}

Let
\[
    \mathcal J_p:=\{\mathbf j\in\mathbb N_0^d:|\mathbf j|=p\},
    \qquad
    m_p:=\#\mathcal J_p.
\]
A translation-covariant Borel family of signed Radon measures with a uniform
local variation bound has a covariance spectral measure as follows.

\begin{lemma}[Scalar spectrum of a covariant signed random measure]
\label{lem:signed-random-measure-spectrum}
Let \(x\mapsto\Psi_x\) be a Borel family of signed Radon measures on
\(V\) satisfying
\[
    \Psi_{(v,e).x}=\tau_v\Psi_x
    \qquad(v\in V,\ x\in X),
\]
and suppose that
\[
    \sup_{x\in X}\sup_{z\in V}|\Psi_x|(B_1(z))<\infty.
\]
Then there is a unique positive tempered Radon measure
\(\Sigma_\Psi\) such that
\begin{equation}
\label{eq:signed-measure-spectral-covariance}
    \int_X
        \Psi_x(f)^\circ
        \overline{\Psi_x(g)^\circ}
        \,d\mu(x)
    =
    \int_V
        \widehat f(\xi)\overline{\widehat g(\xi)}
        \,d\Sigma_\Psi(\xi)
\end{equation}
for all \(f,g\in\mathcal S(V)\). If the restricted \(V\)-action is
ergodic, then
\begin{equation}
\label{eq:signed-spectrum-no-zero-atom}
    \Sigma_\Psi(\{0\})=0.
\end{equation}
\end{lemma}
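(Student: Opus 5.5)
The plan is to build the spectral measure by the standard Bochner route for stationary random fields. First I will check that each linear statistic is square-integrable. For \(f\in\mathcal S(V)\), the uniform local bound \(\sup_{x,z}|\Psi_x|(B_1(z))\le M\) and the rapid decay of \(f\) give \(|\Psi_x(f)|\le M'\sum_{n\in\mathbb Z^d}\sup_{B_1(n)}|f|<\infty\) uniformly in \(x\). Hence \(\Psi_x(f)\in L^\infty(\mu)\subset L^2(\mu)\). Measurability of \(x\mapsto\Psi_x(f)\) follows from the Borel dependence of \(\Psi_x\) by monotone class arguments.

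Next I will define the covariance form
\[
B(f,g):=\int_X\Psi_x(f)^\circ\overline{\Psi_x(g)^\circ}\,d\mu.
\]
It is sesquilinear and positive semidefinite, and it is continuous on \(\mathcal S(V)\) by the bound above. The covariance \(\Psi_{(v,e).x}=\tau_v\Psi_x\), together with invariance of \(\mu\), shows that \(B\) is translation invariant: \(B(f(\cdot+v),g(\cdot+v))=B(f,g)\).

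To pass to a measure, set \(T(h):=B(h,\delta)\) formally, or more precisely define the distribution \(T\) by \(T(f*\tilde g)=B(f,g)\), where \(\tilde g(u)=\overline{g(-u)}\). Invariance makes \(T\) well defined as a tempered distribution, by the Schwartz kernel theorem for translation-invariant continuous sesquilinear forms. Positivity gives \(T(f*\tilde f)\ge0\). The Bochner--Schwartz theorem then yields a positive tempered measure \(\Sigma_\Psi\) with \(\widehat T=\Sigma_\Psi\), and this is \eqref{eq:signed-measure-spectral-covariance}. Uniqueness holds because the functions \(\widehat f\overline{\widehat g}\) determine a tempered measure. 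Alternatively, one can avoid kernel theorems by applying the argument of \cite[Proposition~8.2.I]{DVJ03}, which is written for random measures but uses only second-order stationarity and local boundedness; signed measures are handled the same way after splitting into real parts. The translation-boundedness estimates carry over verbatim.

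For the zero atom I will use the mean ergodic theorem. Fix \(f\in\mathcal S(V)\) with \(\widehat f(0)\ne0\), and average over translations:
\[
A_R(x):=\frac1{\lambda_V(B_R)}\int_{B_R}\Psi_x(f(\cdot-v))^\circ\,dv=\frac1{\lambda_V(B_R)}\int_{B_R}F((v,e).x)\,dv,
\qquad F(x):=\Psi_x(f)^\circ.
\]
By von Neumann's theorem and \(V\)-ergodicity, \(A_R\to\int F\,d\mu=0\) in \(L^2\). On the spectral side, the spectral isometry gives
\[
\|A_R\|_2^2=\int|\widehat f(\xi)|^2\,|\widehat{\mathbf 1}_{B_R}(\xi)/\lambda_V(B_R)|^2\,d\Sigma_\Psi(\xi).
\]
The multiplier tends to \(\mathbf 1_{\{0\}}\) pointwise and is bounded by \(1\). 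Since \(\Sigma_\Psi\) integrates \(|\widehat f|^2\), dominated convergence gives the limit \(|\widehat f(0)|^2\Sigma_\Psi(\{0\})\). Hence \(\Sigma_\Psi(\{0\})=0\).

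The step I expect to be the main obstacle is justifying the averaging identity rigorously. This requires linearity of \(v\mapsto\Psi_x(f(\cdot-v))\) under the integral (Fubini, using the local bound), together with the spectral identity for the averaged test function \(f*\lambda_{B_R}\)-type convolutions. These are not Schwartz, so an approximation argument is needed, as in Lemma~\ref{lem:Bartlett-indicators}.
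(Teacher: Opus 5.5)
Your proposal is correct and follows the paper's route. Existence is the same Schwartz-kernel/Bochner--Schwartz argument, and your mean-ergodic averaging for the zero atom proves inline the fact the paper cites: the Koopman spectral measure $|\widehat f|^2\Sigma_\Psi$ of $\Psi(f)^\circ$ has atom at $0$ equal to the squared norm of the projection of $\Psi(f)^\circ$ onto the invariant vectors. The obstacle you anticipate does not arise, since $f*\mathbf 1_{B_R}$ is itself Schwartz, with Fourier transform $\widehat f\,\widehat{\mathbf 1}_{B_R}$, and moving $\Psi_x$ inside the $v$-integral is Fubini under the uniform local bound, so the covariance identity applies to the averaged test function directly.
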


\begin{proof}
Let \(M:=\sup_{x\in X}\sup_{z\in V}|\Psi_x|(B_1(z))\). A lattice cover of \(V\)
gives, for every \(N>d\), a constant \(C_N<\infty\) such that
\[
    \sup_{x\in X}|\Psi_x(f)|
    \leq
    C_N M\sup_{u\in V}(1+\|u\|)^N|f(u)|,
    \qquad f\in\mathcal S(V).
\]
Write \(\Psi(f)(x):=\Psi_x(f)\). Then
\(f\mapsto\Psi(f)^\circ\) is continuous from
\(\mathcal S(V)\) to the mean-zero subspace of \(L^2(X,\mu)\). For \(f,g\in\mathcal S(V)\), put
\[
    B(f,g)
    :=
    \int_X\Psi_x(f)^\circ
        \overline{\Psi_x(g)^\circ}\,d\mu(x).
\]
The form \(B\) is continuous, positive semidefinite, and sesquilinear. For
\(v,u\in V\) and \(f\in\mathcal S(V)\), write
\[
    T_vf(u):=f(u-v).
\]
Translation covariance gives \(B(T_vf,T_vg)=B(f,g)\). By the Schwartz
kernel theorem, the covariance form is represented by a tempered distribution
on \(V\times V\). Diagonal translation invariance
makes this distribution invariant under
\((u,w)\mapsto(u+v,w+v)\). Passing to the coordinates
\((r,w)=(u-w,w)\), the kernel is invariant under every translation in the
second variable. Hence each of its distributional derivatives in the
\(w\)-variables vanishes. The standard distributional fact that a distribution on \(V\) annihilated
by all first derivatives is constant, applied in the \(w\)-variable, shows
that the kernel is of the form \(C_\Psi(r)\otimes 1(w)\) for a tempered
distribution \(C_\Psi\) on \(V\). Substituting this form
into the kernel pairing gives the convolution factorization
\[
    B(f,g)=C_\Psi(f*\widetilde g),
    \qquad
    \widetilde g(u):=\overline{g(-u)}.
\]
The positivity of \(B\) says that \(C_\Psi\) is positive definite.
The Bochner--Schwartz theorem therefore gives a unique positive
tempered Radon measure \(\Sigma_\Psi\) with
\[
    C_\Psi(h)=\int_V\widehat h(\xi)\,d\Sigma_\Psi(\xi),
    \qquad h\in\mathcal S(V).
\]
Since
\(\widehat{f*\widetilde g}=\widehat f\,\overline{\widehat g}\),
this is exactly \eqref{eq:signed-measure-spectral-covariance}.

If the \(V\)-action is ergodic, choose \(f\in\mathcal S(V)\) with
\(\widehat f(0)\neq0\). With the Koopman convention of
Subsection~\ref{subsec:Euclidean-actions}, translation covariance and
\eqref{eq:signed-measure-spectral-covariance} give, for \(v\in V\),
\[
\begin{aligned}
    \bigl\langle
        \pi_X((v,e))\Psi(f)^\circ,
        \Psi(f)^\circ
    \bigr\rangle
    &=B(T_{-v}f,f)\\
    &=\int_V e^{2\pi i\langle v,\xi\rangle}
        |\widehat f(\xi)|^2\,d\Sigma_\Psi(\xi).
\end{aligned}
\]
Hence the Koopman spectral measure of \(\Psi(f)^\circ\) is
\(|\widehat f|^2\Sigma_\Psi\). Its atom at zero is the squared norm of
the projection onto the invariant subspace. Ergodicity makes that
subspace the constants, while \(\Psi(f)^\circ\) is centered; hence the
atom vanishes. Since \(\widehat f(0)\neq0\),
\eqref{eq:signed-spectrum-no-zero-atom} follows.
\end{proof}

For \(\mathbf j\in\mathcal J_p\), apply
Lemma~\ref{lem:signed-random-measure-spectrum} to
\(x\mapsto\Theta_{\mathbf j,x}\), and denote the resulting measure by
\(\Sigma_{\mathbf j}\). For \(f\in\mathcal S(V)\), write
\(\Theta_{\mathbf j}(f)(x):=\Theta_{\mathbf j,x}(f)\). The mean measure
\(\overline\Theta_{\mathbf j}\), defined by
\[
    \overline\Theta_{\mathbf j}(A)
    :=
    \int_X\Theta_{\mathbf j,x}(A)\,d\mu(x),
\]
is translation invariant and locally finite, and hence is a constant
multiple of \(\lambda_V\). Since \(|\mathbf j|=p\geq1\),
\(\int_V\partial^{\mathbf j}f\,d\lambda_V=0\) for
\(f\in\mathcal S(V)\). Therefore
\begin{equation}
\label{eq:derivative-statistics-centered}
    \Theta_{\mathbf j}(\partial^{\mathbf j}f)^\circ
    =
    \Theta_{\mathbf j}(\partial^{\mathbf j}f)
    \qquad(f\in\mathcal S(V)).
\end{equation}

\begin{proposition}[Bartlett decay]
\label{prop:Bartlett-spectral-decay}
The Bartlett spectrum of \(\eta\) satisfies
\begin{equation}
\label{eq:Bartlett-big-O}
    \sigma_\eta(B_\varepsilon)
    =
    O(\varepsilon^{2p})
    \qquad(\varepsilon\downarrow0).
\end{equation}
Under the standing assumption that the restricted \(V\)-action on
\((X,\mu)\) is ergodic,
\begin{equation}
\label{eq:Bartlett-little-o}
    \sigma_\eta(B_\varepsilon)
    =
    o(\varepsilon^{2p})
    \qquad(\varepsilon\downarrow0).
\end{equation}
\end{proposition}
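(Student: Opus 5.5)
The plan is to compare the Bartlett spectrum with the spectra \(\Sigma_{\mathbf j}\) of the primitives \(\Theta_{\mathbf j,x}\) through the derivative identity \eqref{eq:global-primitive-identity}. For \(f\in\mathcal S(V)\), pairing \eqref{eq:global-primitive-identity} with \(f\) gives
\[
    (\bS f)^\circ(\kappa_\rho(x))
    =
    \Delta_x(f)
    =
    (-1)^p\sum_{\mathbf j\in\mathcal J_p}\Theta_{\mathbf j,x}(\partial^{\mathbf j}f).
\]
The first equality uses that the intensity is \(\rho\) (Proposition~\ref{prop:higher-dimensional-exact-intensity}). Each summand on the right is already centered by \eqref{eq:derivative-statistics-centered}. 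Taking the \(L^2(\mu)\)-norm and using Cauchy--Schwarz over the \(m_p\) terms, I get
\[
    \int_V|\widehat f|^2\,d\sigma_\eta
    \leq
    m_p\sum_{\mathbf j}\int_V|\widehat{\partial^{\mathbf j}f}|^2\,d\Sigma_{\mathbf j}
    =
    m_p(2\pi)^{2p}\sum_{\mathbf j}\int_V|\xi^{\mathbf j}|^2|\widehat f(\xi)|^2\,d\Sigma_{\mathbf j}(\xi).
\]
This uses Lemma~\ref{lem:signed-random-measure-spectrum} for the family \(\Theta_{\mathbf j,x}\). Its hypotheses are the covariance \eqref{eq:primitive-translation-covariance}, the local bound \eqref{eq:global-primitive-local-bound}, and Borelness, all recorded above. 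The final equality is \(\widehat{\partial^{\mathbf j}f}(\xi)=(2\pi i)^{p}\xi^{\mathbf j}\widehat f(\xi)\).

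Next I localize to small balls. I fix a Schwartz \(f\) with \(\widehat f\) real, \(\widehat f\geq\mathbf 1_{B_1}\) and \(\widehat f\) supported in \(B_2\); this is possible by taking \(\widehat f\in C_c^\infty\). I then apply the inequality to \(f_\varepsilon\) with \(\widehat{f_\varepsilon}(\xi)=\widehat f(\xi/\varepsilon)\). The left side is at least \(\sigma_\eta(B_\varepsilon)\). On the right, \(|\xi^{\mathbf j}|^2\leq(2\varepsilon)^{2p}\) on the support, and \(|\widehat f|\leq C\). This gives
\[
    \sigma_\eta(B_\varepsilon)
    \leq
    C\varepsilon^{2p}\sum_{\mathbf j}\Sigma_{\mathbf j}(B_{2\varepsilon}).
\]
Each \(\Sigma_{\mathbf j}\) is a positive Radon measure, so \(\Sigma_{\mathbf j}(B_{2\varepsilon})\) stays bounded as \(\varepsilon\downarrow0\). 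This yields \eqref{eq:Bartlett-big-O}.

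For \eqref{eq:Bartlett-little-o}, I use ergodicity of the restricted \(V\)-action. By \eqref{eq:signed-spectrum-no-zero-atom}, \(\Sigma_{\mathbf j}(\{0\})=0\). Continuity from above of the locally finite measure \(\Sigma_{\mathbf j}\) then gives \(\Sigma_{\mathbf j}(B_{2\varepsilon})\to\Sigma_{\mathbf j}(\{0\})=0\), which upgrades the bound to \(o(\varepsilon^{2p})\).

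The step I expect to need the most care is the first one: the pairing identity for Schwartz test functions, not only compactly supported ones. The issue is that the distributional identity \eqref{eq:global-primitive-identity} was established with \(C_c^\infty\) test functions, while \(\widehat f\) compactly supported forces \(f\) to be non-compactly supported. I would handle this by cutoff approximation \(f\chi(\cdot/R)\to f\) in \(\mathcal S(V)\). Both sides are continuous from \(\mathcal S(V)\) into \(L^2(\mu)\): for \(\Theta_{\mathbf j}\) by the polynomial-weight bound in the proof of Lemma~\ref{lem:signed-random-measure-spectrum}, and for \(\bS\) by the uniform separation of \(P_x\). Passing to the limit then extends the identity. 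The remaining bookkeeping, the Fourier-convention constants and the identification of \(\operatorname{Var}_\eta\) with the \(L^2(\mu)\)-norm via the pullback under \(\kappa_\rho\), is routine.
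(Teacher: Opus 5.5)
Your proposal is correct and takes essentially the same route as the paper. You pair the order-$p$ derivative identity with $f_\varepsilon$ whose Fourier transform is a bump at scale $\varepsilon$, bound $\sigma_\eta(B_\varepsilon)$ by $m_p(4\pi)^{2p}\varepsilon^{2p}\sum_{\mathbf j}\Sigma_{\mathbf j}(B_{2\varepsilon})$ using Cauchy--Schwarz and Lemma~\ref{lem:signed-random-measure-spectrum}, and use the absence of an atom at zero for the $o$-bound. Your explicit use of the exact intensity $\rho$ for the centering, and your cutoff argument extending the distributional identity from $C_c^\infty$ to Schwartz test functions, fill in steps the paper leaves implicit.
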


\begin{proof}
For \(f\in\mathcal S(V)\) and \(x\in X\),
\eqref{eq:global-primitive-identity} and
\eqref{eq:derivative-statistics-centered} give
\[
    (\bS f)^\circ(x)
    =
    \Delta_x(f)
    =
    (-1)^p
    \sum_{\mathbf j\in\mathcal J_p}
        \Theta_{\mathbf j,x}(\partial^{\mathbf j}f).
\]
Hence, by Cauchy--Schwarz in the finite sum and
Lemma~\ref{lem:signed-random-measure-spectrum},
\begin{equation}
\label{eq:scalar-spectral-upper-bound}
\begin{aligned}
    \int_V|\widehat f|^2\,d\sigma_\eta
    &\leq
    m_p\sum_{\mathbf j\in\mathcal J_p}
        \int_V
        |(2\pi i\xi)^{\mathbf j}|^2
        |\widehat f(\xi)|^2
        \,d\Sigma_{\mathbf j}(\xi).
\end{aligned}
\end{equation}

Choose \(\varphi\in C_c^\infty(V)\) with
\(0\leq\varphi\leq1\), \(\varphi=1\) on \(B_1\), and
\(\operatorname{supp}\varphi\subset B_2\). For \(\varepsilon>0\),
let \(f_\varepsilon\in\mathcal S(V)\) be defined by
\[
    \widehat f_\varepsilon(\xi)
    :=\varphi(\xi/\varepsilon).
\]
Then the left-hand side of
\eqref{eq:scalar-spectral-upper-bound} is at least
\(\sigma_\eta(B_\varepsilon)\), while on
\(\operatorname{supp}\widehat f_\varepsilon\) one has
\[
    |(2\pi i\xi)^{\mathbf j}|^2
    \leq
    (4\pi\varepsilon)^{2p}.
\]
Consequently,
\begin{equation}
\label{eq:scalar-Bartlett-trace-bound}
    \sigma_\eta(B_\varepsilon)
    \leq
    m_p(4\pi)^{2p}\varepsilon^{2p}
    \sum_{\mathbf j\in\mathcal J_p}
        \Sigma_{\mathbf j}(B_{2\varepsilon}).
\end{equation}
Local finiteness of the finitely many \(\Sigma_{\mathbf j}\) gives
\eqref{eq:Bartlett-big-O}. Under \(V\)-ergodicity,
Lemma~\ref{lem:signed-random-measure-spectrum} gives
\(\Sigma_{\mathbf j}(\{0\})=0\) for every \(\mathbf j\). Hence, by
continuity from above,
\[
    \Sigma_{\mathbf j}(B_{2\varepsilon})
    \downarrow
    \Sigma_{\mathbf j}(\{0\})=0
    \qquad(\varepsilon\downarrow0),
\]
so the sum on the right of \eqref{eq:scalar-Bartlett-trace-bound} tends to
zero, proving \eqref{eq:Bartlett-little-o}.
\end{proof}



\subsection{Number variance}
\label{subsec:number-variance}

Translation boundedness also gives the following tail estimate. If
\(\sigma\) is a translation-bounded positive Radon measure,
then for every \(a>0\) and \(\beta>d\),
\[
    \int_{\|\xi\|\geq a}
        \|\xi\|^{-\beta}\,d\sigma(\xi)<\infty.
\]
Indeed, the shell \(\{n\leq\|\xi\|<n+1\}\) can be covered by
\(O(n^{d-1})\) translates of a fixed ball and hence has \(\sigma\)-mass
\(O(n^{d-1})\); summability follows from
\(\sum_{n\geq1}n^{d-1-\beta}<\infty\). The remaining compact region away
from the origin has finite \(\sigma\)-mass because \(\sigma\) is Radon.

A low-frequency power bound stronger than order \(d+1\) gives the following
estimate for ball indicators.

\begin{lemma}[Surface-order estimate for ball indicators]
\label{lem:surface-order-variance}
Let \(\sigma\) be a translation-bounded positive Radon measure on
\(V\). Suppose that there exist \(s>d+1\), \(C<\infty\), and
\(\varepsilon_0>0\) such that
\begin{equation}
\label{eq:low-frequency-power-bound}
    \sigma(B_\varepsilon)\leq C\varepsilon^s
    \qquad(0<\varepsilon\leq\varepsilon_0).
\end{equation}
Then
\begin{equation}
\label{eq:ball-Fourier-variance-bound}
    \int_V
        |\widehat{\mathbf 1}_{B_R}(\xi)|^2
        \,d\sigma(\xi)
    =
    O(R^{d-1})
    \qquad(R\to\infty).
\end{equation}
\end{lemma}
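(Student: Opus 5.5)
We split the frequency integral into low frequencies \(\|\xi\|\leq1/R\), a middle dyadic range \(1/R<\|\xi\|\leq\varepsilon_0\), and high frequencies \(\|\xi\|>\varepsilon_0\). Throughout we use the two standard bounds for the Fourier transform of a ball:
\[
    |\widehat{\mathbf 1}_{B_R}(\xi)|\leq\lambda_V(B_R)=c_dR^d,
    \qquad
    |\widehat{\mathbf 1}_{B_R}(\xi)|\leq C_dR^{(d-1)/2}\|\xi\|^{-(d+1)/2}\quad(\xi\neq0).
\]
The second bound comes from the Bessel representation \(\widehat{\mathbf 1}_{B_R}(\xi)=R^{d/2}\|\xi\|^{-d/2}J_{d/2}(2\pi R\|\xi\|)\) together with \(|J_\nu(t)|\leq Ct^{-1/2}\) for \(t\geq1\). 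Squaring gives
\[
    |\widehat{\mathbf 1}_{B_R}(\xi)|^2\leq\min\{c_d^2R^{2d},\,C_d^2R^{d-1}\|\xi\|^{-(d+1)}\}.
\]

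\emph{Low frequencies.} For large \(R\), the region \(B_{1/R}\) lies inside \(B_{\varepsilon_0}\), so \eqref{eq:low-frequency-power-bound} gives a contribution of at most
\[
    c_d^2R^{2d}\cdot CR^{-s}=O(R^{2d-s}).
\]
Since \(s>d+1\), this is \(o(R^{d-1})\).

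\emph{Middle frequencies.} Split the range into dyadic shells \(2^{k}/R<\|\xi\|\leq2^{k+1}/R\), for \(k\geq0\) up to \(2^{k+1}/R\sim\varepsilon_0\). On the \(k\)-th shell, the decay bound together with \eqref{eq:low-frequency-power-bound} bounds the contribution by
\[
    C_d^2R^{d-1}(2^k/R)^{-(d+1)}\,C(2^{k+1}/R)^{s}
    \lesssim R^{d-1}\,2^{k(s-d-1)}R^{d+1-s}.
\]
Summing the geometric series up to the top scale \(2^k\sim\varepsilon_0R\) gives a total of at most
\[
    \lesssim R^{d-1}(\varepsilon_0R)^{s-d-1}R^{d+1-s}=O(R^{d-1}).
\]
This is the only place where the exponent matters, and the step I expect to need the most care. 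Note that the sum is dominated by its top scale. The condition \(s>d+1\) is what keeps the bottom scales from producing a factor growing like \(\log R\), and it also handles the \(k=0\) boundary term. The alternative is a layer-cake (Stieltjes) integration of \(\|\xi\|^{-(d+1)}\) against \(\sigma\) over \(1/R<\|\xi\|\leq\varepsilon_0\): integrating by parts gives the boundary term \(\varepsilon_0^{-(d+1)}\sigma(B_{\varepsilon_0})\), plus \((d+1)\int t^{-d-2}\sigma(B_t)\,dt\leq C\int t^{s-d-2}\,dt<\infty\), so this range even contributes \(O(R^{d-1})\) with a uniformly bounded coefficient. I would use this cleaner Stieltjes version.

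\emph{High frequencies.} On \(\|\xi\|>\varepsilon_0\), use the decay bound and the tail estimate for translation-bounded measures recorded just before the lemma, with \(\beta=d+1>d\). This gives \(R^{d-1}\int_{\|\xi\|\geq\varepsilon_0}\|\xi\|^{-(d+1)}\,d\sigma<\infty\) times a constant, which is \(O(R^{d-1})\). Adding the three ranges proves \eqref{eq:ball-Fourier-variance-bound}.
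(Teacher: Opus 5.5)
Your proof is correct and follows essentially the same route as the paper. Both split the integral into $B_{1/R}$, dyadic shells up to $\varepsilon_0$, and a high-frequency tail, using the Bessel decay $|\widehat{\mathbf 1}_{B_R}(\xi)|^2\lesssim R^{d-1}\|\xi\|^{-(d+1)}$ and the weighted-tail estimate for translation-bounded measures; your Stieltjes treatment of the middle range is only a repackaging of the paper's increasing geometric sum, and it shows directly that $\int_{0<\|\xi\|\leq\varepsilon_0}\|\xi\|^{-(d+1)}\,d\sigma(\xi)<\infty$, which is exactly where $s>d+1$ enters.
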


\begin{proof}
With the Fourier convention of
Subsection~\ref{subsec:Euclidean-Fourier-conventions},
\[
    \widehat{\mathbf 1}_{B_R}(\xi)
    =
    R^d\widehat{\mathbf 1}_{B_1}(R\xi).
\]
The Fourier transform of the unit ball satisfies
\begin{equation}
\label{eq:ball-Fourier-decay}
    |\widehat{\mathbf 1}_{B_1}(\xi)|
    \leq
    C_d(1+\|\xi\|)^{-(d+1)/2}.
\end{equation}
Indeed, for \(\xi\neq0\),
\[
    \widehat{\mathbf 1}_{B_1}(\xi)
    =
    \|\xi\|^{-d/2}
    J_{d/2}(2\pi\|\xi\|),
\]
and \(|J_{d/2}(t)|=O(t^{-1/2})\) as \(t\to\infty\).

Fix \(R\geq4/\varepsilon_0\). On \(B_{1/R}\),
\eqref{eq:low-frequency-power-bound} gives
\[
    \int_{B_{1/R}}
        |\widehat{\mathbf 1}_{B_R}|^2\,d\sigma
    \leq
    C R^{2d-s}
    =
    O(R^{d-1}).
\]
For \(j\geq0\), put
\[
    E_j
    :=
    B_{2^{j+1}/R}\setminus B_{2^j/R}.
\]
As long as \(2^{j+1}/R\leq\varepsilon_0\),
\eqref{eq:ball-Fourier-decay} and
\eqref{eq:low-frequency-power-bound} give
\[
\begin{aligned}
    \int_{E_j}
        |\widehat{\mathbf 1}_{B_R}|^2\,d\sigma
    &\leq
    C R^{2d}
        2^{-j(d+1)}
        \sigma(B_{2^{j+1}/R})\\
    &\leq
    C R^{2d-s}
        2^{j(s-d-1)}.
\end{aligned}
\]
Since \(s-d-1>0\), these bounds form an increasing geometric progression
in \(j\), so their sum is controlled by its final term. At the last
admissible scale
\(2^j\lesssim R\), and hence
\[
    R^{2d-s}2^{j(s-d-1)}
    \lesssim
    R^{d-1}.
\]
Summing these terms up to scale \(\varepsilon_0\) gives
\(O(R^{d-1})\).

By this weighted-tail estimate,
\begin{equation}
\label{eq:translation-bounded-weight-integrability}
    \int_{\|\xi\|\geq\varepsilon_0/4}
        \|\xi\|^{-(d+1)}
        \,d\sigma(\xi)
    <\infty.
\end{equation}

For \(\|\xi\|\geq\varepsilon_0/4\),
\eqref{eq:ball-Fourier-decay} gives
\[
    |\widehat{\mathbf 1}_{B_R}(\xi)|^2
    \leq
    C R^{d-1}\|\xi\|^{-(d+1)}.
\]
Together with
\eqref{eq:translation-bounded-weight-integrability}, this contributes
\(O(R^{d-1})\) and completes the proof.
\end{proof}

\begin{proposition}[Surface-order number variance]
\label{prop:surface-order-number-variance}
If \(2p>d+1\), then
\begin{equation}
\label{eq:surface-order-number-variance}
    \operatorname{Var}_\eta(N_{B_R})=O(R^{d-1})
    \qquad(R\to\infty).
\end{equation}
In particular, \(\eta\) is hyperuniform.
\end{proposition}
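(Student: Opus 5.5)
The plan is to combine Lemma~\ref{lem:Bartlett-indicators}, Proposition~\ref{prop:Bartlett-spectral-decay}, and Lemma~\ref{lem:surface-order-variance}.

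First, the hypotheses of Lemma~\ref{lem:Bartlett-indicators} hold. The process \(\eta=\eta_\rho\) is \(V\)-invariant, and it has local second moments. Indeed, by Proposition~\ref{prop:global-assembly} its configurations are uniformly \(ah\)-separated, so \(\omega(A)\) is bounded uniformly for \(A\) bounded, by the packing bound \eqref{eq:Delone-packing-bound}. The ball \(B_R\) is bounded, and its boundary is a sphere, which is \(\lambda_V\)-null. Lemma~\ref{lem:Bartlett-indicators} therefore gives
\[
    \operatorname{Var}_\eta(N_{B_R})
    =
    \int_V|\widehat{\mathbf 1}_{B_R}(\xi)|^2\,d\sigma_\eta(\xi).
\]

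Next, take \(\sigma=\sigma_\eta\) in Lemma~\ref{lem:surface-order-variance}. This measure is positive, Radon, and translation bounded, as recorded in Subsection~\ref{subsec:linear-statistics-Bartlett}. For the low-frequency power bound, use \eqref{eq:Bartlett-big-O} of Proposition~\ref{prop:Bartlett-spectral-decay}. It supplies \(C<\infty\) and \(\varepsilon_0>0\) with \(\sigma_\eta(B_\varepsilon)\leq C\varepsilon^{2p}\) for \(0<\varepsilon\leq\varepsilon_0\). Only the \(O\)-version is needed here, so ergodicity is not even used. Setting \(s:=2p\), the hypothesis \(2p>d+1\) is exactly the condition \(s>d+1\) in \eqref{eq:low-frequency-power-bound}. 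Lemma~\ref{lem:surface-order-variance} then gives \(O(R^{d-1})\) for the integral above, which proves \eqref{eq:surface-order-number-variance}.

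For hyperuniformity, note that \(R^{d-1}=o(R^d)\), so the variance bound gives \(\operatorname{Var}_\eta(N_{B_R})=o(R^d)\), which is the definition of hyperuniformity. One can also see it spectrally: \(2p>d+1>d\) gives \(\sigma_\eta(B_\varepsilon)=o(\varepsilon^d)\), which is equivalent by \cite[Proposition~3.3]{BH24}.

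There is no genuine obstacle, since each step is a direct invocation of an earlier result. The only point needing care is the local second-moment hypothesis in Lemma~\ref{lem:Bartlett-indicators}, and that follows from the uniform separation established above.
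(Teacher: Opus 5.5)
Your proposal is correct and follows essentially the same route as the paper: the Bartlett decay estimate with \(s=2p\) verifies the hypothesis of the surface-order lemma for ball indicators, translation boundedness of \(\sigma_\eta\) comes from the preliminaries, and the indicator covariance lemma identifies the resulting integral with \(\operatorname{Var}_\eta(N_{B_R})\). Two further details are also correct and are left implicit in the paper: local second moments follow from the uniform separation of the configurations, and only the big-\(O\) form of the Bartlett decay (hence no ergodicity) is needed.
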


\begin{proof}
By Proposition~\ref{prop:Bartlett-spectral-decay}, the hypothesis of
Lemma~\ref{lem:surface-order-variance} holds with \(s=2p\). The Bartlett
spectrum is translation bounded by
Subsection~\ref{subsec:linear-statistics-Bartlett}. Hence
\[
    \int_V
        |\widehat{\mathbf 1}_{B_R}(\xi)|^2
        \,d\sigma_\eta(\xi)
    =
    O(R^{d-1}).
\]
By Lemma~\ref{lem:Bartlett-indicators}, the left-hand side is
\(\operatorname{Var}_\eta(N_{B_R})\). Since \(R^{d-1}=o(R^d)\), \(\eta\) is
hyperuniform.
\end{proof}

\begin{proposition}[Optimality of surface-order variance]
\label{prop:surface-order-optimality}
For the point process \(\eta\) constructed above,
\[
    0<\limsup_{R\to\infty}
    \frac{\operatorname{Var}_\eta(N_{B_R})}{R^{d-1}}<\infty.
\]
\end{proposition}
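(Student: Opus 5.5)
The upper bound is already available. The statement presumably concerns the case \(2p>d+1\), which is the regime of Proposition~\ref{prop:surface-order-number-variance} and which is guaranteed in Corollary~\ref{cor:main-point-process-realization}. In that regime Proposition~\ref{prop:surface-order-number-variance} gives \(\operatorname{Var}_\eta(N_{B_R})=O(R^{d-1})\), so the \(\limsup\) is finite. The work is therefore the strict lower bound. For it I will invoke the averaged Beck-type lower bound \cite[Theorem~5.1]{BB26}. In the form I expect to use, it says the following: if \(\sigma\) is a translation-bounded positive Radon measure with \(\sigma\neq0\) and \(\sigma(\{0\})=0\), then
\[
    \liminf_{T\to\infty}\frac1T\int_T^{2T}
    \frac{1}{R^{d-1}}\int_V|\widehat{\mathbf 1}_{B_R}(\xi)|^2\,d\sigma(\xi)\,dR>0 .
\]
This follows from the Bessel asymptotics \(|\widehat{\mathbf 1}_{B_R}(\xi)|^2\approx R^{d-1}\|\xi\|^{-d-1}\cos^2(2\pi R\|\xi\|-\phi_d)\). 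The averaged cosine squared equals \(1/2\) on average in \(R\) for each fixed \(\xi\neq0\), and Fatou's lemma applied on a compact annulus of positive \(\sigma\)-mass gives positivity.

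The steps are as follows. First, \(\sigma_\eta(\{0\})=0\). This is immediate from Proposition~\ref{prop:Bartlett-spectral-decay}, since \(\sigma_\eta(B_\varepsilon)=o(\varepsilon^{2p})\to0\). Alternatively it follows from \(V\)-ergodicity, exactly as in Lemma~\ref{lem:signed-random-measure-spectrum}. Second, \(\sigma_\eta\) is translation bounded by Subsection~\ref{subsec:linear-statistics-Bartlett}.

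Third, and this is the main point, \(\sigma_\eta\neq0\). Suppose instead that \(\sigma_\eta=0\). Then \(\operatorname{Var}_\eta(\bS f)=0\) for every \(f\in\mathcal S(V)\), so \(\bS f(\omega)=\rho\int f\,d\lambda_V\) almost surely. Choosing a countable dense family of \(f\), almost every configuration \(\omega\) would equal the measure \(\rho\lambda_V\), which is impossible for a counting measure. In the same spirit, one can take \(f\geq0\) concentrated on a ball of radius \(\ll h\): then \(\bS f\) takes the value \(0\) with positive probability, by the separation in \eqref{eq:global-Delone-bounds}, yet has mean \(\rho\int f>0\). So the variance is positive and \(\sigma_\eta\neq0\).

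Finally, combine these with Lemma~\ref{lem:Bartlett-indicators}, whose identity \(\operatorname{Var}_\eta(N_{B_R})=\int|\widehat{\mathbf 1}_{B_R}|^2\,d\sigma_\eta\) holds because \(\lambda_V(\partial B_R)=0\). This gives
\[
    \frac1T\int_T^{2T}\frac{\operatorname{Var}_\eta(N_{B_R})}{R^{d-1}}\,dR\geq c>0
\]
for all large \(T\). An average over \([T,2T]\) bounded below by \(c\) forces some \(R\in[T,2T]\) with ratio at least \(c\). Letting \(T\to\infty\) yields \(\limsup_{R\to\infty}\operatorname{Var}_\eta(N_{B_R})/R^{d-1}\geq c>0\).

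The step I expect to need the most care is matching the hypotheses of \cite[Theorem~5.1]{BB26} exactly. If that theorem is stated for random measures rather than for spectral measures, I will check that \(\eta\) qualifies: it is stationary, has local second moments (since it is uniformly separated), and has no spectral atom at the origin. I will also check whether it requires \(\sigma\) to be nonzero away from a neighborhood of the origin; the nonzero mass established in the third step must lie in some compact annulus \(\{a\leq\|\xi\|\leq b\}\), which suffices. If the cited theorem were instead formulated with a smooth weight in \(R\) rather than a plain average over \([T,2T]\), the passage to the \(\limsup\) is the same, since a positive weighted average again forces large values at arbitrarily large \(R\).
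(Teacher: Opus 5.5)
Your proposal is correct and follows the same route as the paper: the upper bound from Proposition~\ref{prop:surface-order-number-variance} (implicitly in the regime \(2p>d+1\)), nonvanishing of \(\sigma_\eta\) via a countable determining family of test functions, the absence of an atom at the origin, and the averaged Beck lower bound of \cite[Theorem~5.1]{BB26} turned into a statement about the \(\limsup\). The differences are minor. You average over \([T,2T]\) instead of \([0,R]\), and you obtain \(\sigma_\eta(\{0\})=0\) from the Bartlett decay rather than from \(V\)-ergodicity; both are valid. Your Bessel-asymptotic argument on a compact annulus of positive \(\sigma_\eta\)-mass also makes the lower bound independent of the exact form of the cited theorem.
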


\begin{proof}
For the centered signed random measure
\(\omega-\rho\lambda_V\), the covariance spectrum is the Bartlett spectrum
\(\sigma_\eta\), by uniqueness in
Lemma~\ref{lem:signed-random-measure-spectrum}. The spectrum is nonzero:
otherwise every centered smooth linear statistic would vanish almost surely.
Taking a common conull set for a countable determining family in
\(C_c^\infty(V)\) would then give
\(\omega=\rho\lambda_V\), impossible because \(\omega\) is a locally finite
counting measure whereas \(\rho\lambda_V\) is diffuse. Since
\(V\)-ergodicity gives \(\sigma_\eta(\{0\})=0\), choose \(R_0>0\) so that
\[
    I_{R_0}
    :=
    \int_{\|\xi\|\ge R_0^{-1}}
        \frac{d\sigma_\eta(\xi)}{\|\xi\|^{d+1}}
    >0.
\]
The averaged Beck inequality of \cite[Theorem~5.1]{BB26} then gives, for
all \(R\ge R_0\),
\[
    \frac1R\int_0^R
        \operatorname{Var}_\eta(N_{B_r})\,dr
    \ge
    C_d I_{R_0} R^{d-1}.
\]
If
\(\limsup_{R\to\infty}R^{1-d}\operatorname{Var}_\eta(N_{B_R})=0\), then
for every \(\delta>0\) there is \(R_\delta\) such that
\(\operatorname{Var}_\eta(N_{B_r})\le \delta r^{d-1}\) for
\(r\ge R_\delta\). Consequently,
\[
    \frac1R\int_0^R
        \operatorname{Var}_\eta(N_{B_r})\,dr
    \le
    \left(\frac{\delta}{d}+o(1)\right)R^{d-1},
\]
contradicting the preceding lower bound for sufficiently small \(\delta\).
The upper bound follows from
Proposition~\ref{prop:surface-order-number-variance}.
\end{proof}



\subsection{Linear rigidity}
\label{subsec:rigidity}

\begin{lemma}[Polynomial cutoffs]
\label{lem:polynomial-cutoff}
Let \(p\geq1\), and let \(\sigma\) be a translation-bounded positive Radon
measure satisfying
\begin{equation}
\label{eq:polynomial-cutoff-spectral-hypothesis}
    \sigma(B_\varepsilon)
    =
    o(\varepsilon^{2p})
    \qquad(\varepsilon\downarrow0).
\end{equation}
Let \(P\) be a homogeneous polynomial of degree \(m\), with
\(p>d+m\), and let \(A\subset V\) be bounded. Then there are
functions \(f_R\in C_c^\infty(V)\), equal to \(P\) on \(A\) for
all sufficiently large \(R\), such that
\begin{equation}
\label{eq:polynomial-cutoff-variance}
    \int_V|\widehat f_R(\xi)|^2\,d\sigma(\xi)
    \longrightarrow0
    \qquad(R\to\infty).
\end{equation}
\end{lemma}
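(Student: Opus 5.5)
Take a fixed cutoff $\psi\in C_c^\infty(V)$ with $0\le\psi\le1$, $\psi=1$ on $B_1$ and $\operatorname{supp}\psi\subset B_2$, and set
\[
    f_R(u):=P(u)\,\psi(u/R).
\]
For $R$ large, $A\subset B_R$, so $f_R=P$ on $A$. By homogeneity, $f_R(u)=R^m g(u/R)$ with $g:=P\psi\in C_c^\infty(V)$ fixed. Hence
\[
    \widehat f_R(\xi)=R^{m+d}\,\widehat g(R\xi),
\]
and
\[
    \int_V|\widehat f_R|^2\,d\sigma=R^{2(m+d)}\int_V|\widehat g(R\xi)|^2\,d\sigma(\xi).
\]
The task is to show this tends to zero. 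It is convenient to write $\tau:=d+m<p$.

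Split the integral into a low-frequency region $\|\xi\|\le 1/R$, dyadic shells $2^k/R<\|\xi\|\le2^{k+1}/R$ up to a fixed scale $\varepsilon_0$, and a high-frequency region $\|\xi\|>\varepsilon_0$.

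\emph{Low frequencies.} Here $|\widehat g|\le\|g\|_{L^1}$. The hypothesis \eqref{eq:polynomial-cutoff-spectral-hypothesis} gives $\sigma(B_{1/R})=o(R^{-2p})$, so this piece contributes $R^{2\tau}\,o(R^{-2p})\to0$.

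\emph{Shells.} Since $\widehat g$ is Schwartz, we have $|\widehat g(\eta)|\le C_N(1+\|\eta\|)^{-N}$. The $k$-th shell therefore contributes at most
\[
    C\,R^{2\tau}\,2^{-2Nk}\,\sigma(B_{2^{k+1}/R}).
\]
Write $\sigma(B_\varepsilon)\le\omega(\varepsilon)\varepsilon^{2p}$ with $\omega(\varepsilon)\to0$ as $\varepsilon\downarrow0$, and choose $N>p$. The shell sum is then bounded by
\[
    C\,R^{2\tau-2p}\sum_k 2^{(2p-2N)k}\,\omega(2^{k+1}/R).
\]
Each term tends to zero as $R\to\infty$, and the terms are dominated by the summable sequence $2^{(2p-2N)k}\sup\omega$. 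By dominated convergence the sum tends to zero, and the prefactor $R^{2\tau-2p}$ is even bounded (it tends to zero since $\tau<p$). This is the step where $p>d+m$ is used.

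\emph{High frequencies.} For $\|\xi\|>\varepsilon_0$ we have $|\widehat g(R\xi)|^2\le C_N R^{-2N}\|\xi\|^{-2N}$. By the weighted-tail estimate for translation-bounded measures recorded at the start of Subsection~\ref{subsec:number-variance} (valid once $2N>d$), $\int_{\|\xi\|>\varepsilon_0}\|\xi\|^{-2N}\,d\sigma<\infty$. This piece is therefore $O(R^{2\tau-2N})\to0$ for $N>\tau$.

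The main point needing care is the uniform handling of the intermediate shells, which requires the little-$o$ form of the hypothesis together with dominated convergence. Notably, the decay rate actually needed here is only $o(\varepsilon^{2\tau})$, so the hypothesis $p>d+m$ is more than sufficient at this step.
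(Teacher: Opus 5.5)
Your proposal is correct and is essentially the paper's proof. It uses the same cutoff $f_R=P\,\phi(\cdot/R)$, the same scaling $\widehat f_R(\xi)=R^{d+m}\widehat g(R\xi)$, and the same three-region split: the ball $B_{1/R}$, dyadic shells below a fixed scale $\varepsilon_0$, and a high-frequency tail controlled by the weighted-tail estimate for translation-bounded measures. The one difference is that the paper bounds the shells with the big-$O$ form of the hypothesis and the strict inequality $p>d+m$, whereas your dominated-convergence treatment with the little-$o$ form shows that $\sigma(B_\varepsilon)=o(\varepsilon^{2(d+m)})$ already suffices, which is a mild sharpening. One small bookkeeping point: start the tail region at $\varepsilon_0/2$, as the paper does, so that it meets the last admissible shell.
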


\begin{proof}
Choose \(\phi\in C_c^\infty(V)\) with
\(\phi=1\) on \(B_1\), and set
\[
    f_R(u):=P(u)\phi(u/R).
\]
Since \(A\) is bounded, \(f_R=P\) on \(A\) for all sufficiently
large \(R\). Writing \(g(u):=P(u)\phi(u)\), homogeneity gives
\begin{equation}
\label{eq:polynomial-cutoff-Fourier-scaling}
    f_R(u)=R^m g(u/R),
    \qquad
    \widehat f_R(\xi)
    =
    R^{d+m}\widehat g(R\xi).
\end{equation}

Choose \(\varepsilon_0>0\) and \(C<\infty\) such that
\[
    \sigma(B_\varepsilon)
    \leq C\varepsilon^{2p}
    \qquad(0<\varepsilon\leq\varepsilon_0).
\]
On \(B_{1/R}\), \eqref{eq:polynomial-cutoff-spectral-hypothesis}
and \eqref{eq:polynomial-cutoff-Fourier-scaling} give
\begin{equation}
\label{eq:polynomial-cutoff-lowest-frequency}
    \int_{B_{1/R}}|\widehat f_R|^2\,d\sigma
    \leq
    C R^{2d+2m}\sigma(B_{1/R})
    =
    o\bigl(R^{2d+2m-2p}\bigr)
    =
    o(1),
\end{equation}
because \(p>d+m\).

Choose \(N>p\). For
\[
    E_j
    :=
    B_{2^{j+1}/R}\setminus B_{2^j/R},
\]
provided \(2^{j+1}/R\leq\varepsilon_0\), the Schwartz decay of
\(\widehat g\) gives
\[
    \sup_{\xi\in E_j}
        |\widehat g(R\xi)|
    \leq C_N2^{-jN}.
\]
Hence
\begin{equation}
\label{eq:polynomial-cutoff-small-annuli}
\begin{aligned}
    \int_{E_j}|\widehat f_R|^2\,d\sigma
    &\leq
    C_N
    R^{2d+2m-2p}
    2^{j(2p-2N)}.
\end{aligned}
\end{equation}
Since \(N>p\), the factors \(2^{j(2p-2N)}\) are summable
uniformly in the number of annuli. Thus their total contribution is
\(O(R^{2d+2m-2p})\), which tends to zero because \(p>d+m\).

Since \(2N>d\), the weighted-tail estimate at the start of
Subsection~\ref{subsec:number-variance} gives
\begin{equation}
\label{eq:polynomial-cutoff-tail-integrability}
    \int_{\|\xi\|\geq\varepsilon_0/2}
        \|\xi\|^{-2N}\,d\sigma(\xi)
    <\infty.
\end{equation}
By Schwartz decay,
\[
    |\widehat f_R(\xi)|^2
    \leq
    C_N R^{2d+2m-2N}\|\xi\|^{-2N}
    \qquad
    (\|\xi\|\geq\varepsilon_0/2).
\]
Since \(N>p>d+m\),
\eqref{eq:polynomial-cutoff-tail-integrability} shows that this part
of the integral also tends to zero. This proves
\eqref{eq:polynomial-cutoff-variance}.
\end{proof}

Closely related implications from low-frequency suppression to higher-order
rigidity are developed by Lachi\`eze-Rey \cite{LR24}; see also
Lotz--Klatt \cite[Proposition~3.17]{LK26}. In the isotropic power-law case
\(s(\xi)\asymp\|\xi\|^\alpha\), the borderline in \cite{LR24} is
\(\alpha=2\ell+d\) for the spectral density, hence \(2(\ell+d)\) for the
ball-mass exponent because \(\sigma(B_\varepsilon)\asymp
\varepsilon^{\alpha+d}\). The cutoff below is a slightly stronger sufficient
condition for arbitrary translation-bounded spectral measures, with no
absolute-continuity assumption.

\begin{proposition}[Finite-order linear rigidity]
\label{prop:polynomial-rigidity}
Let \(\ell\geq0\). If \(p>d+\ell\), then \(\eta\) is linearly \(\ell\)-rigid and
\(\ell\)-rigid.
\end{proposition}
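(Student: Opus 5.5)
The plan is to combine Lemma~\ref{lem:polynomial-cutoff} with the Bartlett decay of Proposition~\ref{prop:Bartlett-spectral-decay}. Since linear \(\ell\)-rigidity implies \(\ell\)-rigidity (because \(\mathcal H_{A^c}\subset L^2(\mathcal F_{A^c},\eta)\)), only linear \(\ell\)-rigidity needs proof. Fix a bounded Borel set \(A\) and a multi-index \(\mathbf j\) with \(|\mathbf j|=m\leq\ell\). Then \(P(u)=u^{\mathbf j}\) is homogeneous of degree \(m\) and \(p>d+\ell\geq d+m\). Proposition~\ref{prop:Bartlett-spectral-decay} gives \(\sigma_\eta(B_\varepsilon)=o(\varepsilon^{2p})\), and \(\sigma_\eta\) is translation bounded. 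Lemma~\ref{lem:polynomial-cutoff} therefore produces \(f_R\in C_c^\infty(V)\), equal to \(u^{\mathbf j}\) on \(A\) for large \(R\), such that \(\operatorname{Var}_\eta(\bS f_R)=\int|\widehat f_R|^2\,d\sigma_\eta\to0\).

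Next, decompose \(f_R=f_R\mathbf 1_A+f_R\mathbf 1_{A^c}\). For large \(R\) this gives
\[
    (\bS f_R)^\circ
    =
    \mathfrak m_{\mathbf j}(\,\cdot\,;A)^\circ
    +
    \bigl(\bS(f_R\mathbf 1_{A^c})\bigr)^\circ .
\]
The function \(g_R:=f_R\mathbf 1_{A^c}\) is bounded, compactly supported, Borel, and vanishes on \(A\), so \((\bS g_R)^\circ\in\mathcal H_{A^c}\) by definition. Since \(\|(\bS f_R)^\circ\|_{L^2(\eta)}^2=\operatorname{Var}_\eta(\bS f_R)\to0\), we obtain \(-(\bS g_R)^\circ\to\mathfrak m_{\mathbf j}(\,\cdot\,;A)^\circ\) in \(L^2(\eta)\). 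Closedness of \(\mathcal H_{A^c}\) then gives the membership required by \eqref{eq:linear-ell-rigidity-definition}.

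Some routine points need checking along the way. The variables \(\mathfrak m_{\mathbf j}(\,\cdot\,;A)\) and \(\bS g_R\) belong to \(L^2(\eta)\) because of the uniform Delone property, or local second moments, together with boundedness of the supports. The identity \(\operatorname{Var}_\eta(\bS f_R)=\int|\widehat f_R|^2d\sigma_\eta\) holds directly because \(f_R\in C_c^\infty\subset\mathcal S(V)\). No boundary regularity of \(A\) is needed, since the split is pointwise in \(\omega\) and not through Lemma~\ref{lem:Bartlett-indicators}.

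The real content, namely the dyadic low-frequency estimate, is already contained in Lemma~\ref{lem:polynomial-cutoff}. The only delicate step is therefore making sure its hypothesis \(p>d+m\) is met for every \(m\leq\ell\), which the assumption \(p>d+\ell\) guarantees.
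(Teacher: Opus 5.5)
Your proposal is correct and follows the paper's proof essentially verbatim. The paper also applies Lemma~\ref{lem:polynomial-cutoff} to \(u^{\mathbf j}\) via Proposition~\ref{prop:Bartlett-spectral-decay}, and its exterior function \(h_R=f_R-P\mathbf 1_A\) coincides with your \(g_R=f_R\mathbf 1_{A^c}\) once \(f_R=P\) on \(A\); it then concludes from closedness of \(\mathcal H_{A^c}\) and from \(\mathcal H_{A^c}\subset L^2(\mathcal F_{A^c},\eta)\), exactly as you do.
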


\begin{proof}
Fix a bounded Borel set \(A\subset V\) and a multi-index
\(\mathbf j\) with \(|\mathbf j|\leq\ell\). Put
\(P(u):=u^{\mathbf j}\) and \(m:=|\mathbf j|\). Proposition~\ref{prop:Bartlett-spectral-decay}
and \(p>d+m\) allow us to apply Lemma~\ref{lem:polynomial-cutoff}, which gives
\(f_R\in C_c^\infty(V)\), with \(f_R=P\) on \(A\) for large \(R\),
such that
\begin{equation}
\label{eq:rigidity-small-variance}
    \|(\bS f_R)^\circ\|_{L^2(\eta)}^2
    =
    \int_V|\widehat f_R|^2\,d\sigma_\eta
    \longrightarrow0.
\end{equation}

For such \(R\), put \(h_R:=f_R-P\mathbf 1_A\). Then \(h_R\) is bounded,
compactly supported, and vanishes on \(A\).
By linearity,
\begin{equation}
\label{eq:rigidity-exterior-approximation}
    \mathfrak m_{\mathbf j}(\,\cdot\,;A)^\circ
    =
    (\bS f_R)^\circ-(\bS h_R)^\circ.
\end{equation}
The second term belongs to \(\mathcal H_{A^c}\), while the first
tends to zero in \(L^2(\eta)\) by
\eqref{eq:rigidity-small-variance}. Equivalently,
\[
    -(\bS h_R)^\circ
    \longrightarrow
    \mathfrak m_{\mathbf j}(\,\cdot\,;A)^\circ
    \quad\text{in }L^2(\eta).
\]
Since \(\mathcal H_{A^c}\) is closed,
\[
    \mathfrak m_{\mathbf j}(\,\cdot\,;A)^\circ
    \in\mathcal H_{A^c}.
\]
Thus \(\eta\) is linearly \(\ell\)-rigid, and therefore
\(\ell\)-rigid by Subsection~\ref{subsec:rigidity-definitions}.
\end{proof}



\section{Proofs of the main results}
\label{sec:proofs-main-theorems}

The constructions have already been completed in
Sections~\ref{sec:dimension-one} and \ref{sec:local-filling-assembly}.
We now collect their conclusions, together with the estimates of
Section~\ref{sec:spectral-decay-rigidity}, and verify the statements in
Subsection~\ref{subsec:introduction-main-results}.


\subsection{One-dimensional theorem and corollary}
\label{subsec:proof-main-dimension-one}

Let \(\mathbb R\curvearrowright(X,\mu)\) be an essentially free ergodic
p.m.p.\ Borel action. Choose the bounded suspension representation from
Subsection~\ref{subsec:bounded-suspension}, with
\[
    0<\tau_-\leq\tau(z)\leq\tau_+<\infty,
\]
and choose \(\rho_0\) so that \(\rho_0\tau_-\geq20\). Fix
\(\rho\geq\rho_0\).

Section~\ref{sec:dimension-one} constructs a Borel translation-equivariant
map
\[
    \kappa_\rho:X\longrightarrow\mathcal N_s(\mathbb R),
    \qquad
    \kappa_\rho(x)=\delta_{P_x},
\]
with uniformly Delone images and with \(\kappa_\rho\) injective. It also
defines
\[
    Y_\rho:=\{x\in X:0\in P_x\},
    \qquad
    \eta_\rho:=(\kappa_\rho)_*\mu.
\]
By Proposition~\ref{prop:configuration-realization}, \(Y_\rho\) is a
generating Delone translation cross-section with \((Y_\rho)_x=P_x\). The
intensity computation following Proposition~\ref{prop:one-dimensional-configuration-map}
gives \(\rho_{\eta_\rho}=\rho\).

On the invariant conull suspension model fixed in
Section~\ref{sec:dimension-one},
Proposition~\ref{prop:one-dimensional-Euclidean-discrepancy} gives
\[
    \left|
        \#\bigl((Y_\rho)_x\cap I\bigr)-\rho|I|
    \right|
    \leq
    C_\rho\log(2+|I|)
\]
for every bounded interval \(I\subset\mathbb R\). Transporting this model back
to the original action gives the invariant conull set \(X_0\) in
Theorem~\ref{thm:main-dimension-one}. Proposition~\ref{prop:one-dimensional-maximal-rigidity}
gives maximal rigidity. Moreover,
Corollary~\ref{cor:one-dimensional-number-variance} and
Proposition~\ref{prop:one-dimensional-Bartlett-decay} give
\[
    \operatorname{Var}_{\eta_\rho}(N_{[0,R)})=O(\log^2R)
    \qquad(R\to\infty)
\]
and
\[
    \sigma_{\eta_\rho}([-\varepsilon,\varepsilon])
    =
    O\!\left(
        \varepsilon^2\log^2\!\frac{e}{\varepsilon}
    \right)
    \qquad(\varepsilon\downarrow0),
\]
respectively. In particular, \(\eta_\rho\) is hyperuniform.

Finally, the Borel injective equivariant map \(\kappa_\rho\) identifies
\((X,\mu)\), by Lusin--Souslin, with its Borel image of full
\(\eta_\rho\)-measure. Hence \(\eta_\rho\) is translation ergodic. This proves
Theorem~\ref{thm:main-dimension-one}.

For Corollary~\ref{cor:main-dimension-one-point-process}, apply this theorem to
the canonical translation action on
\((\mathcal N_s(\mathbb R),\eta_0)\). The return-time map \(T=\kappa_{Y_\rho}\)
of the resulting generating cross-section is a translation-equivariant
measurable isomorphism onto \((\mathcal N_s(\mathbb R),\eta_\rho)\), and the
variance and Bartlett estimates above give the remaining conclusions.


\subsection{Higher-dimensional theorem and corollaries}
\label{subsec:proof-main-higher-dimensional}

Let \(d\geq2\), let \(K\leq O(V)\) be closed, and let
\(G_K\curvearrowright(X,\mu)\) be an essentially free p.m.p.\ Borel action
whose restricted \(V\)-action is ergodic. Fix \(p\geq1\), let \(\rho_0\) be
the final threshold from Sections~\ref{sec:orbit-geometry-integerization} and
\ref{sec:local-filling-assembly}, and fix \(\rho\geq\rho_0\).

Section~\ref{sec:local-filling-assembly} constructs the Borel
\(G_K\)-equivariant injective map
\[
    \kappa_\rho(x)=\delta_{P_x}
\]
with uniformly Delone images, and defines \(Y_\rho\) and \(\eta_\rho\) by
\eqref{eq:higher-dimensional-cross-section}. Proposition~\ref{prop:configuration-realization},
applied there, gives a \(K\)-invariant generating Delone translation
cross-section with
\[
    (Y_\rho)_x=P_x.
\]
The equivariance and injectivity of \(\kappa_\rho\), together with
Lusin--Souslin, give
\[
    (\mathcal N_s(V),\eta_\rho)\cong(X,\mu)
\]
as p.m.p.\ \(G_K\)-spaces. Thus \(\eta_\rho\) is \(G_K\)-invariant and its
restricted \(V\)-action is ergodic.

Propositions~\ref{prop:higher-dimensional-exact-intensity} and
\ref{prop:higher-dimensional-maximal-rigidity} give
\(\rho_{\eta_\rho}=\rho\) and maximal rigidity. Proposition~\ref{prop:Bartlett-spectral-decay}
gives
\begin{equation}
\label{eq:main-theorem-higher-dimensional-spectral}
    \sigma_{\eta_\rho}(B_\varepsilon)
    =
    o(\varepsilon^{2p})
    \qquad(\varepsilon\downarrow0),
\end{equation}
and, when \(2p>d+1\),
Proposition~\ref{prop:surface-order-number-variance} gives
\[
    \operatorname{Var}_{\eta_\rho}(N_{B_R})=O(R^{d-1})
    \qquad(R\to\infty).
\]
This proves Theorem~\ref{thm:main-higher-dimensional}.

Corollary~\ref{cor:main-point-process-realization} follows by applying this
theorem to the canonical \(G_K\)-action on
\((\mathcal N_s(V),\eta_0)\), choosing \(p\geq q\) with \(p>d+\ell\), and
using Proposition~\ref{prop:polynomial-rigidity}. The return-time map of the
new generating cross-section is the equivariant measurable isomorphism \(T\)
in the statement. Proposition~\ref{prop:surface-order-optimality} gives the
optimality statement following that corollary.

For Corollary~\ref{cor:main-bounded-displacement},
Proposition~\ref{prop:uniform-boundary-discrepancy} gives the stated estimate
for finite unions of unit cubes. Proposition~\ref{prop:bounded-displacement-lattice}
and Laczkovich's criterion give the consequence recorded in the following
remark.

For \(K=SO(V)\), \(G_K\)-invariance is isotropy, and
Corollary~\ref{cor:isotropic-ergodic-weak-mixing} upgrades the established
\(V\)-ergodicity to \(V\)-weak mixing. Applying
Corollary~\ref{cor:main-point-process-realization} to the canonical action of
\(\eta_0\) proves Corollary~\ref{cor:main-isotropic-realizations}. The same
argument with \(K=O(V)\) gives the orthogonally invariant version.


\section*{Statements and Declarations}

\paragraph{\textbf{Funding.}}
This work was supported by the Swedish Research Council under grant
VR 11253322.

\paragraph{\textbf{Competing interests.}}
The author has no relevant financial or non-financial interests to disclose.

\paragraph{\textbf{Data availability.}}
No datasets were generated or analysed during the current study.

\paragraph{\textbf{Use of artificial-intelligence tools.}}
The author used ChatGPT (OpenAI) during the preparation of the manuscript
for editorial assistance and consistency checking, and used Claude
(Anthropic), Gemini (Google DeepMind), and Aristotle (Harmonic) as additional
tools for independent manuscript and proof audits. Aristotle was also used
to formally check several isolated algebraic and analytic computations. All
outputs were independently evaluated by the author, who takes full
responsibility for the mathematical content, references, and final text.


\end{document}